\documentclass[11pt,reqno]{amsart}

 \usepackage{amsmath,amsthm,amsfonts}
\usepackage{latexsym,mathabx,txfonts}
\usepackage{amssymb}
\usepackage{slashed}
\usepackage{tikz}
\usepackage{enumitem}
\usepackage{hyperref}

\allowdisplaybreaks

\newcommand{\lam}{\lambda}

\newcommand{\wt}[1]{\widetilde {#1}}

\newcommand{\R}{\mathbb{R}}

\newcommand{\C}{\mathbb{C}}

\renewcommand{\bar}{\overline}

\newcommand{\bch}{\boldsymbol{\mathcal{H}}}

\newcommand{\im}{\operatorname{Im}}

\numberwithin{equation}{section}

\newtheorem{thm}{Theorem}[section]

\newtheorem{lem}[thm]{Lemma}
\newtheorem{prop}[thm]{Proposition}

\theoremstyle{remark}
\newtheorem{rem}{Remark}[section]

\usepackage{enumitem}

\newcommand{\bfq}{{\bf Q}}
\newcommand{\bfcq}{\boldsymbol{\mathcal{Q}}}
\newcommand{\inn}{\text{inn}}
\newcommand{\out}{\text{out}}
\newcommand{\co}{\text{cor}}

\newcommand{\Del}[1]{}

\definecolor{bluegreen}{rgb}{0.0, 0.3, 0.9}

\author[J. Krieger]{Joachim Krieger}
\address{EPFL SB MATH PDE, B\^atiment MA, Station 8, CH-1015 Lausanne}
\email{joachim.krieger@epfl.ch}

\author[J.M. Palacios]{Jos\'e M. Palacios}
\address{EPFL SB MATH PDE, B\^atiment MA, Station 8, CH-1015 Lausanne}
\email{jose.palaciosarmesto@epfl.ch}

\begin{document}

%%%%%%%%%%%%%%%%%%%%%%%%%%%%%%%%%%%%%%%%%%%

\subjclass[2000]{35L05, 35B40, 35B44}
\keywords{Critical wave maps, blow-up, bubble trees}

%%%%%%%%%%%%%%%%%%%%%%%%%%%%%%%%%%%%%%%%%%%

\begin{abstract}
    We show that the energy critical Wave Maps equation from $\R^{2+1}$ into $\mathbb{S}^2$, restricted to the $k=2$ co-rotational setting, admits arbitrarily large numbers of concentrating concentric $n$ bubble profiles. For any $n\in\mathbb{N}$, we construct an $n$-bubble solution concentrating at scales  $\lambda_1(t)\gg \lambda_2(t)\gg \ldots\gg \lambda_n(t)$, where $\lambda_n(t)=t^{-1}\vert \log t\vert^\beta$, and $\lambda_j(t)\gtrsim \exp( \int_t^{t_0} \lambda_{j+1}(s)ds)$, for any $j<n$. Here $\beta>\tfrac32$ is a parameter that can be chosen arbitrarily. This shows that, as far as finite time blow-up case is concerned, the entirety of cases postulated in the soliton resolution theorem indeed occur, provided the concentric collapsing bubbles have alternating signs.
\end{abstract}

%%%%%%%%%%%%%%%%%%%%%%%%%%%%%%%%%%%%%%%%%%%

\title[Long finite time bubble trees for two co-rotational wave maps]{Long finite time bubble trees for two co-rotational wave maps}

\setcounter{tocdepth}{1}
\maketitle
\tableofcontents

% [To erase later] Notation: \begin{align*}
% \bfq_{n-1}(t,r) &:= \sum_{j=2}^n (-1)^{j+1} Q\big(\lambda_j(t)\,r\big),
% \\ \bfcq_{n-1} (t,r)  & :=  \mathbf{Q}_{n-1}(t,r)  + w_{n-1}(t,r) , 
% \\ \lambda_1 &= \text{inner most}, \quad \lambda_n=\text{outer most},
% \\ h_j&:=h_j^{\out}+h_j^{\inn}
% \\ e_{j,1} &= E_{2,\co} + \sum_{k=1}^j e_{k,\co}
% \\ \mathcal{L} & = \text{operator in }\R^2, \quad \mathcal{H}= r^{1/2}\mathcal{L}r^{-1/2}=\text{operator on the half-line}
% \\ \tau_1 & := \int_t^{t_0} \lambda_1(s)ds, \qquad \tau_2 := \int_t^{t_0} \lambda_2(s)ds
% \end{align*} 

\section{Introduction}

This paper is dedicated to constructing novel finite time blow-up solutions for the $k$ co-rotational wave maps from $\mathbb{R}^{2+1}$ into $\mathbb{S}^2$. We recall that using the polar angle $u$ on $\mathbb{S}^2$, this problem can be cast as the following scalar wave equation
\begin{equation}\label{eq:Mainkcorotational}
-u_{tt} + u_{rr} + \frac{1}{r}u_r = k^2\cdot \frac{\sin(2u)}{2r^2}.
\end{equation}
We shall focus on the case $k = 2$ from now on, although it is likely that aspects of the constructions below can be adapted to the case $k\geq 3$.
Our goal shall be to show that {\it{for any $n\geq 1$}}, there exist finite time bubble trees consisting of $n$ concentric bubbles, concentrating at the space time origin $(t, r) = (0,0)$. This shows that {\it{as far as the finite time blow up case is concerned}}, the entirety of cases postulated in the soliton resolution theorem indeed exist, provided the concentrating bubbles exhibit alternating signs. 
\\

To formulate the main theorem, we recall that up to sign change and scaling transforms, the unique static solution of \eqref{eq:Mainkcorotational} is given by 
\[
Q(r) = 2\arctan(r^2).
\]
The following theorem is our main result and establishes the existence of such $n$ concentric bubble solutions that blow up simultaneously.
\begin{thm}\label{thm:Main} Let $n\geq 2$ and assume $\beta>\frac32$. Then there exist $t_0 = t_0(\beta, n)>0$ and a finite energy solution of \eqref{eq:Mainkcorotational} on $(0, t_0]\times [0,\infty)$ of the form 
\[
u(t, r) = \sum_{j=1}^n (-1)^{j+1}Q\big(\lambda_j(t)r\big) + \epsilon(t, r),
\]
where 
\[
\lim_{t\rightarrow 0}\big\|\nabla_{t,r}\epsilon\big\|_{L^2_{r\,dr}(r\leq t)} = 0,
\]
and the scaling parameters $\lambda_j(t)$ are given by 
\begin{align*}
&\lambda_n(t) = t^{-1}|\log t|^{\beta},\\
&\lambda_{j-1}\sim e^{\mu_j(t)\int_t^{t_0}\lambda_{j}(s)\,ds},\,j\leq n,
\end{align*}
where the functions $\mu_j(t)$ satisfy 
\[
\lim_{t\rightarrow 0}\mu_j(t) = 1.
\]
In particular there is $c = c(\beta, n)>0$ such that
\[
\lambda_j(t)\geq e^{e^{\ldots e^{c|\log t|^{\beta+1}}}}, \qquad j<n,
\]
where the expression at the end is an $n-j$ times iterated exponential tower. 
\end{thm}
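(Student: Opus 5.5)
We will build the solution inductively from the outermost bubble inwards, relying on the Krieger--Schlag--Tataru style approximate solution plus parametrix method for each scale. The first step takes the outermost bubble $Q(\lambda_n r)$ with $\lambda_n(t)=t^{-1}|\log t|^\beta$. Around it we construct an approximate solution $u_n^{app}=Q(\lambda_n r)+w_n$ by iterated corrections. In the self-similar variable $a=r/t$ we use the corrections $v_k$ solving $\mathcal{L}$-type equations near the origin (variable $R=\lambda_n r$) and hyperbolic ODEs in $a$ near the light cone. The error can be made $O(t^{N})$ in suitable norms, for any $N$, while keeping the corrections of finite energy inside the cone. The condition $\beta>\frac32$ should enter here. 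It guarantees that the modulation of $\lambda_n$ is slow enough that the radiation $\lambda_n'$-terms have vanishing energy near the tip. For $k=2$ these terms are effectively $\lambda_n'/\lambda_n^2\sim|\log t|^{-\beta}$. The other source of vanishing energy is the $r^{-2}$ decay of $\pi-Q$.

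The inductive step goes as follows. Given the approximate solution $\boldsymbol{\mathcal{Q}}_{m}$ built from bubbles $\lambda_{m},\dots,\lambda_n$, we insert a new bubble $(-1)^{m}Q(\lambda_{m-1}r)$ at a much smaller scale. Near $r\sim\lambda_{m-1}^{-1}$, the outer profile looks like a constant $\pi$-multiple (hence harmless by symmetry $u\mapsto\pi-u$ and the sign alternation) plus a quadratic correction $\sim c(t)\lambda_m^2 r^2$. This correction induces an interaction term in the inner equation. Projecting the linearized inner equation onto the resonance/zero mode $\Lambda Q$, which for $k=2$ decays like $R^{-2}$ and is in $L^2$, yields a modulation ODE for $\lambda_{m-1}$. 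Its leading term has the form
\[
\frac{d}{dt}\Big(\frac{\lambda_{m-1}'}{\lambda_{m-1}}\Big)\approx \pm\lambda_m^2\cdot(\text{const}), \qquad\text{or equivalently}\qquad \frac{\lambda_{m-1}'}{\lambda_{m-1}}\approx -\mu\lambda_m .
\]
The alternating sign is exactly what makes the interaction attractive, i.e. concentrating. Integrating gives $\lambda_{m-1}\sim\exp\big(\mu_m\int_t^{t_0}\lambda_m\big)$ with $\mu_m\to1$. We then solve the inner corrections in the variable $R=\lambda_{m-1}r$ and glue them to the outer ones across the intermediate region $\lambda_m^{-1}\gg r\gg\lambda_{m-1}^{-1}$. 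We track errors in the time variable $\tau_{m-1}=\int_t^{t_0}\lambda_{m-1}$, in which the error of the new layer is made as small as desired, namely polynomially in $\tau_{m-1}^{-1}$. Iterating $n-1$ times yields the tower bound claimed in Theorem~\ref{thm:Main}.

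The final step completes the approximate solution to an exact one by writing $u=\boldsymbol{\mathcal{Q}}_n+\epsilon$. We solve for $\epsilon$ backwards from $t=0$ via a fixed point argument. We use the distorted Fourier transform of the linearization around the innermost bubble, i.e. the operator $\mathcal{H}$ on the half line, with transference-type handling of the time-dependent scale. The outer bubbles are treated perturbatively through their Fourier-localized contribution, which is small because the scales separate super-exponentially.

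The main obstacle is the inductive gluing step. The corrections coming from outer layers, and their time derivatives, must be expressed in terms of the faster time $\tau_{m-1}$ uniformly, and the modulation law must be derived with error $o(1)$ in $\mu_m$. Weakly localized $\Lambda Q$ tails (decaying like $R^{-2}$) produce logarithmic losses, which must be absorbed. I would first try to choose $\lambda_{m-1}$ implicitly so that the $\Lambda Q$-projection of the error vanishes exactly at each iteration. I would then verify that the remaining source terms decay in $\tau_{m-1}$ fast enough for the parametrix to close the nonlinear estimate.
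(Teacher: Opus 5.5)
Your proposal has a genuine structural gap. You build an approximate $n$-bubble profile layer by layer. You then complete it to an exact solution only once, by a single fixed point for the linearisation around the innermost bubble, with the outer bubbles treated perturbatively. That final step cannot absorb the errors left over by the outer layers, because those errors decay only in their own time variables:
\begin{itemize}
\item for the outermost layer, the iterative corrections gain powers of $(t\lambda_n)^{-1}=|\log t|^{-\beta}$, not $O(t^N)$;
\item for the layer at scale $\lambda_j$, they gain at best powers of $\tau_j^{-1}$, which is only (iterated-)logarithmically small in terms of $\tau_1\sim\lambda_1/\lambda_2$.
\end{itemize}
In the innermost rescaled variables, the outer potential $\frac{4}{r^2}\big(\cos(2Q_1-2\boldsymbol{\mathcal{Q}}_{n-1})-\cos(2Q_1)\big)$ is a \emph{critical} perturbation of size $\tau_1^{-2}$. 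The paper treats it perturbatively in the final step only because the source there is $O(\tau_1^{-N})$ and Lemma~\ref{lem:linin_1} gains a factor $N^{-1}$. This fails for outer-layer errors in three ways:
\begin{itemize}
\item a source that does not decay in $\tau_1$ has infinite $Y_1$ norm;
\item in any weaker norm without $\tau_1$-decay, the $\tau_1^{-2}$ term cannot be absorbed;
\item for errors sitting at scale $\lambda_j^{-1}$, $j\geq 2$, the relevant propagator is the linearisation around $Q(\lambda_j r)$, with its own zero mode, not a perturbation of the innermost one.
\end{itemize}

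The paper avoids this by inducting on \emph{exact} solutions. The background $\boldsymbol{\mathcal{Q}}_{n-1}$ solves the equation exactly; the base case is the exact one-bubble solution of Theorem~\ref{thm:outersolnYManalogue}, quoted rather than rebuilt. So the only error to remove is the one created by inserting $Q(\lambda_1 r)$, which is driven down to $O(\tau_1^{-N})$ and then absorbed.

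The price is that the inductive hypothesis must carry fine information on the exact background's radiation. It needs pointwise decay $\tau_j^{-2+}$ and the expansion $w_{n-1,j}=c_{n-1,j}(t)r^2+O(\lambda_j^{3+}r^4)$ near $r=0$; the coefficient $c_{n-1}$ enters the modulation law. Both must be re-established for the new exact solution (Lemmas~\ref{lem:Tayloerexpansion} and \ref{lem:varepsfinestructure}).

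Likewise, ``gluing across the intermediate region'' leaves out how the errors of the inner elliptic corrections are removed. These errors are $\partial_t^2 h_j^{\mathrm{inn}}$ and the potential mismatch, and they are not localised to $R\lesssim 1$. In the paper, each round solves a wave equation with the full outer potential $\cos(2\boldsymbol{\mathcal{Q}}_{n-1})$ via a distorted Fourier parametrix at scale $\lambda_2$ (Lemma~\ref{lem:linout_1}). This closes only because these sources decay exponentially in $\tau_2$. The solvability conditions of the inner elliptic steps are then compensated by terms $m_j(\cos 2Q_1-1)$, fed back into a perturbed modulation equation.

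Second, your modulation law is wrong as written, and the two forms you call equivalent are not. Integrating $\frac{d}{dt}\big(\lambda_{m-1}'/\lambda_{m-1}\big)\approx c\,\lambda_m^2$ gives $\lambda_{m-1}'/\lambda_{m-1}\approx -c\int_t^{t_0}\lambda_m^2\,ds$. This is much larger than $\lambda_m$ (already by a factor $\sim|\log t|^{\beta}$ for $m=n$) and would produce far faster growth than claimed.

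Since $\Lambda Q\in L^2(R\,dR)$ for $k=2$, projecting $\partial_t^2Q(\lambda r)$ onto it also yields the $(\lambda')^2$ term with a finite coefficient, and one obtains \eqref{eq:lambdaoneleadingorder}. With $\alpha=\log\lambda_{m-1}$, this reads, to leading order, $\alpha''-(\alpha')^2=-c\,\lambda_m^2$. Equivalently, $1/\lambda_{m-1}$ satisfies the linear equation $(1/\lambda_{m-1})''=c\,\lambda_m^2\,(1/\lambda_{m-1})$. Here $(\alpha')^2$ is the dominant term and $\alpha''$ is lower order, which is what gives $\alpha'\approx-\sqrt{c}\,\lambda_m$ by WKB. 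The positivity of $c$, coming from the alternating signs, is what makes this solution non-oscillatory.
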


\begin{rem} Very recently, a result by Hwang and Kim establishing the existence of {\it{infinite time bubble towers of arbitrary length}} appeared. The solutions in this paper are of threshold type, i.e., they are {\it{pure multi bubble solutions}} without asymptotic radiation, which is fundamentally different than the solutions constructed in our paper (see \cite{HK}). 
\end{rem}

\smallskip
\subsection{Previous results}
There is an extensive literature devoted to the study of small
solutions to the general (non-co-rotational) wave maps equation. Seminal contributions in this direction include \cite{KlM,KS97,Tao01,Tataru01}. We emphasize that the first two of these works show that, for energy-critical wave maps, it is natural to consider initial data whose regularity is only slightly above the energy class, even in the non-equivariant setting. The study of the dynamical behavior of large solutions to \eqref{eq:Mainkcorotational}, such as the solutions treated here, has a fairly long history as well. Foundational contributions were obtained in \cite{CTZduke,CTZcpam,STZ92,STZ94}; see also \cite{ShSt00}. The upshot of these works is the decay of energy at the self-similar scale, which in particular rules out self-similar blow-up. Moreover, as shown by Struwe \cite{Struwe03}, this decay mechanism gives rise to bubbling: if $u$ is a solution of \eqref{eq:Mainkcorotational} that blows up in finite time $T$, then there exist sequences $t_n\to T$ and $0 < \lambda_n \ll T-t_n$ such that \[
\Big(u(t_n,\lambda_n\cdot), \lambda_n \partial_t u(t_n,\lambda_n\cdot)\Big) \to m\pi \pm Q,
\]
where the convergence is understood in the topology induced by the energy, locally on bounded sets. Earlier numerical simulations by Bizo\'n, Chmaj, and Tabor provided evidence for finite-time blow-up via energy concentration in the equivariant setting \cite{Bizon}.

This bubbling phenomenon also implies that any solution with energy strictly below that of $Q$ cannot blow up. C\^ote et al. proved that such solutions in fact exhibit purely radiative behavior (see \cite{CKLS15}). For energies above this threshold, finite-time blow-up may occur (see \cite{KST1,RaRod,RoSt10}). Significant advances toward the soliton resolution conjecture were obtained in \cite{CKLS2}. Sequential soliton resolution—that is, convergence along a sequence of times to a superposition of solitons—was established by C\^ote \cite{Cote15} for $k\in\{1,2\}$, and by Jia and Kenig \cite{JK} for $k\geq 3$. Related results without assuming equivariant symmetry, though with a less detailed description of the radiation, were obtained in \cite{Gr}. In \cite{JL4}, a continuous-in-time soliton resolution was proved at the minimal energy level that still allows for the existence of a two-bubble configuration. As a relatively straightforward consequence, continuous-in-time resolution was also established in \cite{JL1} under the assumption that the solution contains at most two bubbles. More recently, a full soliton resolution result in the equivariant setting (for all equivariance classes $k$) was established in \cite{JL3} using a time decomposition based on ``collision intervals" (to accurately distinguish between ``interior" bubbles that come into collision) and ``no-return" lemmas. In the co-rotational case $
k=1$, an independent resolution theorem was obtained earlier by Duyckaerts, Kenig, Martel, and Merle, whose proof relies on a Liouville-type rigidity theorem (of independent interest) to rule out nontrivial non-radiative dynamics, which in turn is based on the energy channels method (see \cite{DKMM}).

While these soliton resolution results give an a priori description of general solutions of \eqref{eq:Mainkcorotational},
in particular those blowing up in finite time, they do not by themselves guarantee the existence of blow-up solutions with more than one collapsing bubbling profile. For example, we observe that the existence of such solutions for the related harmonic map heat flow \[
\partial_t u -\partial_r^2 u - \dfrac{1}{r}\partial_r u + \dfrac{k^2}{2r^2} \sin(2u)=0,
\]
has been disproved for the case $k=1$ in \cite{VanderHout}, and for $k\geq 2$ in \cite{Samu}.

The first constructions of concentric two-bubble solutions for \eqref{eq:Mainkcorotational} with $k\geq 3$, as well as for the equivariant Yang–Mills problem closely related to \eqref{eq:keq2corotational}, were obtained in \cite{J1,J2}. These are pure two-bubble solutions that exist globally in one time direction, in the sense that, as time goes to infinity, they decouple into two bubbles, one remaining static while the other is dynamically rescaled (its scale parameter tends to 0 at large times). In particular, the total energy of such solutions must be exactly twice the energy of a single static bubble. For $k\geq4$, uniqueness of solutions of this type was proved in \cite{JL2}, and \cite{JL4} described their asymptotic behavior in both time directions, proving, in particular, the inelastic nature of the collision of two bubbles. The case $k=1$ was treated in \cite{Rodrig}, where, among other results, it was shown that pure two-bubble solutions do not exist in this case.

\smallskip
\subsection{Notation} We adopt the following notation, most of which are standard: First, we use $\langle x\rangle$ for $\sqrt{1+\vert x\vert^2}$. An inequality of the form $F \lesssim G$ for nonnegative quantities $F$ and $G$ means the existence of a universal positive constant $c>0$ independent of $F$ and $G$ such that \[
F \leq c\cdot G.
\]
The notation $F\sim G$ means that $F\lesssim G$ and $G\lesssim F$. 

We denote by $a+$ (respectively $a-$) a number $b>a$ (respectively $b<a$) that can be chosen arbitraily close to $a$. In this regard, we use the notation \[
F \lesssim \tau^{-p+}
\]
to imply that for any $\varepsilon>0$, there is a constant $C_\varepsilon$ such that \[
F \leq C_\varepsilon \cdot \tau^{-p+\varepsilon}.
\]
Such inequalities will always occur in a setting where $\tau \geq \tau_*$ for some sufficiently
large $\tau_*\gg 1$.

For positive functions $\mu(t)$ and $\lambda(t)$ defined on $(0,t_0]$ we shall write $\mu(t)\ll \lambda(t)$, provided \[
\lim_{t\to 0+} \dfrac{\mu(t)}{\lambda(t)}=0.
\]

We will often use a dot symbol ‘·’ to distinguish the bounds on various quantities; this is meant to help the reader navigate some of the bounds.

\bigskip
\section{Starting point}

The strategy to construct solutions of the kind asserted in the theorem shall be by means of an inductive procedure, commencing with a single bubble solution corresponding to $n = 1$, as constructed in \cite{JenKri}. We shall then proceed by 'adding in' higher frequency bubbles, which evolve on a low frequency background. The starting point is provided by the following 
\begin{thm}\label{thm:outersolnYManalogue} Let $\lambda(t) = t^{-1}\cdot\big({-}\log t\big)^{\beta}$, $\beta\geq \frac32$. Then there is $t_0 = t_0(\beta)>0$ such that the equation 
\begin{equation}\label{eq:keq2corotational}
\big({-}\partial_{tt} + \partial_{rr} + \frac{1}{r}\partial_r\big)u = 2\frac{\sin(2u)}{r^2}
\end{equation}
admits a finite time blow up solution on $(0, t_0]\times \mathbb{R}_+$ of the form 
\[
\tilde{Q}(t, r) = Q(\lambda(t)r) + v(t, r),\qquad Q(r) = 2\arctan(r^2),
\]
where we have the bounds 
\[
\sum_{k=0}^K\|\nabla_{t,r}S^k v\|_{L^2}\lesssim_K |\log t|^{-1}, \qquad S = t\partial_t + r\partial_r,
\]
as well as the bound $\big|v(t, r)\big|\lesssim |\log t|^{-1}$. 
\end{thm}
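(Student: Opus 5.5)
This is the $k=2$ analogue of the Yang--Mills construction of \cite{JenKri} (cf.\ also \cite{KST1}). The paper says it takes this result as its starting point, so the proof essentially transfers that construction to \eqref{eq:keq2corotational}. I would follow the standard two-stage scheme: first build an approximate solution by iterated elliptic corrections, then solve for the remaining error by a Fourier-based parametrix for the linearized operator.

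\textbf{Approximate solution.} Work in the variables $R=\lambda(t)r$ and $\tau=\int_t^{t_0}\lambda(s)\,ds$, which here is $\sim|\log t|^{\beta+1}$. Set $u_0=Q(R)$. The error $-\partial_t^2 Q(\lambda r)$ contains $\lambda_t/\lambda^2\sim t^{-1}\cdot\lambda^{-1}\sim|\log t|^{-\beta}$ and the correction terms involving $\lambda_{tt}$. We then iterate
\[
u_{k}=u_{k-1}+v_k,
\]
where $v_k$ is obtained either by solving the linearized elliptic problem
\[
\mathcal L v=-\partial_{RR}v-\tfrac1R\partial_R v+\tfrac{4\cos(2Q)}{R^2}v
\]
near the cone interior ($R\lesssim \lambda t$), or by solving a wave equation in self-similar variables $a=r/t$ in the outer region. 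The forcing terms are expanded in powers of $|\log t|^{-1}$, $\log R$, and $a$.

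Because $k=2$, $\Lambda Q\sim R^{-2}$ at infinity, which makes it an $L^2$ eigenfunction. The corresponding solvability obstruction is handled by the choice of the exact rate $\lambda=t^{-1}|\log t|^\beta$: the $\lambda_t^2$ and $\lambda_{tt}$ terms cancel to leading order precisely for this power-log correction. The condition $\beta\ge\frac32$ is needed so that the resulting $v=u-Q$ has energy $\lesssim|\log t|^{-1}$ and all errors are integrable in $\tau$.

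Finitely many iterations yield an approximate solution $u_N$ whose error $e_N$ satisfies
\[
\|e_N\|\lesssim t^{N}|\log t|^{-M}
\]
in weighted norms. The estimates are stable under $S=t\partial_t+r\partial_r$, because all corrections are built from functions of $R$, $a$ and $\log t$ with symbol-type bounds.

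\textbf{Final correction.} Write $u=u_N+\epsilon$ and pass to $\tilde\epsilon=R^{1/2}\epsilon$ in $(\tau,R)$. This gives
\[
\big(\partial_\tau+\tfrac{\lambda_\tau}{\lambda}R\partial_R\big)^2\tilde\epsilon+\mathcal H\tilde\epsilon=\text{(nonlinear)}+\text{(error)},\qquad \mathcal H=R^{1/2}\mathcal L R^{-1/2}.
\]
Use the distorted Fourier transform of $\mathcal H$. Its spectral theory for $k=2$ has a zero energy resonance/eigenfunction $\Lambda Q$ and a spectral measure that is regular near $\xi=0$. Solve the resulting transport-type system on the Fourier side by Duhamel from $\tau=\infty$, where the transference operator $\mathcal K$ is handled as in \cite{KST1}. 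Close a contraction in a space with norms of the form $\sup_\tau \tau^{N}\|\cdot\|$, taking $N$ large and $t_0$ small. The discrete mode along $\Lambda Q$ is treated separately by an ODE and integrated from infinity, which is possible because of the fast decay of the error.

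Finally, the bounds on $S^k v$ and $|v|\lesssim|\log t|^{-1}$ follow from the explicit structure of the corrections $v_k$ together with the smallness of $\epsilon$.

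\textbf{Main obstacle.} I expect the main obstacle to be the first stage, namely controlling the $\Lambda Q$ obstruction and the logarithmic corrections. These terms decay only like powers of $|\log t|$ rather than $t$, so one must keep precise track of $\log$-growth and ensure that the iteration gains at least $|\log t|^{-1}$ (or a power of $t$ in the outer region) per step. This is where the threshold $\beta\ge 3/2$ enters. In the Fourier step, the delicate point is the behaviour of the spectral measure near $\xi=0$ in the $k=2$ case.
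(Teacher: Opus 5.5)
The paper gives no argument for this theorem; it imports it from \cite{JenKri}. Read as a self-contained outline, your proposal has the standard architecture (an approximate solution, then a distorted-Fourier fixed point). However, two load-bearing steps are wrong as stated.

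\textbf{The rate does not remove the $\Lambda Q$ obstruction.} Project $\lambda^{-2}\partial_t^2 Q(\lambda r)=\frac{\lambda''}{\lambda^3}\Phi(R)+\big(\frac{\lambda'}{\lambda^2}\big)^2(R\Phi'-\Phi)$ onto $\Phi$ and use \eqref{eq:explicitintegrals}. This gives $2\pi\big[\frac{\lambda''}{\lambda^3}-2\big(\frac{\lambda'}{\lambda^2}\big)^2\big]=-2\pi\,\lambda^{-1}(1/\lambda)''$.

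The $O((\lambda t)^{-2})$ parts cancel for \emph{every} rate of the form $t^{-1}\times$(slowly varying), so nothing singles out $|\log t|^\beta$. For $\lambda=t^{-1}|\log t|^\beta$ the remainder is $\approx-2\pi\beta|\log t|^{-2\beta-1}\neq0$; it vanishes only for the self-similar rate $\lambda=c/t$.

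Hence the elliptic correction necessarily contains the growing mode $\Theta(R)\sim R^2/4$, i.e.\ a term $\approx C\beta|\log t|^{-1}(r/t)^2$. Equivalently, the bubble must be driven by radiation with $v\approx c(t)r^2$ near $r=0$, where $c(t)\simeq\beta\,t^{-2}|\log t|^{-1}$. This comes from the single-bubble version of \eqref{eq:lambdaoneleadingorder}, with a multiple of $c(t)$ in place of $\sum_j(-1)^j\lambda_j^2$; it is exactly the role of $c_{n-1}$ in \eqref{eq:vn-1refined} and in $\overline{\lambda}_2$ in this paper.

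This self-similar profile is not small in $(\lambda t)^{-1}$. It is where the size $|\log t|^{-1}$ of $v$ comes from, already at leading order and for any $\beta>0$, so your energy argument does not explain the threshold $\beta\ge\frac32$. Your first stage must actually construct this profile (regular at $r=0$, controlled up to $r=t$, with $S^k$ bounds) rather than impose orthogonality.

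\textbf{The final step does not close as in \cite{KST1}.} Here $\omega=\lambda_\tau/\lambda=-\lambda'/\lambda^2\simeq|\log t|^{-\beta}$, while $\tau\simeq|\log t|^{\beta+1}/(\beta+1)$. So $\tau\,\omega\simeq|\log t|\to\infty$.

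In $\sup_\tau\tau^N\|\cdot\|$ norms, the $1/N$ smallness for the transference terms ($2\omega\mathcal K_{nd}D_\tau$, $\omega[\mathcal K_{nd},\mathcal K_d]$, and the rest) comes precisely from $\omega\lesssim\tau^{-1}$. Compare $\omega_1\sim\tau_1^{-1}$ in Lemma~\ref{lem:linin_1}. Here one Duhamel pass gives $\int_\tau^\infty\omega(\sigma)\sigma^{-N}\,d\sigma\sim\tau^{1/(\beta+1)}\tau^{-N}/N$, which is an unbounded loss rather than a gain.

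Weights adapted to $\omega$ would be $e^{-N\int\omega\,d\tau}=\lambda^{-N}\approx t^N$, and these would require $\|e_N\|=O(t^N)$. But at the bubble scale the only small parameter is $(\lambda t)^{-1}=|\log t|^{-\beta}$, so finitely many iterations give only $O(|\log t|^{-M})$. Your claimed bound $t^N|\log t|^{-M}$ is therefore unsupported, and it contradicts your own statement that each step gains $|\log t|^{-1}$. Closing the argument for this slow, logarithmic rate needs an idea your outline does not supply.
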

This is proved in \cite{JenKri}. 

\bigskip
\section{Outline of the inductive procedure}

We shall construct the solutions described in Theorem~\ref{thm:Main} by means of an inductive procedure, where the case $n = 1$ corresponds to the preceding theorem.

Fixing $n\geq 2$, and assuming a solution for $n$ replaced by $n-1$ to have been constructed on $(0, t_0(\beta, n-1)]\times \mathbb{R}_+$, we shall by re-labelling denote the 'bubble part' of the $(n-1)$-bubble solution, which we shall also denote as {\it{outer solution}}, by 
\begin{equation}\label{eq:outerbubblepart}
\bfq_{n-1}(t,r) := \sum_{j=2}^n (-1)^{j} Q\big(\lambda_j(t)\,r\big),
\end{equation}
and the full outer solution by 
\begin{equation}\label{eq:outerbubblesoln}
\bfcq_{n-1} (t,r)   :=  \bfq_{n-1}(t,r)  + w_{n-1}(t,r).
\end{equation}
Our task shall be to add an innermost (high-frequency) bubble 
\[
Q(\lambda_1(t))
\]
and a further correction term $v(t,r)$, such that the expression 
\[
Q(\lambda_1(t)r) - \bfcq_{n-1} (t,r) + v(t, r)
\]
solves \eqref{eq:Mainkcorotational} on $(0, t_0(\beta, n)]\times \mathbb{R}_+$. Here $t_0(\beta, n)\leq t_0(\beta, n-1)$, and per construction we shall have 
\[
\lambda_1(t)\gg \lambda_2(t)\gg \ldots\gg \lambda_n(t) = \frac{|\log t|^{\beta}}{t},\,t\in (0, t_0(\beta, n)].
\]
In fact, the growth from $\lambda_j(t)$ to $\lambda_{j-1}(t)$ shall be extremely fast (essentially exponential), which will play an important role when dealing with the propagator associated with the linearisation around the bulk part $\bfcq_{n-1}$. 
\\

In order to understand how we shall choose $\lambda_1$ to leading order, having chosen $\lambda_j,\,n\geq j\geq 2$ already, we consider now the equation which $v$ will have to satisfy. Letting 
\[
Q(\lambda_1(t)r) =: Q_1,
\]
we deduce the following equation:
\begin{equation}\label{eq:veqn}\begin{split}
&-v_{tt} + v_{rr} + \frac{1}{r}v_r - \frac{4\cos\big(2Q_1 - 2\bfcq_{n-1}\big)}{r^2}v = \sum_{j=1}^3 E_j,\\
&E_1 = \frac{2\cos\big(2Q_1- 2\bfcq_{n-1}\big)}{r^2}\cdot \big[\sin\big(2v\big) - 2v\big],\\
  &E_2 = \frac{2}{r^2}\big[\sin\big(2Q_1- 2\bfcq_{n-1}\big) - \sin\big(2Q_1\big) + \sin\big(2\bfcq_{n-1}\big)\big] +Q_{1,tt},\\
  &E_3 =  \frac{2\sin\big(2Q_1- 2\bfcq_{n-1}\big)}{r^2}\cdot \big[\cos\big(2v\big) - 1\big]. 
\end{split}\end{equation}
We point out that, among the above errors, the largest contribution comes from $E_2$, which in particular is of order zero in $v$. We will first deal with $E_2$ in a preliminary separate step (see Step 0 in Section \ref{sec:construction_acc}).

Our strategy shall be to first find an approximate solution for the preceding equation, up to an error which decays very rapidly with respect to the 'innermost' time variable essentially given by 
\begin{equation}\label{eq:tauasympto}
\tau \sim \lambda_1(t)\lambda_2^{-1}(t). 
\end{equation}
Note that this time variable blows up as we approach the singular time $t = 0$. 
\\
The main challenge for solving \eqref{eq:veqn} consists in solving the linear inhomogeneous problem 
\begin{equation}\label{eq:veqnlininhom}
-v_{tt} + v_{rr} + \frac{1}{r}v_r - \frac{4\cos\big(2Q_1 - 2\bfcq_{n-1}\big)}{r^2}v = f. 
\end{equation}
As in \cite{JenKri}, the idea shall be to find an approximate solution by alternating the solution of a wave equation corresponding to the ``outer potential term"
\[
 -\frac{4\cos\big(2\bfcq_{n-1}\big)}{r^2}v
\]
with an elliptic equation involving the ``inner potential term" 
\[
 - \frac{4\cos\big(2Q_1\big)}{r^2}v.
 \]
 Precisely, we shall set 
 \[
 v = v^{\inn} + v^{\out},
 \]
where we start by solving for $v^{\out}$ by means of the wave equation 
\begin{equation}\label{eq:vouter}
-v^{\out}_{tt} + v^{\out}_{rr} + \frac{1}{r}v^{\out}_r - \frac{4\cos\big(2\bfcq_{n-1}\big)}{r^2}v^{\out} = f.
\end{equation}
The main difference of this equation for the case $n>2$ as compared to the case $n = 2$ treated in \cite{JenKri} comes from the more complex structure of the potential \[
- \frac{4\cos\big(2\bfcq_{n-1}\big)}{r^2}v^{\out},
\]
due to the presence of at least two bubbles in the expression $\bfcq_{n-1}$. Our way of dealing with this rests on two features of our setup, namely
\begin{itemize}
\item The fact that approximating $\bfcq_{n-1}$ by $-Q(\lambda_2(t)r)$ results an error which are essentially critical (when working with polynomially decaying functions) in the sense that they can be bounded by a multiple of $\frac{v^{\out}}{\tilde{\tau}^2}$, where \[
\tau_2 = \int_t^{t_0}\lambda_2(s)\,ds.
\]
\item The fact that the source term $f$ will be decaying exponentially fast in $\tau_2$, and the preceding error is no longer critical when working with functions decaying exponentially with respect to $\tau_2$. 
\end{itemize}
Now $v^{\out}$ only solves \eqref{eq:veqnlininhom} approximately, in the sense that 
\begin{align*}
&-v^{\out}_{tt} + v^{\out}_{rr} + \frac{1}{r}v^{\out}_r - \frac{4\cos\big(2Q_1 - 2\bfcq_{n-1}\big)}{r^2}v^{\out}\\
&= f + \frac{4\cos\big(2\bfcq_{n-1}\big) - 4\cos\big(2Q_1 - 2\bfcq_{n-1}\big)}{r^2}v^{\out}.
\end{align*}
We hence pick $v^{\inn} $ to solve the elliptic problem 
\begin{align*}
&v^{\inn}_{rr} + \frac{1}{r}v^{\inn}_r - \frac{4\cos\big(2Q_1 \big)}{r^2}v^{\inn}\\
& = -\frac{4\cos\big(2\bfcq_{n-1}\big) - 4\cos\big(2Q_1 - 2\bfcq_{n-1}\big)}{r^2}v^{\out}.
\end{align*}
In fact, the preceding equation needs to be modified by adding a suitable correction term on the right, in order to ensure that good bounds result for $v^{\inn}$. These extra correction terms in turn force a delicate fine tuning of the precise innermost scaling parameter $\lambda_1(t)$.
\\

Setting $v = v^{\out} + v^{\inn}$, we have still only solved \eqref{eq:veqnlininhom} approximately, namely (omitting the extra correction term required to obtain good bounds for $v^{\inn}$)
\begin{align*}
&-v_{tt} + v_{rr} + \frac{1}{r}v^{\out}_r - \frac{4\cos\big(2Q_1 - 2\bfcq_{n-1}\big)}{r^2}v\\
&= f - v^{\inn}_{tt} + \frac{4\cos\big(2Q_1\big) - 4\cos\big(2Q_1 - 2\bfcq_{n-1}\big)}{r^2}v^{\inn}.
\end{align*}
It turns out that the additional error generated here, namely the sum of the last two terms at the end involving $v^{\inn}$, will have gained smallness, and we can then repeat the procedure with this error as source term instead. 
\\

Repeated application of the preceding steps allows us to {\it{approximately}} solve \eqref{eq:veqn}, up to an error of size 
\[
O\big(\tau^{-N}\big),
\]
where $N\gg 1$ is a fixed sufficiently large number, and we recall \eqref{eq:tauasympto}.  
We can then obtain an {\it{exact solution}} of \eqref{eq:veqn} by solving one last perturbation problem, for which the problematic potential term  
\[
\frac{4\cos\big(2Q_1 - 2\bfcq_{n-1}\big)}{r^2}v
\]
may be replaced by the simpler 
\[
\frac{4\cos\big(2Q_1\big)}{r^2}v,
\]
up to generating an error term which is of perturbative nature, thanks to the fast polynomial decay of the final correction. 

\subsection{Inductive hypothesis} From now on and for the remainder of this work we assume that the radiation part (or correction) $w_{n-1}$, which completes $\bfq_{n-1}$ to an exact solution of \eqref{eq:keq2corotational}, can actually be decomposed as the sum of $n-1$ other corrections, each of which associated with the completion of an $m$-bubble solution starting from a given $(m-1)$-bubble solution. In other words, we assume that there is a decomposition \[
w_{n-1}=w_{n-1,2} + w_{n-1,3} +... + w_{n-1,n-1}+ w_{n-1,n},
\]
where, roughly speaking, for each $m\in\{2,...,n\}$, we can think of \[
\sum_{j=m}^{n} w_{n-1,j} \quad \, \hbox{ as the correction one adds to } \, \quad 
\sum_{j=m}^{n} (-1)^j Q\big(\lambda_j(t) r\big)
\]
to complete it to an exact solution of \eqref{eq:keq2corotational}. We also require that the radiation associated with each new bubble satisfy the pointwise decay estimate \begin{align}\label{eq:intro_wn-1_decay_indhyp}
\vert S^k w_{n-1,j}\vert \lesssim_k \tau_j^{-2+}, \qquad j\in\{2,...,n-1\},\,k\geq 0, 
\end{align}
where $\tau_j$ is defined in Section \ref{sec:linout}, and is given by \[
\tau_j= \int_t^{t_0} \lambda_j(s)ds, \quad \hbox{ or, if one prefers, } \quad \tau_j= \exp\bigg( c \int_t^{t_0} \lambda_{j+1}(s)ds\bigg), \quad c>0.
\]
Furthermore, we require there to be an additional decomposition 
\begin{equation}\label{eq:intro-vn-1refined}
w_{n-1,j}(t, r) = c_{n-1,j}(t)r^2 + \tilde{w}_{n-1,j}(t,r), \qquad j\in\{2,...,n\},
\end{equation}
where we have the bounds (throughout $k\in \{0,1,...\}$, $\varepsilon>0$ arbitrarily small, and $t\in (0, t_0]$ with $t_0\ll 1$)
\begin{equation}\label{eq:intro-vn-1boundsrefined}\begin{split}
&\big|(t\partial_t)^kc_{n-1,j}(t)\big|\lesssim_k \lambda_j^{1+k \varepsilon}, 
\\ & \big\|r^{-4}S^k \tilde{w}_{n-1,j}\big\|_{L^\infty_{r\,dr}(r\lesssim t)}\lesssim_k \lambda_j^{ 3+k \varepsilon },
\end{split}\end{equation}
for all $j\in\{2,...,n\}$. Regarding the pointwise decay estimate \eqref{eq:intro_wn-1_decay_indhyp}, in the case $j=n$, the correction $w_{n-1,n}$ satisfies the specific decay rate described in Theorem \ref{thm:outersolnYManalogue}. Note that the decay rate of $w_{n-1,n}$, the correction associated with the outermost (lowest frequency) bubble, is by far the slowest one.

%%%%%%%%%%%%%%%%%%%%%%%%%%%%%%%%%
%%%%%%%%%%%%%%%%%%%%%%%%%%%%%%%%%
%%%%%%%%%%%%%%%%%%%%%%%%%%%%%%%%%
%%%%%%%%%%%%%%%%%%%%%%%%%%%%%%%%%
%%%%%%%%%%%%%%%%%%%%%%%%%%%%%%%%%

%\newpage
\bigskip
\section{Spectral Theory and the distorted Fourier Transform}

In this section we recall the definition and construction of the distorted Fourier Transform associated with \[
\mathcal{H}:= r^{1/2}\mathcal{L}\big( r^{-1/2} \cdot \big) = -\partial_r^2 + \dfrac{15}{4r^2} - \dfrac{32r^2}{(1+r^4)^2}, \]
where \[
\mathcal{L}:= - \partial_r^2 - \dfrac{1}{r}\partial_r + \dfrac{4}{r^2}\cos(2Q).
\]
All results in this section have been taken from \cite{JenKri} and \cite{KST1}. From now on we denote by $\phi$ and $\theta$ the Weyl solutions at zero for the half-line operator $\mathcal{H}$, and by $\Phi$ and $\Theta$ the associated Weyl solutions at zero for the radial $2$D operator, that is, \[
\Phi=r^{-1/2}\phi, \qquad \quad \Theta= r^{-1/2}\theta.
\]
These are explicitly given by \begin{equation}\label{dft1}\begin{aligned}
\Phi_{0} & := r \partial_rQ= \dfrac{4r^2}{1+r^4}, \quad  & & \quad \Theta_{0}  :=  \dfrac{1-8r^4\ln r-r^8}{16r^2(1+r^4)},
\\ 
\phi_{0} & := r^{3/2}\partial_r Q= \dfrac{4r^{5/2}}{1+r^4},
 \quad & & \quad \theta_{0}  := \dfrac{1-8r^4\ln r-r^8}{16r^{3/2}(1+r^4)}. 
\end{aligned}\end{equation}
Note that $\phi_{0}\in L^2(0,\infty)$, and they are normalized so that  $W[\theta_{0}, \phi_{0}] =1$. We emphasize that the eigenfunction $\Phi_{0}$ is nothing but the zero mode associated with the scaling invariance $r\partial_r Q$. 

Let us denote by $\phi(r,\xi)$ and $\theta(r,\xi)$ the fundamental system of solutions of \[
\mathcal{H}\phi=\xi \phi, \qquad \xi>0.
\]
We choose $\phi(r,\xi)$ as the (unique, up to normalization) Weyl solution at zero. We also denote by $\psi(r,\xi)$ the Weyl solution at infinity.
\begin{prop}[\cite{JenKri}]
The following statements hold:
\begin{enumerate}[leftmargin=*]
\item The operator $\mathcal{H}$ is self-adjoint with domain \[
\qquad \ \ \mathrm{Dom}(\mathcal{H})=\big\{ f\in L^2\big( (0,\infty)\big): \ f,f'\in \mathrm{AC}\big( (0,\infty) \big) , \ \mathcal{H}f \in L^2 \big( (0,\infty) \big) \big\}.
\]
The spectrum of $\mathcal{H}$ is \[
\sigma( \mathcal{H} ) = [0,\infty).
\]
Moreover, $(0,\infty)$ is purely absolutely continuous, and $0$ is a threshold eigenvalue.
\item There is a fundamental system $\phi(r,z)$,  $\theta(r,z)$  for $\mathcal{H}-z$ and arbitrary $z\in\C$, which
depends analytically on $(r,z)$, for $r>0$, which has the asymptotic behavior \[
\phi(r,z) \sim 4r^{5/2}, \qquad \theta(r,z)\sim \dfrac{1}{16} r^{-3/2}, \qquad \hbox{as} \qquad r\to0. \]
\end{enumerate}
\end{prop}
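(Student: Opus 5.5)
The plan is to treat $\mathcal{H}$ as a Schrödinger operator on the half-line with a potential that is singular only at the origin. Write
\[
\mathcal{H} = -\partial_r^2 + \frac{15}{4r^2} + V(r), \qquad V(r) = -\frac{32r^2}{(1+r^4)^2}.
\]
The potential $V$ is bounded and smooth on $[0,\infty)$ and decays like $r^{-6}$ at infinity. Near $r=0$ the term $\frac{15}{4r^2} = \frac{\nu^2-1/4}{r^2}$ with $\nu=2\geq 1$ is in the limit point case. At infinity $\mathcal{H}$ is limit point because $V$ decays. Weyl's alternative then makes the minimal operator essentially self-adjoint, and its closure is the maximal operator with exactly the domain stated. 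Since $V$ is a relatively compact perturbation of the free Bessel operator $-\partial_r^2+\frac{15}{4r^2}$, the essential spectrum is $[0,\infty)$. Conjugation by $r^{1/2}$ is unitary from $L^2(r\,dr)$ onto $L^2(dr)$, so all of these statements are equivalent to the corresponding statements for $\mathcal{L}$.

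Next I will rule out negative spectrum and show that $0$ is a threshold eigenvalue. The function $\phi_0 = r^{3/2}\partial_rQ$ is positive on $(0,\infty)$, lies in $L^2$ (it behaves like $r^{5/2}$ at $0$ and $r^{-3/2}$ at infinity), and satisfies $\mathcal{H}\phi_0=0$. From this positivity I will factor the operator as
\[
\mathcal{H} = A^*A, \qquad A = -\partial_r + \frac{\phi_0'}{\phi_0}.
\]
This gives $\mathcal{H}\geq 0$, so $\sigma(\mathcal{H}) = [0,\infty)$, and $0$ is an eigenvalue with eigenfunction $\phi_0$. Absolute continuity on $(0,\infty)$ follows from short-range scattering theory. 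Since $V=O(r^{-6})$, for every $\xi>0$ there is a Jost solution $\psi(r,\xi)\sim e^{ir\sqrt\xi}$ at infinity, obtained by Volterra iteration. Its Wronskian with $\phi(\cdot,\xi)$ cannot vanish for real $\xi>0$, because otherwise $\mathcal{H}$ would have an $L^2$ eigenfunction with positive eigenvalue, which the Jost asymptotics forbid. The Weyl–Titchmarsh function then extends continuously to the real axis from above with positive imaginary part, and this yields purely a.c. spectrum there. Alternatively, Kato's theorem excludes embedded eigenvalues and the limiting absorption principle gives the a.c. spectrum.

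For part (2), I will construct $\phi(r,z)$ by a perturbative series around the zero-energy solution $\phi_0$, following \cite{KST1}:
\[
\phi(r,z) = \phi_0(r) + \sum_{j\geq1} z^j\phi_j(r),
\]
with the terms defined recursively by $\phi_j(r) = \int_0^r \big[\phi_0(r)\theta_0(s) - \phi_0(s)\theta_0(r)\big]\,\phi_{j-1}(s)\,ds$. Here I use the normalization $W[\theta_0,\phi_0]=1$. One checks that $\mathcal{H}\phi = z\phi$ holds formally. The key estimate, proved by induction, is
\[
|\phi_j(r)| \leq \frac{C^j r^{2j}}{j!}\,r^{5/2}\langle r\rangle^{-4},
\]
which is slightly crude at infinity but sufficient for convergence. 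This bound makes the series an entire function of $z$ for each fixed $r$, locally uniformly in $r$, so $\phi$ is analytic in $(r,z)$ and $\phi(r,z)\sim 4r^{5/2}$ as $r\to0$. For $\theta(r,z)$, the corresponding Volterra iteration from $0$ is singular because $\theta_0\sim \frac1{16}r^{-3/2}$. Instead I will integrate from a fixed point $r_0$, or use the reduction-of-order formula $\theta = \phi\int_{r}^{r_0}\phi^{-2}\,ds$ near the origin together with analytic continuation in $r$. I will then adjust by a multiple of $\phi$, analytic in $z$, to fix the normalization so that $W[\theta,\phi]=1$ and $\theta\sim\frac1{16}r^{-3/2}$.

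The main obstacle I expect is the inductive kernel bound for $\phi_j$ in the intermediate and large $r$ regime. There $\theta_0$ grows like $r^{5/2}\log r$, the kernel $\phi_0(r)\theta_0(s)-\phi_0(s)\theta_0(r)$ must be bounded by about $r^{1/2}s^{1/2}\log$ factors, and the factorial gain has to survive these logarithms. I would handle this by splitting the integral into the regions $s\le1$ and $s\ge1$ and absorbing the logarithms into a constant $C^j$.
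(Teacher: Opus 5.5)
Part (1) is fine: limit point at both ends, relative compactness of $V$, the factorization $\mathcal{H}=A^*A$ through the positive zero mode, and a Jost/limiting-absorption argument on $(0,\infty)$. One correction there. For real $\xi>0$, $W(\psi_+,\phi)\neq0$ because $\phi(\cdot,\xi)$ is real and $\psi_+,\overline{\psi_+}$ are independent, not because an $L^2$ eigenfunction would arise.

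\textbf{The genuine gap: constructing $\theta$.} You need $\theta(r,z)$ analytic for \emph{all} $z\in\C$ with $W[\theta,\phi]\equiv1$, and your sketch does not produce it.

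Adding a multiple of $\phi$ changes neither the Wronskian nor the $r^{-3/2}$ coefficient, so it cannot ``fix the normalization''. Given $\phi\sim4r^{5/2}$, the condition $\theta\sim\frac1{16}r^{-3/2}$ is in fact equivalent to $W[\theta,\phi]=1$.

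Your candidate constructions each fail:
\begin{itemize}
\item If $\theta$ is defined by data $(\theta_0(r_0),\theta_0'(r_0))$ at a fixed $r_0$, then $w(z)=\theta_0(r_0)\phi'(r_0,z)-\theta_0'(r_0)\phi(r_0,z)$ is entire with $w(0)=1$. But $w$ vanishes at every eigenvalue of $\mathcal{H}$ on $(0,r_0)$ with the corresponding Robin condition at $r_0$, which is an infinite discrete set. There you have no fundamental system, and dividing by $w$ makes $\theta$ only meromorphic.
\item Reduction of order does give $W=1$, but it needs $\phi(\cdot,z)\neq0$ on $(0,r_0]$. This forces $r_0\to0$ as $|z|\to\infty$. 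The resulting $\theta$'s differ by $c(z)\phi$ with $c$ holomorphic only on bounded sets, and you give no gluing argument.
\item Using $\int_r^{r_0}$ for the $\theta_0$-part at every step of an iteration does not help either. The operator is then the inverse of that same Robin problem, not a Volterra operator, so the $z$-series has finite radius.
\end{itemize}

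What works:
\begin{itemize}
\item $r=0$ is a regular singular point with exponents $\frac52,-\frac32$, and $V$ is analytic in $r^2$ for $r<1$. So a Frobenius expansion $\theta=r^{-3/2}\sum_k a_k(z)r^{2k}+c(z)\,\phi\log r$ works, with the $a_k$ polynomial in $z$ and the resonant coefficient set to $a_2:=0$. This gives $\theta$ entire in $z$ on $(0,1)$; then continue by the ODE.
\item Equivalently, use a fixed lower limit in the $\theta_0$-integral only for $\theta_1,\theta_2$, and the Volterra kernel from $0$ for $\theta_j$, $j\ge3$. This is admissible since $\theta_0\theta_{j-1}=O(s^{2j-5})$ is integrable at $0$ once $j\ge3$.
\end{itemize}
In either case $W[\theta,\phi]\equiv1$ then follows from the asymptotics at $r=0$.

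\textbf{The $\phi_j$ estimate is false at large $r$.} The obstacle you describe is also misdiagnosed. At infinity $\theta_0(r)\sim-\frac1{16}r^{5/2}$, and the logarithm is lower order. For $1\le s\ll r$ the kernel has size $r^{5/2}s^{-3/2}$, not $r^{1/2}s^{1/2}$ up to logarithms.

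Concretely, $\phi_1(r)=\theta_0(r)\int_0^r\phi_0^2-\phi_0(r)\int_0^r\theta_0\phi_0=-\frac{\pi}{8}r^{5/2}+O(r^{1/2})$ as $r\to\infty$, using $\|\phi_0\|_{L^2}^2=2\pi$. This violates your bound $Cr^{1/2}$. The true growth is $r^{2j+1/2}$, i.e.\ the weight is $\langle r\rangle^{-2}$, consistent with $|\tilde\phi_j(u)|\le\frac{C^j}{j!}\frac{|u|^2}{\langle u\rangle}$ in Proposition~\ref{dft7}.

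Since you only need locally uniform convergence, this is easily repaired. For $0<s<r\le b$ one has $|\theta_0(r)\phi_0(s)-\theta_0(s)\phi_0(r)|\le C_b r^{5/2}s^{-3/2}$. Induction then gives $|\phi_j(r)|\le\frac{C_b^j}{j!}r^{2j+5/2}$ on $(0,b]$.

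\textbf{Sign of the recursion.} With $W[\theta_0,\phi_0]=1$, your kernel $\phi_0(r)\theta_0(s)-\phi_0(s)\theta_0(r)$ gives $\mathcal{H}\phi_j=-\phi_{j-1}$, hence $\mathcal{H}\phi=-z\phi$. Use $\theta_0(r)\phi_0(s)-\phi_0(r)\theta_0(s)$ instead.
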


The following proposition gives us asymptotic power series expansions for $\phi$ and $\psi_+$ that will be useful for calculations and estimates.
\begin{prop}[ \cite{JenKri}]\label{dft7}
\begin{enumerate}[leftmargin=*]
\item Weyl solution at zero: For any $z\in\C$, the function $\phi(r,z)$ admits an absolutely convergent asymptotic expansion \[
\phi(r,z) = \phi_0(r) + r^{-3/2} \sum_{j=1}^\infty (r^2 z)^j \widetilde{\phi}_j(r^2),
\]
where the functions $\widetilde{\phi}_j$ are holomorphic on the region $\Omega := \big\{ \vert \im u\vert < \tfrac12 \big\} $, and satisfy the pointwise bounds \begin{align*}
\vert \wt \phi_j(u)\vert \leq \dfrac{C^j}{j!}\, \dfrac{\vert u\vert^2}{\langle u \rangle}, \qquad j\geq 1.
\end{align*}
\item Weyl solution at infinity: For any $\xi>0$, the equation $(\mathcal{H}-\xi)f = 0$ admits a solution $\psi_+(r,\xi)=\psi_+(r,\xi+i0)$ arising as limit of the Weyl-Titchmarsh solutions on the upper half plane, and of the form
\[
\qquad  \ \ \psi_+(r,\xi) = \xi^{-1/4} e^{i r \xi^{1/2}} \sigma (r\xi^{1/2}, r), \qquad r^2\xi \gtrsim1,
\]
where $\sigma = \sigma(q,r)$ admits an asymptotic series expansion of the form \[
\qquad  \ \ \sigma(q,r) \approx \sum_{j=0}^\infty q^{-j} \psi_{+,j}(r), \qquad \psi_{+,0}= 1, \qquad \psi_{+,1} = \dfrac{15i}{8}+O\big(\langle r\rangle^{-2}\big),
\]
with zero order symbols $\psi_{+,j}(r)$ that are analytic at infinity, in the sense that for all large integers $j_0$, and all indices $\alpha$, $\beta$, we have \[
\qquad  \ \ \sup_{r>0} \bigg\vert (r\partial_r)^\alpha (q \partial_q )^\beta \Big( \sigma(q,r) - \sum_{j=0}^{j_0} q^{-j} \psi_{+,j}(r) \Big) \bigg\vert \leq c_{\alpha, \beta, j_0} q^{-1-j_0}, 
\]
for all $q>1$.
\end{enumerate}
\end{prop}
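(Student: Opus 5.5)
The plan is to obtain both expansions from Volterra integral equations, treating $\mathcal{H}-z$ as a perturbation of the appropriate model operator. The model is $\mathcal{H}_0=\mathcal{H}$ at $z=0$ near the origin, and $-\partial_r^2+\frac{15}{4r^2}$ near infinity.

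\emph{Part (1).} I write $\phi(r,z)=\sum_{j\ge0} z^j\phi^{(j)}(r)$ with $\phi^{(0)}=\phi_0$, where each term solves $\mathcal{H}\phi^{(j)}=\phi^{(j-1)}$. Using the zero-energy fundamental system $\phi_0,\theta_0$ with $W[\theta_0,\phi_0]=1$, the variation of constants formula gives
\[
\phi^{(j)}(r)=\int_0^r\big(\phi_0(r)\theta_0(s)-\theta_0(r)\phi_0(s)\big)\phi^{(j-1)}(s)\,ds .
\]
The kernel is integrable at $s=0$ because $\phi^{(j-1)}(s)=O(s^{5/2+2(j-1)})$, so the behaviour $\phi\sim4r^{5/2}$ at zero is preserved. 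I then substitute $u=r^2$ and define $\widetilde\phi_j(u):=r^{3/2-2j}\phi^{(j)}(r)$. Induction on $j$ should give $|\widetilde\phi_j(u)|\le \frac{C^j}{j!}\frac{|u|^2}{\langle u\rangle}$.
\begin{itemize}
\item For small $u$: the kernel satisfies $|\phi_0(r)\theta_0(s)|\lesssim r^{5/2}s^{-3/2}$, and integrating $s^{-3/2}s^{5/2+2j-2}$ yields an extra factor $r^2/(2j)$. This factor supplies the $1/j!$.
\item For large $u$: the functions $\phi_0\sim r^{-3/2}$ and $\theta_0\sim r^{5/2}\log r$ are handled the same way, and the bound $|u|^2/\langle u\rangle$ is what is consistent with the growth of $\theta_0$. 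Here I expect that a logarithmic loss must be absorbed into $C^j$.
\end{itemize}
Holomorphy in the strip $\{|\operatorname{Im}u|<1/2\}$ follows by running the same integral equation along complex rays. The only singularities of the potential in $u$ lie at $1+u^2=0$, that is $u=\pm i$, which the strip avoids. Summing over $j$ then gives absolute convergence for all $z$.

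\emph{Part (2).} I set $\psi_+=\xi^{-1/4}e^{ir\xi^{1/2}}\sigma$ and write $q=r\xi^{1/2}$. The function $\sigma$ then satisfies a first-order transport-type ODE in $q$ with coefficients $\frac{15}{4}-\frac{32r^4}{(1+r^4)^2}$, divided by $q^2$.
\begin{itemize}
\item \emph{Formal expansion.} Inserting the ansatz $\sum q^{-j}\psi_{+,j}(r)$ gives a recursion $2i\,j\,\psi_{+,j}=(\text{lower order terms in }\psi_{+,j-1})$. At the first step this gives $\psi_{+,1}=\frac{15i}{8}+O(\langle r\rangle^{-2})$. Each $\psi_{+,j}$ is a zero-order symbol, analytic at infinity in $r^{-1}$.
\item \emph{Remainder.} Truncating at $j_0$ leaves an error of size $q^{-j_0-1}$. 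I correct it by solving a Volterra equation from $r=\infty$ with the oscillatory kernel $\xi^{-1/2}\sin((r-s)\xi^{1/2})$. Integrating against the decay $s^{-2}$ converges in the region $r^2\xi\gtrsim1$, and the $(r\partial_r)^\alpha(q\partial_q)^\beta$ bounds follow by differentiating under the integral.
\end{itemize}

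The main obstacle is the large-$u$ bound in part (1). There the growing solution $\theta_0$ enters the kernel, so the induction has to be arranged, possibly with a weight $\langle u\rangle^{-1}$ and with the logarithm absorbed, so that it closes with the factorial gain uniformly in the complex strip.
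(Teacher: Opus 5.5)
The paper does not prove this proposition itself; it quotes it from \cite{JenKri}. Your overall scheme is the standard one for this result: Volterra iteration in powers of $r^2z$ around the zero-energy system, then a symbol ansatz for $\sigma$ corrected by a Volterra equation from $r=\infty$. The problem is two steps that are stated incorrectly, and the first is exactly the one you call the main obstacle.

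\emph{Part (1): the logarithm.} As $r\to\infty$,
\[
\theta_0(r)=-\tfrac1{16}r^{5/2}-\tfrac12 r^{-3/2}\log r+O(r^{-3/2}),\qquad \phi_0(r)=4r^{-3/2}+O(r^{-11/2}),
\]
so there is no logarithm at leading order. Had there been one, it could not be ``absorbed into $C^j$''. It would recur at every step and produce $(\log\langle u\rangle)^j$, which is incompatible with a bound $\frac{C^j}{j!}\frac{|u|^2}{\langle u\rangle}$ holding uniformly in $u$.

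\emph{Part (1): holomorphy.} You also overlook that $u=0$ lies in the strip, where $\theta_0$ carries $u^{-3/4}$ and $\log u$, and so does the negative real axis.

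A clean way to handle both points is as follows. Write $\phi_0=r^{-3/2}a(r^2)$ and $\theta_0=r^{-3/2}b(r^2)$, with $a(u)=\frac{4u^2}{1+u^2}$ and $b(u)=\frac{1-u^2}{16}-\frac{a(u)\log u}{16}$. Use the kernel $\theta_0(r)\phi_0(s)-\phi_0(r)\theta_0(s)$; your sign produces $\phi(r,-z)$. With $u=r^2$ and $s^2=tu$ one gets
\[
\widetilde\phi_j(u)=\frac{1}{2u^2}\int_0^1 K(u,tu)\,t^{j-3}\,\widetilde\phi_{j-1}(tu)\,dt,\qquad K(u,tu)=\frac{(1-u^2)a(tu)-(1-t^2u^2)a(u)+a(u)a(tu)\log t}{16}.
\]
\begin{itemize}
\item The logarithm survives only as $\log t$. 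So $K(u,tu)$ is rational in $u$, with poles only where $1+u^2=0$ or $1+t^2u^2=0$; neither occurs for $|\operatorname{Im}u|<\frac12$. Also $K(u,tu)=O\big((1+|\log t|)u^2\big)$ at $u=0$, so holomorphy on the strip follows inductively.
\item On the strip, $|K(u,tu)|\lesssim\langle u\rangle^2\min(1,t^2|u|^2)+\langle tu\rangle^2\min(1,|u|^2)+|\log t|\min(1,|u|^2)$.
\item For $|u|\le1$ this gives the gain $\int_0^1(1+|\log t|)t^{j-1}dt\lesssim\frac1j$.
\item For $|u|\ge1$ and $j\ge2$ the gain is $\int_{1/|u|}^1t^{j-2}dt\le\frac1{j-1}$.
\item A logarithm could only appear at $j=1$, where this integral would be $\log|u|$. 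It is avoided because $\widetilde\phi_0=a$ satisfies the better bound $|u|^2/\langle u\rangle^2$.
\end{itemize}

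\emph{Part (2): the coefficient recursion.} The equation for $\sigma$ is second order, and the recursion for the $\psi_{+,j}$ is not algebraic. Write $\mathcal{H}=-\partial_r^2+r^{-2}W$ with $W(r)=\frac{15}{4}-\frac{32r^4}{(1+r^4)^2}$, and let $D=r\frac{d}{dr}=q\partial_q+r\partial_r$ be the derivative along $\xi=\mathrm{const}$. Then the equation becomes $2iqD\sigma=-D^2\sigma+D\sigma+W\sigma$. Since $D(q^{-j}\psi)=q^{-j}(r\partial_r-j)\psi$, the ansatz gives
\[
2i(r\partial_r-j)\psi_{+,j}=\big[-(r\partial_r-j+1)^2+(r\partial_r-j+1)+W\big]\psi_{+,j-1}=:R_j .
\]
\begin{itemize}
\item Because $W$ is not constant, $r\partial_r\psi_{+,j}$ is of the same order as $j\psi_{+,j}$ and cannot be dropped.
\item Each coefficient is obtained by integrating this transport equation from infinity: $\psi_{+,j}(r)=\frac{i}{2}r^j\int_r^\infty s^{-j-1}R_j(s)\,ds$.
\item Excluding the homogeneous solution $r^j$ is what makes $\psi_{+,j}$ a zero-order symbol, analytic at infinity and bounded as $r\to0$. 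For example, $\psi_{+,1}=\frac{i}{2}r\int_r^\infty W(s)s^{-2}\,ds=\frac{15i}{8}+O(\langle r\rangle^{-4})$.
\end{itemize}
Your remainder step is fine in outline, since the Volterra kernel contributes $\xi^{-1/2}\int_r^\infty s^{-2}ds=q^{-1}$. However, $\sigma$ is a genuine function of $(q,r)$, so the $(r\partial_r)^\alpha(q\partial_q)^\beta$ bounds require differentiating the integral equation in $\xi$ as well as in $r$.
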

Part (1) of the above proposition is useful to obtain various estimates for $\phi$ in the region $r^2\xi \lesssim1$, whereas, in the oscillatory regime  $r^2\xi \gtrsim1$, we  represent $\phi$ in terms of the Weyl solution at infinity, namely, \begin{align}\label{dft9}
\phi(r,\xi) =  a(\xi) \psi_+(r,\xi) + \overline{a (\xi) \psi_+(r,\xi)}, 
\end{align}
and use the asymptotic expansion described in part (2). The coefficient $a(\xi)$ is given by \[
a(\xi) = \dfrac{W\big( \phi(r,\xi), \overline{\psi_+(r,\xi)}\big)}{W\big( \psi_+(r,\xi), \overline{\psi_+(r,\xi)}\big)}.
\]
\begin{prop}
The function $a(\xi)$ is smooth on $(0,\infty)$, always non-zero, and satisfies \begin{align*}
\vert a(\xi)\vert &\sim \langle \xi\rangle^{ -1}.
\end{align*}
\end{prop}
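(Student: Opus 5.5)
We need to show that $a(\xi)$ is smooth on $(0,\infty)$, never vanishes, and satisfies $|a(\xi)|\sim\langle\xi\rangle^{-1}$. The approach is to evaluate the two Wronskians in the defining formula for $a(\xi)$, treating small and large $\xi$ separately.

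\emph{Denominator and smoothness.} First we compute the denominator $W(\psi_+,\overline{\psi_+})$. Since $\mathcal{H}$ is real and $\psi_+$, $\overline{\psi_+}$ both solve $(\mathcal{H}-\xi)f=0$, this Wronskian is independent of $r$. We evaluate it as $r\to\infty$ using Proposition~\ref{dft7}(2): $\psi_+ = \xi^{-1/4}e^{ir\xi^{1/2}}(1+O((r\xi^{1/2})^{-1}))$, with derivative bounds supplied by the symbol estimates. This gives $W(\psi_+,\overline{\psi_+}) = -2i$ (up to a sign convention), a nonzero constant. Hence $a(\xi)$ is a constant multiple of $W(\phi(\cdot,\xi),\overline{\psi_+(\cdot,\xi)})$.

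Smoothness in $\xi$ then follows from two facts. First, $\phi$ is analytic in $(r,z)$ by the first proposition. Second, $\psi_+(r,\xi)$ depends smoothly on $\xi>0$ at any fixed $r$ with $r^2\xi\gtrsim1$; this comes from the symbol bounds on $\sigma$ in $q\partial_q$ and $r\partial_r$, or alternatively from the limiting absorption construction. Evaluating the Wronskian at $r=r(\xi)$ chosen locally constant, we get a smooth function of $\xi$. Non-vanishing holds because $a(\xi)=0$ would make $\phi = 2\mathrm{Re}(a\psi_+)\equiv0$, impossible since $\phi(r,\xi)\sim4r^{5/2}$ as $r\to0$.

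\emph{Size of $a(\xi)$.} By compactness and continuity, $|a(\xi)|$ is bounded above and below on compact subsets of $(0,\infty)$. It therefore remains to find the asymptotics as $\xi\to0$ and $\xi\to\infty$.

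For $\xi\to\infty$, we evaluate the Wronskian at $r\sim\xi^{-1/2}$, i.e.\ $q=r\xi^{1/2}\sim1$, after rescaling $r=\xi^{-1/2}s$. In this region the potential term $32r^2/(1+r^4)^2$ is $O(\xi^{-1})$, negligible compared with $\xi$. So $\phi$ is close to the Bessel-type solution $4r^{5/2}\cdot c\,(r\xi^{1/2})^{-2}J_2(r\xi^{1/2})\,(r\xi^{1/2})^{1/2}\ldots$. More concretely, the series in Proposition~\ref{dft7}(1) with $r^2\xi\sim1$ gives $\phi\approx\xi^{-5/4}\,\Phi_{\mathrm{free}}(s)$ with $\Phi_{\mathrm{free}}$ the free Weyl solution for $-\partial_s^2+\tfrac{15}{4s^2}-1$. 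Meanwhile $\psi_+\approx\xi^{-1/4}e^{is}\sigma_{\mathrm{free}}(s)$. The Wronskian in $r$ picks up a factor $\xi^{1/2}$ from differentiation, so $|a|\sim\xi^{-5/4}\xi^{-1/4}\xi^{1/2}=\xi^{-1}$.

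For $\xi\to0$, we evaluate at $r\sim\xi^{-1/2}\gg1$. Here the potential again decays like $r^{-6}$ and is perturbative at this scale. Now $\phi\approx\phi_0\approx4r^{-3/2}$ for $1\ll r\ll\xi^{-1/2}$, and this matches onto the free solution of order $r^{-3/2}$ (the decaying branch of $\tfrac{15}{4r^2}$ is $r^{-3/2}$) at $r\sim\xi^{-1/2}$. Making the matching rigorous is the main obstacle. The expansion in Proposition~\ref{dft7}(1) has the form $r^{-3/2}\sum(r^2\xi)^j\wt\phi_j(r^2)$ with $|\wt\phi_j(u)|\lesssim C^j|u|/j!$ for large $u$. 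This bound is too lossy at face value, because it would let the terms grow like $r^{2j}$; we need to use that, for large $u$, $\wt\phi_j(u)$ is essentially constant times $u^0$ (the free, rescaled structure).

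The first attempt will be this. Rescale $s=r\xi^{1/2}$ and write the equation as $(-\partial_s^2+\tfrac{15}{4s^2}-1)f = \xi^{-1}V(s\xi^{-1/2})f$. For $s\gtrsim\xi^{1/2}$ the right-hand coefficient is $O(\xi^2 s^{-6})$, and a Volterra iteration from $s=\xi^{1/2}\cdot R$ shows $\phi(r,\xi)=\xi^{3/4}(c_0\,\phi_{\mathrm{free}}(s)+o(1))$ with $c_0\neq0$. The scale $\xi^{3/4}$ comes from $r^{-3/2}=\xi^{3/4}s^{-3/2}$, and the free solution is normalised as $s^{-3/2}$ near zero; the growing free branch $s^{5/2}$ is excited only at a lower order. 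This gives $|a(\xi)|\sim\xi^{3/4}\xi^{-1/4}\xi^{1/2}\cdot\xi^{-1/2}\ldots$. At this point the powers must be checked carefully against the claimed $|a|\sim1$. Since $\phi_{\mathrm{free}}\sim s^{-3/2}$ is singular at $0$, the correct statement is that $\phi$ matches a combination $\alpha J$-type $+\beta Y$-type solution with $\beta\sim\xi^{3/4}$, and pairing this against $\psi_+\sim\xi^{-1/4}$ in the variable $r$ gives $\xi^{3/4-1/4+1/2}\cdot$(const), up to a $\xi^{-1}$ from the $Y$-normalisation. Reconciling these exponents exactly, so that $|a(\xi)|\sim1$ comes out as $\xi\to0$, is the delicate bookkeeping step.
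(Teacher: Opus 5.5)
The paper quotes this proposition from \cite{JenKri} without proof, so I am assessing your argument on its own. Several parts are fine: the denominator $W(\psi_+,\overline{\psi_+})=-2i$, smoothness, non-vanishing via $\phi=2\,\mathrm{Re}(a\psi_+)$, and the large-$\xi$ scaling $|a|\sim\xi^{-5/4}\xi^{-1/4}\xi^{1/2}=\xi^{-1}$. For the lower bound at large $\xi$, evaluate at $r=M\xi^{-1/2}$ with $M\gg1$ fixed rather than at $q\sim1$. There Proposition~\ref{dft7}(2) really gives $\sigma=1+O(M^{-1})$, and you still have $r\ll1$.

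\textbf{The gap is the regime $\xi\to0$.} It is not a bookkeeping issue: the mechanism you propose is wrong. Carried out faithfully, it gives $|a(\xi)|\sim\xi^{3/4}\cdot\xi^{-1/4}\cdot\xi^{1/2}=\xi$, which contradicts the statement. There is no compensating ``$\xi^{-1}$ from the $Y$-normalisation'', because you already normalised the $r^{-3/2}$ branch to amplitude $\xi^{3/4}$.

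What you are missing is that $0$ is an $L^2$ \emph{eigenvalue} of $\mathcal{H}$, not a regular threshold. Write $\phi=\phi_0+\phi_1+\dots$ with $\mathcal{H}\phi_1=\xi\phi_0$, and use $W[\theta_0,\phi_0]=1$, $\int_0^\infty\phi_0^2\,dr=2\pi$ and $\theta_0(r)\approx-\tfrac1{16}r^{5/2}$. Then for $r\gg1$,
\[
\phi_1(r)=\xi\int_0^r\big[\theta_0(r)\phi_0(s)-\phi_0(r)\theta_0(s)\big]\phi_0(s)\,ds\approx 2\pi\xi\,\theta_0(r)\approx-\tfrac{\pi}{8}\,\xi\,r^{5/2}.
\]
This refutes several of your claims:
\begin{itemize}
\item You claim $\phi\approx\phi_0\approx4r^{-3/2}$ on all of $1\ll r\ll\xi^{-1/2}$. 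In fact the growing piece overtakes $\phi_0$ once $r\gg\xi^{-1/4}$.
\item You claim the growing branch is excited only at lower order. At $r\sim\xi^{-1/2}$ it has size $\xi^{-1/4}$, a factor $\xi^{-1}$ \emph{larger} than the decaying branch ($\sim\xi^{3/4}$).
\item You call the bound $|\wt\phi_j(u)|\lesssim C^j|u|/j!$ for $u\gg1$ lossy. It is sharp: $\wt\phi_1(u)\approx-\tfrac{\pi}{8}u$.
\end{itemize}
The linear-in-$u$ leading parts of the $\wt\phi_j$ resum to the regular Bessel branch. For $r\gg1$, to leading order,
\[
\phi\approx-\pi\sqrt r\,J_2(r\sqrt\xi)-\pi\xi\sqrt r\,Y_2(r\sqrt\xi).
\]
So the regular branch has an $O(1)$ coefficient and the singular one an $O(\xi)$ coefficient, the reverse of your picture.

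\textbf{How to repair the small-$\xi$ argument.} The upper bound $|a|\lesssim1$ follows directly from the stated expansion at $r=M\xi^{-1/2}$: there $|\phi|\lesssim_M r^{1/2}$ and $|\partial_r\phi|\lesssim_M r^{-1/2}$. Pairing with $|\psi_+|\sim\xi^{-1/4}$ and $|\partial_r\psi_+|\sim\xi^{1/4}$ gives $|W|\lesssim1$. The lower bound comes from pairing the $J_2$-branch with $\psi_+\approx\xi^{-1/4}e^{ir\sqrt\xi}$, which gives $|a(\xi)|\to\sqrt{\pi/2}\neq0$. This threshold-eigenvalue effect is exactly what makes $\rho'(\xi)\sim1$ near $0$, and it has to be the core of the small-$\xi$ part of the proof.
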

Consequently, the Fourier representation formulas are given by  \begin{equation}\label{dft2}\begin{aligned}
f(r) & =  \lim_{ a \to +\infty } \int_0^a \phi(r,\xi) \widehat{f}(\xi) \rho'(\xi) d\xi +  \dfrac{\langle \phi_{0},f\rangle_{L^2_{dr}} } {\langle \phi_0,\phi_0 \rangle_{L^2_{dr}}} \phi_{0} ,
\\ \widehat{f}(\xi) & =  \lim_{b\to\infty} \int_0^b \phi(r,\xi) f(r) dr + \langle \phi_{0},f\rangle_{L^2_{dr}}  ,
\end{aligned}\end{equation}
where the spectral measure is given by 
\begin{align*}
\rho\big((\lambda_1,\lambda_2]\big)=\dfrac{1}{\pi}\lim_{\delta\to0^+}
  \lim_{\epsilon\to0^+}\int_{\lambda_1+\delta}^{\lambda_2+\delta}\mathrm{Im}\, m\big( \lambda + i\epsilon \big) d\lambda,
\end{align*}
and $m(\cdot)$, the so called Weyl-Titchmarsh $m$ function, is defined as 
\[
m(\xi)=\dfrac{W(\theta(r,\xi),\psi_+(r,\xi))}{W(\psi_+(r,\xi),\phi(r,\xi))}.
\] 
After some Wronskian calculations (see \cite{GZ}),  choosing the normalization of $\psi_+$ so that $W( \overline{ \psi_+},\psi_+) = 2i$, one finds that the density of the spectral measure is given by
\begin{align}\label{dft6}
\frac{d\rho}{dk}(\xi) = \dfrac{1}{\pi}\mathrm{Im}\,m(\xi) = \dfrac{1}{4\pi |a(\xi)|^2}.
\end{align}

\smallskip
\subsection{The Transference Operator}
In this subsection, we review some properties and representation of the scaling vector field $r\partial_r$ in the distorted Fourier side, which has already been derived previously in \cite{DK,JenKri,KST3}.

We would like to express the operator $r\partial_r$ in terms of the distorted Fourier coefficients. Recall that
\[
f(r) =  \dfrac{\langle \phi_{0},f\rangle_{L^2_{dr}} } {\langle \phi_0,\phi_0 \rangle_{L^2_{dr}}} \, \phi_{0}  +  \int_0^\infty \phi (r,\xi)  \mathcal{F}[f](\xi) \rho'(\xi)d\xi.
\]
The main difficulty in doing this is caused by the operator $r\partial_r$, which is not diagonal in the distorted Fourier basis. Moreover, the fact that in our present case we have both continuous and discrete spectra will require the introduction of a matrix-valued operator to express the operator $r\partial_r$ in terms of the distorted Fourier coefficients. To motivate the decomposition, we first recall that in the flat case we have \[
\Big( r\partial_r - 2 \xi \partial_\xi \Big)e^{\pm i\xi^{1/2}r} = 0.
\]
According to Lemma \ref{dft7}, this cancellation also occurs (to leading order at least) in the distorted case. This suggests one could expect to have 
\[
\mathcal{F}\big[ r\partial_r u\big](\xi) = - 2 \xi \partial_\xi \widehat{u} + \mathcal{K} \widehat{u}, \qquad \widehat{u}:= \begin{pmatrix}
\widehat{u}_p \\ \widehat{u}_c
\end{pmatrix},
\]
where $\mathcal{K}$ is a nonlocal operator, and from now on we think (for the sake of simplicity) of $\widehat{u}$ as a vector composed of its discrete and continuous components. The operator $\mathcal{K}$ takes the form (here $p$ stands for point and $c$ for continuous)
\begin{align*}
\mathcal{K}:= \begin{pmatrix}
 \mathcal{K}_{pp} & \mathcal{K}_{pc}
 \\ \mathcal{K}_{cp} & \mathcal{K}_{cc},
\end{pmatrix}
\end{align*}
where the entries are either operators or functions (which act by multiplication), as  \begin{equation}\label{dft8}\begin{aligned}
\mathcal{K}_{pp} & :=   \big\langle r \partial_r \phi_{0}, \,  \phi_{0} \big\rangle_{L^2_{dr}} / \langle \phi_0, \phi_0\rangle_{L^2_{dr}},
\\ \mathcal{K}_{cp} & :=   \big\langle r \partial_r \phi_{0}, \,  \phi(\cdot,\xi)\big\rangle_{L^2_{dr}} / \langle \phi_0,\phi_0\rangle_{L^2_{dr}},
\\ \mathcal{K}_{pc} & :=   \bigg\langle \int_0^\infty r \partial_r \phi(r,\eta) \widehat{u}_c(\eta) \rho'(\eta) d\eta, \,  \phi_{0} \bigg\rangle_{L^2_{dr}} ,
\\ \mathcal{K}_{cc} & :=  \bigg\langle \int_0^\infty r \partial_r \phi(r,\eta) \widehat{u}_c(\eta) \rho'(\eta) d\eta, \,  \phi(\cdot,\xi) \bigg\rangle_{L^2_{dr}}
\\ &  ~ + \bigg\langle \int_0^\infty  \phi(r,\eta) \big( 2\eta \partial_\eta \widehat{u}_c(\eta) \big) \rho'(\eta)   d\eta, \,  \phi(\cdot,\xi) \bigg\rangle_{L^2_{dr}},
\end{aligned}\end{equation}
having written $2\xi \partial_\xi \widehat{u} $ as  \[
2\xi \partial_\xi \widehat{u} = \mathcal{F}\Big[ \mathcal{F}^{-1}[2\xi \partial_\xi \widehat{u}]\Big]= \bigg\langle \int_0^\infty \phi(r,\eta) \big( 2\eta \partial_\eta \widehat{u}(\eta)\big) \rho'(\eta)d\eta  , \phi(r,\xi)\bigg\rangle_{L^2_{dr}},
\]
to obtain the last term in the definition of $\mathcal{K}_{cc}$. Note that, integrating by parts, we find that
\[
\big\langle r \partial_r \phi_{0}, \,  \phi_{0} \big\rangle_{L^2_{dr}}=-\dfrac{1}{2}\Vert \phi_0 \Vert_{L^2_{dr}}^2 = - \pi.
\]
On the other hand, the operator $\mathcal{K}_{pc}$ can be seen as an operator $\mathcal{K}_{pc}:L^2_{\rho d\xi} \to \C$ given by integration against the kernel \[
K_p(\xi) = -\big\langle r\partial_r \phi_0(r), \phi(r,\xi)\big\rangle_{L^2_{dr}},
\]
having integrated by parts in $r$. It is worth noting that, due to the commutation relation
\[
[\mathcal{H},r\partial_r] = 2\mathcal{H} +\big( 2 + r\partial_r\big) \dfrac{32r^2}{(1+r^4)^2},
\]
along with the eigen-equation
\[
\big(\mathcal{H}-\xi\big)(r\partial_r\phi)=[\mathcal{H},r\partial_r]\phi,
\]
one concludes that, for $\xi\gg 1$ large, \begin{align*}
& \big\langle r\partial_r\phi, \phi_0\big\rangle_{L^2_{dr}} 
\\ & = - \xi^{-1} \big\langle [\mathcal{H},r\partial_r]\phi ,\phi_0\big\rangle_{L^2_{dr}}  
\\ & = - 2 \xi^{-1} \Big\langle \dfrac{32r^2}{(1+r^4)^2}\phi(\cdot,\xi),\phi_0(\cdot)\Big\rangle_{L^2_{dr}} - \xi^{-1}\Big\langle r\partial_r\Big( \dfrac{32r^2}{(1+r^4)^2}\Big)\phi(\cdot,\xi),\phi_0(\cdot)\Big\rangle_{L^2_{dr}},
\end{align*}
repeated integration by parts lead to rapid decays as $\xi\to +\infty$ (see \eqref{dft9} and Lemma \ref{dft7}). Moreover, we also have that $\mathcal{K}_{cp}=-K_p$. Finally, the operator $\mathcal{K}_{cc}$ is the most delicate one, which is analyzed in the following proposition.
\begin{prop}[\cite{KST3}]\label{dft5}
We can write
\begin{align}\label{dft4}
\mathcal{K}_{cc} = -\Big( \dfrac{3}{2} + \dfrac{\eta \rho'}{\rho} \Big) \delta(\xi-\eta) + \mathcal{K}_0,
\end{align}
where $\mathcal{K}_0$ is the operator given by integration against the kernel \[
K_0(\eta,\xi)=\dfrac{1}{\eta-\xi}\,\rho(\xi)F(\xi,\eta),
\]
with $F$ of regularity $C^2$ on $(0,\infty)\times (0,\infty)$, continuous on its closure, and satisfying the bounds \begin{align*}
    \vert F(\xi,\eta)\vert & \lesssim \begin{cases}
        \xi+\eta, & \xi+\eta \leq 1,
        \\ (\xi+\eta)^{-3/2}(1+\vert \xi^{1/2}-\eta^{1/2}\vert)^{-N}, & \xi+\eta \geq 1,
    \end{cases}
    \\ \vert \partial_\xi F\vert + \vert \partial_\eta F\vert & \lesssim \begin{cases}
        1, & \xi+\eta \leq 1,
        \\ (\xi+\eta)^{-2}(1+\vert \xi^{1/2}-\eta^{1/2}\vert)^{-N}, & \xi+\eta \geq 1,
    \end{cases}
    \\ \sup_{j+k=2}\vert \partial_xi^j\partial_\eta^k F\vert & \lesssim \begin{cases}
        \vert \log(\xi+\eta)\vert^3, & \xi+\eta \leq 1,
        \\ (\xi+\eta)^{-5/2}(1+\vert \xi^{1/2}-\eta{1/2})^{-N}, & \xi+\eta \geq1,
    \end{cases}
\end{align*}
where $N$ is an arbitrarily large integer.
\end{prop}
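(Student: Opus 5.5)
The plan is to follow the Krieger--Schlag--Tataru route. Compute the kernel of $\mathcal{K}_{cc}$ through the commutator identity, and isolate the diagonal (delta) part by the asymptotics of $\phi$ at infinity.

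\emph{Off-diagonal part.} Fix $\eta\neq\xi$ and pair $r\partial_r\phi(\cdot,\eta)$ with $\phi(\cdot,\xi)$. Using $(\mathcal{H}-\eta)(r\partial_r\phi(\cdot,\eta))=[\mathcal{H},r\partial_r]\phi(\cdot,\eta)$ and the commutator formula recalled above, we get formally
\[
(\xi-\eta)\big\langle r\partial_r\phi(\cdot,\eta),\phi(\cdot,\xi)\big\rangle=\big\langle [\mathcal{H},r\partial_r]\phi(\cdot,\eta),\phi(\cdot,\xi)\big\rangle .
\]
The commutator equals $2\mathcal{H}$ plus a rapidly decaying potential term $V_1(r)=(2+r\partial_r)\frac{32r^2}{(1+r^4)^2}$, which is $O(r^{-6})$ at infinity. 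The $2\mathcal{H}$ piece contributes $2\eta\langle\phi(\eta),\phi(\xi)\rangle$, which is purely distributional and supported on the diagonal. The smooth remainder defines
\[
F(\xi,\eta)\sim \frac{\langle V_1\phi(\cdot,\eta),\phi(\cdot,\xi)\rangle}{\rho(\xi)},
\]
up to normalization. The stated bounds on $F$ then come from Proposition \ref{dft7}.
\begin{itemize}
\item For $\xi+\eta\le1$, use the power series of $\phi$ in $r^2z$. Since $\phi\sim r^{5/2}$ near $0$ and $V_1$ decays, this gives a bound of order $\xi+\eta$. Differentiating gives the bound $1$, and second derivatives give logarithmic losses from the region $r\sim(\xi+\eta)^{-1/2}$.
\item For $\xi+\eta\ge1$, use the representation \eqref{dft9} with $|a|\sim\langle\xi\rangle^{-1}$. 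Integrating by parts in the oscillatory phase $e^{ir(\xi^{1/2}\pm\eta^{1/2})}$ against the smooth, decaying $V_1$ gives the factor $(1+|\xi^{1/2}-\eta^{1/2}|)^{-N}$, and the powers $(\xi+\eta)^{-3/2},\dots$ come from the $\xi^{-1/4}$ normalizations and the $a$ factors.
\end{itemize}

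\emph{Diagonal part.} The pairing above is only an oscillatory integral on the diagonal, so I would regularize by inserting a cutoff $\chi(r/R)$ and letting $R\to\infty$. In the region $r\gg1$, substitute $\phi\approx a\psi_++\bar a\bar\psi_+$ with $\psi_+\approx\xi^{-1/4}e^{ir\xi^{1/2}}$. The term $\langle r\partial_r\phi(\eta),\phi(\xi)\rangle$, combined with the added term $\langle\mathcal{F}^{-1}[2\eta\partial_\eta\hat u],\phi(\xi)\rangle$, then reduces to the flat computation $(r\partial_r-2\xi\partial_\xi)e^{ir\xi^{1/2}}=0$, up to derivative losses on the amplitudes $\xi^{-1/4}a(\xi)$ and on $\rho'$. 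Integrating by parts in $\eta$ moves $2\eta\partial_\eta$ onto $\rho'(\eta)$ and produces $-\frac{\eta\rho'}{\rho}$-type terms. The cancellation of the non-resonant $\psi_+\bar\psi_+$ cross terms, together with the $-\tfrac12$ from $r\partial_r$ being anti-self-adjoint up to trace in $L^2_{dr}$ and the $\xi^{-1/4}$ normalization, gives the constant $-\tfrac32$. The principal-value singularity $\frac{1}{\eta-\xi}$ arises from $\int_0^\infty e^{ir(\eta^{1/2}-\xi^{1/2})}\,dr$, and this is consistent with the form of $K_0$.

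\emph{Main obstacle.} The delicate step is bookkeeping the diagonal constant $-\big(\tfrac32+\frac{\eta\rho'}{\rho}\big)$ exactly, and showing that what remains is a genuine Calder\'on--Zygmund-type kernel $\rho(\xi)F/(\eta-\xi)$ with $F\in C^2$. For this I would verify the identity on test functions $\hat u\in C_c^\infty(0,\infty)$. I would compute $\mathcal{F}[r\partial_r u]$ directly and compare leading asymptotics, fixing the constant by testing against a Gaussian in the flat model. A second point needing care is the $|\log(\xi+\eta)|^3$ second-derivative bound, which comes from the $\theta_0$-type $r^4\ln r$ terms near $r\sim(\xi+\eta)^{-1/2}$.
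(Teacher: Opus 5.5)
The paper does not prove this proposition; it quotes it from \cite{KST3} (Theorem 6.1). Your outline matches the argument given there: the commutator identity for the off-diagonal kernel, large-$r$ asymptotics of $\phi$ for the diagonal, and the power series versus oscillatory representation for the bounds on $F$. However, two steps as you describe them do not deliver the statement.

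\textbf{The diagonal coefficient.} This is the real content of \eqref{dft4}, and your proposal asserts it rather than derives it; the bookkeeping you sketch gives the wrong number. Write $\rho$ for the spectral density, so $\rho=(4\pi|a|^2)^{-1}$ by \eqref{dft6}. Take $\hat u_c=f\in C_c^\infty(0,\infty)$ and split
\[
r\partial_r\phi(r,\eta)=2\eta\partial_\eta\phi(r,\eta)+(r\partial_r-2\eta\partial_\eta)\phi(r,\eta).
\]
Integrating the first piece by parts against $f(\eta)\rho(\eta)\,d\eta$ gives $-2\xi f'-(2+2\xi\rho'/\rho)f$. For the second piece, note that $r\partial_r-2\eta\partial_\eta$ annihilates both $e^{ir\eta^{1/2}}$ and $q=r\eta^{1/2}$. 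Hence for $r^2\eta\gg1$,
\[
(r\partial_r-2\eta\partial_\eta)\phi(r,\eta)=2\,\mathrm{Re}\Big[\big(\tfrac12 a(\eta)-2\eta a'(\eta)\big)\psi_+(r,\eta)\Big]+\big(\text{terms with amplitudes }O(q^{-1})\big).
\]
Pairing with $\phi(\cdot,\xi)=2\,\mathrm{Re}[a\psi_+]$ gives the diagonal term
\[
4\pi\,\mathrm{Re}\big[(\tfrac12 a-2\xi a')\bar a\big]\,\delta(\xi-\eta)=\rho(\xi)^{-1}\big(\tfrac12+\xi\rho'/\rho\big)\,\delta(\xi-\eta).
\]
Adding the two contributions, $-2-2\xi\rho'/\rho+\tfrac12+\xi\rho'/\rho=-\tfrac32-\xi\rho'/\rho$.

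So the amplitude derivatives you call ``losses'' are exactly the source of $+\tfrac12+\xi\rho'/\rho$. There is no extra ``$-\tfrac12$ from anti-self-adjointness'' to add on top of this computation. That $-\tfrac12$ belongs to a different, equally valid argument:
\begin{itemize}
\item since $\mathcal{F}$ is unitary and $r\partial_r+\tfrac12$ is skew on $L^2_{dr}$, while $2\xi\partial_\xi+1+\xi\rho'/\rho$ is skew on $L^2_\rho$, the symmetric part of $\mathcal{K}_{cc}$ is $-(\tfrac32+\xi\rho'/\rho)$;
\item the principal-value part is skew when $F$ is symmetric;
\item so this fixes the $\delta$-coefficient, once your phase cancellation has shown that the diagonal singularity is only p.v.\ plus $\delta$.
\end{itemize}

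Your fallback of fixing the constant with a Gaussian in the flat model cannot repair the bookkeeping. For the flat operator $-\partial_r^2+\frac{15}{4r^2}$ one has $\rho\propto\xi^2$, so $\xi\rho'/\rho\equiv2$. The test only confirms the total $-\tfrac72$; it cannot separate the constant from the coefficient of $\xi\rho'/\rho$.

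\textbf{The small-frequency bound.} The estimate $|F|\lesssim\xi+\eta$ does not follow from size considerations. As $\xi,\eta\to0$ you have $F(\xi,\eta)\to\langle V_1\phi_0,\phi_0\rangle$, and ``$\phi\sim r^{5/2}$ at $0$, $V_1=O(r^{-6})$'' only gives $O(1)$. You need the cancellation $F(0,0)=0$, which comes from the zero mode. Since $\mathcal{H}\phi_0=0$, we have $V_1\phi_0=[\mathcal{H},r\partial_r]\phi_0=\mathcal{H}(r\partial_r\phi_0)$. Moreover $r\partial_r\phi_0\in\mathrm{Dom}(\mathcal{H})$, being $O(r^{5/2})$ at $0$ and $O(r^{-3/2})$ at infinity. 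Therefore $\langle\mathcal{H}(r\partial_r\phi_0),\phi_0\rangle=\langle r\partial_r\phi_0,\mathcal{H}\phi_0\rangle=0$. Directly, the pairing equals $512\int_0^\infty\frac{u(u-1)}{(1+u)^5}\,du=512\,(B(3,2)-B(2,3))=0$. Only then does $|\nabla F|\lesssim1$ give $|F|\lesssim\xi+\eta$.

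\textbf{Normalization of $F$.} $F$ should be, up to sign, the symmetric pairing $\langle V_1\phi(\cdot,\xi),\phi(\cdot,\eta)\rangle$ itself. The factor $\rho$ in $K_0$ is the spectral weight of the integration variable. Dividing by $\rho(\xi)$, as you propose, destroys the symmetry of $F$, which is what makes $\mathcal{K}_0$ skew in $L^2_\rho$. It also changes the large-frequency bounds by a factor $\rho(\xi)^{-1}\sim\langle\xi\rangle^{-2}$.
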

The proof of the above proposition can be found in \cite{KST3}, see Theorem $6.1$. Moreover, from \cite{KST3} we also deduce the following mapping bounds.
\begin{prop}\label{prop:dft1}
The operators $\mathcal{K}_0$ and $\mathcal{K}$ satisfy
    \[
    \mathcal{K}_0: L^{2,\alpha}_\rho \to L^{2,\alpha+\frac12}_{\rho}, \ \qquad \mathcal{K}: L^{2,\alpha}_\rho \to L^{2,\alpha}_\rho,
    \]
    for $\alpha\in \R$ arbitrary. Moreover, we have the commutator bound \[
    \big[ \mathcal{K}, \xi\partial_\xi \big]: L^{2,\alpha}_\rho \to L^{2,\alpha}_\rho,
    \]
    where $\xi\partial_\xi$ only acts non-trivially on the contrinuous part of the spectrum.
\end{prop}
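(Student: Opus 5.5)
The plan is to prove the three mapping statements directly from the kernel description of $\mathcal{K}_{cc}$ in Proposition~\ref{dft5}, together with the rapid decay of the point--continuous entries noted above. Throughout I take $L^{2,\alpha}_\rho$ to be the weighted space with norm $\|\langle\xi\rangle^{\alpha}f\|_{L^2(\rho'd\xi)}$, extended by $\C$ for the discrete component.

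\textbf{Step 1: the bound for $\mathcal{K}_0$.} After conjugating by $(\rho')^{1/2}$ and by the weights, $\mathcal{K}_0$ becomes an operator on flat $L^2(d\xi)$ with kernel
\[
\tilde K(\xi,\eta)=\langle\xi\rangle^{\alpha+\frac12}\langle\eta\rangle^{-\alpha}\,\frac{\rho'(\xi)^{1/2}\rho'(\eta)^{1/2}F(\xi,\eta)}{\eta-\xi},
\]
up to the exact placement of the density factors. I would split the $(\xi,\eta)$ quadrant into three regions.
\begin{enumerate}
\item \emph{$\xi+\eta\le 1$.} Here $|F|\lesssim \xi+\eta$ and $|\partial F|\lesssim1$. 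Write $F(\xi,\eta)=F(\xi,\xi)+(\eta-\xi)G$ with $G$ bounded. The bounded part contributes a Schur-bounded kernel. The diagonal part $F(\xi,\xi)/(\eta-\xi)$ is a principal value, i.e.\ a Hilbert transform multiplied by a bounded function, so it is $L^2$-bounded. The behaviour of $\rho'$ near $0$ only enters through factors that are harmless in this region.
\item \emph{$\xi+\eta\ge1$ with $|\xi^{1/2}-\eta^{1/2}|\gtrsim1$.} The factor $(1+|\xi^{1/2}-\eta^{1/2}|)^{-N}$ compensates any weight mismatch $\langle\xi\rangle^{\alpha+1/2}\langle\eta\rangle^{-\alpha}$. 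The remaining kernel is Schur-bounded, thanks to the extra decay $(\xi+\eta)^{-3/2}$ against $\rho'\sim\langle\xi\rangle^{2}$.
\item \emph{$\xi+\eta\ge1$ near the diagonal.} Again subtract $F(\xi,\xi)$. The difference quotient is controlled by $|\partial F|\lesssim(\xi+\eta)^{-2}$. The diagonal piece is a truncated Hilbert transform with coefficient $\langle\xi\rangle^{1/2}F(\xi,\xi)\rho'(\xi)$.
\end{enumerate}
The main check, and the main obstacle, is the power counting in region (3). We have $\rho'\sim\xi^2$ from \eqref{dft6} and $|a|\sim\langle\xi\rangle^{-1}$, while $F(\xi,\xi)\lesssim\xi^{-3/2}$. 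The gain of $\xi^{1/2}$ therefore has to come out exactly, using the conjugation convention $\rho(\xi)$ versus $\rho'$ in the definition of $K_0$. I would first try to verify this carefully against \cite{KST3}, Theorem~6.1, with the density normalization of this paper.

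\textbf{Step 2: the bound for $\mathcal{K}$.} The $\delta$-part of $\mathcal{K}_{cc}$ is multiplication by $-\frac32-\eta\rho'/\rho$. This multiplier is bounded because $\rho$ behaves like a power near $0$ and near $\infty$, so it preserves $L^{2,\alpha}_\rho$. The part $\mathcal{K}_0$ maps $L^{2,\alpha}_\rho$ into $L^{2,\alpha+1/2}_\rho\subset L^{2,\alpha}_\rho$ by Step~1. The entries $\mathcal{K}_{pp}$, $\mathcal{K}_{pc}$ and $\mathcal{K}_{cp}=-K_p$ are finite rank. They are bounded in every weight because $K_p(\xi)$ decays faster than any power as $\xi\to\infty$, by the integration-by-parts argument using $[\mathcal{H},r\partial_r]$ given above, and is bounded near $\xi=0$.

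\textbf{Step 3: the commutator.} The scaling derivative commutes with the $\delta$-part up to the bounded multiplier $\xi\partial_\xi(\eta\rho'/\rho)$. On the finite rank parts, $\xi\partial_\xi$ falls on $K_p$, which keeps its rapid decay, or is integrated by parts onto it. For $\mathcal{K}_0$ I would use the identity
\[
[\mathcal{K}_0,\xi\partial_\xi]\ \text{has kernel}\ (\xi\partial_\xi+\eta\partial_\eta+1)K_0(\xi,\eta).
\]
Since $(\xi\partial_\xi+\eta\partial_\eta)(\eta-\xi)^{-1}=-(\eta-\xi)^{-1}$, the new kernel has the same form $(\eta-\xi)^{-1}\tilde F$, with $\tilde F=(\xi\partial_\xi+\eta\partial_\eta)(\rho F)+\ldots$. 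The $C^2$ bounds on $F$ in Proposition~\ref{dft5} show that $\tilde F$ obeys the $C^1$-level bounds needed in Step~1. Here $\xi\partial_F\sim(\xi+\eta)^{-1}F$, so $\tilde F$ satisfies the same estimates as $F$, with the second derivatives controlling the difference quotients. Hence Step~1 applies verbatim to give boundedness on $L^{2,\alpha}_\rho$.
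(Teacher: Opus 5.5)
Your Step 1 leaves its decisive estimate unproved, and with the inputs you propose to use it actually fails. You flag this estimate yourself as the main obstacle. By \eqref{dft6} and $|a(\xi)|\sim\langle\xi\rangle^{-1}$ one has $\rho'(\xi)\sim\langle\xi\rangle^{2}$. Combined with $|F|\lesssim(\xi+\eta)^{-3/2}(1+|\xi^{1/2}-\eta^{1/2}|)^{-N}$, the conjugated kernel at scale $\xi\sim\eta\sim\mu\gg1$ is of size $\mu^{1/2}|\xi-\eta|^{-1}$ on $|\xi-\eta|\lesssim\mu^{1/2}$.

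\begin{itemize}
\item In region (3), the bounds only give $|\rho'(\xi)F(\xi,\xi)|\lesssim\xi^{1/2}$ for the Hilbert-transform coefficient. Your weighted coefficient $\langle\xi\rangle^{1/2}\rho'(\xi)F(\xi,\xi)$ is therefore only $\lesssim\xi$. The difference-quotient piece is no better: $\rho'|\partial F|\sim1$ over a window of width $\mu^{1/2}$ gives a Schur norm $\mu^{1/2}$, and $\mu$ after the weight.
\item In region (2), $(\xi+\eta)^{-3/2}$ against $\rho'\sim\xi^{2}$ is growth of size $\mu^{1/2}$, not extra decay. The Schur integral is of order $\mu^{1/2}$ before the weight and $\mu$ after.
\end{itemize}

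So the displayed bounds in Proposition~\ref{dft5} give neither the half-power gain nor even boundedness of $\mathcal{K}_0$ on $L^{2,\alpha}_\rho$. Those bounds are the ones from the $k=1$ setting, where the density grows like $\xi$. Your argument needs the version consistent with the present normalization. For $\xi+\eta\ge1$ this means $\rho'(\xi)^{1/2}\rho'(\eta)^{1/2}|F|\lesssim(\xi+\eta)^{-1/2}(1+|\xi^{1/2}-\eta^{1/2}|)^{-N}$, i.e.\ $|F|\lesssim(\xi+\eta)^{-5/2}(\ldots)$ here, with each derivative costing a further $(\xi+\eta)^{-1/2}$. This is a genuine property of the $k=2$ transference kernel. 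It has to be imported from, or re-derived as in, \cite{KST3}, and the paper gives no argument beyond that citation. Once this input is in place, your decomposition is the standard proof: subtract the diagonal value, use a Hilbert transform with bounded coefficient near the diagonal, use Schur off the diagonal, and let $(1+|\xi^{1/2}-\eta^{1/2}|)^{-N}$ absorb the weight ratio.

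Step 3 also contains an error. In Proposition~\ref{dft5} each derivative of $F$ costs $(\xi+\eta)^{-1/2}$ relative to $F$, not $(\xi+\eta)^{-1}$; this reflects oscillation on the scale $|\xi^{1/2}-\eta^{1/2}|\sim1$. Consequently $(\xi\partial_\xi+\eta\partial_\eta)(\rho' F)$ obeys bounds worse than those of $\rho'F$ by a factor $(\xi+\eta)^{1/2}$. It does not satisfy ``the same estimates as $F$''. This loss exactly consumes the half-power gain, which is why the commutator is only asserted to be bounded on $L^{2,\alpha}_\rho$. The conclusion survives if you rerun Step 1 at this weaker level, using the second-derivative bounds on $F$ to control the difference quotient of the new kernel on $|\xi-\eta|\lesssim(\xi+\eta)^{1/2}$. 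But Step 1 does not apply verbatim, and it yields no gain for the commutator.
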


%%%%%%%%%%%%%%%%%%%%%%%%%%%%%%%%%
%%%%%%%%%%%%%%%%%%%%%%%%%%%%%%%%%
%%%%%%%%%%%%%%%%%%%%%%%%%%%%%%%%%
%%%%%%%%%%%%%%%%%%%%%%%%%%%%%%%%%

%\newpage
\bigskip
\section{Approximate solution of \eqref{eq:veqn}, part I: the main mechanism driving the evolution of $\lambda_1(t)$}

As before, assuming an $n-1$-bubble solution $\bfcq_{n-1}$ to have been constructed, we now commence the inductive process leading to a highly accurate approximate solution $v_N$ of \eqref{eq:veqn}, including the leading order asymptotics of $\lambda_1(t)$. We shall construct $v_N$ in the form 
\[
v_N = \sum_{j=0}^N h_j,
\]
and here we explain how to obtain $h_0$, modulo technical details referred to the subsequent sections. 
\\

To construct $h_0$, we shall neglect the terms $E_1$ and $E_3$ on the right-hand side in \eqref{eq:veqn}, and we shall replace the left-hand side by a simplified elliptic operator acting on $h_0$: 
\begin{equation}\label{eq:h0eqn}
h_{0,rr} + \frac{1}{r}h_{0,r} - \frac{4\cos\big(2Q_1\big)}{r^2}h_0 = E_2,\quad Q_1 = Q(R),\quad R = \lambda_1(t) r . 
\end{equation}
Setting %need to introduce these functions before%
\[
\Phi(R): = R^{-\frac12}\phi_0(R),\qquad \Theta(R): = R^{-\frac12}\theta_0(R),
\]
and recalling that $ W\big[\Theta(R), \Phi(R)\big] = 1/R$, we have the solution formula 
\begin{equation}\label{eq:variationofconstants}
h_0(r) = \Phi(R)\cdot \int_0^R \lambda_1^{-2}\cdot E_2\big(\tfrac{s}{\lambda_1}\big)\Theta(s)\,sds  -  \Theta(R)\cdot \int_0^R \lambda_1^{-2}\cdot E_2\big(\tfrac{s}{\lambda_1}\big)\Phi(s)\,sds .
\end{equation}
In order to prevent quadratic growth of the second term on the right towards $R = +\infty$, we need to impose the vanishing condition
\begin{equation}\label{eq:keyvanishing}
\int_0^{\infty} \lambda_1^{-2}\cdot E_2\Phi(s)\,sds = 0. 
\end{equation}
In fact, this condition will be modified slightly for technical reasons, as we shall replace $E_2$ by a simpler expression $\tilde{E}_2$, in particular only including the bulk components of $\bfcq_{n-1}$:
\begin{equation}\label{eq:tildeE2}
 \tilde{E}_2: = \frac{\lambda_1''}{\lambda_1^3}\cdot\Phi(R) + \big(\frac{\lambda_1'}{\lambda_1^2}\big)^2\cdot \big(R\Phi'(R) - \Phi(R)\big) - 8\cdot\frac{\sum_{j=2}^n(-1)^j\lambda_j^2}{\lambda_1^2}\cdot \big[\cos\big(2Q(R)\big) - 1\big].
\end{equation}
Note that the first two terms simply correspond to $\lambda_1^{-2}\partial_t^2Q_1$, namely, \[
\lambda_1^{-2} \partial_t^2 Q_1 =  \frac{\lambda_1''}{\lambda_1^3}\cdot\Phi(R) + \big(\frac{\lambda_1'}{\lambda_1^2}\big)^2\cdot \big(R\Phi'(R) - \Phi(R)\big),
\]
which is exact, whereas the last expression in \eqref{eq:tildeE2} is obtained by approximating the difference of the trig functions in \eqref{eq:veqn}, writing \begin{align*}
    & \sin(2Q_1-2\bfcq_{n-1})-\sin(2Q_1)+\sin(2\bfcq_{n-1})
    \\  & = \sin(2Q_1)\Big(\cos(2\bfcq_{n-1})-1\Big) - \sin(2\bfcq_{n-1})\Big(\cos(2Q_1)-1\Big),
\end{align*} 
and then neglecting the first term while substituting $\sin(2\bfcq_{n-1})/r^2$ by $4$ times the alternating sum of the $\lambda_j$'s. Then, replacing $E_2$ by $\tilde{E}_2$ in \eqref{eq:keyvanishing}, we deduce the following crucial relation which implicitly forces our choice of $\lambda_1$ in the inductive scheme: 
\begin{align*}
&\int_0^\infty  \Big[\frac{\lambda_1''}{\lambda_1^3}\cdot\Phi(R) + \big(\frac{\lambda_1'}{\lambda_1^2}\big)^2\cdot \big(R\Phi'(R) - \Phi(R)\big)\Big]\cdot \Phi(R)\,R\,dR\\
& =  -\frac{\sum_{j=2}^n(-1)^j\lambda_j^2}{\lambda_1^2}\cdot 8\int_0^\infty \big[{-}\cos\big(2Q(R)\big) + 1\big]\cdot R\Phi(R)\,dR, 
\end{align*}
Performing integration by parts in the integral on the left, and taking advantage of the identities
\begin{equation}\label{eq:explicitintegrals}\begin{split}
 &\int_0^\infty \Phi^2(R) RdR=-\dfrac{4R^2}{1+R^4}+4\arctan(R^2)\bigg\vert_{R=0}^\infty  = 2\pi ,
\\ &\int_0^\infty \Big(1-\cos\big(2Q_1(R)\big)\Big) \Phi(R) R dR  = -\dfrac{4(1+2R^4)}{(1+R^4)^2}\bigg\vert_{R=0}^\infty = 4,
\end{split}\end{equation}
we find the relation
\begin{equation}\label{eq:lambdaoneleadingorder}
\boxed{\frac{\lambda_1''}{\lambda_1^3} - 2 \big(\frac{\lambda_1'}{\lambda_1^2}\big)^2 = -\frac{16}{\pi}\cdot\frac{\sum_{j=2}^n(-1)^j\lambda_j^2}{\lambda_1^2}}
\end{equation}
It is worth mentioning that, morally speaking, one can get a good expectation of how the solution should behave near $0+$ from the WKB method. More specifically, consider the equation  \[
\dfrac{y''(t)}{y^3(t)} - 2 \dfrac{(y'(t))^2}{y^4(t)} + \dfrac{f(t)}{y^2(t)}=0 \quad \implies \quad u''(t) - f(t)u(t)=0, \quad u(t)=\dfrac{1}{y(t)},
\]
where, for simplicity, we assume that $f(t)$ grows monotonically and fast enough to infinity as $t\to0+$. Then, the standard WKB method tells us that the growing solution behaves as \begin{align}\label{modu5}
y(t) \sim (f(t))^{1/4}\exp\Big(\int_t^1 \big(f(s)\big)^{1/2}ds\Big), \qquad \quad t\sim 0+.
\end{align}
With the initial choice of 
\begin{equation}\label{eq:lambdandef}
\lambda_n(t) = \frac{|\log t|^{\beta}}{t},\qquad \beta>\frac32,
\end{equation}
we shall see that the functions $\lambda_{n-j}$, $n-1\geq j\geq 1$, grow very rapidly (on sufficiently short time intervals of the form $(0, t_0]$). In particular, this will imply that 
\[
\sum_{j=2}^n(-1)^j\lambda_j^2\sim \lambda_2^2, 
\]
and so 
\[
\lambda_1(t)\gtrsim e^{c\int_t^1\lambda_2(s)\,ds}
\]
for suitable $c>0$. In the following, we shall give the precise inductive construction of the $\lambda_j$, also involving a small perturbation parameter $m$ which shall be required for the 'elliptic stages' of the construction of the higher order corrections. 

\begin{lem}\label{lem:simpleinductivelambdaone}
Let $t_0>0$ and $\overline{\lambda}_2(t)$ a positive, decreasing, smooth function on $(0,t_0]$ satisfying 
\[
\lim_{t\rightarrow 0+}t\cdot \overline{\lambda}_2(t) = +\infty
\]
as well as the derivative type bounds 
\[
\big|\overline{\lambda}_2^{(k)}(t)\big|\leq C_{k,\nu}\cdot \overline{\lambda}_2^{1+\nu k}(t),\,k\geq 1,\,\nu>0.
\]
Here the $C_k$ are positive constants. Then there exists $t_1>0,\,t_1\leq t_0$, depending on finitely many of the $C_k$, such that the problem 
\[
\frac{\lambda_1''}{\lambda_1^3} - 2 \big(\frac{\lambda_1'}{\lambda_1^2}\big)^2 = -\frac{\overline{\lambda}_2^2}{\lambda_1^2}
\]
admits a positive, decreasing solution $\lambda_1\in C^\infty\big((0,t_1]\big)$, satisfying the estimates 
\[
\lambda_1(t)\geq e^{c\int_t^{t_2}\overline{\lambda}_2(s)\,ds},\,t\in (0, t_1],
\]
for a suitable constant $c>0$. Furthermore, we have the estimates 
\begin{align}\label{eq:app_lam_1}
\big|\lambda_1^{(k)}(t)\big|\leq D_{k,\nu}\cdot \lambda_1^{1+\nu k}(t),\,k\geq 1,\,\nu>0
\end{align}
for suitable constants $D_{k,\nu}$. Alternatively, we have the estimate 
\begin{align}\label{eq:app_lam_2}
\big|\lambda_1^{(k)}(t)\big|\leq E_{k}\cdot\bar{\lambda}_2^k(t)\lambda_1(t). 
\end{align}
\end{lem}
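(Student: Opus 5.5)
The plan is to linearize the ODE by the substitution $u = 1/\lambda_1$, exactly as in the WKB heuristic \eqref{modu5}. A direct computation gives $u'' = -\lambda_1''/\lambda_1^2 + 2(\lambda_1')^2/\lambda_1^3$. The equation, multiplied by $-\lambda_1^2$, therefore becomes the linear equation
\[
u''(t) = \overline{\lambda}_2^2(t)\, u(t), \qquad t\in(0,t_0].
\]
So the problem reduces to finding a solution $u$ of this linear equation that is positive and increasing, and decays extremely fast as $t\to 0+$. This solution is the recessive one at $t=0$, which is the growing one in the backward direction. Then $\lambda_1 = 1/u$ is positive and decreasing.

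\textbf{Step 1: WKB ansatz and Volterra equation.} Write $\Lambda(t) := \int_t^{t_0}\overline{\lambda}_2(s)\,ds$, so $\Lambda(t)\to\infty$ as $t\to 0+$, since $\overline{\lambda}_2\gg t^{-1}$. Seek
\[
u(t) = \overline{\lambda}_2^{-1/2}(t)\, e^{-\Lambda(t)}\big(1+\eta(t)\big).
\]
The WKB error is $\overline{\lambda}_2^{-1/2}e^{-\Lambda}\cdot q(t)$, where $q = O(\overline{\lambda}_2''/\overline{\lambda}_2 + (\overline{\lambda}_2'/\overline{\lambda}_2)^2)$. The derivative hypotheses with $k=1,2$ and small $\nu$ give $|q|\lesssim \overline{\lambda}_2^{2\nu}$, which is $\ll \overline{\lambda}_2^{2}$. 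The correction $\eta$ then solves a Volterra integral equation on $(0,t_1]$ with boundary condition $\eta(0+)=0$. Its kernel is built from the two WKB solutions $\overline{\lambda}_2^{-1/2}e^{\pm\Lambda}$.

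\textbf{Step 2: solving the Volterra equation.} The relevant integral operator is bounded by
\[
\int_0^t \overline{\lambda}_2^{-1}(s)\, |q(s)|\, ds \lesssim \int_0^t \overline{\lambda}_2^{-1+2\nu}(s)\,ds.
\]
Because $\overline{\lambda}_2\gtrsim t^{-1}$, this quantity is small for $t\le t_1$ small, provided $\nu$ is chosen small. I expect this to be the main obstacle: one must integrate the kernel in the correct direction, from $0$, so that the factor $e^{-2(\Lambda(s)-\Lambda(t))}$ is bounded. One must also check that the smallness depends only on finitely many $C_{k,\nu}$. A contraction in $L^\infty$ then gives $|\eta|\le 1/2$, and in particular $u>0$. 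Also $u' = \overline{\lambda}_2 u\,(1+O(\overline{\lambda}_2^{-1+2\nu}))>0$, so $\lambda_1 = 1/u$ is decreasing.

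\textbf{Step 3: the lower bound for $\lambda_1$.} From Step 2,
\[
\lambda_1 \sim \overline{\lambda}_2^{1/2}\, e^{\Lambda(t)} \ge e^{c\Lambda(t)},
\]
which is the stated lower bound. Since $\overline{\lambda}_2$ is eventually large, the prefactor $\overline{\lambda}_2^{1/2}$ is at least $1$, and any $c\le 1$ works.

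\textbf{Step 4: derivative bounds.} Set $\ell := -\lambda_1'/\lambda_1 = u'/u$. It satisfies the Riccati equation $\ell' = \overline{\lambda}_2^2 - \ell^2$, and by Step 2 we have $\ell = \overline{\lambda}_2\,(1+o(1))$. Differentiating the Riccati equation inductively, and using the hypotheses on $\overline{\lambda}_2^{(k)}$, gives $|\ell^{(k)}|\lesssim_k \overline{\lambda}_2^{k+1}$. The Faà di Bruno formula then yields $|\lambda_1^{(k)}|\le E_k\,\overline{\lambda}_2^k\lambda_1$, which is \eqref{eq:app_lam_2}. Finally, $\overline{\lambda}_2\lesssim \log\lambda_1$ because $\log\lambda_1\gtrsim\Lambda\gtrsim\overline{\lambda}_2$. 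To see the last inequality, write $\Lambda(t)\ge\int_t^{2t}\overline{\lambda}_2\ge t\,\overline{\lambda}_2(2t)$, and compare $\overline{\lambda}_2(2t)$ with $\overline{\lambda}_2(t)$ using the derivative bound. Hence $\overline{\lambda}_2^k\lambda_1\lesssim_{\nu}\lambda_1^{1+\nu k}$, which is \eqref{eq:app_lam_1}.
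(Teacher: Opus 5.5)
Your Steps 1--3 and your proof of \eqref{eq:app_lam_2} are correct, and they take a genuinely different route from the paper. The paper sets $\lambda_1=e^{\alpha}$ and works with the nonlinear Riccati equation $\zeta'=(\overline{\lambda}_2\zeta)^2-1$ for $\zeta=1/\alpha'$. It expands $\zeta$ to several WKB orders around $-1/\overline{\lambda}_2$ and closes a quadratic fixed point for the remainder. Your substitution $u=1/\lambda_1$ instead makes the problem linear (one multiplies by $-\lambda_1$, not $-\lambda_1^2$, but the resulting equation $u''=\overline{\lambda}_2^2u$ is right). Existence, positivity and monotonicity then come from a linear Volterra equation whose kernel is bounded by $\overline{\lambda}_2^{-1}(s)|q(s)|$. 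You also obtain the sharper asymptotics $\lambda_1\sim\overline{\lambda}_2^{1/2}e^{\Lambda}$. Your $\ell=u'/u$ is just $-\alpha'=-1/\zeta$.

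The gap is in the last step toward \eqref{eq:app_lam_1}. The claim $\Lambda\gtrsim\overline{\lambda}_2$, and with it $\overline{\lambda}_2\lesssim\log\lambda_1$, is false under the hypotheses. Since $\overline{\lambda}_2$ is decreasing and unbounded, $\Lambda(t)\le\delta\,\overline{\lambda}_2(t)+\int_\delta^{t_0}\overline{\lambda}_2\,ds$ for every $\delta>0$. Hence in fact $\Lambda=o(\overline{\lambda}_2)$, and likewise $\log\lambda_1=\Lambda+\tfrac12\log\overline{\lambda}_2+O(1)=o(\overline{\lambda}_2)$. For example, $\overline{\lambda}_2=e^{1/t}$ satisfies all the hypotheses, yet $\Lambda\sim t^2e^{1/t}$. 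Your own estimate $\Lambda\ge t\,\overline{\lambda}_2(2t)$ carries a factor $t\to0$ that you dropped and that cannot be removed.

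What is true, and suffices, is the weaker bound $\log\overline{\lambda}_2\le\nu\Lambda+C_\nu$. Fix $\nu'\in(0,1)$. Then $|(\log\overline{\lambda}_2)'|\le C_{1,\nu'}\overline{\lambda}_2^{\nu'}\le\nu\,\overline{\lambda}_2=\nu|\Lambda'|$ for $t\le t_*(\nu)$, because $\overline{\lambda}_2\to\infty$. Integrating over $[t,t_*]$ gives $\overline{\lambda}_2(t)\lesssim_\nu e^{\nu\Lambda(t)}\le\lambda_1^{\nu}(t)$, using your Step 3. This is exactly the comparison \eqref{eq:integraldominance} in the paper. Combined with \eqref{eq:app_lam_2}, it yields $\overline{\lambda}_2^k\lambda_1\lesssim_{k,\nu}\lambda_1^{1+\nu k}$, which is \eqref{eq:app_lam_1}.
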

\begin{proof} Formally setting $\lambda_1(t) = e^{\alpha(t)}$, we deduce 
\[
\alpha'' - \big(\alpha'\big)^2 = -\overline{\lambda}_2^2.
\]
We shall show that picking $0<t\leq t_1$ small enough, we can select $\alpha'<0$ satisfying the preceding relation. For this we pass instead to the variable $\zeta(t): = (\alpha')^{-1}(t)$, which formally satisfies  
\[
\zeta' = \big(\overline{\lambda}_2\zeta\big)^2 - 1. 
\]
Write 
\begin{align*}
\zeta = -\frac{1}{\overline{\lambda}_2} - \frac{\overline{\lambda}_2'}{2\overline{\lambda}_2^3} + \frac{\overline{\lambda}_2''}{4\overline{\lambda}_2^4} - \frac{5(\overline{\lambda}_2')^2}{8\overline{\lambda}_2^5} + w. 
\end{align*}
Then we infer the following equation for $w$:
\begin{equation}\label{eq:weqn}
w' + 2\overline{\lambda}_2(1+o(1))w = E + \big(\overline{\lambda}_2 w\big)^2,
\end{equation}
where $E = E(t)$ is a smooth function satisfying the bounds 
\[
\big|E^{(k)}\big|\lesssim_k \overline{\lambda}_2^{-3+},\,k\geq 0,
\]
according to our assumptions. Furthermore, the function 
\begin{align*}
o(1): =  \frac{\overline{\lambda}_2'}{2\overline{\lambda}_2^2} -\frac{\overline{\lambda}_2''}{4\overline{\lambda}_2^3} + \frac{5(\overline{\lambda}_2')^2}{8\overline{\lambda}_2^4}
\end{align*}
satisfies the estimates 
\begin{align*}
\big|\big(o(1)\big)^{(k)}\big|\lesssim_k \overline{\lambda}_2^{-1+}.
\end{align*}
Writing 
\[
 2\overline{\lambda}_2(1+o(1)) =:\tilde{\lambda}_2, 
 \]
we can characterize $w$ as a solution of the following fixed-point problem:
\begin{equation}\label{eq:wfixedpoint}
w(t) = e^{\int_t^{t_0}\tilde{\lambda}_2(s)\,ds}\cdot \int_0^t e^{-\int_{t'}^{t_0}\tilde{\lambda}_2(s)\,ds}\cdot \big[E(t') + \big(\overline{\lambda}_2 w(t')\big)^2\big]\,dt'.
\end{equation}
Noting that by our assumption, we have the bound 
\[
e^{-\int_{t'}^{t}\tilde{\lambda}_2(s)\,ds}\leq 1
\]
provided we restrict $t\leq t_1$ sufficiently small, and furthermore, we have the bootstrapping bound 
\begin{align*}
\big\|\overline{\lambda}_2^3(t')\big(\overline{\lambda}_2 w(t')\big)^2\big\|_{L^\infty(0,t_1]}\ll \big\|\overline{\lambda}_2^3 w\big\|_{L^\infty(0,t_1]}, 
\end{align*}
we infer from the above bounds on $E$ the existence of a continuous solution on $(0, t_1]$, $t_1$ sufficiently small, by means of a standard fixed point argument. Furthermore, taking advantage of the derivative bound
\begin{align*}
\big|\partial_t^k\big(e^{-\int_{t'}^{t}\tilde{\lambda}_2(s)\,ds}\big)\big|\lesssim_k \overline{\lambda}_2^k,\,k\geq 0,\,t'\leq t,
\end{align*}
we can also obtain inductively the bounds 
\begin{equation}\label{eq:wderivative}
\big|w^{(k)}\big|\lesssim_k \overline{\lambda}_2^{-3+k},\,k\geq 0. 
\end{equation}
The preceding implies that 
\[
\zeta =  -\frac{1}{\overline{\lambda}_2}\cdot \big(1+g(t)\big),
\]
where we have the bounds 
\[
\big|g^{(k)}\big|\lesssim_k \overline{\lambda}_2^{-1+k+},\,k\geq 0.
\]  
We can now set 
\begin{align*}
\alpha(t) = \int_t^{t_1}\overline{\lambda}_2(s)\cdot\big(1+g(s)\big)^{-1}\,ds    
\end{align*}
for $0<t_1\ll 1$ sufficiently small, and deduce from the above the bounds 
\begin{equation}\label{eq:lambdaonederivativebounds1}
\big|\lambda_1^{(k)}\big|\lesssim_k \overline{\lambda}_2^k\cdot \lambda_1, \qquad k\geq 0.
\end{equation}
This corresponds exactly to \eqref{eq:app_lam_2}. 
Clearly $\lambda_1$ is positive, smooth, and also decreasing. Finally, we prove the second-to-last estimate \eqref{eq:app_lam_1}. Due to the above bounds on $g$, we have the lower bound 
\[
\lambda_1(t)\geq e^{c\int_t^{t_1}\overline{\lambda}_2(s)\,ds}
\]
for any given $c>0$, provided $t_1$ is sufficiently small. Further, for any $\nu>0$, we have that 
\begin{align*}
\frac{d}{dt}\big(\nu c\int_t^{t_1}\overline{\lambda}_2(s)\,ds - \log \overline{\lambda}_2(t)\big) = -\nu c\overline{\lambda}_2(t) + \frac{\overline{\lambda}_2'(t)}{\overline{\lambda}_2(t)}<-\frac{\nu c}{2}\overline{\lambda}_2(t)
\end{align*}
provided $0<t<t_*(\nu, c)$ small enough, and our assumption on $\overline{\lambda}_2(t)$ implies that 
\[
\lim_{t\rightarrow 0+}\int_t^{t_*}\overline{\lambda}_2(s)\,ds = +\infty. 
\]
It follows that 
\begin{equation}\label{eq:integraldominance}
\lim_{t\rightarrow 0+}\nu c\int_t^{t_1}\overline{\lambda}_2(s)\,ds - \log \overline{\lambda}_2(t) = +\infty, 
\end{equation}
and so 
\begin{align*}
\overline{\lambda}_2^k(t)\lesssim_{k,\nu}\lambda_1^{\nu k}(t).     \end{align*}
In light of \eqref{eq:lambdaonederivativebounds1}, the desired derivative bound for $\lambda_1$ stated in the lemma follows. 
\end{proof}

Repeated application of the lemma implies the following conclusion:
\begin{prop}\label{prop:exponentialtowers1} Given $n\geq 2$, $\beta>0$, there is $t_0 = t_0(n,\beta)>0$, such that the inductively defined system of scaling parameters 
\begin{align*}
&\lambda_n(t): = \frac{|\log t|^{\beta}}{t},\\
&\frac{\lambda_k''}{\lambda_k^3} - 2 \big(\frac{\lambda_k'}{\lambda_k^2}\big)^2 = -\frac{16}{\pi}\cdot\frac{\sum_{j=k+1}^n(-1)^{j-k-1}\lambda_j^2}{\lambda_k^2}, \quad 1\leq k\leq n-1,
\end{align*}
admits a smooth solution $\big(\lambda_j(t)\big)_{j=1}^n$ on $(0, t_0]$, such that the $\lambda_j$, $1\leq j<n$, satisfy the bounds 
\[
\lambda_j(t) \geq e^{c\int_t^{t_0}\lambda_{j+1}(s)\,ds},\qquad 1\leq j<n,
\]
for suitable $c>0$, as well as 
\begin{align*}
&\big|\lambda_j^{(k)}\big|\leq E_{k,\nu,n} \lambda_j^{1+k\nu},\\
&\big|\lambda_j^{(k)}\big|\leq F_{k}\lambda_{j+1}^k\lambda_j. 
\end{align*}
for any $\nu>0$. In particular, the $\lambda_j$ display tower exponential growth. The same asymptotics hold provided we replace $\lambda_n$ by $\lambda_n\cdot (1+d_n)$ where 
\[
(t\partial_t)^kd_n(t)\lesssim_k |d_n(t)|\lesssim (|\log t|)^{-C}\,\forall k\geq 0,\,C>0,
\]
and we replace $\sum_{j=k+1}^n(-1)^{j-k-1}\lambda_j^2$ by 
\[
\big(\sum_{j=k+1}^n(-1)^{j-k-1}\lambda_j^2\big)\cdot (1+e_{n,k}(t))
\]
where $e_n$ satisfies the same type of bounds as $d_n$. 
\end{prop}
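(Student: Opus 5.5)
We argue by downward induction on $k$, from $k=n-1$ down to $k=1$, applying Lemma~\ref{lem:simpleinductivelambdaone} at each step with
\[
\overline{\lambda}_{2}(t) := \Big(\tfrac{16}{\pi}\sum_{j=k+1}^n(-1)^{j-k-1}\lambda_j^2(t)\Big)^{1/2},
\]
and setting $\lambda_k := \lambda_1$ from the lemma. The inductive hypothesis at stage $k$ is that $\lambda_{k+1},\dots,\lambda_n$ have been constructed on some $(0,t^{(k+1)}]$. They are smooth, positive and decreasing, and satisfy both derivative bounds $|\lambda_j^{(m)}|\lesssim \lambda_j^{1+m\nu}$ and $|\lambda_j^{(m)}|\lesssim \lambda_{j+1}^m\lambda_j$. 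Moreover $\lambda_{j}\ge e^{c\int_t\lambda_{j+1}}$, so that $\lambda_{j+1}\lesssim_N \lambda_j^{1/N}$ for every $N$ by \eqref{eq:integraldominance}.

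\emph{Base case $k=n-1$.} Here $\overline{\lambda}_2=\sqrt{16/\pi}\,\lambda_n$. We check the hypotheses of the lemma directly. First, $t\lambda_n=|\log t|^\beta\to\infty$. Second, $\lambda_n^{(m)}=O(t^{-1-m}|\log t|^\beta)$, while $\lambda_n^{1+m\nu}\ge t^{-1-m\nu}|\log t|^{\beta}$. Since $\nu<1$, this comparison is the wrong way round, so the bound $|\lambda_n^{(m)}|\lesssim\lambda_n^{1+m\nu}$ fails. It is precisely here that we must read the hypothesis as required only for large $\nu$, or else weaken it. Inspecting the proof of the lemma, what is actually used is that $E$ and $o(1)$ are small, i.e.\ $|\overline\lambda_2^{(m)}|\lesssim \overline\lambda_2^{m+1}\cdot o(1)$; this holds for $\lambda_n$ since $t\lambda_n\to\infty$. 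I would therefore verify that the lemma's proof runs under the assumption $|\overline\lambda_2^{(m)}|\le C_m t^{-m}\overline\lambda_2$ together with $t\overline\lambda_2\to\infty$. This gives $|E^{(m)}|\lesssim \overline\lambda_2^{-3}(t\overline\lambda_2)^{-2}t^{-m}$, which suffices for the fixed point. For all later steps the original hypothesis with small $\nu$ holds anyway.

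\emph{Inductive step.} Because $\lambda_{k+2}^2\lesssim \lambda_{k+1}^{2\nu}$ for every $\nu$, the alternating sum satisfies
\[
\sum_{j=k+1}^n(-1)^{j-k-1}\lambda_j^2=\lambda_{k+1}^2\bigl(1+O(\lambda_{k+1}^{-2+})\bigr),
\]
so it is positive near $0$ and $\overline\lambda_2\sim\lambda_{k+1}$. Monotonicity of $\overline\lambda_2$ follows from the derivative bounds. Next, $t\overline\lambda_2\ge t\lambda_n\to\infty$. The bound $|\overline\lambda_2^{(m)}|\lesssim \overline\lambda_2^{1+m\nu}$ follows from the Leibniz rule applied to $\lambda_j^2$, combined with $\lambda_j^{(m)}\lesssim\lambda_j^{1+m\nu}$ and $\lambda_j\le\lambda_{k+1}$, and then from taking a square root of a quantity comparable to $\lambda_{k+1}^2$ (Fa\`a di Bruno). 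The lemma then produces $\lambda_k$ on $(0,t^{(k)}]$ with $\lambda_k\ge e^{c\int_t\overline\lambda_2}\ge e^{c'\int_t\lambda_{k+1}}$, together with both derivative bounds. Bound \eqref{eq:app_lam_2} gives $|\lambda_k^{(m)}|\lesssim \overline\lambda_2^m\lambda_k\sim\lambda_{k+1}^m\lambda_k$. After $n-1$ steps we take $t_0=\min_k t^{(k)}$, which depends only on $n$ and $\beta$. Iterating the lower bound, starting from $\int_t\lambda_n\sim|\log t|^{\beta+1}$, gives the exponential tower.

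\emph{Perturbed version.} Replacing $\lambda_n$ by $\lambda_n(1+d_n)$, or the sum by its product with $(1+e_{n,k})$, changes $\overline\lambda_2$ by a factor $(1+O(|\log t|^{-C}))$ whose $t\partial_t$-derivatives are controlled. Hence the same hypotheses, in the weakened form used at the base, continue to hold, and the argument is unchanged.

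The main obstacle is the base step. There one must confirm that Lemma~\ref{lem:simpleinductivelambdaone} (in particular the construction of $E$, the bound $o(1)\to0$, and the bootstrap for $w$) only needs the symbol-type bounds in $t$ that $|\log t|^\beta/t$ actually satisfies. It is also where the weak growth of $t\lambda_n$ makes the smallness of $E$ marginal.
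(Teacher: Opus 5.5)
Your argument is correct, and for the steps $k\le n-2$ it is the paper's: a downward induction that feeds $\overline{\lambda}_2=\big(\tfrac{16}{\pi}\sum_{j=k+1}^n(-1)^{j-k-1}\lambda_j^2\big)^{1/2}\sim\lambda_{k+1}$ into Lemma~\ref{lem:simpleinductivelambdaone}. The genuine difference is the base case $k=n-1$. The paper does not apply the lemma there; it imports $\lambda_{n-1}\geq e^{c|\log t|^{\beta+1}}$ and the asserted bounds from \cite{JenKri}. You instead re-run the lemma's proof under the symbol hypothesis $|\overline{\lambda}_2^{(m)}|\lesssim_m t^{-m}\overline{\lambda}_2$, $t\overline{\lambda}_2\to\infty$. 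You correctly note that $\lambda_n=|\log t|^\beta/t$ violates $|\lambda_n^{(m)}|\lesssim\lambda_n^{1+m\nu}$ for every $\nu<1$, which is exactly why the paper must cite. Your route makes the proposition self-contained for all $\beta>0$; the paper's is shorter but leaves this mismatch implicit.

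The re-run needs one correction. Your bound $|E^{(m)}|\lesssim\overline{\lambda}_2^{-3}(t\overline{\lambda}_2)^{-2}t^{-m}$ is false. Under your hypothesis the individual terms of $E$, e.g.\ $\overline{\lambda}_2'''/(4\overline{\lambda}_2^{4})$, are only $O((t\overline{\lambda}_2)^{-3})$, so the right bound is $|E^{(m)}|\lesssim_m(t\overline{\lambda}_2)^{-3}t^{-m}$. For $\lambda_n$ this is a power of $|\log t|^{-1}$, far larger than your $t^3|\log t|^{-5\beta}$.

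This still suffices, but not in the paper's norm $\|\overline{\lambda}_2^3w\|_{L^\infty}$. Since $(t\overline{\lambda}_2)^{-3}$ is increasing in $t$ and $2\overline{\lambda}_2(s)(1+o(1))\geq\overline{\lambda}_2(t)$ for $s\in(0,t]$, the fixed point closes in $\sup_t\overline{\lambda}_2(t\overline{\lambda}_2)^3|w|$. This gives $g=O((t\overline{\lambda}_2)^{-1})$ and $|\alpha^{(m)}|\lesssim\overline{\lambda}_2^m$, and with them the rest of the lemma's conclusions.

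Two smaller fixes are needed in the inductive step.
\begin{itemize}
\item Your inductive hypothesis should not contain $|\lambda_n^{(m)}|\lesssim\lambda_n^{1+m\nu}$, since you showed it fails. For the $j=n$ term of the sum, use instead $|(\lambda_n^2)^{(m)}|\lesssim t^{-m}\lambda_n^2\lesssim_\delta\lambda_{k+1}^{\delta}$ for every $\delta>0$. This holds because $\lambda_{k+1}\geq\lambda_{n-1}\geq e^{c|\log t|^{\beta+1}}$.
\item Monotonicity of the new $\overline{\lambda}_2$ does not follow from upper derivative bounds. It follows from $\lambda_{k+1}'=\alpha'\lambda_{k+1}$ with $-\alpha'\sim\lambda_{k+2}$ from the previous step. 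Then $-(\lambda_{k+1}^2)'\gtrsim\lambda_{k+2}\lambda_{k+1}^2$, which dominates the derivatives of the remaining terms.
\end{itemize}
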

\begin{proof} It was shown in \cite{JenKri} that one obtains for $\lambda_{n-1}\geq e^{c|\log t|^{\beta+1}}$ the bounds asserted in the proposition, and for the following $\lambda_j$, $j<n-1$, the assertion follows by noting that 
\begin{align*}
\overline{\lambda}_{k+1}: = \sqrt{\sum_{j=k+1}^n(-1)^{j-k-1}\lambda_j^2}
\end{align*}
satisfies the same bounds as $\lambda_{k+1}$ on $(0, t_0]$, for $t_0$ sufficiently small. Indeed, inductive application of the preceding lemma leads to this conclusion. The last modification of the lemma follows similarly. 
\end{proof}

In the following, we shall have to modify slightly the equation \eqref{eq:lambdaoneleadingorder} defining $\lambda_1$ from the preceding scaling rates. So we state here a 'perturbed version' of Lemma~\ref{lem:simpleinductivelambdaone}, as follows:
\begin{lem}\label{lem:simpleinductivelambdaonewithm} Let $\overline{\lambda}_2$ be as in Lemma~\ref{lem:simpleinductivelambdaone}, and set 
\[
\tau: = e^{\int_t^{t_0}\overline{\lambda}_2(s)\,ds},\,t\in (0, t_1],
\]
with $0<t_1\leq t_0$. Further, let $m\in C^\infty\big((0, t_0]\big)$ satisfy the bound 
\[
\big|m(t)\big|\leq \tau^{-\frac12},\,t\in (0, t_0]. 
\]
Then the perturbed equation 
\[
\frac{\lambda_1''}{\lambda_1^3} - 2 \big(\frac{\lambda_1'}{\lambda_1^2}\big)^2 = -\frac{\big(\overline{\lambda}_2 + m\big)^2}{\lambda_1^2},
\]
admits a solution $\lambda_1\in C^\infty\big((0, t_1]\big)$, provided $0<t_1$ is sufficiently small. This solution satisfies the bounds\footnote{Here we use the operator $t\partial_t$ instead of $\partial_t$, as it will appear naturally at later stages of the proof.} (with any $\nu>0$)
\begin{align*}
&\big|(t\partial_t)^k\lambda_1\big|\lesssim_{\nu,k}\lambda_1^{1+\nu k}\cdot\big(1 + \tau^{-\frac12}\big\|m\big\|_{\frac12,(k-2)_+}\big)^k,\\
&\big|(t\partial_t)^k\lambda_1\big|\lesssim_{\nu,k}\bar{\lambda}_2^{k}\lambda_1\cdot\big(1 + \tau^{-\frac12}\big\|m\big\|_{\frac12,(k-2)_+}\big)^k,
\end{align*}
where we utilize the norms (with $p>0, l\geq 0$)
\[
\big\|m\big\|_{p,l}: = \sum_{j=0}^l\big\|\tau^{p}(t\partial_t)^jm(t)\big\|_{L^\infty((0,t_0])}.
\]
\end{lem}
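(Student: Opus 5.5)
The plan is to repeat the proof of Lemma~\ref{lem:simpleinductivelambdaone}, treating $m$ as a perturbation of $\overline{\lambda}_2$ that decays faster than any power of $\overline{\lambda}_2$.

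\textbf{Reduction to a Riccati equation.} Set $\lambda_1=e^{\alpha}$ and $\zeta=(\alpha')^{-1}$. As before this gives
\[
\zeta'=\big((\overline{\lambda}_2+m)\zeta\big)^2-1 .
\]
I use the same ansatz as before,
\[
\zeta=-\frac{1}{\overline{\lambda}_2}-\frac{\overline{\lambda}_2'}{2\overline{\lambda}_2^3}+\frac{\overline{\lambda}_2''}{4\overline{\lambda}_2^4}-\frac{5(\overline{\lambda}_2')^2}{8\overline{\lambda}_2^5}+w ,
\]
and keep it built from $\overline{\lambda}_2$ alone. Then $w$ satisfies an equation of the form \eqref{eq:weqn}, namely
\[
w'+\tilde{\lambda}_2 w=E+E_m+\big((\overline{\lambda}_2+m)w\big)^2 .
\]
The coefficient $\tilde{\lambda}_2$ is now $2(\overline{\lambda}_2+m)^2\overline{\lambda}_2^{-1}(1+o(1))$, which is still $2\overline{\lambda}_2(1+o(1))$. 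The new source is
\[
E_m=\big(2\overline{\lambda}_2 m+m^2\big)\,\zeta_0^2, \qquad \zeta_0=\text{the explicit part of the ansatz}.
\]
It satisfies $|E_m|\lesssim \overline{\lambda}_2^{-1}\tau^{-1/2}$.

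\textbf{Size of the new source.} By \eqref{eq:integraldominance}, $\tau^{-1/2}\lesssim_N\overline{\lambda}_2^{-N}$ for every $N$. So $E_m$ is at least as small as $E$, namely $O(\overline{\lambda}_2^{-3+})$, and the fixed-point argument based on \eqref{eq:wfixedpoint} goes through verbatim. The only required property of the kernel is $e^{-\int_{t'}^t\tilde{\lambda}_2}\le1$ for $t'\le t$, which still holds. This produces a continuous solution $w$ with $|w|\lesssim\overline{\lambda}_2^{-3+}$ on $(0,t_1]$. It follows that
\[
\zeta=-\overline{\lambda}_2^{-1}(1+g), \qquad \alpha(t)=\int_t^{t_1}\overline{\lambda}_2(1+g)^{-1}\,ds ,
\]
so $\lambda_1$ is smooth and positive.

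\textbf{Derivative bounds.} These are the main work. Since $t\le t_0\ll1$, each factor $t\partial_t$ costs at most $t\overline{\lambda}_2\le\overline{\lambda}_2$. I will differentiate the Duhamel formula for $w$ by applying $t\partial_t$ to the equation rather than to the kernel. This gives
\[
(t\partial_t w)'+\tilde{\lambda}_2\,(t\partial_t w)=t\partial_t(\text{RHS}) - w' - (t\partial_t\tilde{\lambda}_2)\,w ,
\]
and I estimate inductively in $k$. Derivatives falling on $E$ and on $\overline{\lambda}_2$ are controlled exactly as in the unperturbed lemma. Derivatives falling on $m$ are controlled by $\tau^{-1/2}\|m\|_{\frac12,j}$. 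When $\lambda_1^{(k)}$ is expressed through $\alpha$ (Faà di Bruno), the $k$-th derivative of $\lambda_1$ involves $\alpha^{(j)}$ only for $j\le k$. Through $g$ and $w$, these involve $m$ with at most $k-2$ derivatives: $\alpha'$ contains $w$, which is one integration smoother than the source, and the $m$-dependence sits in the source. Multilinear products of up to $k$ such factors yield the factor $\big(1+\tau^{-1/2}\|m\|_{\frac12,(k-2)_+}\big)^k$.

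This gives $|(t\partial_t)^k\lambda_1|\lesssim\overline{\lambda}_2^k\lambda_1(1+\cdots)^k$, which is the second bound. The first bound then follows from $\overline{\lambda}_2^k\lesssim_{k,\nu}\lambda_1^{\nu k}$, exactly as at the end of the proof of Lemma~\ref{lem:simpleinductivelambdaone}.

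\textbf{Main obstacle.} I expect the main difficulty to be the derivative-counting bookkeeping that justifies the index $(k-2)_+$. Keeping the pure $\overline{\lambda}_2$ part of the ansatz and putting all $m$-dependence into $w$ should make this transparent.
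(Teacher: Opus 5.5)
Your proposal is correct and is essentially the paper's argument. You use the same Riccati variable $\zeta=(\alpha')^{-1}$, the same forward Duhamel formula with kernel $e^{-\int_{t'}^{t}\tilde{\lambda}_2(s)\,ds}\le 1$, and the same conversion of $\overline{\lambda}_2^k$ into $\lambda_1^{\nu k}$.

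The one structural difference is where the perturbation is placed. The paper perturbs around the exact unperturbed solution, writing $(\beta')^{-1}=\zeta+\nu$, and contracts for $\nu$ in $\|\cdot\|_{\frac12,0}$, which gives $\|\nu\|_{\frac12,0}\lesssim\|m\|_{\frac12,0}$. You instead fold the $m$-perturbation into $w$ and use only $|w|\lesssim\overline{\lambda}_2^{-3+}$, via $\tau^{-\frac12}\lesssim_N\overline{\lambda}_2^{-N}$. That suffices for the lemma as stated and is slightly simpler. However, it discards the quantitative $\tau^{-\frac12}$-closeness to the unperturbed solution, which is exactly what the paper uses next for the differencing bounds \eqref{eq:nuderivativebounds} and \eqref{eq:lambdaonedifferencing}.

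On the derivative count, your index $(k-2)_+$ is the right one; the paper's proof as written actually only records $\|m\|_{\frac12,k-1}$. To obtain $(k-2)_+$, read off $t\partial_t w=t\big(E+E_m+((\overline{\lambda}_2+m)w)^2-\tilde{\lambda}_2 w\big)$ directly from the equation and differentiate that identity. Do not solve a new Duhamel problem for $t\partial_t w$: the source of your differentiated equation contains $t\partial_t m$, through $t\partial_t E_m$ and $t\partial_t\tilde{\lambda}_2$. A Duhamel estimate of it therefore costs one extra derivative of $m$ and yields only the index $k-1$. Also, in that differentiated equation the $w'$ term enters with a $+$ sign, not $-$.
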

\begin{proof} Let $\zeta$ be as in the proof of the preceding Lemma~\ref{lem:simpleinductivelambdaone}. We shall then strive to construct 
\begin{equation}\label{eq:lambdabetaprime}
\lambda_1(t) = e^{\int_{t_0}^{t}\beta'(s)\,ds}, 
\end{equation}
where we set 
\begin{equation}\label{eq:betaprimezetanu}
\big(\beta'(s)\big)^{-1} = \zeta(s) + \nu(s),
\end{equation}
with $\nu(s)$ playing a purely perturbative role for $0<s$ small enough. Then $\nu$ needs to satisfy the following equation:
\begin{equation}\label{eq:nueqn}
\nu' = \big(m^2 + 2\overline{\lambda}_2m\big)\zeta ^2+ \big(\nu^2 + 2\nu\zeta\big)\big(\overline{\lambda}_2 + m\big)^2.
\end{equation}
Writing 
\[
\tilde{\lambda}_2: = 2\zeta\big(\overline{\lambda}_2 + m\big)^2, \qquad G(t,\nu): = \big(m^2 + 2\overline{\lambda}_2m\big)\zeta ^2 + \nu^2\big(\overline{\lambda}_2 + m\big)^2, 
\]
we can characterize $\nu$ as a solution of the following fixed-point problem:
\begin{equation}\label{eq:nufixedpoint}
\nu(t) = e^{-\int_t^{t_0} \tilde{\lambda}_2(s)\,ds}\cdot \int_0^{t} e^{\int_{t'}^{t_0} \tilde{\lambda}_2(s')\,ds'}\cdot G(t', \nu)\,dt'.
\end{equation}
Note that since $\zeta(t)<0$ for $0<t\ll 1$, we have 
\[
e^{-\int_t^{t_0} \tilde{\lambda}_2(s)\,ds}\cdot  e^{\int_{t'}^{t_0} \tilde{\lambda}_2(s')\,ds'} =  e^{\int_{t'}^{t} \tilde{\lambda}_2(s')\,ds'}\leq 1,\qquad 0<t'\leq t. 
\]
Then we observe that due to \eqref{eq:integraldominance} we have the estimate 
\begin{align*}
\big\|\nu^2\big(\overline{\lambda}_2 + m\big)^2\big\|_{\frac12,0}\ll \big\|\nu\big\|_{\frac12,0}^2,     
\end{align*}
provided $t_0>0$ is small enough, and furthermore the proof of Lemma~\ref{lem:simpleinductivelambdaone} implies that 
\[
\big\|\big(m^2 + 2\overline{\lambda}_2m\big)\zeta^2\big\|_{\frac12,0}\lesssim \big\|m\big\|_{\frac12,0}. 
\]
It then follows that the map 
\[
\nu\longrightarrow e^{-\int_t^{t_0} \tilde{\lambda}_2(s)\,ds}\cdot \int_0^{t} e^{\int_{t'}^{t_0} \tilde{\lambda}_2(s')\,ds'}\cdot G(t', \nu)\,dt'
\]
is a contraction with respect to the norm $\|\cdot\|_{\frac12,0}$, provided $t_0>0$ is sufficiently small. In turn we infer the existence of a fixed point $\nu\in C^0\big((0,t_0]\big)$ satisfying 
\[
\big\|\nu\big\|_{\frac12,0}\lesssim \big\|m\big\|_{\frac12,0}.
\]
For the derivative bounds, as in the proof of Lemma~\ref{lem:simpleinductivelambdaone} we have 
\begin{align*}
\big|(t\partial_t)^k\big( e^{\int_{t'}^{t} \tilde{\lambda}_2(s')\,ds'}\big)\big|\lesssim_{\nu, k}\lambda_1^{\nu k}\cdot\big(1 + \tau^{-\frac12}\big\|m\big\|_{\frac12,k-1}\big)^k,\,k\geq 0. 
\end{align*}
We conclude first that 
\begin{align*}
\big\|(t\partial_t)\nu\big\|_{\frac12,0}\lesssim_{\nu}\lambda_1^{\nu}\cdot\big(1 + \tau^{-\frac12}\big\|m\big\|_{\frac12,0}\big), 
\end{align*}
where we also took advantage of the estimate for $\big\|\nu\big\|_{\frac12,0}$. Proceeding inductively on $k$, we then infer the more general higher-order bounds 
\begin{align*}
\big\|(t\partial_t)^k\nu\big\|_{\frac12,0}\lesssim_{\nu,k}\lambda_1^{\nu k}\cdot\tau^{-\frac12}\big\|m\big\|_{\frac12,k-1}\big(1 + \tau^{-\frac12}\big\|m\big\|_{\frac12,k-1}\big)^k, 
\end{align*}
Recalling the definition of $\lambda_1$ and $\beta'$ at the beginning of this proof, as well as the fact that 
\[
\overline{\lambda}_2(t)\ll \tau^{0+}
\]
for sufficiently small $t>0$, we infer that 
\[
\beta'(s)\sim \zeta^{-1}(s),\,s\ll 1,
\]
and furthermore, we deduce the bounds 
\begin{align*}
\big|(t\partial_t)^k\big(\lambda_1(t)\big)\big| = \big|(t\partial_t)^k\big(e^{\int_{t_0}^{t}\beta'(s)\,ds}\big)\big|\lesssim_{\nu, k}\lambda_1^{1+\nu k}\cdot\big(1 + \tau^{-\frac12}\big\|m\big\|_{\frac12,k-1}\big)^k,\,k\geq 0,
\end{align*}
provided $0<t\leq t_0$ with $t_0$ sufficiently small. The bound with $\bar{\lambda}_2$ instead of $\lambda_1^{\nu}$ is obtained similarly. 
\end{proof}

For later purposes, it shall be important to understand how $\lambda_1$ changes upon modifying $m$. Letting $\phi = \phi(t;m)$ a function, we use the notation
\begin{equation}\label{eq:differencingnotation}
\triangle \phi(t;m,n) : = \phi(t;m+n) - \phi(t;m). 
\end{equation}
In particular, we set 
\[
\triangle\big(e^{\int_{t'}^{t} \tilde{\lambda}_2(s)\,ds}\big)\big(t;m,n\big) = e^{\int_{t'}^{t} \tilde{\lambda}_2(s;m+n)\,ds} - e^{\int_{t'}^{t} \tilde{\lambda}_2(s;m)\,ds},
\]
where we use the notation $\tilde{\lambda}_2(s;m)$ to emphasize the dependency of the function on $m$. Letting
\[
H(t;m,n;\alpha): = e^{\int_{t'}^{t} \tilde{\lambda}_2(s;m+\alpha n)\,ds},\,\alpha\in [0,1], 
\]
we can write 
\[
\triangle\big(e^{\int_{t'}^{t} \tilde{\lambda}_2(s)\,ds}\big)\big(t;m,n\big) = \int_0^1\frac{\partial}{\partial\alpha}H(t;m,n;\alpha)\,d\alpha.
\]
Exploiting the fact that $\tilde{\lambda}_2(s;m+\alpha n)\leq 0$ for $t\ll 1$, as well as the explicit form of $\tilde{\lambda}_2$ and the estimates from Lemma~\ref{lem:simpleinductivelambdaone}, we deduce that (for $p\geq \frac12$, say)
\begin{equation}\label{eq:differencingbound1}
\big\|\triangle\big(e^{\int_{t'}^{t} \tilde{\lambda}_2(s)\,ds}\big)\big(t;m,n\big)\big\|_{p,0}\lesssim \tau^{-p}\big\|n\big\|_{p,0},
\end{equation}
provided we impose the a priori bound 
\begin{equation}\label{eq:mnbasicbound}
\big\|m\big\|_{p,0} + \big\|n\big\|_{p,0}\leq 1, 
\end{equation}
and we assume that $t_0$ (as appearing in the definition of $\|\cdot\|_{p,l}$) is small enough. Furthermore, applying the differencing operator $\triangle$ to the fixed point relation \eqref{eq:nufixedpoint}, and further the Leibniz type rule 
\begin{equation}\label{eq:Leibnizdifferencing}\begin{split}
&\triangle\big(f\cdot g\big)(t;m,n)\\
& = \big(\triangle f\cdot  \triangle g\big)(t;m,n) + f(t;m)\cdot \triangle g(t;m,n) +  g(t;m) \cdot \triangle f(t;m,n),
\end{split}\end{equation}
we derive via a bootstrap argument that 
\begin{align*}
\big\|\triangle \nu(\cdot;m,n)\big\|_{p,0}\lesssim \big\|n\big\|_{p,0},\,p\geq \frac12. 
\end{align*}
As far as higher derivatives are concerned, recalling the basic assumption on $\overline{\lambda}_2$ as formulated in Lemma~\ref{lem:simpleinductivelambdaone}, as well as using induction on the number of derivatives and bootstrap arguments as before, we deduce that (for $p\geq \frac12$)
\begin{equation}\label{eq:nuderivativebounds}
\big|(t\partial_t)^l\triangle \nu(t;m,n)\big|\lesssim_{p,l}\tau^{-p+}\cdot\big(1+\|m\|_{p,l}+ \|n\|_{p,l}\big)^{l-1}\cdot \big\|n\big\|_{p,l}. 
\end{equation}
Recalling the relations \eqref{eq:betaprimezetanu}, \eqref{eq:lambdabetaprime}, we then find that, with $\lambda_1$ determined as in Lemma~\ref{lem:simpleinductivelambdaonewithm}, we have the differencing bound 
\begin{equation}\label{eq:lambdaonedifferencing}
\big|\lambda_1^{-1}\cdot\big(t\partial_t\big)^l\triangle\lambda_1(t;m,n)\big|\lesssim_{l} \tau^{-p+}\cdot\big(\Big\|m\Big\|_{p,l} +\Big\|n\Big\|_{p,l}+1\big)^{l-1}\cdot\Big\|n\Big\|_{p,l}
\end{equation}
We also make the following simple observation concerning the 'time variable' associated to the new scale $\lambda_1$, defined by 
\[
\tau_1: = \int_t^{t_1}\lambda_1(s)\,ds. 
\]
Note that under the assumptions of the preceding lemma, we have
\begin{align*}
 \int_t^{t_1}\lambda_1(s)\,ds& = \int_t^{t_1}e^{-\int_{s}^{t_0}\beta'(s_1)\,ds_1}\,ds\\&   = \big(-\beta'(t)\big)^{-1}\cdot e^{-\int_{t}^{t_0}\beta'(s_1)\,ds_1} + C_1\\
 & - \int_t^{t_1}\frac{d}{ds}\big[\big(\beta'(s)\big)^{-1}\big]e^{-\int_{s}^{t_0}\beta'(s_1)\,ds_1}\,ds,
\end{align*}
where we let $C_1 = \big(\beta'(t_1)\big)^{-1}\cdot e^{-\int_{t_1}^{t_0}\beta'(s_1)\,ds_1}$. But then the relation \eqref{eq:betaprimezetanu}, and the fact that 
\[
\zeta+\nu \sim -\overline{\lambda}_2^{-1},\qquad \Big|\frac{d}{ds}\big(\zeta+\nu\big)\Big|\lesssim \overline{\lambda}_2^{-1+}
\]
imply that 
\begin{equation}\label{eq:importanttauoneasymptotic}
 \tau_1 = \int_t^{t_1}\lambda_1(s)\,ds\sim \frac{\lambda_1}{\overline{\lambda}_2}
\end{equation}
provided $t\ll 1$.

%%%%%%%%%%%%%%%%%%%%%%%%%%%%%%%%%%
%%%%%%%%%%%%%%%%%%%%%%%%%%%%%%%%%%
%%%%%%%%%%%%%%%%%%%%%%%%%%%%%%%%%%
%%%%%%%%%%%%%%%%%%%%%%%%%%%%%%%%%%

%\newpage
\bigskip
\section{Bounds for the linearisation around the outer $n-1$ bubble $\bfcq_{n-1}$}\label{sec:linout}

In the next sections, we will need a priori bounds for the propagator associated with the linearisation around the full outer profile $\bfcq_{n-1}$. More specifically, we will need bounds for the inhomogeneous linear equation with suitably decaying source terms. Consider the equation \begin{align}\label{linout1}
-\partial_t^2 h + \partial_r^2 h + \dfrac{1}{r} \partial_r h - \dfrac{4}{r^2}\cos\big( 2\bfcq_{n-1}\big)h  = F
\end{align}
We introduce the time variables \begin{align}\label{linout20}
    \tau &:= \exp\bigg( c \int_t^{t_0} \lam_2(s)ds\bigg), \qquad  \tau_2  := \int_t^{t_0} \lam_2(s)ds, \qquad c>0.
\end{align}
One can think of $\tau$ as being an approximation of $\lambda_1$, which we use here for simplicity. A particularly useful observation that we will use later is that 
\[
\tau_2 \sim \dfrac{\lambda_2(t)}{\lambda_3(t)}\bigg(1 + O \Big( \dfrac{  \lambda_3'(t)}{\big(\lambda_3(t)\big)^2} \Big)\bigg)  \quad \hbox{ for } \quad t\sim 0+,
\]
which can easily be seen from Laplace Method, provided \[
\dfrac{\vert \lambda_3'(t)\vert}{\lambda_3^2(t)} \to 0 \quad \hbox{ as } \quad t\to 0+,
\]
which is our present case (see Lemma \ref{prop:exponentialtowers1}). The following Lemma is the main result of this section. 
\begin{lem}\label{lem:linout_1}
Let $M\geq 2$ be fixed. Suppose that the source term $F$ satisfies \begin{align}\label{linout16}
\lambda_2^{-1}\Vert S^{\ell}F\Vert_{L^2_{rdr}}\lesssim \tau^{-p}, \qquad p>0, \quad 0\leq \ell \leq M.
\end{align}
Then, there exists $t_0=t_0(p,M,\beta)$ such that, for any $0<\varepsilon\ll1$, \eqref{linout1} admits a solution on $(0,t_0]$ satisfying the bounds
\begin{align}\label{linout10}
    \big\Vert S^\ell h(t,\cdot)\big\Vert_{\boldsymbol{\mathcal{H}}^1_{rdr}} \lesssim_{\ell,\varepsilon} \tau^{-(p-\ell \varepsilon)}, \qquad 0\leq \ell \leq M,
\end{align}
where the energy norm $\Vert \cdot\Vert_{\boldsymbol{\mathcal{H}}^1_{rdr}}$ is defined as \begin{align}\label{linout18}
\Vert h\Vert_{\boldsymbol{\mathcal{H}}^1_{rdr}} := \Vert \mathcal{L}^{1/2}_t h\Vert_{L^2_{rdr}} + \Vert \partial_t h\Vert_{L^2_{rdr}}+\lambda_2(t) \,\Vert h\Vert_{L^2_{rdr}},
\end{align}
having denoted by $\mathcal{L}_t:=-\partial_r^2-\tfrac{1}{r}\partial_r+\tfrac{4}{r^2}\cos(Q_2)$. Moreover, the following estimate hold \begin{align}\label{linout12}
\Big\Vert \dfrac{1}{r^2}S^\ell h\Big\Vert_{L^2_{rdr}} + \Big\Vert \dfrac{1}{r^2}S^{\ell}h \Big\Vert_{L^4_{rdr}} \lesssim_{\ell,\beta,p,\varepsilon} \tau^{-(p-(\ell+1)\varepsilon)}, \qquad 0\leq \ell\leq M-2.
\end{align}
\end{lem}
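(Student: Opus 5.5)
We construct $h$ by solving backwards from the singular time $t=0$, with zero data at $t=0$, through a Duhamel integral:
\[
h(t)=\int_0^t U(t,s)F(s)\,ds .
\]
Here $U(t,s)$ is the propagator of the linear operator in \eqref{linout1}. Since the source decays like $\tau^{-p}$, and $\tau=\exp(c\tau_2)$ blows up super-polynomially in $\tau_2$ as $t\to0^+$, the integral converges. We obtain an actual solution as the limit of solutions $h_{t_*}$ with vanishing data at $t=t_*$, letting $t_*\to 0^+$. The estimates are proved as uniform a priori bounds on these approximations.

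The first step is to split the potential. We write
\[
\cos\big(2\bfcq_{n-1}\big)=\cos\big(2Q(\lambda_2 r)\big)+V_{\mathrm{err}},
\]
where $V_{\mathrm{err}}$ collects the contributions of the bubbles $\lambda_j$ with $j\ge3$ and of the radiation $w_{n-1}$. By the inductive hypotheses \eqref{eq:intro_wn-1_decay_indhyp}, \eqref{eq:intro-vn-1refined} and \eqref{eq:intro-vn-1boundsrefined}, and the scale separation $\lambda_3\ll\lambda_2$, we expect
\[
\Big|\frac{V_{\mathrm{err}}}{r^2}\Big|\lesssim \lambda_2^2\,\tau_2^{-2+}+\lambda_j^2\ \text{terms},
\]
valid on $r\lesssim t$. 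In the regime where $\lambda_2 r\gtrsim 1$, this is bounded by $\tau_2^{-2}\lambda_2^2$ up to the $\lambda_3^2$ part, which is $\lambda_2^2\tau_2^{-2}$ again by $\tau_2\sim\lambda_2/\lambda_3$.

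The second step is an energy estimate for the model equation with potential $\cos(2Q(\lambda_2 r))$. We pass to $R=\lambda_2 r$ and the time $\tau_2$, so that the spatial operator becomes $\lambda_2^2\mathcal L$. We then use the energy functional built from $\|\mathcal L_t^{1/2}h\|^2+\|\partial_t h\|^2$. Because $\mathcal L\ge0$ and its only resonance is the zero mode $\Phi_0$, we also control $\lambda_2\|h\|_{L^2}$. This control comes from integrating $\partial_t h$ in time, which costs at most a factor $\tau_2$, harmless against $\tau^{-p}$ once the time integration is done.

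The time derivative of the energy produces commutator terms from $\partial_t\lambda_2$, of size $(\lambda_2'/\lambda_2)E\lesssim \lambda_2^{0+}\lambda_3E$. It also produces the error potential, of size $\lambda_2\tau_2^{-2}E$. Both are integrable against the exponential weight: since
\[
\frac{d}{dt}\tau^{-p}=pc\lambda_2\tau^{-p},
\]
a Gronwall argument in $t$ with weight $\tau^{p}$ closes as long as the perturbative coefficients are $o(\lambda_2)$. They are, so we obtain
\[
\|h\|_{\bch^1_{rdr}}\lesssim \int_0^t \|F(s)\|_{L^2}\,ds\lesssim \int_0^t\lambda_2\tau^{-p}\,ds\lesssim\tau^{-p}.
\]
This is exactly where exponential (rather than polynomial) decay makes the error noncritical.

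The third step handles the vector fields. We apply $S^\ell$ to \eqref{linout1}. The commutator $[S,\Box]=-2\Box$ is harmless, while the commutator with the potential gives
\[
S\big(\cos(2\bfcq_{n-1})\big)r^{-2}.
\]
This is bounded by the potential times $(1+\lambda_2^{\varepsilon})$, using the derivative bounds of Proposition~\ref{prop:exponentialtowers1} and the $S^k$-bounds in the inductive hypothesis. Since $\lambda_2^{\varepsilon}\lesssim\tau^{\varepsilon}$, induction on $\ell$ costs $\tau^{\varepsilon}$ per derivative and yields \eqref{linout10}.

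For \eqref{linout12}, we use the equation to write
\[
\lambda_2^2\mathcal L(S^\ell h)=-\partial_t^2 S^\ell h+\ldots .
\]
We express $\partial_t^2$ through $S$ and $t\partial_t$, with one additional $S$ absorbed by losing $\varepsilon$ and using $\ell\le M-2$. An elliptic Hardy-type estimate for $\mathcal L$ in the $k=2$ setting, where the $4/r^2$ term dominates near the origin, then gives control of $r^{-2}S^\ell h$ in $L^2$. The $L^4$ bound follows by interpolation or Sobolev embedding in two dimensions against $\|\nabla(r^{-1}\cdot)\|$.

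The main obstacle will be the energy estimate in the second step near the threshold resonance $\Phi_0$. There the energy does not control $\|h\|_{L^2}$ uniformly, and the slowly decaying radiation $w_{n-1,n}$ (only $|\log t|^{-1}$) must be shown to act only on scales $r\gtrsim\lambda_3^{-1}$, where the weight $\lambda_2^2\tau_2^{-2}$ takes over. I would first try to treat this by a localized energy splitting into regions $r\lesssim\lambda_3^{-1}$ and $r\gtrsim\lambda_3^{-1}$, using the inductive pointwise bounds in each region.
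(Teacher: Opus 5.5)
\textbf{Gap: the weighted bound \eqref{linout12}.} You want to read off $r^{-2}S^\ell h$ from $\lambda_2^2\mathcal{L}(S^\ell h)=-\partial_t^2S^\ell h+\dots$ after ``expressing $\partial_t^2$ through $S$ and $t\partial_t$''. This step fails as written. The $\bch^1$ bounds control only one time derivative of $S^kh$. The second time derivative cannot be traded for vector fields without producing a second-order spatial term: writing $t\partial_t=S-r\partial_r$ gives $t^2\partial_t^2h=-2t\partial_th-S^2h+Sh+2t\partial_tSh+r^2\partial_r^2h$.

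The term $\frac{r^2}{t^2}\partial_r^2S^\ell h$ is of the same order as the elliptic part, so it cannot be absorbed at the cost of $\tau^{\varepsilon}$. Moving it to the left produces the operator $\frac{t^2-r^2}{t^2}\partial_r^2+\frac1r\partial_r-\frac4{r^2}$, which degenerates on the light cone. The missing ingredient is an elliptic estimate for this degenerate operator, which the paper imports from \cite{KST2} as \eqref{linout15}. Alternatively, cut off to $r\ll t$ as in the proof of Lemma~\ref{lem:Tayloerexpansion}, where $\frac{r^2}{t^2}\partial_r^2$ is perturbative, and for $r\gtrsim t$ use $r^{-2}\lesssim t^{-1}r^{-1}$ together with \eqref{linout17}.

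Note also that a Hardy-type bound $\|R^{-2}f\|\lesssim\|\mathcal{L}f\|$ is false, since $\mathcal{L}\Phi_0=0$. The coercive operator is the free one, $\partial_r^2+\frac1r\partial_r-\frac4{r^2}$ (Rellich in $\R^6$ after writing $h=r^2g$). The term $\frac4{r^2}(\cos2\bfcq_{n-1}-1)h$ is a multiplier of size $\lambda_2^2$ and goes to the right, at the price of $\lambda_2\|h\|_{L^2}$.

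\textbf{The energy route for \eqref{linout10}.} This is genuinely different from the paper. The paper passes to $(\tau_2,R)$ and applies the distorted Fourier transform. It solves the zero-mode ODE and the continuous part by explicit kernels, and treats the transference terms and the error potential perturbatively in $X_2,Y_2$. Your argument avoids $\mathcal{K}$ entirely and does close, even without the paper's $q<p$ loss, but the bookkeeping must be fixed in three places.

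First, the term $\frac12\langle(\partial_t\mathcal{L}_t)h,h\rangle$ in $\frac{d}{dt}E$ is not bounded by $\frac{|\lambda_2'|}{\lambda_2}E$. It involves the $\Phi_0(\lambda_2\cdot)$-component of $h$, which $E$ does not see. It is bounded instead by $\frac{|\lambda_2'|}{\lambda_2}\lambda_2^2\|h\|^2_{L^2}\lesssim\lambda_3(\lambda_2\|h\|)^2$.

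Second, the time integration must then be lossless, and it is. Since $\frac{d}{ds}\tau^{-p}=cp\lambda_2\tau^{-p}$ and $\lambda_2$ is decreasing, $\lambda_2(t)\int_0^t\tau(s)^{-p}ds\le(cp)^{-1}\tau(t)^{-p}$. The commutator contribution therefore carries the small factor $\lambda_3/\lambda_2\sim\tau_2^{-1}$. With the $\tau_2$ loss you allow, it would instead carry a factor $\tau_2$, and the bootstrap would not close as written.

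With this, there is no remaining ``main obstacle''. The contribution of $w_{n-1,n}$ is already handled pointwise by \eqref{eq:intro-vn-1refined}--\eqref{eq:intro-vn-1boundsrefined}, which give \eqref{linout19} on $r\lesssim t$. For $r\gtrsim t$ the trivial bound $8r^{-2}\lesssim t^{-2}\ll\lambda_3^2\sim\lambda_2^2\tau_2^{-2}$ suffices.

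Third, in the $S^\ell$ step, use that $r^{-2}S(\cos2\bfcq_{n-1})$ is a bounded multiplier of size $\lambda_2^{2+}$, because it vanishes at $r=0$ (cf.\ \eqref{eq:vn-1bounds}). Bounding it by ``the potential times $\lambda_2^{\varepsilon}$'' would require $\|r^{-2}S^kh\|$, which is only available after \eqref{linout12}.
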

\begin{proof}
First of all, rewrite equation \eqref{linout1} as \begin{equation}\begin{aligned}\label{linout7}
& -\partial_t^2 h + \partial_r^2 h + \dfrac{1}{r} \partial_r h - \dfrac{4}{r^2}\cos\big( 2Q_2\big)h %
\\ & = F + \dfrac{4}{r^2}\Big( \cos\big(2\bfcq_{n-1}\big) - \cos(2 Q_2) \Big)h.
\end{aligned}\end{equation}
We begin by showing the desired bound \eqref{linout10} in the case $\ell=0$, and then proceed inductively to prove the general case $\ell\geq 1$.

Now, to handle the above equation, it is convenient to switch to coordinates in which the Schr\"odinger operator is no longer time dependent. Concretely, changing variables $R = \lambda_2(t) r$, and defining the re-scaled unknown  \[
\wt h(\tau_2,R) = R^{1/2} h\big( t(\tau_2),\, \lambda_2^{-1} R\big),
\]
using that \[
\partial_th(t,r)= \dfrac{(-\lambda_2)}{R^{1/2}} \partial_{\tau_2} \widetilde{h}  + \dfrac{(-\lambda_{2,\tau_2})}{R^{1/2}}R\partial_R \widetilde{h} + \dfrac{\lambda_{2,\tau_2}}{2R^{1/2}}\widetilde{h},
\]
and similarly for the second derivative, we find that  \begin{equation}\begin{aligned}\label{linout2}
& \bigg( - \Big( \partial_{\tau_2} + \dfrac{\lambda_{2,\tau_2}}{\lambda_2} R\partial_R \Big)^2 + \dfrac{1}{4}\Big(\dfrac{\lambda_{2,\tau_2}}{\lambda_2}\Big)^2 + \dfrac{1}{2} \partial_{\tau_2} \Big(\dfrac{\lambda_{2,\tau_2}}{\lambda_2}\Big) \bigg) \wt h - \mathcal{H} \wt h 
\\ & = R^{1/2}\bigg( \dfrac{1}{\lambda_2^2}F + \dfrac{4}{R^2}\Big( \cos(2\bfcq_{n-1}) - \cos(2Q_2)\Big) h  \bigg),
\end{aligned}\end{equation}
where the first large parenthesis on the left of \eqref{linout2} corresponds the contribution of $-\partial_t^2$ in the new variables. Next, for the sake of notation, set \begin{align*}
\omega_2 := \dfrac{ \partial_{ \tau_2   } \lambda_2}{\lambda_2} =\dfrac{1}{\lambda_2}\, \dfrac{d\lambda_2 / dt}{d\tau_2  /dt} = -\dfrac{\lambda_2'}{\lambda_2^2}, \qquad \hbox{and} \qquad \dot{\omega}_2=\partial_{\tau_2  } \omega_2.
\end{align*}
It is worth noting that \[
\omega_2 \sim \dfrac{1}{\tau_2} \qquad \hbox{ and that } \qquad \vert \dot{\omega}_2 \vert \sim \dfrac{1}{\tau_2^2}.
\]
Finally, define the operator $D_{\tau_2  } := \partial_{\tau_2  } -\omega_2 \mathcal{K}_d$, and \[
\mathcal{K}_d:= \begin{pmatrix}
1 & 0
\\ 0 & 2\xi \partial_\xi + 2 + \tfrac{\xi \rho'}{\rho} 
\end{pmatrix}, \quad \mathcal{K}_{nd}:= \begin{pmatrix}
0 & \mathcal{K}_{pc} 
\\ \mathcal{K}_{cp} & \mathcal{K}_0
\end{pmatrix} .
\]
We point out that the constant term $+2$ in the bottom right entry in the definition of $\mathcal{K}_d$, instead of $\tfrac{3}{2}$ as in Lemma \ref{dft5}, arises partly because the last two terms in the large parenthesis on the left-hand side of \eqref{linout2}, with coefficients $\tfrac14$ and $\tfrac12$; together they contribute an additional $\tfrac12$ to this constant term. The same observation applies to the $1$ in the $(1,1)$ entry, where $\tfrac12$ of it comes from \eqref{dft8}. This allows everything to factor neatly in terms of $D_{\tau_2}$. Concretely, taking the distorted Fourier transform on both sides of equation \eqref{linout2}, \begin{equation}\label{linout3}\begin{aligned}
\Big( - D_{\tau_2  }^2 - \omega_2 D_{\tau_2  } - \xi \Big) \mathcal{F} \wt h & =  \mathcal{F}\widetilde{G} - 2 \omega_2 \mathcal{K}_{nd} D_{\tau_2  } \mathcal{F}\wt h  -  \omega_2 [ \mathcal{K}_{nd}, \mathcal{K}_d]  \mathcal{F}\wt h 
\\ &   + \omega_2^2 \big( \mathcal{K}_{nd}^2 - \mathcal{K}_{nd}\big)  \mathcal{F}\wt h - \dot\omega_2 \mathcal{K}_{nd}  \mathcal{F}\wt h ,
\end{aligned}\end{equation}
where we have denoted by $\widetilde{G}$ the right-hand side of \eqref{linout2}, \[
\wt G := R^{1/2}\bigg( \dfrac{1}{\lambda_2^2}F - \dfrac{4}{R^2}\Big( \cos(2\bfcq_{n-1}) - \cos(2Q_2)\Big) h  \bigg).
\]
We will solve \eqref{linout3} by means of a fixed-point argument, which in turn yields the desired bound for $\ell=0$. Recalling the vector notation coming from the discrete and continuous components of the distorted Fourier transform, namely, \[
\mathcal{F}[\wt h](\xi) = \int_0^\infty \phi(R,\xi) \wt h(R) dR + \langle \phi_{0}, \wt h \rangle_{L^2_{dR}}.
\]
For the sake of notation let us denote by \[
\widehat{h} := \mathcal{F}[\wt h], \qquad \hbox{or,} \hbox{ according to the above identity,} \qquad  \widehat{h} = \begin{pmatrix} \widehat{h}_p \\ \widehat{h}_c
\end{pmatrix}.
\]
We introduce the norm \begin{align*}
    \Vert \widehat{h}\Vert_{X_2} &:= \sup_{0 < t \leq t_0} e^{c q \tau_2} \Big( \vert \widehat{h}_p(\tau_2)\vert + \tau_2 \vert \dot{\widehat{h}}_p(\tau_2)\vert \Big)   + \sup_{0 < t \leq t_0 } e^{c q \tau_2}  \Vert \rho^{1/2} \widehat{h}_c (\tau_2,\cdot) \Vert_{L^2_{d\xi}} 
    \\  & \ \, +  \sup_{0 < t \leq t_0 } \tau_2\, e^{c q \tau_2} \Vert \rho^{1/2} D_{\tau_2} \widehat{h}_c (\tau_2,\cdot) \Vert_{L^2_{d\xi}}  + \sup_{0 < t \leq t_0 } \tau_2 \, e^{c q \tau_2}  \Vert \rho^{1/2} \xi^{1/2}\widehat{h}_c (\tau_2,\cdot) \Vert_{L^2_{d\xi}}  , 
\end{align*}
whereas for the source terms we define \[
\Vert f\Vert_{Y_2} := \sup_{0 < t\leq t_0} \tau_2 \, e^{c  q  \tau_2}\big\vert f_p(\tau_2)\big\vert + \sup_{0 < t\leq t_0} \tau_2 \, e^{c  q  \tau_2} \big\Vert \rho^{1/2} f_c(\tau_2,\cdot) \big\Vert_{L^2_{d\xi}},
\]
with $q<p$ as close to $p$ as desired. Observe that, from Plancherel Theorem for the distorted Fourier Transform, we infer that \[
\sup_{0< t\leq t_0} e^{cq\tau_2}\Vert h\Vert_{\boldsymbol{\mathcal{H}}^1_{rdr}} \lesssim \Vert \mathcal{F}[\wt h]\Vert_{X_2}.
\]
In order to solve equation \eqref{linout3} let us denote by $g_p$ and $g_c$ the discrete and continuous components of the right-hand side of \eqref{linout3}, so for instance \[
\mathcal{F}\wt G = \begin{pmatrix} \langle \wt G, \phi_0\rangle_{L^2_{dR}} 
\\ \langle \wt G, \phi(\cdot,\xi)\rangle_{L^2_{dR}} \end{pmatrix},
\] 
and similarly for the other terms, accounting for the matrix multiplications introduced by the operators $\mathcal{K}$. With these definitions on hand, going back to equation \eqref{linout3}, we are led to study the system \begin{equation}\label{linout4}\begin{aligned}
-\partial_{\tau_2} \Big( \partial_{\tau_2} - \omega_2\Big) \widehat{h}_p & = g_p,
\\ \Big( - D_{\tau_2}^2 - \omega_2 D_{\tau_2} - \xi \Big) \widehat{h}_c & =  g_c.
\end{aligned}\end{equation}
The first equation in \eqref{linout4} can be solved directly by means of variation of constants. Note that the homogeneous solutions are given by \[
\widehat{h}_{p,\text{hom}}= c_1 \lambda_2(\tau_2) + c_2 \lambda_2(\tau_2) \int_{\tau_2}^\infty \dfrac{1}{\lambda_2(t')} dt' =: c_1 \widehat{h}_{p,\text{hom}}^1 + c_2 \widehat{h}_{p,\text{hom}}^2, 
\]
and their Wronskian is exactly $W(\tau_2)=\lambda_2(\tau_2)$. Note that \[
\dfrac{1}{W(\tau_2)}\Big( \widehat{h}_{p,\text{hom}}^1(\tau_2)\widehat{h}_{p,\text{hom}}^2(\sigma_2) - \widehat{h}_{p,\text{hom}}^1(\sigma_2)\widehat{h}_{p,\text{hom}}^2(\tau_2)\Big) = \lambda_2(\tau_2)\int_{\tau_2}^{\sigma_2} \dfrac{ds}{\lambda_2(s)} .
\]
Then, solving backwards from infinity, one easily concludes that the first equation in \eqref{linout4} admits a solution \[
\widehat{h}_p = - \lambda_2(\tau_2) \int_{\tau_2}^\infty \Big( \int_{\tau_2}^{\sigma_2} \dfrac{ds}{\lambda_2(s)}  \Big) \, g(\sigma_2)d\sigma_2. 
\]
We see that for $\tau_2 < \sigma_2$,
\begin{align*}
\bigg \vert \lambda_2(\tau_2) \int_{\tau_2}^{\sigma_2} \dfrac{ds}{\lambda_2(s)} \bigg\vert & \lesssim  \tau_2\ln\Big(\dfrac{\sigma_2}{\tau_2}\Big),
\\ \bigg \vert \partial_{\tau_2} \bigg( \lambda_2(\tau_2) \int_{\tau_2}^{\sigma_2} \dfrac{ds}{\lambda_2(s)} \bigg) \bigg\vert & \lesssim \ln\Big(\dfrac{\sigma_2}{\tau_2}\Big) .
\end{align*}
from where we conclude that the first equation in \eqref{linout4} admits a solution satisfying \begin{align}\label{linout5}
    \sup_{0 < t \leq t_0} e^{c q \tau_2} \Big( \vert \widehat{h}_p\vert + \tau_2 \vert \dot{\widehat{h}}_p\vert \Big) \lesssim \sup_{0 < t \leq t_0} \tau_2 \, e^{cq\tau_2} \big\vert g_p\big\vert ,
\end{align}
Next, we seek to solve the second equation in \eqref{linout4}. To write down its fundamental system of solutions, we first note that this is equivalent to \[
\bigg[ -\Big( \partial_{\tau_2} - 2\omega_2 \xi\partial_\xi\Big)^2  -  \omega_2 \Big( \partial_{\tau_2} - 2\omega_2 \xi\partial_\xi\Big)  - \xi  \bigg] \dfrac{1}{\lambda_2^2}\big( \rho^{1/2}(\xi) \widehat{h}_c\big) = \dfrac{1}{\lambda_2^2} \rho^{1/2}(\xi) g_c,
\]
and hence, changing variables \[
\widehat{h}_c =: \rho^{1/2}(\xi) y(\tau_2,\xi), \qquad \wt g:=\rho^{1/2}(\xi) g_c,
\] 
we can remove the $\rho$ dependent terms in $D_{\tau_2}$ and simplify the second equation in \eqref{linout4}, to obtain \[
\bigg[ -\Big( \partial_{\tau_2} -2 \omega_2 \xi \partial_\xi\Big)^2  - \Big( \partial_{\tau_2} -2 \omega_2 \xi \partial_\xi\Big)  - \xi \bigg] \lambda_2^{-2} y = \lambda_2^{-2} \wt g.
\]
The advantage of this is that now we can easily find the solutions of this equation. Indeed, note that the characteristics associated with the operator on the left-hand side of the last equation are given by $\big(\tau_2, \lambda_2^{-2}(\tau_2)\xi_0\big)$, which is the same as saying that \[
\Big( \partial_{\tau_2} - 2\omega_2 \xi\partial_\xi \big) f(\tau_2,\xi) = \dfrac{d}{d\tau_2} f\big( \tau_2,\xi(\tau_2)\big), \qquad \xi(\tau_2)= \dfrac{1}{\lambda_2^2(\tau_2)}\xi_0.
\]
Consequently, we infer that \[
\bigg( -\partial_{\tau_2}^2 - \omega_2 \partial_{\tau_2} - \dfrac{1}{\lambda_2^2}\xi_0\bigg) \exp\bigg( \pm i\xi_0^{1/2} \int_{\tau_2}^\infty \lambda_2^{-1}(s)ds \bigg) = 0.
\]
By direct calculations, one also checks that their Wronskian is exactly $2i\xi_0^{1/2}\lambda_2^{-1}(\tau_2)$. Therefore, by variation of parameters, we conclude that the second equation in \eqref{linout4} admits a solution of the form \begin{align*}
    \widehat{h}_c(\tau_2,\xi) = \int_{\tau_2}^\infty U\big( \tau_2,\sigma_2,\xi\big)    g_c\Big( \tau_2, \dfrac{\lambda_2^2(\tau_2)}{ \lambda_2^{2}(\sigma_2)}  \xi \Big) d\sigma_2,
\end{align*}
where the Green function is given by  \begin{align*}
    U(\tau_2,\sigma_2,\xi) := \xi^{-1/2} \dfrac{\rho^{1/2}\Big(\tfrac{\lambda_2^2(\tau_2)}{\lambda_2^2(\sigma_2)}\xi\Big) }{\rho^{1/2}(\xi\big)} \, \dfrac{\lambda_2(\tau_2)}{\lambda_2(\sigma_2)} \sin \bigg( \xi^{1/2} \lambda_2(\tau_2) \int_{\tau_2}^{\sigma_2} \lambda_2^{-1}(s)ds\bigg) .
\end{align*}
It is worth noting that \[
\bigg\vert \sin \bigg( \xi^{1/2} \lambda_2(\tau_2) \int_{\tau_2}^{\sigma_2} \lambda_2^{-1}(s)ds\bigg)\bigg\vert \lesssim \min\bigg\{1,\, \xi^{1/2} \lambda_2(\tau_2) \int_{\tau_2}^{\sigma_2} \lambda_2^{-1}(s)ds  \bigg\}
\]
and hence, the propagator satisfies the estimates \begin{align*}
    & \big\vert U(\tau_2,\sigma_2,\xi)\big\vert  \lesssim  \dfrac{\rho^{1/2}\big(\tfrac{\lambda_2^2(\tau_2)}{\lambda_2^2(\sigma_2)}\xi\big) }{\rho^{1/2}( \xi\big)} \, \dfrac{\lambda_2(\tau_2)}{\lambda_2(\sigma_2)}  \, \min\Big\{ \xi^{-1/2}, \, \tau_2 \ln\Big( \dfrac{\sigma_2}{\tau_2} \Big) \Big\} , 
    \\ & \big\vert \partial_{\tau_2} U(\tau_2,\sigma_2,\xi)\big\vert  \lesssim \dfrac{\rho^{1/2}\big(\tfrac{\lambda_2^2(\tau_2)}{\lambda_2^2(\sigma_2)}\xi\big) }{\rho^{1/2}( \xi\big)}  \, \ln\Big( \dfrac{\sigma_2}{\tau_2} \Big) , 
\end{align*}
which in turn imply that \begin{equation}\label{linout6}\begin{aligned}
    & \sup_{0 < t \leq t_0 } e^{c q \tau_2}  \Vert \rho^{1/2} \widehat{h}_c (\tau_2,\cdot) \Vert_{L^2_{d\xi}}   +  \sup_{0 < t \leq t_0 } \tau_2\, e^{c q \tau_2} \Vert \rho^{1/2} D_{\tau_2} \widehat{h}_c (\tau_2,\cdot) \Vert_{L^2_{d\xi}}
    \\ & + \sup_{0 < t \leq t_0 } \tau_2 \, e^{c q \tau_2}  \Vert \rho^{1/2} \xi^{1/2}\widehat{h}_c (\tau_2,\cdot) \Vert \, \lesssim \, \sup_{0 < t \leq t_0 } \tau_2 \, e^{c q \tau_2} \Vert \rho^{1/2} g_c(\tau_2,\cdot) \Vert_{L^2_{d\xi}} .
\end{aligned}\end{equation}
Combining \eqref{linout5} and \eqref{linout6} we conclude that \[
\Vert \widehat{h}\Vert_{X_2} \lesssim \Vert g\Vert_{Y_2}.
\]
With these estimates on hand, we now seek to solve \eqref{linout3}, that is, we must now estimate its right-hand side in the $Y_2$ norm and obtain smallness for those terms depending on $\wt h$. We split the analysis into multiple steps.

\medskip

\noindent
$\bullet$ \textit{Estimate for $\mathcal{F}[\wt G]$}: First, note that \[
\big\Vert \mathcal{F}\wt G\big\Vert_{Y_2} \lesssim \sup_{0 < t\leq t_0} \tau_2 \, e^{c q\tau_2} \lambda_2^{-1} \Vert F(t,\cdot)\Vert_{L^2_{rdr}} + \sup_{0 < t \leq t_0} \tau_2 \, e^{c q \tau_2} \Vert G_1(t,\cdot)\Vert_{L^2_{RdR}} ,
\]
from where we immediately obtain that
\begin{align*}
    \sup_{0 < t \leq t_0} \tau_2 \, e^{c q \tau_2} \lambda_2^{-1} \, \Vert F(t,\cdot)\Vert_{L^2_{rdr}}  \lesssim \sup_{0 < t \leq t_0} e^{c p \tau_2} \lambda_2^{-1} \Vert F(t,\cdot)\Vert_{L^2_{rdr}} ,
\end{align*}
having denoted by $G_1$ the function \begin{align*}
   G_1 := \dfrac{4}{R^2}\Big( \cos(2\bfcq_{n-1}) - \cos(2Q_2)\Big)  \,\wt h.
\end{align*}
By direct manipulations we can write 
\begin{equation}\label{linout8}\begin{aligned}
    & \cos(2\bfcq_{n-1}) - \cos(2Q_2) 
    \\ & = -2\bigg(\sin\Big( \bfq_{n-1}+Q_2\Big) \cos(w_{n-1})+ \cos\Big(\bfq_{n-1}+Q_2\Big)\sin(w_{n-1}) \bigg) \cdot 
    \\ & \cdot  \bigg( \sin\Big(\sum_{j=3}^n(-1)^{j+1}Q_j\Big)\cos(w_{n-1})+\cos\Big(\sum_{j=3}^n (-1)^{j+1}Q_j\Big) \sin(w_{n-1}) \bigg) ,
\end{aligned}\end{equation}
and hence, decomposing $w_{n-1}$ as \[
w_{n-1}=w_{n-1,2}+w_{n-1,3}+...+w_{n-1,n}=:w_{n-1,2}+W,
\]
using the inductive hypothesis \eqref{eq:intro_wn-1_decay_indhyp} for one of the $w_{n-1,2}$, and \eqref{eq:intro-vn-1refined}-\eqref{eq:intro-vn-1boundsrefined} for $W$ and the second $w_{n-1,2}$ in the case we have two factors $w_{n-1,2}$, we conclude that
\begin{align}\label{linout19}
    \bigg\vert \dfrac{4}{R^2}\Big( \cos(2\bfcq_{n-1}) - \cos(2Q_2)\Big) \bigg\vert \lesssim \tau_2^{-2+}.
\end{align}
It follows then that \begin{align*}
    \sup_{0 < t\leq t_0}\tau_2 \, e^{c q \tau_2} \big\Vert G_1 (t,\cdot)\big\Vert_{L^2_{RdR}}  \ll \sup_{0 < t \leq t_0} e^{c q \tau_2} \big\Vert \wt h(t,\cdot) \big\Vert_{L^2_{RdR}} \lesssim \big\Vert \mathcal{F}\wt h\big\Vert_{X_2}.
\end{align*}

\medskip

\noindent
$\bullet$ \textit{Estimate for the terms with at least one occurence of $\mathcal{K}_{nd}$ or $\mathcal{K}_d$}: Estimating these terms follow identically to those in Lemma $4.1$ in \cite{JenKri}, so we omit them.

\medskip

In order to conclude the proof, we now have to show analogous bounds for $S^\ell h$, as well as estimates for the singular weights $r^{-k}$, $k=1,2$. First, we apply the scaling operator 
$S=t\partial_t +r\partial_r$ to equation \eqref{linout7}, from where we obtain that
\begin{align}\label{linout11}
& -\partial_t^2(Sh) + \partial_r^2(Sh) + \dfrac{1}{r}\partial_r(Sh) - \dfrac{4}{r^2}\cos(2Q_2) Sh 
\\ & = 2F +  SF + \dfrac{4}{r^2} S\bigg[ \Big( \cos(2\bfcq_{n-1})-\cos(2Q_2)\Big) h\bigg] + \dfrac{4}{r^2}S \Big( \cos(2Q_2)\Big) h. \nonumber
\end{align}
We think of the last equation as a fixed point for $Sh$. Before going any further, observe that, by direct calculations,
\begin{align}
    & \Big\vert \sin(Q_j)\Big\vert\lesssim \dfrac{\lambda_j^2r^2}{\langle \lambda_j r\rangle^4} ,  \label{linout9}
    \\ & \Big\vert S \sin(Q_j)\Big\vert  \lesssim \dfrac{\lambda_jr^2 (\lambda_j + t \vert \lambda_j'\vert ) }{\langle \lambda_j r\rangle^4}, \nonumber 
    \\ & \Big\vert S \cos(Q_j)\Big\vert  \lesssim \dfrac{\lambda_j^3r^4 (\lambda_j+t \vert \lambda_j'\vert ) }{\langle \lambda_j r\rangle^8}, \nonumber
\end{align}
for all $j\geq 1$. Then, for $j\in\{2,...,n\}$, $k\in\{3,...,n\}$, with $j\leq k$, \[
\bigg\vert \dfrac{1}{r^2} S\Big[ \sin(Q_j)\sin(Q_k) \Big] \bigg\vert \lesssim \dfrac{t \lambda_j\vert \lambda_j'\vert \lambda_k^2 r^2}{ \langle \lambda_j r\rangle^4 \langle \lambda_k r\rangle^4} \lesssim t \lambda_{j+1} \lambda_k^2.
\] 
An obvious but important observation is that whenever $S$ acts on one of these trigonometric functions (each of which satisfies slightly worse bounds compared to \eqref{linout9}), then such a term has a multiplicative factor $h$ in front, rather than $Sh$. Similarly, whenever $Sh$ appears on the right-hand side, then we have the stronger bound \eqref{linout9}. More concretely, the terms depending on $h$ on the right-hand can either be bound by \[
t\lambda_2^2 \lambda_3\vert h\vert \qquad \hbox{ or } \qquad  \lambda_2^2 \vert Sh\vert.
\]
This better bound for the term on the right-hand side involving $Sh$, together with the fact that we only require a decay of order $\tau^{-(p-\varepsilon)}$ for $Sh$, see \eqref{linout10}, is what enables us to close the fixed-point argument for $Sh$.  Up to obvious modifications, the same observation holds if we apply $S^\ell$ to the equation, with $\ell \geq2$. 

Indeed, let us call $H(t)$ all the terms on the right-hand side of \eqref{linout11} except the one containing $Sh$. Thus, according to \eqref{linout8} as well as the already proven bounds for $h$ in the case $\ell=0$, we conclude that  
\begin{align*}
    & \sup_{0 < t \leq t_0} \tau_2 \,  e^{cq\tau_2} \lambda_2^{-1} \Vert H(t,\cdot)\Vert_{L^2_{rdr}} 
    \\ & \lesssim \sup_{0 < t\leq t_0} e^{cp \tau_2}\Vert F(t,\cdot) \Vert_{L^2_{rdr}} + \sup_{0 < t\leq t_0} e^{cp \tau_2}\Vert S F(t,\cdot)\Vert_{L^2_{rdr}}.
\end{align*}
We can now repeat the same argument as before, with $Sh$ taking the role of $h$, $H$ that of $F$, the term depending on $Sh$ on the right-hand side of \eqref{linout11} corresponding to the last term on the right-hand side of \eqref{linout7}, and with $\tilde{q}:=q-\varepsilon<q$ chosen arbitrarily close to $q$. Therefore, by induction on $\ell$, we conclude that \[
\big\Vert S^\ell h\Vert_{\boldsymbol{\mathcal{H}}^1_{rdr}}  \lesssim_{\ell,\varepsilon} \tau^{-(p-\ell \varepsilon)}.
\]
Finally, it only remains to prove the weighted estimates \eqref{linout12}. This is proven by a direct estimate and no fixed point argument is needed here (in particular no smallness is needed, in contrast with the previous steps). We prove this only for the case $\ell=0$. The general case $\ell \geq1$ then follows by differentiating the equation as before. To this end, we first recall the local elliptic estimate near $r=0$ in \cite{KST2}, namely, \begin{align}\label{linout15}
\Vert r^{-2}h\Vert_{L^2_{rdr}} \lesssim \Big\Vert \dfrac{1}{t} \partial_rh\Big\Vert_{L^2_{rdr}}+\Big\Vert \dfrac{1}{t r}h\Big\Vert_{L^2_{rdr}}+\Big\Vert \Big( \dfrac{t^2-r^2}{t^2}\partial_r^2+\dfrac{1}{r}\partial_r-\dfrac{4}{r^2}\Big)h\Big\Vert_{L^2_{rdr}}.
\end{align}
Now, to express the last term in the estimate above in a more convenient way, we write $t\partial_t = S- r \partial_r$, and hence \[
t^2\partial_t^2 h = -2t \partial_t h - S^2 h  + Sh + r^2 \partial_r^2 h + 2 t \partial_t S h,
\]
which in turn implies that \begin{align*}
\bigg( \dfrac{t^2-r^2}{t^2} \partial_r^2 + \dfrac{1}{r}\partial_r  - \dfrac{4}{r^2} \bigg) h & = \dfrac{1}{t^2}\bigg( -2t \partial_t h - S^2 h  + Sh  + 2 t \partial_t S h \bigg)
\\ & + \bigg(-\partial_t^2+\partial_r^2+\dfrac{1}{r}\partial_r - \dfrac{4}{r^2}\bigg) h.
\end{align*}
From this last identity it follows that \begin{align*}
    \dfrac{1}{\lambda_2}\Big\Vert \Big( \dfrac{t^2-r^2}{t^2} \partial_r^2 + \dfrac{1}{r}\partial_r  - \dfrac{4}{r^2}\Big) h\Big\Vert_{L^2_{rdr}} & \lesssim \dfrac{1}{t^2\lambda_2^2} \Big( \lambda_2 \Vert S^2h\Vert_{L^2_{rdr}} + \lambda_2 \Vert Sh\Vert_{L^2_{rdr}} \Big)
    \\ & + \dfrac{1}{t \lambda_2}\Big( \Vert \partial_t S h\Vert_{L^2_{rdr}} + \Vert \partial_t h\Vert_{L^2_{rdr}}\Big)
    \\ & + \dfrac{1}{\lambda_2}\Big\Vert \Box h - \dfrac{4}{r^2}h\Big\Vert_{L^2_{rdr}},
\end{align*}
where we recall that $\Box h = -\partial_t^2h+\partial_r^2h+\tfrac1r\partial_rh$. Moreover, from  \eqref{linout1} and the already proven bounds we conclude that \begin{align*}
    \dfrac{1}{\lambda_2(t)}\Big\Vert \Box h - \dfrac{4}{r^2}h\Big\Vert_{L^2_{rdr}} & \lesssim  \dfrac{1}{\lambda_2(t)}\Vert F\Vert_{L^2_{rdr}} + \dfrac{1}{\lambda_2(t)}\Big\Vert \dfrac{4}{r^2}\Big(\cos(2\bfcq_{n-1})-1\Big)h\Big\Vert_{L^2_{rdr}}
    \\ & \lesssim \dfrac{1}{\lambda_2(t)} \Vert F\Vert_{L^2_{rdr}} + \lambda_2(t) \Vert h\Vert_{L^2_{rdr}} \lesssim \lambda_2(t) \tau^{-p}
\end{align*}
Therefore, \begin{align}\label{linout13}
    \Big\Vert \Big( \dfrac{t^2-r^2}{t^2} \partial_r^2 + \dfrac{1}{r}\partial_r  - \dfrac{4}{r^2}\Big) h\Big\Vert_{L^2_{rdr}} \lesssim \tau^{-(p-\varepsilon)}.
\end{align}
On the other hand, we recall the elementary bound (see for instance \cite{KST3}) \begin{align}\label{linout17}
\Vert \partial_r h\Vert_{L^2_{rdr}} + \Vert r^{-1}h\Vert_{L^2_{rdr}} \lesssim \Vert \mathcal{L}_{t}^{1/2} h\Vert_{L^2_{rdr}} + \lambda_2(t)\Vert h\Vert_{L^2_{rdr}}.
\end{align}
and hence \begin{align}\label{linout14}
    \Vert t^{-1}\partial_r h\Vert_{L^2_{rdr}}+\Vert t^{-1}r^{-1} h\Vert_{L^2_{rdr}} \lesssim \tau^{-(p-\varepsilon)}.
\end{align}
Plugging \eqref{linout13} and \eqref{linout14} into \eqref{linout15} we conclude the desired estimate. The $L^4_{rdr}$ bound in \eqref{linout12} follows from interpolating the previous bound and the trivial bound \[
\Vert h\Vert_{L^\infty} \lesssim \Vert \partial_r h\Vert_{L^2_{rdr}} + \Vert \dfrac{1}{r}h\Vert_{L^2_{rdr}} \lesssim \tau^{-(p-\varepsilon)}.
\]
The proof is complete.
\end{proof}

In order to recover the Taylor type decomposition \eqref{eq:intro-vn-1refined}, we shall take advantage of the following lemma, which is based on commuting vector field methods
\begin{lem}\label{lem:Tayloerexpansion} Let $\epsilon$ solve the equation 
\begin{equation}\label{eq:epsilonwave10}
-\epsilon_{tt} + \epsilon_{rr} + \frac{1}{r}\epsilon_r - \frac{4\cos \bfcq_{n-1}}{r^2}\epsilon = f. 
\end{equation}
on the region $(0, t_0]\times \mathbb{R}^2$, and satisfy the conclusions of the preceding lemma. 
Assume that $\bfcq_{n-1}$ satisfies the inductive assumptions encapsulated by \eqref{eq:intro_wn-1_decay_indhyp}, \eqref{eq:intro-vn-1refined}, and \eqref{eq:intro-vn-1boundsrefined}, and that the source term $f$ satisfies the bounds
\begin{align*}
&\big\|S^kf(t,\cdot)\big\|_{L^2_{r\,dr}}\lesssim_k \tau^{-2+}, \\
&\big\|\chi_{r\ll t}S^kf(t,\cdot)\big\|_{L^\infty_{r\,dr}}\lesssim_k \tau^{-1+}, \\
&\big\|r^{-2}\chi_{r\ll t}S^kf(t,\cdot)\big\|_{L^\infty_{r\,dr}}\lesssim_k \tau,
\end{align*}
where $\tau_2 = \int_{t}^{t_0}\lambda_2(s)\,ds,\,\tau = e^{c\tau_2}$. Then have the representation 
\[
\epsilon(t, r) = c(t)r^2 + g(t, r)
\]
where we have the bounds 
\[
\big|(t\partial_t)^kc(t)\big|\lesssim_k \tau^{-1+},\,\big|r^{-4} S^kg(t, r)\big|\lesssim_k\tau. 
\]
\end{lem}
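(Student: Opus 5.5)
Since the potential $4\cos(\cdot)/r^2$ behaves like $4/r^2$ near the origin, the regular solutions of the local problem are governed by the $k=2$ Bessel-type operator $\partial_r^2+\frac1r\partial_r-\frac4{r^2}$, whose regular homogeneous solution is $r^2$. The plan is therefore to isolate the $r^2$-coefficient of $\epsilon$ at the origin and show the remainder vanishes to fourth order. Concretely, I would define
\[
c(t):=\lim_{r\to0}\frac{\epsilon(t,r)}{r^2},\qquad g:=\epsilon-c(t)r^2,
\]
and derive both bounds from the equation \eqref{eq:epsilonwave10} rewritten as a perturbed local elliptic equation in the region $r\ll t$. Using $t^2\partial_t^2=S^2-S-2r\partial_r S+r^2\partial_r^2+2r\partial_r$ (equivalently the identity used in the proof of Lemma~\ref{lem:linout_1}), I would write
\[
\Big(\frac{t^2-r^2}{t^2}\partial_r^2+\frac1r\partial_r-\frac4{r^2}\Big)\epsilon = f+\frac{4(\cos\bfcq_{n-1}-1)}{r^2}\epsilon+\frac{1}{t^2}\big(\text{terms in }S\epsilon,\,S^2\epsilon,\,t\partial_t S\epsilon\big).
\]
The right-hand side is then treated as a source, with all $S^k\epsilon$ controlled by Lemma~\ref{lem:linout_1}.

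\textbf{Step 1 (coefficient $c$).} I would derive the bound $|(t\partial_t)^k c|\lesssim\tau^{-1+}$ by first obtaining $\|r^{-2}S^k\epsilon\|_{L^\infty(r\ll t)}\lesssim\tau^{-1+}$. The $L^2$ and $L^4$ bounds \eqref{linout12} on $r^{-2}S^k\epsilon$ combine with the $r^{1}$-weighted integral representation for the radial operator $\partial_r^2+\frac1r\partial_r-\frac4{r^2}$. This uses variation of constants with the fundamental system $r^2,r^{-2}$, solving from $r=0$ and using $L^\infty$ control of the source with the hypothesis $\|\chi_{r\ll t}S^kf\|_{L^\infty}\lesssim\tau^{-1+}$. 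Since the potential error satisfies $|\cos\bfcq_{n-1}-1|\lesssim\lambda_2^2r^2$ near the origin, that term is harmless. Finally, $S$ acting on $c(t)r^2$ gives $(t\partial_t+2)c\,r^2$, which transfers the bounds to $(t\partial_t)^kc$.

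\textbf{Step 2 (remainder $g$).} Subtracting, $g$ satisfies the same type of equation with an additional source coming from $c(t)r^2$. This extra source is of size $|c|(\lambda_2^2r^2)+|c''|r^2+\dots$, i.e.\ it is $O(r^2)$ near the origin. The other sources are also $O(r^2)$: $f$ by the hypothesis $\|r^{-2}\chi_{r\ll t}S^kf\|_{L^\infty}\lesssim\tau$, and the potential term by the refined inductive bounds \eqref{eq:intro-vn-1boundsrefined} on $\bfcq_{n-1}$. Solving the elliptic ODE $(\partial_r^2+\frac1r\partial_r-\frac4{r^2})g=O(r^2)$ with the $r^2$ mode already removed then yields $g=O(r^4)$, via the explicit kernel $\frac{1}{4}\big(r^2\int_r s^{-1}(\cdot)\,ds - r^{-2}\int_0^r s^{3}(\cdot)\,ds\big)$. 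The result is $|r^{-4}S^kg|\lesssim\tau$, after which one commutes $S^k$ through and inducts on $k$. The crude constant $\tau$ (growth) absorbs losses like $\lambda_2^{C}\ll\tau^{0+}$ and the factors $t^{-2}\lesssim\lambda_2^2$.

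\textbf{Main obstacle.} I expect the main obstacle to be the terms $t^{-2}S^j\epsilon$ produced by the time derivatives. These are not a priori $O(r^2)$ with the right coefficient, so they feed back into both $c$ and $g$. I would try to handle them by a bootstrap in the pointwise norm $\sup_{r\ll t}\big(|r^{-2}S^k\epsilon|\big)$, iterating in the number of vector fields. One uses that $S(c r^2)$ stays of the form $\tilde c r^2$, so these terms only modify the $r^2$ coefficient and contribute $O(r^4)$ to $g$ after one more elliptic inversion. Losing finitely many $S$ derivatives is acceptable, since all $k$ are available.
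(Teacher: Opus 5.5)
Your plan is correct and is essentially the paper's argument. You apply variation of constants for $\partial_r^2+\frac1r\partial_r-\frac{4}{r^2}$ from $r=0$ to the rewritten equation. A first pass gives $|(t\partial_t)^kc|,\ \|r^{-2}S^k\epsilon\|_{L^\infty(r\ll t)}\lesssim\tau^{-1+}$, with the bounds on $c$ recovered from $S^k(cr^2)=((t\partial_t+2)^kc)\,r^2$. In a second pass, these pointwise bounds together with $\|r^{-2}\chi_{r\ll t}S^kf\|_{L^\infty}\lesssim\tau$ make every source $O(r^2)$, so the remainder is $O(\tau r^4)$.

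The one deviation is harmless and actually simpler. In the first pass you control the $t^{-2}S^{j}\epsilon$ sources directly from \eqref{linout12}, for instance via $\int_0^r s^{-1}|S^j\epsilon|\,ds\le r\,\|r^{-2}S^j\epsilon\|_{L^2_{r\,dr}}$, rather than through the paper's preliminary $H^2$-type estimate. This is also why the bootstrap in your ``main obstacle'' paragraph is unnecessary. One small correction: $\int_0^r s^{-1}f\,ds$ needs the interpolated bound $|f|\lesssim\min\{\tau^{-1+},\tau r^2\}$, because the $L^\infty$ hypothesis alone leaves a logarithmic divergence at $s=0$.
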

\begin{proof} This proceeds in several steps:
\\

{\it{(1): We have the bound 
\begin{equation}\label{eq:Htwobound1}
\big\|\epsilon_{rr}\big\|_{L^2_{r\,dr}(r\ll t)} + \big\|\frac{\epsilon_r}{r}\big\|_{L^2_{r\,dr}}\lesssim \tau^{-2+},
\end{equation}
and similarly for $S^k\epsilon$.}} Taking advantage of the preceding lemma, we see that it suffices to establish the estimate
\[
\big\|\big(\chi_{r\ll t}\epsilon\big)_{rr}\big\|_{L^2_{r\,dr}} + \big\|\frac{\big(\chi_{r\ll t}\epsilon\big)_r}{r}\big\|_{L^2_{r\,dr}}\lesssim \tau^{-2+}.
\]
for a smooth cutoff function $\chi_{r\ll t}$. For this consider the equation 
\begin{equation}\label{eq:trickidentity1}\begin{split}
\Big(\frac{t^2 - r^2}{t^2}\partial_{rr} + \frac{1}{r}\partial_r - \frac{4}{r^2}\Big)\big(\chi_{r\ll t}\epsilon\big) &= t^{-2}\cdot\chi_{r\ll t}\Big({-}S^2\epsilon + 2t\partial_t S\epsilon + S\epsilon - 2t\partial_t\epsilon\Big)\\
& +\chi_{r\ll t}\big[ \Box\epsilon - \frac{4\epsilon}{r^2}\big]\\
& + \Big[\Big(\frac{t^2 - r^2}{t^2}\partial_{rr} + \frac{1}{r}\partial_r - \frac{4}{r^2}\Big), \chi_{r\ll t}\Big]\epsilon\\
&=:(I) + (II) + (III).
\end{split}\end{equation}
Due to the identity (for compactly supported $g$)
\begin{align*}
\int_0^\infty \big(g_{rr} + \frac{1}{r}g_r\big)^2 r\,dr = \int_0^\infty\big(g_{rr}^2 + \frac{g_r^2}{r^2}\big)r\,dr +  2\int_0^\infty g_{rr}g_r\,dr =  \int_0^\infty\big(g_{rr}^2 + \frac{g_r^2}{r^2}\big)r\,dr,  
\end{align*}
we deduce that 
\begin{align*}
\big\|\frac{r^2}{t^2}\big(\chi_{r\ll t}\epsilon\big)_{rr}\big\|_{L^2_{r\,dr}}\leq c\big\|\big(\partial_{rr} + \frac{1}{r}\partial_r\big)\chi_{r\ll t}\epsilon\big\|_{L^2_{r\,dr}},
\end{align*}
for a constant $c\ll 1$. Then we deduce from \eqref{eq:trickidentity1} that 
\begin{align*}
\big\|\big(\partial_{rr} + \frac{1}{r}\partial_r\big)\chi_{r\ll t}\epsilon\big\|_{L^2_{r\,dr}}&\leq c\big\|\big(\partial_{rr} + \frac{1}{r}\partial_r\big)\chi_{r\ll t}\epsilon\big\|_{L^2_{r\,dr}} + \big\|\frac{1}{r^2}\chi_{r\ll t}\epsilon\big\|_{L^2_{r\,dr}} + \big\|(I)\big\|_{L^2_{r\,dr}} \\
& + \big\|(II)\big\|_{L^2_{r\,dr}} + \big\|(III)\big\|_{L^2_{r\,dr}}, 
\end{align*}
as well as 
\[
\big\|\big(\chi_{r\ll t}\epsilon\big)_{rr}\big\|_{L^2_{r\,dr}} + \big\|\frac{\big(\chi_{r\ll t}\epsilon\big)_r}{r}\big\|_{L^2_{r\,dr}}\lesssim \big\|\big(\partial_{rr} + \frac{1}{r}\partial_r\big)\chi_{r\ll t}\epsilon\big\|_{L^2_{r\,dr}}
\]
Since $\big\|\frac{1}{r^2}\chi_{r\ll t}\epsilon\big\|_{L^2_{r\,dr}}\lesssim \tau^{-2+}$, the proof of \eqref{eq:Htwobound1} reduces to bounding $\big\|(I), (II), (III)\big\|_{L^2_{r\,dr}}$
Thanks to the preceding lemma, we infer the estimate 
\[
 \big\|(III)\big\|_{L^2_{r\,dr}}\lesssim \big\|\frac{\epsilon}{t^2}\big\|_{L^2_{r\,dr}} +  \big\|\frac{\epsilon_r}{t}\big\|_{L^2_{r\,dr}}\lesssim \tau^{-2+}, 
\]
For $(I)$, we observe that 
\begin{align*}
 \big\|(I)\big\|_{L^2_{r\,dr}}\lesssim t^{-2}\big\|S^2\epsilon\big\|_{L^2_{r\,dr}} + t^{-1}\big\|\partial_t S\epsilon\big\|_{L^2_{r\,dr}} +  t^{-1}\big\|\partial_t \epsilon\big\|_{L^2_{r\,dr}}\lesssim \tau^{-2+}.
\end{align*}
Finally, for $(II)$, we estimate it by 
\begin{align*}
&\Big\|\chi_{r\ll t}\big[ \Box\epsilon - \frac{4\epsilon}{r^2}\big]\Big\|_{L^2_{r\,dr}}\\&\leq \Big\|\chi_{r\ll t}\big[ \Box\epsilon - \frac{4\cos(\bfcq_{n-1})\epsilon}{r^2}\big]\Big\|_{L^2_{r\,dr}} + \Big\|\chi_{r\ll t}\frac{4(\cos(\bfcq_{n-1})-1)\epsilon}{r^2}\Big\|_{L^2_{r\,dr}}. 
\end{align*}
The assumptions on $\epsilon, f$ and the preceding lemma allow us to bound the last expression by $\lesssim \tau^{-2+}$. The analogous bounds for $S^k\epsilon$ follow by applying $S^k$ to the equation satisfied by $\epsilon$ and using induction on $k$.
\\

{\it{(2): Proof of the estimate $\big|c(t)\big|\lesssim \tau^{-1+}$.}} Write \eqref{eq:trickidentity1} in the form 
\[
\Big(\partial_{rr} + \frac{1}{r}\partial_r - \frac{4}{r^2}\Big)\big(\chi_{r\ll t}\epsilon\big) = \phi, 
\]
which we solve via variation of constants. It follows that 
\begin{equation}\label{eq:epsilonformula}
\chi_{r\ll t}\epsilon = c(t)r^2 +  \frac14 r^2\int_0^r s^{-1}\phi(s)\,ds - \frac14 r^{-2}\int_0^r s^{3}\phi(s)\,ds, 
\end{equation}
where $\phi$ is the sum of the terms on the right hand side in \eqref{eq:trickidentity1}  and the term $\frac{r^2}{t^2}\big(\chi_{r\ll t}\epsilon\big)_{rr}$. We next show the bound 
\begin{equation}\label{eq:ccontrol1}
\big|\frac14 \int_0^r s^{-1}\phi(s)\,ds - \frac14 r^{-4}\int_0^r s^{3}\phi(s)\,ds\big|\lesssim \tau^{-1+},\,r\lesssim t. 
\end{equation}
Assuming this bound, we deduce from \eqref{eq:epsilonformula} that 
\begin{align*}
\big\|c(t)\cdot 1\big\|_{L^2_{r\,dr}(r\lesssim t)}\lesssim \big\|r^{-2}\chi_{r\ll t}\epsilon\big\|_{L^2_{r\,dr}} + t\cdot \tau^{-1+}, 
\end{align*}
which results in 
\[
|c(t)|\lesssim t^{-1}\tau^{-1+} + \tau^{-1+}\lesssim \tau^{-1+},
\]
implying $\big|c(t)\big|\lesssim \tau^{-1+}$. \\
In order to establish \eqref{eq:ccontrol1}, we estimate the contributions of the various constituents of $\phi$,  starting with $\frac{r^2}{t^2}\big(\chi_{r\ll t}\epsilon\big)_{rr}$. This leads to the term 
\begin{align*}
&\frac{1}{4t^2} \int_0^r s\big(\chi_{s\ll t}\epsilon\big)_{ss}\,ds - \frac{1}{4t^2} r^{-4}\int_0^r s^{5}\big(\chi_{s\ll t}\epsilon\big)_{ss}\,ds\\
& = \frac{1}{t^2}\chi_{r\ll t}\epsilon-  \frac{5}{t^2} r^{-4}\int_0^r s^{3}\chi_{s\ll t}\epsilon\,ds
\end{align*}
The absolute value of this term is bounded by $\lesssim t^{-2}\tau^{-2+}\lesssim \tau^{-2+}$, which is better than what we need. \\
As for the contribution of the term $(III)$ in \eqref{eq:trickidentity1}, we can bound it in absolute value by 
\[
\lesssim t^{-2}\big|\tilde{\chi}_{r\sim t}\epsilon\big| + t^{-1}\big|\tilde{\chi}_{r\sim t}\epsilon_r\big|
\]
where $\tilde{\chi}$ is a suitable smooth cutoff function localising to the indicated region. 
It follows that the corresponding contribution to \eqref{eq:ccontrol1} is bounded in absolute value by $\lesssim \tau^{-2+}$ as well. 
\\
Next, we consider the contribution of the term $(I)$ in \eqref{eq:trickidentity1}. Specifically, we shall treat the contribution of the term $t^{-1}\chi_{r\ll t}\partial_t(S\epsilon)$, the other terms there being handled similarly. Write
\[
t^{-1}\partial_t(S\epsilon) = t^{-2}S^2\epsilon - t^{-2}r\partial_r(S\epsilon). 
\]
Then we get 
\begin{align*}
&\big|\frac{1}{4t^2} \int_0^r s^{-1}S^2\epsilon\,ds - \frac{1}{4t^2} r^{-4}\int_0^r s^{3}S^2\epsilon\,ds\big|\\
&\lesssim t^{-1}\big\|\frac{S^2\epsilon}{r}\big\|_{L^\infty(r\ll t)} + t^{-2}\big\|\frac{S^2\epsilon}{r}\big\|_{L^2(r\ll t)} 
\end{align*}
The first term on the last line in turn is bounded by 
\begin{align*}
t^{-1}\big\|\frac{S^2\epsilon}{r}\big\|_{L^\infty(r\ll t)}\lesssim  t^{-1}\big\|\frac{S^2\epsilon}{r^2}\big\|_{L^2_{r\,dr}(r\ll t)} + t^{-1}\big\|\partial_r\big(\frac{S^2\epsilon}{r}\big)\big\|_{L^2_{r\,dr}(r\ll t)}\lesssim \tau^{-2+},
\end{align*}
thanks to {\it{(1)}}. As for the contribution of $ t^{-2}r\partial_r(S\epsilon)$ to the variation of constants formula, it equals 
\begin{align*}
&\frac{1}{4t^2} \int_0^r \partial_s(S\epsilon)\,ds - \frac{1}{4t^2} r^{-4}\int_0^r s^{4}\partial_s(S\epsilon)\,ds = \frac{1}{t^2} r^{-4}\int_0^r s^{3}S\epsilon\,ds\lesssim t^{-2}\big\|S\epsilon\big\|_{L^\infty(r\lesssim t)}, 
\end{align*}
which in turn is bounded by $\lesssim \tau^{-2+}$. 
\\
It remains to deal with the contribution of the term $(II)$. Writing
\[
\chi_{r\ll t}\big[ \Box\epsilon - \frac{4\epsilon}{r^2}\big] = \chi_{r\ll t} f + \chi_{r\ll t}\frac{4(\cos(\bfcq_{n-1})-1)\epsilon}{r^2},
\]
For the latter term, we use the estimate 
\[
\big|r^{-1}\chi_{r\ll t}\frac{4(\cos(\bfcq_{n-1})-1)\epsilon}{r^2}\big|\lesssim \lambda_2^2 r\big|\frac{\epsilon}{r^2}\big|.
\]
We conclude that (letting $\chi_{r\ll t}\frac{4(\cos(Q_2)-1)\epsilon}{r^2}=:\psi$)
\begin{align*}
\big|\frac14 \int_0^r s^{-1}\psi(s)\,ds - \frac14 r^{-4}\int_0^r s^{3}\psi(s)\,ds\big|&\lesssim \big\|\lambda_2^2\cdot 1\big\|_{L^2_{s\,ds}(s\lesssim t)}\cdot \big\|\frac{\epsilon}{s^2}\big\|_{L^2_{s\,ds}}\\
&\lesssim (\lambda_2^2 t)\cdot \tau^{-2+}\lesssim \tau^{-2+}. 
\end{align*}
As for the contribution of the inhomogeneous source term $\chi_{r\ll t} f$, we use that  the assumptions of the lemma imply the estimate 
\[
\big\|r^{0-}\chi_{r\ll t}f\big\|_{L^\infty_{r\,dr}}\lesssim \tau^{-1+}.
\]
It follows that 
\[
\big|\frac14 \int_0^r s^{-1}f(s)\,ds - \frac14 r^{-4}\int_0^r s^{3}f(s)\,ds\big|\lesssim \tau^{-1+}. 
\]
This completes the proof of \eqref{eq:ccontrol1}. 
\\

{\it{(3): We have the estimate $\big\|\frac{\epsilon}{r^2}\big\|_{L^\infty_{r\,dr}}\lesssim \tau^{-1+}$.}} In fact, this follows from \eqref{eq:epsilonformula}, \eqref{eq:ccontrol1}, as well as {\it{(2)}}. 
\\

{\it{(4): We have the estimate $\big\|\frac{S^k\epsilon}{r^2}\big\|_{L^\infty_{r\,dr}}\lesssim_k \tau^{-1+}$.}} This follows by applying $S^k$ to the equation for $\epsilon$ and repeating the steps {\it{(1) - (3)}} for $S^k\epsilon$, using induction on $k$.
\\

{\it{(5): We have the bound 
\begin{equation}\label{eq:sharpgbound}
\big|\frac14 \int_0^r s^{-1}\phi(s)\,ds - \frac14 r^{-4}\int_0^r s^{3}\phi(s)\,ds\big|\lesssim \tau r^2,\,r\lesssim t. 
\end{equation}
}}
To accomplish this, we revisit the argument for \eqref{eq:ccontrol1}. Precisely, we estimate the contributions of the various terms constituting $\phi$, but now also taking advantage of {\it{(3), (4)}}. Repeating the steps in {\it{(2)}}, we start with the contribution of  $\frac{r^2}{t^2}\big(\chi_{r\ll t}\epsilon\big)_{rr}$ to the variations of constants formula, which we saw can be re-expressed in the form
\begin{align*}
&\frac{1}{t^2}\chi_{r\ll t}\epsilon-  \frac{5}{t^2} r^{-4}\int_0^r s^{3}\chi_{s\ll t}\epsilon\,ds\\
& = r^2\big[\frac{1}{t^2}\chi_{r\ll t}r^{-2}\epsilon-  \frac{5}{t^2} r^{-6}\int_0^r s^{5}\chi_{s\ll t}s^{-2}\epsilon\,ds]
\end{align*}
Using {\it{(4)}}, we can bound this by $\lesssim r^2\tau^{-1+}$.  
\\
Continuing with the contribution of the term $(III)$, we can exploit that it is supported at $r\sim t$, which means applying $r^{-2}$ in front of the variation of constants expression will cost at most $\lesssim t^{-2}\lesssim \tau^{0+}$. Hence this contribution is also bounded by $\lesssim r^2\tau^{-1+}$. 
\\
Next, we consider the contribution of the term $(I)$. Again proceeding as in {\it{(2)}}, we can estimate 
\begin{align*}
&\big|\frac{1}{4t^2} \int_0^r s^{-1}S^2\epsilon\,ds - \frac{1}{4t^2} r^{-4}\int_0^r s^{3}S^2\epsilon\,ds\big|\\
&\lesssim r^2\cdot \big\|\frac{S^2\epsilon}{r^2}\big\|_{L^\infty_{r\,dr}}\lesssim r^2\tau^{-1+},
\end{align*}
thanks to {\it{(4)}}. Further, we find that 
\begin{align*}
\frac{1}{4t^2} \int_0^r \partial_s(S\epsilon)\,ds - \frac{1}{4t^2} r^{-4}\int_0^r s^{4}\partial_s(S\epsilon)\,ds = \frac{1}{t^2} r^{-4}\int_0^r s^{3}S\epsilon\,ds&\lesssim \frac{r^2}{t^{2}}\big\|\frac{S\epsilon}{r^2}\big\|_{L^\infty(r\lesssim t)}\\
&\lesssim r^2\tau^{-1+},
\end{align*}
again due to {\it{(2)}}. 
Finally, we turn to the contribution of $(II)$, and more specifically, the contribution of the terms 
\[
\chi_{r\ll t} f,\, \chi_{r\ll t}\frac{4(\cos(\bfcq_{n-1})-1)\epsilon}{r^2}
\]
to the variation of constants formula. For the latter, which we call $\psi_1$, we observe that 
\[
\big|\psi_1\big|\lesssim r^2\lambda_2^2\cdot \big|\frac{\epsilon}{r^2}\big|\lesssim r^2\tau^{-1+},
\]
which easily results in the desired bound 
\begin{align*}
&\big|\frac{1}{4t^2} \int_0^r s^{-1}\psi_1\,ds - \frac{1}{4t^2} r^{-4}\int_0^r s^{3}\psi_1\,ds\big|\lesssim r^2\tau^{-1+}.
\end{align*}
For the former term, the desired bound for this contribution is a direct consequence of the third bound for $f$ stated in the assumptions of this lemma. 
\\

In conjunction with \eqref{eq:epsilonformula} and {\it{(2)}}, this entails that we have the decomposition 
\[
\epsilon(t, r) = c(t)r^2 + g(t, r)
\]
and the bound stated at the end of the lemma with $k = 0$. 
\\

{\it{(6): Higher order bounds for $c, g$}}. Applying $S^k$ to the equation for $\epsilon$, we indictively derive analogous decompositions
\[
S^k\epsilon(t, r) = c_k(t)r^2 + g_k(t, r)
\]
satisfying 
\[
\big|c(t)\big|\lesssim_k \tau^{-1+},\,\big|g_k(t, r)\big|\lesssim_k r^4\tau.  
\]
Then we take advantage of the identity
\[
c_k(t) = (t\partial_t +2)^kc(t)
\]
to derive the higher order derivative bounds for $c$, while those for $g$ follow from 
\[
S^k g = g_k. 
\]

\end{proof}

%%%%%%%%%%%%%%%%%%%%%%
%%%%%%%%%%%%%%%%%%%%%%
%%%%%%%%%%%%%%%%%%%%%%
%%%%%%%%%%%%%%%%%%%%%%

%\newpage
\bigskip
\section{Construction of accurate approximate solution}\label{sec:construction_acc}

In this section, our point of departure is the $n-1$-bubble solution $\bfcq_{n-1}$, constructed on a time interval $(0, t_0]$ for some small $t_0>0$, which we assume to be of the form \eqref{eq:outerbubblesoln}. Here we assume that the scales $\lambda_j$, $2\leq j\leq n$, $n\geq 3$, satisfy \eqref{eq:lambdandef}, as well as the bounds stated in Proposition \ref{prop:exponentialtowers1}
\begin{equation}\label{eqlambdajassumptions}
\lambda_j(t)\geq e^{c\int_t^{t_0}\lambda_{j+1}(s)\,ds},\quad \big|\lambda_j^{(k)}\big|\leq E_{k,\nu,n} \lambda_j^{1+k\nu},\quad 2\leq j<n,\quad \nu>0,
\end{equation}
and $\nu>0$ can be chosen arbitrarily. 
We furthermore assume that the radiation part $w_{n-1}$ satisfies the bounds 
\begin{equation}\label{eq:vn-1bounds}\begin{split}
&\sum_{k=0}^K\big\|\nabla_{t,r}S^k w_{n-1}\big\|_{L^2_{r\,dr}(r\lesssim t)}\lesssim_K |\log t|^{-1},\quad \, S = t\partial_t + r\partial_r,\\
&\sum_{k=1}^K\big\|\frac{S^k w_{n-1}(t,\cdot)}{r^2}\big\|_{L^\infty_{r\,dr}(r\lesssim t)}\lesssim_K \lambda_2.
\end{split}\end{equation}
Furthermore, we require there to be a decomposition 
\begin{equation}\label{eq:vn-1refined}
w_{n-1}(t, r) = c_{n-1}(t)r^2 + \tilde{w}_{n-1}(t,r),
\end{equation}
where we have the bounds (throughout $t\in (0, t_0]$ with $t_0\ll 1$)
\begin{equation}\label{eq:vn-1boundsrefined}\begin{split}
&\big|(t\partial_t)^kc_{n-1}(t)\big|\lesssim_k \lambda_2^{1+k \varepsilon}\\
&\big\|r^{-4}S^k \tilde{w}_{n-1}\big\|_{L^\infty_{r\,dr}(r\lesssim t)}\lesssim_k \lambda_2^{3+k \varepsilon}.  
\end{split}\end{equation}
We note here that the very weak decay rate of the energy type norm in the first inequality of \eqref{eq:vn-1bounds} is due to the contribution of the outermost bubble solution. On the other hand, the scaling parameter $\lambda_2$ occurring in the remaining inequalities is the dominant one, corresponding to the innermost bubble in $\bfcq_{n-1}$. 
\\

Our goal now is the following 
\begin{prop}\label{prop:approximateinnerbubble} Given $N\geq 1$, there exists $t_1 = t_1(N,\beta, n)\in (0, t_0]$ and a positive function 
\[
\lambda_1\in C^\infty\big((0, t_1]\big)
\]
satisfying the bounds 
\[
\lambda_1\geq e^{c\int_t^{t_0}\lambda_{2}(s)\,ds},\qquad \big|\lambda_1^{(k)}\big|\leq F_{k,\nu,n, N} \lambda_1^{1+k\nu},\qquad \big|\lambda_1^{(k)}\big|\leq E_{k,n,N}\bar{\lambda}_2^k\lambda_1,
\]
where $\nu>0$ can be chosen arbitrarily, and $c = c(\beta, n, N)>0$ is a suitable constant, and such that the following holds: setting 
\begin{align}\label{eq:construction_app_tau1}
\tau_1: = \int_t^{t_1}\lambda_1(s)\,ds, 
\end{align}
there exists an approximate solution 
\[
 u_N(t, r) = Q\big(\lambda_1(t)r\big) - \bfcq_{n-1}(t, r) + v_N(t, r)
\]
satisfying the bounds
 \begin{align*}
&\big\|{-}u_{N, tt} + u_{N, rr} + \frac{1}{r}u_{N,r} - 2\frac{\sin (2u_N)}{r^2}\big\|_{L^2_{r\,dr}(r\lesssim t)}\lesssim \tau_1^{-N},\\
&\big\|S^k v_N\big\|_{H^1_{rdr}}\lesssim_k \tau_1^{-2+},\qquad k\geq 0. 
\end{align*}
Moreover, there is a splitting 
\[
v_N = c_N(t)r^2 + \tilde{v}_N
\]
such that we have the bounds
\begin{equation}\label{eq:vNbounds}\begin{split}
&\big|(t\partial_t)^kc_N(t)\big|\lesssim_k \lambda_1^{1+}\\
&\big\|S^k\tilde{v}_{N}\big\|_{L^\infty_{r\,dr}(r\lesssim t)}\lesssim_k \lambda_1^{3+}.  
\end{split}\end{equation}
\end{prop}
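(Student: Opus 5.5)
We construct $v_N=\sum_{j=0}^N h_j$ iteratively, following the scheme outlined in Section 3, with $\lambda_1$ determined at the end by a fixed point in the modulation parameter $m$ of Lemma~\ref{lem:simpleinductivelambdaonewithm}.

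\textbf{Step 0: the leading correction $h_0$.} We first solve the elliptic equation \eqref{eq:h0eqn} via the variation of constants formula \eqref{eq:variationofconstants}. The source $E_2$ is split as $\tilde E_2$ plus a remainder. The remainder collects:
\begin{itemize}
\item the term $\sin(2Q_1)(\cos(2\bfcq_{n-1})-1)$;
\item the error from replacing $\sin(2\bfcq_{n-1})/r^2$ by $4\sum_j(-1)^j\lambda_j^2$, which is controlled through \eqref{eq:vn-1refined}--\eqref{eq:vn-1boundsrefined} and Taylor expansion of $Q(\lambda_j r)$ for $\lambda_j r\ll 1$.
\end{itemize}
The vanishing condition \eqref{eq:keyvanishing} for the full $E_2$ is then imposed as $\frac{\lambda_1''}{\lambda_1^3}-2(\frac{\lambda_1'}{\lambda_1^2})^2=-\frac{(\bar\lambda_2+m)^2}{\lambda_1^2}$. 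Here $\bar\lambda_2^2=\frac{16}{\pi}\sum_{j\ge2}(-1)^j\lambda_j^2$, and $m$ absorbs the projection of the remainder onto $\Phi$, which is $O(\tau^{-1/2})$. Lemma~\ref{lem:simpleinductivelambdaonewithm} then gives $\lambda_1$ with the claimed derivative bounds. With the orthogonality in place, $h_0$ is bounded in $R$ (no $R^2$ growth). Using \eqref{eq:importanttauoneasymptotic}, one gets $h_0=O(\tau_1^{-2})$ in the relevant norms, since $\lambda_1^{-2}\partial_t^2Q_1\sim\bar\lambda_2^2/\lambda_1^2\sim\tau_1^{-2}$.

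\textbf{Iteration.} Given $h_0,\dots,h_{j-1}$, let $e_j$ denote the error they leave in \eqref{eq:veqn}. This error consists of:
\begin{itemize}
\item the time derivatives $\partial_t^2 h_{j-1}$ of the elliptic pieces;
\item the potential mismatch $\frac{4}{r^2}[\cos(2Q_1)-\cos(2Q_1-2\bfcq_{n-1})]h_{j-1}$;
\item the nonlinear terms $E_1$ and $E_3$.
\end{itemize}
We split $e_j$ with smooth cutoffs into an inner part ($R\lesssim \tau_1^{1/2}$, say) and an outer part. The outer part is handled by the wave equation \eqref{eq:vouter} through Lemma~\ref{lem:linout_1}: the sources decay like powers of $\tau_1\approx\tau$, exponentially in $\tau_2$, so the lemma applies with $p$ growing at each stage, giving $h_j^{\out}$. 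The inner part, plus $\frac{4}{r^2}(\cos 2\bfcq_{n-1}-\cos(2Q_1-2\bfcq_{n-1}))h_j^{\out}$, is solved elliptically as for $h_0$, giving $h_j^{\inn}$. The orthogonality condition against $\Phi$ is again enforced by adjusting $m=m_0+m_1+\dots$.

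Each step gains a factor $\tau_1^{-1+}$, because $\partial_t\sim \lambda_1\tau_1^{-1}$ on the $\lambda_1$-scale and $\bfcq_{n-1}\sim\lambda_2^2r^2$ is small inside $R\lesssim\tau_1$. After $N'\sim 2N$ steps the residual is $O(\tau_1^{-N})$ in $L^2(r\lesssim t)$. The $S^k$ bounds follow from the $S^k$ versions of Lemma~\ref{lem:linout_1} together with \eqref{eq:app_lam_1}.

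\textbf{Main obstacle: closing the modulation parameter.} The hardest step is the consistency of $m$. The corrections $m_j$ depend on $\lambda_1$, hence on $m$ itself, so we close by a contraction in the norm $\|\cdot\|_{\frac12,l}$. For this we use the differencing bounds \eqref{eq:differencingbound1}, \eqref{eq:nuderivativebounds} and \eqref{eq:lambdaonedifferencing}. These show that $m\mapsto \sum_j m_j(\lambda_1(m))$ has Lipschitz constant $\ll1$ for $t_1$ small, since every projection carries an extra factor $\tau^{-1/2+}$.

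\textbf{Splitting of $v_N$.} Finally, the decomposition $v_N=c_N r^2+\tilde v_N$ is obtained as follows:
\begin{itemize}
\item For the wave parts $h_j^{\out}$, apply Lemma~\ref{lem:Tayloerexpansion} with $\lambda_1$ in place of $\lambda_2$.
\item For the elliptic parts, use the explicit small-$R$ expansion $\Phi\sim 4R^2$ in \eqref{eq:variationofconstants}, which produces an $r^2$ coefficient of size $\lambda_1^2\tau_1^{-2}\lesssim\lambda_1^{1+}$ and a remainder of size $O(\lambda_1^4r^4)$.
\end{itemize}
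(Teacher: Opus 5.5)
Your architecture broadly matches the paper's. You have a Step 0 elliptic correction with $\lambda_1$ taken from Lemma~\ref{lem:simpleinductivelambdaonewithm}, then alternating outer wave solves via Lemma~\ref{lem:linout_1} with inner elliptic solves kept from growing by orthogonality to $\Phi$, a gain of $\tau_1^{-1+}$ per step, and Lemma~\ref{lem:Tayloerexpansion} for the $r^2$-splitting. The gap is in the step you single out as the main obstacle. With the available estimates you cannot close a contraction for $m\mapsto\sum_j m_j$ in any fixed norm $\|\cdot\|_{\frac12,l}$, because every iteration loses $t\partial_t$-derivatives of $m$. Each new error contains $\partial_t^2 h_j$ (resp.\ $\partial_t^2 h_j^{\inn}$), and $h_j$ depends on $m$ in two ways. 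It depends on $m$ through $\lambda_1$, and by the modulation ODE $\lambda_1''''$ already involves $m''$. It also depends on $m$ through the previously chosen $m_k$. Accordingly, the differencing bounds that actually hold have the form $|(t\partial_t)^\ell\triangle m_k|\lesssim\tau_1^{-k-p+}\|n\|_{p,\ell+4k}$ (compare \eqref{eq:step2_4}). The Lipschitz estimate therefore controls the $\|\cdot\|_{p,l}$-norm of the difference of the map only by $\|n\|_{p,l+2N+2}$. The smallness factor $\tau_1^{-1+}$ cannot pay for a loss of derivatives, and infinitely many iterations would need uniform control of implicit constants in an unbounded number of derivatives, which you do not have.

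The missing idea is that no exact fixed point is needed. The paper fixes $m$ throughout the construction and makes every elliptic step exactly orthogonal by adding the explicit sources $e_{k,\co}=m_k(\cos(2Q_1)-1)$, which are carried along uncancelled. Only at the end does it solve $-8(2m\bar\lambda_2+m^2)+\sum_{k\le N}m_k=0$ \emph{approximately}, by finitely many Picard iterates $m^{(k)}=d+P^{(k)}(d)$ starting from the smooth function $d=\mathcal{M}_N(\cdot;0)$. The residual is an $(N+1)$-fold nested difference, hence $O(\tau_1^{-N+})$, and finitely many derivative losses are harmless; this is also why $t_1$ must depend on $N$. In your formulation, where orthogonality is enforced by \emph{adjusting $m$}, an inexact $m$ would also leave $\Theta(R)$-growing pieces in every elliptic step, which the paper's bookkeeping avoids.

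A second, easily repaired problem is in Step 0. With $\bar\lambda_2^2=\frac{16}{\pi}\sum_j(-1)^j\lambda_j^2$, the projection of your remainder is not $O(\tau^{-1/2})$. Since $w_{n-1}=c_{n-1}r^2+\tilde w_{n-1}$ with only $|c_{n-1}|\lesssim\lambda_2^{1+}$ available from \eqref{eq:vn-1boundsrefined}, $\sin(2\bfcq_{n-1})/r^2$ differs from $4\sum_j(-1)^j\lambda_j^2$ on the $\lambda_1^{-1}$-scale by roughly $2c_{n-1}$. This forces $m\approx c_{n-1}/\bar\lambda_2$, of size up to $\lambda_2^{0+}\gg\tau^{-1/2}$, which is outside the range of Lemma~\ref{lem:simpleinductivelambdaonewithm}. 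The paper absorbs $c_{n-1}$ into $\bar\lambda_2$ itself, see \eqref{eq:barlambdatwo}. After that the remainder is $O(\bar\lambda_2^4 r^2)$ by \eqref{eq:bfcqn-1bound3}, and you can treat it either by your projection or, as the paper does, by bounding the corresponding piece $h_3$ directly on $r\lesssim t$ without orthogonality.

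Two smaller corrections concern the splitting of $v_N$. The elliptic sources vanish at $R=0$ to at least second order, so the elliptic parts are $O(R^4)$ there and contribute only to $\tilde v_N$. The coefficient $c_N$ comes from the $h_j^{\out}$, via Lemma~\ref{lem:Tayloerexpansion} applied as stated at the scale $\lambda_2$, not at $\lambda_1$.
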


\begin{proof}[Proof of Proposition \ref{prop:approximateinnerbubble}] Let $t_1>0$ be a small number, whose size will eventually be chosen small enough depending on $n, \beta, N$. In order to construct $\lambda_1, v_N$, we shall rely on an auxiliary function $m\in C^0\big((0, t_1]\big)$, which we shall choose at the end, and which will determine $\lambda_1$ via Lemma~\ref{lem:simpleinductivelambdaonewithm}; note that $\overline{\lambda}_2$ in turn is determined by
\begin{equation}\label{eq:barlambdatwo}
\overline{\lambda}_2 = \frac{4}{\sqrt{\pi}}\cdot \Big[\sqrt{\sum_{j=2}^n(-1)^j\lambda_j^2 + c_{n-1}}\Big].
\end{equation}
 For this we shall require throughout that  
\begin{equation}\label{eq:basicmbound}
\big|m(t)\big|\leq \tau_1^{-\frac12},\,t\in (0, t_1]. 
\end{equation}
For now we leave $m$ undetermined as an additional parameter, only requiring the preceding bound in addition to higher derivative bounds specified later on, and determine $\lambda_1$ in accordance with Lemma~\ref{lem:simpleinductivelambdaonewithm}.
The correction $v_{N}$ shall result from adding $N$ increments to a first approximation $h_0$, i.e. we shall set 
\[
v_N = \sum_{j=0}^{N}h_j. 
\]
In order to motivate the choice of the $h_j$, we first recall the equation which $v_N$ would have to satisfy in order to result in an exact solution of \eqref{eq:keq2corotational}, given by \eqref{eq:veqn}, where we need to replace $v$ by $v_N$. 
\\

{\bf{Step 0}}: {\it{Construction of the correction $h_0$.}} Here, we replace the wave operator on the left of \eqref{eq:veqn} by the simplified ``inner" elliptic operator 
\[
\partial_{rr} + \frac{1}{r}\partial_r - \frac{4\cos\big(2Q_1\big)}{r^2}. 
\]
Furthermore, we only retain the term $E_2$ on the right, and we introduce a further correction term, depending on the parameter $m$, which will be required to compensate for other corrections later in the inductive procedure. We then arrive at the following equation for $h_0$ (recall $E_2$ was defined in \eqref{eq:veqn}): 
\begin{equation}\label{eq:hzeroeqn}
 h_{0, rr} + \frac{1}{r}h_{0, r} - \frac{4\cos\big(2Q_1\big)}{r^2}h_0 = E_2 - 8(2m\overline{\lambda}_2 + m^2)\cdot \big[\cos(2Q_1) - 1\big]
\end{equation}
We stress that this step is not part of the inductive process we shall perform later.
The base case of the inductive process will be the construction of the correction $h_1$ (see Step 1). This isolated Step 0 aims specifically to cancel $E_2$, which, as mentioned before, contains the largest error terms involving the strong interactions between the bubbles as well as the fact that $Q_1$ is not a solution.

In analogy to \cite{JenKri}, we have the following 
  \begin{lem}\label{lem:hzerocorrection} The equation \eqref{eq:hzeroeqn} admits a solution $h_0\in H^3_{r\,dr}$ satisfying the uniform bound 
  \begin{align*}
  \big|h_0\big|\lesssim \tau_1^{-2+},\qquad r\lesssim t. 
  \end{align*}
  Denoting the scaling vector field $S = t\partial_t + r\partial_r$, we have the uniform bounds 
  \begin{align*}
  \big\|S^k h_0\big\|_{H^1_{rdr}}\lesssim_k  \tau_1^{-2+},\quad k\geq 0,\quad r\lesssim t. 
  \end{align*}
  provided $\big|(t\partial_t)^jm\big|\lesssim 1$, $0\leq j\leq k$. More precisely, interpreting $h_0$ as a function depending on $m$, and correspondingly writing 
\[
h_0(t, r) = h_0(t,r; m),
\]
we can estimate
  \begin{align*}
  \big|S^k h_0(\cdot,\cdot;m)\big|\lesssim _k \tau_1^{-2+}\cdot \big(1 + \tau_1^{-p}\cdot\Big\|m\Big\|_{p,k}\big)^{k-1},\,r\lesssim t,\,p\geq \frac12,
  \end{align*}
  provided we have the a priori bound $\big\|m\big\|_{p,0}\leq 1$.
  We can also include an additional factor 
  \[
  \min\{1, (r\lambda_1)^4\}
  \]
  on the right-hand side. 
    \end{lem}
\begin{proof}[Proof of Lemma \ref{lem:hzerocorrection}] We shall use the notation 
\begin{equation}\label{eq:Rdef}
R: = \lambda_1(t)\cdot r. 
\end{equation}
Then we can write 
 \begin{align*}
  E_2 &=  \frac{\lambda_1''}{\lambda_1}\cdot\Phi(R) + \big(\frac{\lambda_1'}{\lambda_1}\big)^2\cdot \big(R\Phi'(R) - \Phi(R)\big)\\& - 2\frac{\sin\big(2\bfcq_{n-1}\big)}{r^2}\cdot \big[\cos(2Q_1\big) - 1\big]\\
& +  2\frac{\big[\cos\big(2 \bfcq_{n-1}\big) - 1\big]}{r^2}\cdot \sin\big(2Q_1\big), 
  \end{align*}
and recall that $\Phi(R) = 4R^2/(1+R^4)$. Let us introduce the modified source term 
 \begin{align*}
  \tilde{E}_2: = \frac{\lambda_1''}{\lambda_1^3}\cdot\Phi(R) + \big(\frac{\lambda_1'}{\lambda_1^2}\big)^2\cdot \big(R\Phi'(R) - \Phi(R)\big) - 8\cdot\big(\frac{\tilde{\lambda}_2}{\lambda_1}\big)^2\cdot \big[\cos\big(2Q(R)\big) - 1\big].
  \end{align*}
Here we have introduced
\begin{equation}\label{eq:tildelambdatwo}
\tilde{\lambda}_2: = \sqrt{\sum_{j=2}^n(-1)^j\lambda_j^2 +c_{n-1}}.    
\end{equation}
Next introduce the operator 
  \begin{equation}\label{eq:mathcalL}
\mathcal{L}: = \partial_{RR} + \frac{1}{R}\partial_R - \frac{4\cos(2Q(R))}{R^2},\qquad R = \lambda_1(t)r. 
\end{equation}
and its fundamental system 
\[
\frac14\Phi(R) = \frac{R^2}{1+R^4},\qquad \Theta(R): = \frac{-1 + 8R^4\log R + R^8}{4R^2(1+R^4)}. 
\]
Then we can solve the equation \eqref{eq:hzeroeqn} by means of the following variation of constants formula:
\begin{equation}\label{eq:hformula}\begin{split}
h_0(t, r) &= \frac14\Theta(R)\cdot \int_0^R f\big( \tfrac{s}{\lambda_1}  \big)\Phi(s)\,s ds\\
& -  \frac14\Phi(R)\cdot \int_0^R f\big(  \tfrac{s}{\lambda_1}  \big)\Theta(s)\,s ds,
\end{split}\end{equation}
where we have used the shorthand 
\[
f = \lambda_1^{-2}E_2 - 8 \lambda_1^{-2}\cdot (2m\bar{\lambda}_2 + m^2)\cdot \big[\cos(2Q_1) - 1\big].
\]
As in \cite{JenKri}, we decompose $h_0$ into three parts, where the middle one, $h_2$, turns out to be the most delicate one. Specifically, we write 
\[
h_0 = \sum_{j=1}^3 h_j,
\]
where we set 
\begin{align*}
&h_1 ( r ) = -  \frac14\Phi(R)\cdot \int_0^R f\big(  \tfrac{s}{\lambda_1}  \big)\Theta(s)\,s ds,\\
&h_2 ( r ) = \frac14\Theta(R)\cdot \int_0^R \tilde{f}\big(  \tfrac{s}{\lambda_1}  \big)\Phi(s)\,s ds,
\end{align*}
and where we have introduced the modified source term $\tilde{f}$ to define $h_2$, in turn defined by (recall the definition of $\tilde{E}_2$ in \eqref{eq:tildeE2}) \[
\tilde{f} = \tilde{E}_2 - 8 \lambda_1^{-2}\cdot (2m\lambda_2 + m^2)\cdot \big[\cos(2Q_1) - 1\big].
\]
It is worth noting that, according to Lemma \ref{lem:simpleinductivelambdaonewithm}, \[
\int_0^\infty \tilde{f}\big(\tfrac{s}{\lambda_1}\big) \Phi(s) sds =0.
\]
The final term $h_3$ accounts for the error produced by replacing $f$ by $\tilde{f}$, and is hence given by 
\begin{align*}
h_3(r) = \frac14\Theta(R)\cdot \int_0^R (f-\tilde{f})\big(  \tfrac{s}{\lambda_1}  \big) \Phi(s)\,s ds.
\end{align*}
We next observe that our a priori assumption on $\bfcq_{n-1}$ as described in \eqref{eq:vn-1bounds}, as well as the relation \eqref{eq:outerbubblesoln}, result in the pointwise bound 
\begin{equation}\label{eq:bfcqn-1bound1}
\big|\bfcq_{n-1}(t,r)\big|\lesssim \big(\overline{\lambda}_2r\big)^2,\,r\lesssim t,    
\end{equation}
provided $0<t\leq t_0$ with $t_0$ sufficiently small. This implies that 
\begin{equation}\label{eq:bfcqn-1bound2}
(\lambda_1 r)^{-2}\big|\bfcq_{n-1}(t,r)\big|\lesssim \big(\frac{\overline{\lambda}_2}{\lambda_1}\big)^2.
\end{equation}
We also note the improved (for suitably small $r$) error bound 
\begin{equation}\label{eq:bfcqn-1bound3}
\big|\bfcq_{n-1}(t,r) -  \bfq_{n-1}(t,r) - c_{n-1}(t)r^2\big|\lesssim \bar{\lambda}_2^4 r^4. 
\end{equation}
Now we estimate each of the terms $h_j$ separately: 
\\

{\it{(1): The estimate for $h_1$.}} Taking advantage of \eqref{eq:bfcqn-1bound2}, we deduce the bounds
\begin{align*}
&\lambda_1^{-2}\cdot \big|2\frac{\sin\big(2\bfcq_{n-1} \big)}{r^2}\cdot \big[\cos(2Q_1\big) - 1\big]\big|\lesssim \big(\frac{\bar{\lambda}_2}{\lambda_1}\big)^2\cdot \frac{R^4}{1+R^8},\\
&\lambda_1^{-2}\cdot \big| 2\frac{\big[\cos\big(2\bfcq_{n-1} \big) - 1\big]}{r^2}\cdot \sin\big(2Q_1\big)\big|\lesssim \big(\frac{\bar{\lambda}_2}{\lambda_1}\big)^2\cdot\frac{R^2}{1+R^4}.
\end{align*}
Further, in light of Lemma~\ref{lem:simpleinductivelambdaonewithm} and our choice of $\lambda_1$, we have the bound
\begin{equation}\label{eq:h1technical}\begin{split}
\Big|\frac{\lambda_1''}{\lambda_1^3}\cdot\Phi(R) + \big(\frac{\lambda_1'}{\lambda_1^2}\big)^2\cdot \big(R\Phi'(R) - \Phi(R)\big)\Big|&\lesssim\big(\frac{\bar{\lambda}_2}{\lambda_1}\big)^2\cdot \frac{R^2}{1+R^4}.
\end{split}\end{equation}
We suppress the implicit dependence on $m$ here but this will be made more explicit at later stages. In the preceding estimates, we can always replace $\frac{\bar{\lambda}_2}{\lambda_1}$ by $\lambda_1^{-1+}$. Moreover, in light of \eqref{eq:importanttauoneasymptotic}, we can also replace $\frac{\bar{\lambda}_2}{\lambda_1}$ by $\tau_1^{-1}$.
\\
Finally, we have the (crude) bound 
\[
\big|8 \lambda_1^{-2}\cdot (2m\bar{\lambda}_2 + m^2)\cdot \big[\cos(2Q_1) - 1\big]\big|\lesssim \lambda_1^{-2}\cdot \frac{R^4}{1+R^8}.
\]

The desired bound for this contribution to $h_0$, but still without the operator $S^k$, then easily follows from the asymptotic bounds for $\Phi(R)$, $\Theta(R)$. In fact, we obtain an improved estimate with $\tau_1^{-2}$ instead of $\tau_1^{-2+}$, and due to the quadratic vanishing of $\Phi(R)$ at $R = 0$, we can include a factor $\min\{1, (\lambda_1 r)^4\}$ on the right hand side. 
\\

{\it{(2): The estimate for $h_2$}}. Recall that we chose $\lambda_1$ by applying Lemma~\ref{lem:simpleinductivelambdaonewithm} to the equation
\[
\frac{\lambda_1''}{\lambda_1^3} - 2 \big(\frac{\lambda_1'}{\lambda_1^2}\big)^2 = -\frac{\big(\overline{\lambda}_2 + m\big)^2}{\lambda_1^2}.
\]
Keeping in mind the definition \eqref{eq:barlambdatwo} as well as the numerical identitites \eqref{eq:explicitintegrals}, we can rewrite the term $h_2$ in the following way: 
\begin{align*}
h_2 &= \chi_{R\lesssim 1}\cdot  \frac14\Theta(R)\cdot \int_0^R\tilde{f}(s)\Phi(s)\,s ds\\
& - \chi_{R\gtrsim 1}\cdot  \frac14\Theta(R)\cdot \int_R^\infty\tilde{f}(s)\Phi(s)\,s ds.
\end{align*}
Taking advantage of \eqref{eq:h1technical}, as well as the pointwise estimate 
\begin{align*}
\Big|4\cdot\big(\frac{\bar{\lambda}_2+m}{\lambda_1}\big)^2\cdot \big[\cos\big(2Q(R)\big) - 1\big]\Big|\lesssim \big(\frac{\bar{\lambda}_2}{\lambda_1}\big)^2\cdot \frac{R^4}{1+R^8},
\end{align*}
and further taking advantage of the estimates 
\begin{align*}
&\chi_{R\gtrsim 1}\cdot\big|\Theta(R)\cdot\int_R^\infty \frac{s^2}{1+s^4}\cdot \Phi(s)s\,ds\big|\lesssim  1\\
&\chi_{R\lesssim 1}\cdot\big|\Theta(R)\cdot\int_0^R \frac{s^2}{1+s^4}\cdot \Phi(s)s\,ds\big|\lesssim (r\lambda_1)^4,
\end{align*}
we infer the estimate 
\begin{align*}
\big|h_2(t,r)\big|\lesssim \tau_1^{-2}\min\{1, (\lambda_1 r)^4\}.    \end{align*}

{\it{(3): The estimate for $h_3$}}. Computing the difference of $E_2$ and $\tilde{E}_2$, we see that the following integrals need to be bounded: 
\begin{align*}
&(I): = 2\Theta(R)\int_0^R\frac{\big[\cos\big(2 \bfcq_{n-1}\big) - 1\big]}{S^2}\cdot \sin\big(2Q_1\big)\cdot \Phi(S)S\,dS,\\
&(II): = 2\Theta(R)\int_0^R2\frac{\big[\sin\big(2\bfcq_{n-1} \big) - 4\big(\frac{\tilde{\lambda}_2}{\lambda_1}S\big)^2\big]}{S^2}\cdot \big[\cos(2Q_1\big) - 1\big]\cdot \Phi(S)S\,dS,
\end{align*}
where we recall \eqref{eq:tildelambdatwo}. Also, $=\lambda_1(t)s$. Taking advantage of the bound 
\[
\big|\sin\big(2Q_1\big)\big|\lesssim \frac{S^2}{1+S^4}, 
\]
and further taking advantage of \eqref{eq:bfcqn-1bound1}, we derive the pointwise estimate 
\begin{align*}
\Big|\frac{\big[\cos\big(2 \bfcq_{n-1}\big) - 1\big]}{S^2}\cdot \sin\big(2Q_1\big)\cdot \Phi(S)S\Big|\lesssim \big(\frac{\bar{\lambda}_2}{\lambda_1}\big)^4\langle S\rangle^{-1}\min\{1, (s\lambda_1)^4\}.    
\end{align*}
Now if we restrict $r\lesssim t$, then we have $R = \lambda_1r\lesssim \lambda_1 t$. It follows that 
\begin{align*}
\Big|(I)\Big|&\lesssim \big(\lambda_1 t\big)^2\cdot |\log (\lambda_1 t)|\cdot\big(\frac{\bar{\lambda}_2}{\lambda_1}\big)^4\min\{1, (r\lambda_1)^4\}\\
&\lesssim \big(\frac{\bar{\lambda}_2}{\lambda_1}\big)^{2-}\min\{1, (r\lambda_1)^4\}, 
\end{align*}
where we have taken advantage of \eqref{eqlambdajassumptions}, \eqref{eq:lambdandef}. This gives the desired bound for this contribution to $h_3$, still without derivatives $S^k$. 
\\
As for the term $(II)$, recalling \eqref{eq:outerbubblepart}, \eqref{eq:outerbubblesoln}, as well as \eqref{eq:vn-1refined}, \eqref{eq:vn-1bounds} and the definition \eqref{eq:tildelambdatwo}, we infer the estimate 
\[
\Big|2\frac{\big[\sin\big(2\bfcq_{n-1} \big) - 4\big(\frac{\tilde{\lambda}_2}{\lambda_1}S\big)^2\big]}{S^2}\cdot \big[\cos(2Q_1\big) - 1\big]\cdot \Phi(S)S\Big|\lesssim \frac{\bar{\lambda}_2^3}{\lambda_1^4}\langle S\rangle^{-1}\min\{1, (s\lambda_1)^4\}.
\]
From here we infer the same type of estimate for the term $(II)$ as for the term $(I)$. 
\\

In order to complete the proof of the lemma, we also need to control the differentiated expression $S^kh_0$. Here we proceed in analogy to \cite{JenKri}, relying on the following kind of observation: 
\begin{lem}\label{lem:symbolbounds1} Let $H$ be a smooth function on $\mathbb{R}_+$ satisfying symbol bounds of the type 
\[
\big|(R\partial_R)^kH(R)\big|\lesssim_k \frac{R^{\alpha}}{\langle R\rangle^{\beta}},\,(\alpha,\beta)\in \mathbb{R}\times \mathbb{R}. 
\]
Then the composite function (with $\lambda_1,\tau_1$ as before)
\[
G(t, r): = H(\lambda_1(t)r)
\]
satisfies the bounds (as usual we set $p\geq \frac12$)
\begin{align*}
\Big|S^kG(t, r)\Big|\lesssim_k \tau_1^{0+}\cdot\big(1 + \tau_1^{-p}\big\|m\big\|_{p,k-1}\big)^k\cdot \frac{R^{\alpha}}{\langle R\rangle^{\beta}},\,R = \lambda_1(t)r,\,k\geq 0.     
\end{align*}
The same conclusion applies when $S$ is replaced by $t\partial_t$. 
\end{lem}
This lemma is a straightforward consequence of the chain rule and Lemma~\ref{lem:simpleinductivelambdaonewithm}.
\\

To see how this implies the desired derivative bounds for the terms $h_j$, let us consider for example the term $(I)$ contributing to the component $h_3$. Applying the Leibniz rule, we can write 
\begin{align*}
&S^k(I) \\&= \sum_{k_1+k_2=k,\,k_1<k}C_{k_{1,2}}S^{k_1}\big(\Theta(R)\big)S^{k_2-1}\Big(S(R)\cdot\frac{\big[\cos\big(2 \bfcq_{n-1}\big) - 1\big]}{R}\sin\big(2Q_1\big)\Phi(R)\Big)\\
&+2S^k\big(\Theta(R)\big)\int_0^R\frac{\big[\cos\big(2 \bfcq_{n-1}\big) - 1\big]}{S^2}\cdot \sin\big(2Q_1\big)\cdot \Phi(S)S\,dS.
\end{align*}
Then the desired bound follows for the second term by following the same argument as for the undifferentiated term $(I)$, together with the preceding lemma, and for the first term, by invoking the induction hypotheses \eqref{eq:vn-1bounds}. The remaining contributions to $h_0$ can be handled similarly. 
\end{proof}

In order to enable the construction of the desired function $m$ at the end, we shall also require differencing bounds with respect to $m$ for all the functions $h_j$, and in particular for $h_0$.
Writing 
\[
h_0 = h_0(t,r;m)
\]
and denoting (in accordance with \eqref{eq:differencingnotation})
\[
\triangle h_0(t,r;m,n) = h_0(t,r;m+n) - h_0(t,r;m), 
\]
we derive the following bound by exploiting \eqref{eq:lambdaonedifferencing}:     
\begin{equation}\label{eq:nzerodifferencingbounds}
\big|S^l\triangle h_0(t,r;m,n)\big|\lesssim_{l} \tau_1^{-2-p+}\cdot\big(\Big\|m\Big\|_{p,l} + \Big\|n\Big\|_{p,l}+1\big)^{l}\cdot\Big\|n\Big\|_{p,l},\,r\lesssim t. 
\end{equation}
Keeping in mind the equation \eqref{eq:veqn}, we see that the approximation $v_N = h_0$ generates the following error term:
 \begin{equation}\label{eq:e0error}\begin{split}
 e_0 &= -h_{0,tt} + E_1(h_0) + E_3(h_0) - 8(2m\lambda_2 + m^2)\cdot \big[\cos(2Q_1) - 1\big]\\
 & + \frac{4\cos(2Q_1) - 4\cos\big(2Q_1 - 2\bfcq_{n-1}\big)}{r^2}h_0.
\end{split}\end{equation}
Here $E_{1,3}$ are as defined on the right-hand side in \eqref{eq:veqn}. Then we can formulate the following error estimate:
 \begin{lem}\label{lem:ezerobound} We have the estimate 
 \begin{align*}
 &\lambda_2^{-1}(t)\cdot \big\|S^k \big(e_0 +  8(2m\lambda_2 + m^2)\cdot \big[\cos(2Q_1) - 1\big]\big)\big\|_{L^2_{r\,dr}(r\lesssim t)}\\&\lesssim_{k,l} \tau_1^{-2+}\cdot\big(1 + \tau_1^{-p}\|m\|_{p,k+2}\big)^{k+2},\,k\geq 0,
 \end{align*}
 where we restrict to $p\geq \frac12$. Furthermore, setting $E_0: = e_0 +  8(2m\lambda_2 + m^2)\cdot \big[\cos(2Q_1) - 1\big]$ and writing $E_0 = E_0(t,r;m)$, we have the differencing bounds 
 \begin{align*}
&\lambda_2^{-1}(t)\cdot \big\|S^l\triangle E_0(t,r;m,n)\big\|_{L^2_{r\,dr}}\lesssim_{l} \tau_1^{-2-p+}\cdot\big(\Big\|m\Big\|_{p,l+2} +\Big\|n\Big\|_{p,l+2}+1\big)^{l}\cdot\Big\|n\Big\|_{p,l+2}.
 \end{align*}
 Here we restrict to $p\geq \frac12$. Finally, we also have the estimate 
 \begin{align*}
  &\lambda_2^{-4}(t)\cdot \big\|r^{-2}S^k \big(e_0 +  8(2m\lambda_2 + m^2)\cdot \big[\cos(2Q_1) - 1\big]\big)\big\|_{L^\infty_{r\,dr}(r\lesssim t)}\\&\lesssim_{k,l} \tau_1^{0+}\cdot\big(1 + \tau_1^{-p}\|m\|_{p,k+2}\big)^{k+2},\,k\geq 0.
\end{align*}
 \end{lem}
\begin{proof} We verify this for the different terms constituting
$E_0$. 
\\

\noindent
{$\bullet$ \it{The estimate for $h_{0,tt}$.}} Write 
\begin{align*}
h_0 &= \chi_{R\lesssim 1}\cdot  \Big[\frac14\Theta(R)\cdot \int_0^R\tilde{f}(s)\Phi(s)\,s ds - \frac14\Phi(R)\cdot \int_0^R\tilde{f}(s)\Theta(s)\,s ds\Big]\\
& + \chi_{R\gtrsim1}\cdot  \Big[-\frac14\Theta(R)\cdot \int_R^\infty\tilde{f}(s)\Phi(s)\,s ds - \frac14\Phi(R)\cdot \int_0^R\tilde{f}(s)\Theta(s)\,s ds\Big]\\
& + \frac14\Theta(R)\cdot \int_0^R[f-\tilde{f}](s)\Phi(s)\,s ds - \frac14\Phi(R)\cdot \int_0^R[f-\tilde{f}](s)\Theta(s)\,s ds\\
& = : \sum_{j=1}^3 H_j. 
\end{align*}
We note that each of the three variation of constants expressions gain two derivatives of smoothness with respect to the integrand $\tilde{f}, f - \tilde{f}$, respectively. Then we note that 
\begin{align*}
&(t\partial_t)H_3\\
&=  \frac14(t\partial_t)\big(\Theta(R)\cdot \lambda^2(t)\big)\int_0^r[f-\tilde{f}](t,\lambda(t)s)\Phi(\lambda(t)s)s\, ds\\& - \frac14(t\partial_t)\big(\Phi(R)\cdot \lambda^2(t)\big)\int_0^r[f-\tilde{f}](t,\lambda(t)s)\Theta(\lambda(t)s)\,s ds\\
& + \frac14\Theta(R)\cdot \lambda^2(t)\int_0^r(t\partial_t)\big([f-\tilde{f}](t,\lambda(t)s)\Phi(\lambda(t)s)\big)s\, ds\\& - \frac14\big(\Phi(R)\big)\cdot \lambda^2(t)\int_0^r(t\partial_t)\big([f-\tilde{f}](t,\lambda(t)s)\Theta(\lambda(t)s)\big)\,s ds.
\end{align*}
 The first two expressions on the right can be bounded in close analogy to the term $h_3$, also relying on Lemma~\ref{lem:symbolbounds1}. To estimate the last two expressions, we write 
 \begin{align*}
 (t\partial_t)\big([f-\tilde{f}](t,\lambda(t)s)\Phi(\lambda(t)s)\big) &= S\big([f-\tilde{f}](t,\lambda(t)s)\Phi(\lambda(t)s)\big)\\
 & - (s\partial_s)\big([f-\tilde{f}](t,\lambda(t)s)\Phi(\lambda(t)s)\big).
 \end{align*}
 The first term on the right leads to a contribution to the remaining two variation of constant formula contributions which can be treated just like the term $h_3$ in the proof of Lemma~\ref{lem:hzerocorrection}, due to the a priori bounds \eqref{eq:vn-1bounds} - \eqref{eq:vn-1boundsrefined}. For the last term on the right, we perform integration by parts with respect to $s$, leading to the term 
 \begin{align*}
&-\frac12\Theta(R)\cdot \lambda^2(t)\int_0^r\big([f-\tilde{f}](t,\lambda(t)s)\Phi(\lambda(t)s)\big)s\, ds\\&\hspace{2cm} +\frac12\big(\Phi(R)\big)\cdot \lambda^2(t)\int_0^r\big([f-\tilde{f}](t,\lambda(t)s)\Theta(\lambda(t)s)\big)\,s ds.
 \end{align*}
 This can again be treated like the term $h_3$ in the proof of Lemma~\ref{lem:hzerocorrection}.
 \\
 As for the term $(t\partial_t)^2H_3$, one proceeds as before, performing integration by parts to suppress the operator $s\partial_s$. The only new feature is the appearance of the boundary term 
 \begin{align*}
 &\frac14(t\partial_t)\big(\Theta(R)\cdot r^2\lambda^2(t)\big)[f-\tilde{f}](t,\lambda(t)r)\Phi(\lambda(t)r)\\&\hspace{2cm} - \frac14(t\partial_t)\big(\Phi(R)\cdot r^2\lambda^2(t)\big)[f-\tilde{f}](t,\lambda(t)r)\Theta(\lambda(t)r)
 \end{align*}
 This term is seen to satisfy the same bounds as $h_3$. Moreover, we can apply arbitrary powers of the operator $S$ to it (using Lemma~\ref{lem:simpleinductivelambdaonewithm}), as well as to the preceding expressions, using integration by parts as before for the expressions involving integrals, as needed. 
 In light of the identity 
 \[
 \partial_t^2 = t^{-2}\big[(t\partial_t)^2 - (t\partial_t)\big], 
 \]
 The first inequality of the lemma for the contribution of $h_{0,tt}$ then follows from the preceding and the inequality 
 \[
 t^{-2}\lesssim \tau_1^{0+}. 
 \]
 The second inequality of the lemma for the contribution of $h_{0,tt}$ follows by straightforward differencing in anaolgy to \eqref{eq:nzerodifferencingbounds}, and the last inequality is a consequence of the fourth order vanishing of the contribution of $h_{0,tt}$ at $r= 0$, in turn inherited from Lemma~\ref{lem:hzerocorrection}.  
 \\

 Continuing with the proof of Lemma~\ref{lem:ezerobound}, we turn to 
\\

\noindent
{$\bullet$ \it{The estimates for $E_{1,3}(h_0)$.}} Recalling \eqref{eq:veqn}, in addition to \eqref{eq:outerbubblesoln} as well as the a priori bounds \eqref{eq:vn-1bounds}, we infer the bounds\footnote{We also use that $\lambda_1\lesssim \tau_1^{1+}$.} 
\begin{align*}
&\big\|S^{l}\big( \frac{2\sin\big(2Q_1- 2\bfcq_{n-1}\big)}{r^2}\big)(t,r)\big\|_{L^2_{r\,dr}}\lesssim_{l} \tau_1^{1+},\,l\geq 0.     
\end{align*}
Also taking advantage of Lemma~\ref{lem:hzerocorrection} and writing 
 \begin{align*}
 &S^l\big(E_3(h_0)\big)\\& = \sum_{l_1+l_2 = l}C_{l_{1,2}}\cdot S^{l_1}\big( \frac{2\sin\big(2Q_1- 2\bfcq_{n-1}\big)}{r^2}\big)\cdot  S^{l_2}\big(\cos(2h_0) - 1\big),
\end{align*}
we infer the bound 
\begin{align*}
\big\|S^lE_3(h_0)\big\|_{L^2_{r\,dr}(r\lesssim t)}\lesssim_l\tau_1^{-3+}\big(1 + \tau_1^{-p}\big\|m\big\|_{p,l}\big)^l.    
\end{align*}
For the differencing bound, we take advantage of the estimate 
\begin{align*}
&\big\|S^{l}\triangle\big( \frac{2\sin\big(2Q_1 - 2\bfcq_{n-1}\big)}{r^2}\big)(t,r;m,n)\big\|_{L^2_{r\,dr}}\\&\lesssim_{l} \tau_1^{1-p+}\cdot \big(1+ \Big\|m\Big\|_{p,l} +  \Big\|n\Big\|_{p,l}\big)^{l}\cdot\Big\|n\Big\|_{p,l},\,l\geq 0,
\end{align*}
in turn a consequence of Lemma~\ref{lem:simpleinductivelambdaonewithm}, and the differencing estimate \eqref{eq:nzerodifferencingbounds}, yielding the bound
 \begin{align*}
 \big\|S^l\triangle \big(E_3(h_0)(\cdot,\cdot;m,n)\big)\big\|_{L^2_{r\,dr}}\lesssim_l\tau_1^{-3-p+}\cdot \big(\Big\|m\Big\|_{p,l} +\Big\|n\Big\|_{p,l}+1\big)^{l}\cdot\Big\|n\Big\|_{p,l}.
 \end{align*}
 The final bound asserted in the lemma follows for this contribution again from the vanishing properties of $h_0$ at the origin. 
 The term $E_1(h_0)$ is handled analogously. 
 \\

\noindent
{$\bullet$ \it{The estimates for the term $ \frac{4\cos(2Q_1) - 4\cos\big(2Q_1 - 2\bfcq_{n-1}\big)}{r^2}h_0$.}} Here we exploit the bounds
 \[
 \big|S^l\big(\frac{4\cos(2Q_1) - 4\cos\big(2Q_1 - 2\bfcq_{n-1}\big)}{r^2}\big)\big|\lesssim_l \bar{\lambda}_2^{2+},\,l\geq 0,
 \]
 which is a consequence of our assumption \eqref{eqlambdajassumptions}. Recalling Lemma~\ref{lem:hzerocorrection} and the fact that $\bar{\lambda}_2\lesssim \tau_1^{0+}$, we infer from application of Leibniz' rule the bounds 
\begin{align*}
\Big\|S^l\Big( \frac{4\cos(2Q_1) - 4\cos\big(2Q_1 - 2\bfcq_{n-1}\big)}{r^2}h_0\Big)\Big\|_{L^2_{r\,dr(r\lesssim t)}}\lesssim_l\tau_1^{-2+}\cdot \big(1 + \tau_1^{-p}\cdot\Big\|m\Big\|_{p,l}\big)^{l-1}
\end{align*}
The fourth order vanishing at the origin $r = 0$ of these terms is also a consequence of Lemma~\ref{lem:hzerocorrection}, and all additional factors $\bar{\lambda}_2$ can be absorbed by $\tau_1^{0+}$. The differencing bounds are more of the same. 
\end{proof}

\end{proof}

%%%%%%%%%%%%%%%%%%%%%%%%%%%%%%%%%%%%%%%%%%%
%%%%%%%%%%%%%%%%%%%%%%%%%%%%%%%%%%%%%%%%%%%
%%%%%%%%%%%%%%%%%%%%%%%%%%%%%%%%%%%%%%%%%%%
%%%%%%%%%%%%%%%%%%%%%%%%%%%%%%%%%%%%%%%%%%%
%%%%%%%%%%%%%%%%%%%%%%%%%%%%%%%%%%%%%%%%%%%
%%%%%%%%%%%%%%%%%%%%%%%%%%%%%%%%%%%%%%%%%%%

%\newpage
\medskip
{\bf{Step 1}}: {\it{Construction of the correction $h_1$.}} We now begin the inductive process for the construction of the correction. This step corresponds to the base case. The purpose of this step is to cancel the error $e_{0,2}$ (see below). More precisely, this first step consists of obtaining an approximate solution to the following wave equation \begin{align}\label{eq:step1_1}
-\partial_t^2 h_1 + \partial_r^2 h_1 +\dfrac{1}{r}\partial_r h_1  - \dfrac{4}{r^2} \cos\big(2Q_1-2\bfcq_{n-1}\big)h_1 = e_{0,2} + \boxed{ e_{1,\co}} 
\end{align}
where we define \begin{align}\label{eq:step1_11}
e_{0}:= e_{0,1} - e_{0,2}, \, \quad e_{0,1}:= E_{2,\co}, \quad \hbox{and} \quad e_{1,\co} = m_1\Big(\cos(2Q_1)-1\Big).
\end{align}
We recall that \begin{equation}\label{eq:step1_12}\begin{aligned}
    e_{0,2}&=  \partial_t^2 h_0 + E_1(h_0)+E_2(h_0)+E_3(h_0) ,
    \\  E_1(h_0) & =  \dfrac{2}{r^2}\cos\big( 2Q_1-2\bfcq_{n-1} \big) \Big( \sin(2h_0)-2h_0 \Big)  ,
    \\ E_2(h_0) &  =  \dfrac{4}{r^2}\Big(\cos\big(2Q_1-2\bfcq_{n-1}\big)- \cos(2Q_1)\Big) h_0,
    \\  E_3(h_0)  & =  \dfrac{2}{r^2}\sin\big(2Q_1-2\bfcq_{n-1}\big)\Big( \cos(2h_0)-1\Big).
\end{aligned}\end{equation}
It is worth stressing that we are not aiming to cancel the first correction term from the previous step, namely, \begin{align}\label{eq:step1_13}
e_{0,1}=E_{2,\co}  :=  - 8 (2m\overline{\lambda}_2 + m^2)\cdot \Big(\cos(2Q_1) - 1\Big). 
\end{align}
On the other hand, the main purpose of the boxed term $e_{1,\co}$ in \eqref{eq:step1_1} is to enforce the additional vanishing condition required to eliminate the growing part of the solution of said equation (similar to the introduction of $E_{2,\co}$ in Step 0 before). We anticipate that, to ensure the same vanishing condition for all $h_k$, $k\geq1$, a similar correction term will be introduced at every step of the inductive process, ultimately requiring that (at least approximately) \[
-8\big(2\lambda_2 m+m^2) + \sum_{k=1}^N m_k =0,
\]
where the $m_k$ are the factors chosen to guarantee the subsequent vanishing conditions at every step of the iteration. 

\medskip

Now, the strategy to construct $h_1$ will be via a two-stage process: first, we solve an outer
wave equation involving the potential term (associated with the $n-1$ outer bubbles) \[
-\dfrac{4}{r^2}\cos\big(2\bfcq_{n-1}\big),
\]
then we solve an inner elliptic equation akin to the one satisfied by $h_0$ (with a potential term associated with the innermost bubble $Q_1$). The first step has a smoothing effect at the outer scale $\lambda_2^{-1}$, while the second step has a shrinking effect. The role of the boxed term in \eqref{eq:step1_1} will be to ensure good bounds for the second step.

That being said, we decompose the solution as \[
h_1=:h_1^{\out}+h_1^{\inn},
\]
where $h_{1}^{\out}$ will be chosen to satisfy an outer wave equation, whereas $h_{1}^{\inn}$ will solve an inner elliptic equation. Concretely, we choose $h_{1}^{\out}$ and $h_{1}^{\inn}$ satisfying  \begin{align}\label{eq:step1_2}
-\partial_t^2 h_{1}^{\out} + \partial_r^2 h_{1}^{\out} +\dfrac{1}{r}\partial_r h_{1}^{\out} - \dfrac{4}{r^2}\cos(2\bfcq_{n-1}) h_{1}^{\out}  =  e_{0,2},
\end{align}  
and \begin{equation}\label{eq:step1_3}\begin{aligned}
& \partial_r^2 h_{1}^{\inn} +\dfrac{1}{r}\partial_r h_{1}^{\inn} - \dfrac{4}{r^2}\cos(2Q_1) h_{1}^{\inn} 
\\ & = e_{1,\co} + \dfrac{4}{r^2}\Big(\cos(2Q_1-2\bfcq_{n-1})-\cos(2\bfcq_{n-1}) \Big) h_{1}^{\out}. 
\end{aligned}\end{equation}
The lemma below contains the key information of this step. We point out that the parameter $m_1$ (appearing in the equation for $h_1^{\inn}$) will depend mildly on $h_1^{\out}$.
\begin{lem}\label{lem:step1_2}
Let $m$ be as before. Then, there exists a function $m_1$ on $(0,t_0]$ satisfying the bound \begin{align}\label{eq:step1_4}
\vert m_1(t)\vert \ll_{t_0} \tau_1^{-1/2},
\end{align}
such that equation \eqref{eq:step1_1} admits an approximate solution on $(0,t_0]\times (0,\infty)$ satisfying the bounds \[
\big\Vert S^\ell h_1(t,\cdot)\big\Vert_{H^1_{rdr}}\lesssim_{k,l} \tau_1^{-(2-)}\bigg(1+\tau_1^{-\ell}\sum_{0\leq j \leq k+4}\sup_{0 < t \leq t_0} \tau_1^\ell \big\vert (t\partial_t)^j m(t)\big\vert \bigg),
\]
for $k\geq0$ and $\ell \geq \tfrac12$. We also have a representation 
\[
h_1(t, r) = c_1(t)r^2 + g_1(t, r)
\]
with the bounds 
\[
\big|(t\partial_t)^kc_1(t)\big|\lesssim_k \tau_1^{-1+},\,\big|S^kg_1(t, r)\big|\lesssim_k \tau_1 r^4. 
\]
Moreover, the difference between the right- and left-hand sides of \eqref{eq:step1_1} - recalling that this is only an approximate solution - satisfies \[
\lambda_2^{-1}\big\Vert S^k e_{1,0}\big\Vert_{L^2_{rdr}(r\lesssim t)} \lesssim_k \tau_1^{-(3-)}\bigg(1+\tau_1^{-\ell} \sum_{0\leq j \leq k+6} \sup_{0 < t \leq t_0} \tau_1^\ell \big\vert (t\partial_t)^j m(t)\big\vert \bigg),
\]
having called $e_{1,0}$ this difference. We also have the pointwise bounds
\[
\big|r^{-2}S^k e_{1,0}(t,r)\big|\lesssim_k 1,\,r\lesssim t. 
\]

Furthermore, we have the differencing bounds \begin{align*}
    \big\Vert S^k\triangle h_1(t,r;m,n)\big\Vert_{H^1_{rdr}} \lesssim_\ell \tau_1^{-(3+p-\varepsilon)}\Big( 1+ \Vert m\Vert_{p,\ell+4} + \Vert n\Vert_{p,\ell+4}\Big)^\ell  \Vert n\Vert_{p,\ell+4},
\end{align*}
and \begin{align*}
    \lambda_2^{-1}\big\Vert S^k\triangle e_{1,0}(t,r;m,n)\big\Vert_{L^2_{rdr}} \lesssim_\ell \tau_1^{-(3+p-\varepsilon)}\Big( 1+ \Vert m\Vert_{p,\ell+6} + \Vert n\Vert_{p,\ell+6}\Big)^\ell  \Vert n\Vert_{p,\ell+6}.
\end{align*}
\end{lem}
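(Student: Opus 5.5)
We construct $h_1=h_1^{\out}+h_1^{\inn}$ as indicated before the statement, solving first \eqref{eq:step1_2} and then \eqref{eq:step1_3}.

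\emph{Outer step.} We apply Lemma~\ref{lem:linout_1} to \eqref{eq:step1_2} with source $F=e_{0,2}$. By Lemma~\ref{lem:ezerobound}, $\lambda_2^{-1}\|S^k e_{0,2}\|_{L^2_{r\,dr}}\lesssim \tau_1^{-2+}(1+\tau_1^{-p}\|m\|_{p,k+2})^{k+2}$. Since $\tau_1\sim\lambda_1/\overline{\lambda}_2$ by \eqref{eq:importanttauoneasymptotic}, and $\lambda_1$ is comparable (up to $\tau^{0\pm}$ factors) to $\tau=e^{c\tau_2}$, this is hypothesis \eqref{linout16} with $p=2-$. The lemma then gives $\|S^\ell h_1^{\out}\|_{\bch^1}\lesssim\tau_1^{-2+}$ and the weighted bounds \eqref{linout12}.

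We next apply Lemma~\ref{lem:Tayloerexpansion}. Its pointwise hypotheses on $f=e_{0,2}$ near $r=0$ follow from the last estimate of Lemma~\ref{lem:ezerobound}, which uses the fourth-order vanishing of $h_0$ at the origin. This yields $h_1^{\out}=c_1(t)r^2+g_1^{\out}$ with $|(t\partial_t)^kc_1|\lesssim\tau_1^{-1+}$ and $|S^kg_1^{\out}|\lesssim\tau_1 r^4$.

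\emph{Inner step and choice of $m_1$.} Write the right-hand side of \eqref{eq:step1_3} as $e_{1,\co}+P\,h_1^{\out}$, where $P=4r^{-2}(\cos(2Q_1-2\bfcq_{n-1})-\cos(2\bfcq_{n-1}))$. This potential is localised at $R=\lambda_1r\lesssim1$ and has size $\lambda_1^2\langle R\rangle^{-4}$ there. We solve \eqref{eq:step1_3} by the variation of constants formula \eqref{eq:hformula} in $R$. To avoid the $R^2$ growth from $\Theta$, we impose the vanishing condition $\int_0^\infty (e_{1,\co}+Ph_1^{\out})(s/\lambda_1)\Phi(s)s\,ds=0$. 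Using \eqref{eq:explicitintegrals}, this determines
\[
m_1=-\tfrac14\int_0^\infty \lambda_1^{-2}P h_1^{\out}\Phi\,s\,ds .
\]
Near the origin, $h_1^{\out}\approx c_1r^2=c_1\lambda_1^{-2}R^2$, so the integrand is of size $c_1R^2\langle R\rangle^{-4}\Phi$. Hence $|m_1|\lesssim |c_1|\lesssim\tau_1^{-1+}\ll\tau_1^{-1/2}$, which is \eqref{eq:step1_4}. The derivative bounds $(t\partial_t)^k m_1$ follow from Lemma~\ref{lem:symbolbounds1} together with the $S^k$ bounds on $h_1^{\out}$.

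With the vanishing condition in place, $h_1^{\inn}$ splits into pieces as in the treatment of $h_1,h_2$ in Lemma~\ref{lem:hzerocorrection}. This gives $|h_1^{\inn}|\lesssim\tau_1^{-1+}\lambda_1^{-2}\cdot\lambda_1^{2}\min\{R^4,1\}\cdot R^{-?}$. More precisely, it gives the $H^1$ bound $\tau_1^{-2+}$: the $L^2_{r\,dr}$ volume of the region $R\lesssim1$ contributes an extra factor $\lambda_1^{-1}$, and this compensates for the $\lambda_1$ loss in the energy. The inner piece also has fourth-order vanishing. Combining it with the outer decomposition gives $h_1=c_1r^2+g_1$ with the stated bounds.

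\emph{Error and differencing.} The remaining error is
\[
e_{1,0}=-\partial_t^2h_1^{\inn}+4r^{-2}\big(\cos(2Q_1)-\cos(2Q_1-2\bfcq_{n-1})\big)h_1^{\inn}.
\]
We estimate it exactly as in Lemma~\ref{lem:ezerobound}: write $\partial_t^2=t^{-2}((t\partial_t)^2-t\partial_t)$, convert $t\partial_t$ into $S-r\partial_r$, integrate by parts inside the variation of constants integrals, and use $t^{-2},\overline{\lambda}_2\lesssim\tau_1^{0+}$. The gain from $\tau_1^{-2}$ to $\tau_1^{-3}$ comes from the factor $\overline{\lambda}_2/\lambda_1\sim\tau_1^{-1}$ carried by the time derivatives of $\lambda_1$, and from the $\overline{\lambda}_2^2r^2$ smallness of the potential difference. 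The derivative losses ($k+4$, $k+6$) reflect the two time derivatives in $e_{0,2}$ and the two more in $e_{1,0}$. The differencing bounds follow by applying $\triangle$ throughout, using \eqref{eq:lambdaonedifferencing}, \eqref{eq:nzerodifferencingbounds} and the linearity of the solution maps of Lemma~\ref{lem:linout_1} and \eqref{eq:hformula}.

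\emph{Main obstacle.} The hardest step is making the outer solution compatible with the inner stage. Lemma~\ref{lem:linout_1} is phrased in $\tau$, which only approximates $\lambda_1$, so we must pass to $\tau_1$ losing only $\tau_1^{0+}$. We also need the sharp near-origin structure $c_1r^2+O(\tau_1r^4)$ from Lemma~\ref{lem:Tayloerexpansion}. Without it, the $\lambda_1^2$ size of $P$ would destroy the bound on $m_1$ and the decay of $h_1^{\inn}$. I would first verify the pointwise hypotheses on $e_{0,2}$ carefully.
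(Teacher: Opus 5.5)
You follow the paper's route: Lemma~\ref{lem:linout_1} with $F=e_{0,2}$ and Lemma~\ref{lem:Tayloerexpansion} for $h_1^{\out}$, $m_1$ fixed by orthogonality to $\Phi(\lambda_1\cdot)$, the inner equation solved by variation of constants split at $R\sim 1$, and $e_{1,0}$ handled through $\partial_t^2=t^{-2}((t\partial_t)^2-t\partial_t)$, $t\partial_t=S-r\partial_r$ and integration by parts.

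The one local deviation is the bound on $m_1$. You use the pointwise structure $c_1r^2+g_1$. The paper instead uses Cauchy--Schwarz against the singular estimate \eqref{linout12}: $|m_1|\lesssim \|(Pr^2)\Phi\|_{L^2_{RdR}}\,\lambda_1\|r^{-2}h_1^{\out}\|_{L^2_{rdr}}\lesssim \lambda_1\tau^{-2+}\sim\tau_1^{-1+}$, with $|Pr^2|\lesssim\langle R\rangle^{-2}$. Both routes give $\tau_1^{-1+}$. The paper's needs only a weighted $L^2$ bound and inherits the differencing estimates directly from Lemma~\ref{lem:linout_1}, which is stated for arbitrary rate $p$; yours needs a differenced version of Lemma~\ref{lem:Tayloerexpansion}. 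So your claim that without the Taylor structure the $\lambda_1^2$ size of $P$ would ruin $m_1$ is overstated. Also, your displayed formula for $m_1$ has a stray $\lambda_1^{-2}$. The normalization consistent with \eqref{eq:explicitintegrals} is $m_1=-\frac14\int_0^\infty P\,h_1^{\out}\Phi(R)R\,dR$, which is what your size estimate actually uses.

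The genuine gap is in the inner step, which carries the quantitative content of the lemma.

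\emph{What goes wrong.} Your bound $|h_1^{\inn}|\lesssim\tau_1^{-1+}\lambda_1^{-2}\cdot\lambda_1^{2}\min\{R^4,1\}$ (with an unresolved power of $R$) reinserts the $\lambda_1^2$ from $|P|\lesssim\lambda_1^2\langle R\rangle^{-4}$. In the $m_1$ computation you had already cancelled that factor against $h_1^{\out}\approx c_1\lambda_1^{-2}R^2$.

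\emph{The correct count.} We have $\partial_{rr}+\frac1r\partial_r-\frac{4\cos(2Q_1)}{r^2}=\lambda_1^2\mathcal{L}$, with $\mathcal{L}$ as in \eqref{eq:mathcalL}. So the source in the $R$ variable is $\lambda_1^{-2}\big(m_1(\cos 2Q_1-1)+(Pr^2)\,r^{-2}h_1^{\out}\big)$. Using $c_1$ near the origin and \eqref{linout12} away from it, this has size $\lambda_1^{-2}\tau_1^{-1+}\sim\tau_1^{-3+}$. Hence $h_1^{\inn}=O(\tau_1^{-3+})$ in $H^1$, which is \eqref{eq:step1_17}. Near $r=0$ one gets $|h_1^{\inn}|\lesssim\tau_1^{-1+}\lambda_1^{-2}R^4=\tau_1^{-1+}\lambda_1^{2}r^4\lesssim\tau_1^{1+}r^4$.

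\emph{Why your bound fails.} It is too large by $\lambda_1^2\sim\tau_1^{2+}$ and yields none of the conclusions:
\begin{itemize}
\item The Taylor remainder near the origin would be $\tau_1^{3+}r^4$, not $\tau_1r^4$.
\item By scale invariance of the Dirichlet energy in 2D, your ``volume compensates the $\lambda_1$ loss'' heuristic turns $\tau_1^{-1+}\min\{R^4,1\}$ into an $H^1$ bound of $\tau_1^{-1+}$, not $\tau_1^{-2+}$.
\item The potential-difference part of $e_{1,0}$, namely $\lambda_2^{-1}\|\overline{\lambda}_2^2h_1^{\inn}\|_{L^2_{rdr}(r\lesssim t)}$, would only be $O(\tau_1^{-1+})$.
\end{itemize}

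Once the size $\tau_1^{-3+}$ of $h_1^{\inn}$ is in place, $\lambda_2^{-1}\|S^ke_{1,0}\|_{L^2_{rdr}}\lesssim\tau_1^{-3+}$ follows by losing only $t^{-2},\overline{\lambda}_2\lesssim\tau_1^{0+}$. This uses Lemma~\ref{lem:symbolbounds1} and integration by parts inside the variation of constants integrals. The $\tau_1^{-3}$ gain sits in $h_1^{\inn}$ itself, not in the time derivatives of $\lambda_1$ as your error paragraph suggests.
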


\begin{proof}
    As stated before, we decompose $h_1=h_1^{\out}+h_1^{\inn}$, each of which satisfy \eqref{eq:step1_2}-\eqref{eq:step1_3} respectively. We split the analysis into multiple steps. Before going any further, we record that $e_{1,0}$ is given by \begin{align}\label{eq:step1_14}
        e_{1,0}:= \partial_t^2 h_1^{\inn} + \dfrac{4}{r^2}\Big( \cos(2Q_1-2\bfcq_{n-1})-\cos(2Q_1) \Big) h_1^{\inn}.
    \end{align}

    \medskip

    \noindent
    \textit{(i) Control of the solution to the outer wave equation \eqref{eq:step1_2}.} We have that equation \eqref{eq:step1_2} is exactly of the form \eqref{linout1}. Hence, applying Lemma \ref{lem:linout_1} with $F=e_{0,2}$, yields the desired decay, provided that the right-hand side satisfies the bound \eqref{linout16}. On the other hand, Lemma \ref{lem:ezerobound} guarantees $e_{0,2}$ satisfies said bound with $p=2-$, and hence, we infer the existence of a solution satisfying the bounds \begin{align}\label{eq:step1_7}
    \Vert S^k h_1^{\out}\Vert_{\bch^1_{rdr}} \lesssim \tau_1^{-(2-(k+1)\varepsilon)}\bigg(1+ \tau_1^{-\ell} \sum_{0 \leq j \leq k+2} \sup_{0 < t \leq t_0} \tau_1^{\ell} \big\vert (t\partial_t)^jm(t)\big\vert \bigg),
    \end{align}
    with $k\geq0$, $\ell \geq \tfrac12$. Moreover, we have that the solution $h_1^\out$ depends continuously on $m$. Specifically, write $h_1^{\out}=h_1^{\out}(t,r;m)$. Recall the notation $\triangle f(t,r;m,n)  = f(t,r;m+n) - f(t,r;m)$, the fact that $\lambda_1$ depends on $m$ through Lemma \ref{lem:simpleinductivelambdaonewithm} that and $\lambda_2=O(\tau_1^{0+})$. Then, from Lemma \ref{lem:ezerobound} it follows that \[
    \big\Vert S^k \triangle h_1^{\out}(t,r;m,n)\big\Vert_{\bch^1_{rdr}} \lesssim_k  \tau_1^{-2-p+} \Big( \Vert m\Vert_{p,\ell+2}+\Vert n\Vert_{p,\ell+2}+1\Big) \Vert n\Vert_{p,\ell+2},
    \]
    where $p\geq \tfrac{1}{2}$. Finally, taking advantage of Lemma~\ref{lem:Tayloerexpansion} and the last inequality in Lemma~\ref{lem:ezerobound}, we deduce that 
    \[
    h_1^{\out}(t, r) = c_1^{\out}(t)r^2 + g_1^{\out}(t, r)
    \]
    where we have 
    \[
    \big|(t\partial_t)^kc_1^{\out}(t)\big|\lesssim_k \tau_1^{-1+},\,\big|S^kg_1(t,r)\big|\lesssim_k \tau_1 r^4.  
    \]
    \bigskip

    \noindent
    \textit{(ii) Choice of the parameter $m_1(t)$ in the inner elliptic equation \eqref{eq:step1_3}.} Similarly to the case of $h_0$ in \eqref{eq:hzeroeqn} (see also \eqref{eq:hformula}), we will solve equation \eqref{eq:step1_3} by means of variations of constants formula. Letting $S=\lambda_1(t)s$, we choose  \begin{align*}
        4 m_1(t):= - \lambda_1^2\int_0^\infty \dfrac{4}{S^2}\Big(\cos(2Q_1-2\bfcq_{n-1})-\cos(2\bfcq_{n-1})\Big) h_1^{\out}(t,s)\Phi(S) SdS,
    \end{align*}
    where the factor $4$ in front of $m_1(t)$ comes from \eqref{eq:explicitintegrals}. We point out that with this definition we automatically obtain the following orthogonality property    \begin{equation}\label{eq:step1_5}\begin{aligned}
        & \bigg\langle \dfrac{4}{r^2}\Big(\cos(2Q_1-2\bfcq_{n-1})-\cos(2\bfcq_{n-1})\Big) h_1^{\out}, \,  \Phi(\lambda_1\cdot) \bigg\rangle_{L^2_{rdr}} 
        \\ & + m_1 \Big\langle \cos(2Q_1)-1, \,  \Phi(\lambda_1\cdot)\Big\rangle_{L^2_{rdr}}=0. 
    \end{aligned}\end{equation}
    having used \eqref{eq:explicitintegrals}. The following Lemma ensures, in particular, that \eqref{eq:step1_4} holds.
    \begin{lem}\label{lem:step1_1}
        The function $m_1(t)$ satisfies the bounds \begin{align*}
            \big\vert (t\partial_t)^\ell m_1(t)\big\vert \lesssim_\ell \tau_1^{-(1-\varepsilon)}\Big(1 + \Vert m\Vert_{p,\ell+4}\Big)^\ell, \qquad p\geq \dfrac{1}{2}.
        \end{align*}
        Moreover, interpreting $m_1(t)=m_1(t;m)$ we have the estimate for the difference \begin{align}\label{eq:step1_8}
            \big\vert (t\partial_t)^\ell \triangle m_1(t;m,n)\big\vert  \lesssim_{\ell} \tau_1^{-1-p+} \Big(1+\Vert m\Vert_{p,\ell+4} + \Vert n\Vert_{p,\ell+4}\Big)^\ell \Vert n\Vert_{p,\ell+4},
        \end{align}
        with $p\geq \tfrac12$.
    \end{lem}
    \begin{proof}[Proof of Lemma \ref{lem:step1_1}]
    We begin by showing the base case $\ell=0$. Indeed, first recall that  \begin{align*}
        & \cos(2Q_1-2\bfcq_{n-1})-\cos(2\bfcq_{n-1})=-2\sin(Q_1)\sin\big(Q_1-2\bfcq_{n-1}\big),
    \end{align*}
    and \begin{align*}
        \sin(Q_1)=\dfrac{2S^2}{1+S^4}.
    \end{align*}
    Hence, for $t_0\in (0,1)$ small enough (recall \eqref{eq:construction_app_tau1}), \begin{align}\label{eq:step1_6}
    \bigg\vert \dfrac{4}{S^2}\Big( \cos(2Q_1-2\bfcq_{n-1})-\cos(2\bfcq_{n-1}) \Big)   s^2 \bigg\vert \lesssim \dfrac{1}{\langle S\rangle^2} \, \dfrac{1}{\lambda_1^2}\lesssim \dfrac{1}{\langle S\rangle^2} \, \dfrac{1}{\tau_1^2},
    \end{align}
    having bounded $\vert \sin(Q_1-2\bfcq_{n-1}) \vert $ by $1$. In particular, the last inequality implies that the integral defining $m_1$ is bounded by \begin{align*}
    & \vert \lambda_1^{-2} m_1(t)\vert 
    \\ & \lesssim \bigg\Vert \dfrac{4}{S^2} \Big( \cos(2Q_1-2\bfcq_{n-1})-\cos(2\bfcq_{n-1}) \Big) \big( S\Phi(S)\big) s^2 \bigg\Vert_{L^2_{SdS}} \bigg\Vert \dfrac{h_1^{\out}}{s^2}\bigg\Vert_{L^2_{SdS}}
    \\ & \lesssim \dfrac{1}{\tau_1^2} \bigg\Vert \dfrac{h_1^{\out}}{s^2}\bigg\Vert_{L^2_{SdS}}
    \end{align*}
    On the other hand, Lemma \ref{lem:linout_1} (recall that $h_1^{\out}$ satisfies equation \eqref{eq:step1_2}) implies that \begin{align}\label{eq:step1_10}
        \bigg\Vert \dfrac{h_1^{\out}}{s^2}\bigg\Vert_{L^2_{SdS}} \lesssim \lambda_1 \bigg\Vert \dfrac{h_1^{\out}}{s^2}\bigg\Vert_{L^2_{sds}} \lesssim \lambda_1 \lambda_2^2 \, \tau_1^{-(2-\varepsilon)}\Big(1+\Vert m \Vert_{p,4}\Big), \qquad p\geq \dfrac{1}{2}.
    \end{align}
    We conclude then that \[
    \vert \lambda_1^{-2} m_1(t)\vert \lesssim \tau_1^{-(3-2\varepsilon)} \Big(1+\Vert m \Vert_{p,4}\Big), \qquad p\geq \dfrac{1}{2}.
    \]
    Next, we prove inequality \eqref{eq:step1_8} for the difference. This follows from the same estimates as before, from where we obtain that \begin{equation}\label{eq:step1_9}\begin{aligned}
       & \bigg\vert \triangle\bigg(\dfrac{4}{R^2}\Big(\cos(2Q_1-2\bfcq_{n-1})-\cos(2\bfcq_{n-1})\Big) R \Phi(R) r^2\bigg)(t,r;m,n)\bigg\vert 
       \\ & \lesssim \langle R\rangle^{-2} \tau_1^{-2-p+} \Vert n\Vert_{p,0},
    \end{aligned}\end{equation}
    having used the bound (see \eqref{eq:lambdaonedifferencing})
    \[
    \big\vert \triangle\lambda_1(t;m,n)\big\vert\lesssim_{l} \tau_1^{-p+}\lambda_1\Vert n\Vert_{p,0},
    \]
    as well as the fact that, in general, for a function $F(R)$, we have \begin{align*}
    & F\big(R(m+n)\big)-F\big(R(m)\big)
    \\ & = \Big(R(m+n)-R(m)\Big)\int_0^1 F'\Big(sR(m+n)+(1-s)R(m)\Big)ds,
    \end{align*}
    which in turn implies that \begin{align*}
        \big\vert \triangle F(R)\big\vert \lesssim \vert \triangle R\vert \,\sup_{\xi\in[R(m),R(m+n)]}\big\vert F'(\xi)\big\vert, \qquad \vert R F'(R)\vert\lesssim \dfrac{1}{\lambda_1^2} \dfrac{1}{\langle R\rangle^2},
    \end{align*}
    where in the last inequality we have taken $F$ as being the function inside the big parenthesis in \eqref{eq:step1_9}. Finally, the derivative bounds for the difference follow identical lines as before (up to obvious modifications), and hence we omit them. 
    \end{proof}

    \bigskip

    \noindent
    \textit{(iii) Control of the solution to the inner elliptic equation \eqref{eq:step1_3}.} This is a consequence of the following lemma.
    \begin{lem}
        Equation \eqref{eq:step1_3} admits a solution satisfying the bounds \begin{align*}
            \big\Vert S^\ell \triangle h_{1}^\inn (t,r;m,n)\big\Vert_{H^1_{rdr}} \lesssim_{\ell} \tau_1^{-3-p+}\Big(1 + \Vert m \Vert_{p,\ell+4} + \Vert n\Vert_{p,\ell+4}\Big)^\ell \Vert n\Vert_{p,\ell+4},
        \end{align*}
        where $k,\ell\geq0$ and $p\geq \tfrac12$. We also have the estimates 
        \[
        \big|S^k h_{1}^\inn (t,r;m)\big|\lesssim_k \tau_1^{1+}r^4. 
        \]
    \end{lem}
    \begin{proof}
        First, for simplicity let us define \[
        G:=e_{1,\co} + \dfrac{4}{r^2}\Big(\cos(2Q_1-2\bfcq_{n-1})-\cos(2\bfcq_{n-1}) \Big) h_{1}^{\out}.
        \]
        It is enough to estimate \[
        \Phi(R)\int_0^R \lambda_1^{-2}  \Theta(s) G\big(t,\tfrac{s}{\lambda_1}\big) sds - \Theta(R)\int_0^R  \lambda_1^{-2} \Phi(s) G\big(t,\tfrac{s}{\lambda_1}\big)sds. 
        \]
        Then, due to the orthogonality in \eqref{eq:step1_5}, we can write the last term above as \[
        -\chi_{R\lesssim 1} \, \Theta(R)\int_0^R  \lambda_1^{-2} \Phi(s) G\big(t,\tfrac{s}{\lambda_1}\big)sds + \chi_{R\gtrsim 1}\, \Theta(R)\int_R^\infty  \lambda_1^{-2} \Phi(s) G\big(t,\tfrac{s}{\lambda_1}\big)sds,
        \]
        which prevents this term from growing quadratically. Indeed, using estimate \eqref{eq:step1_7} and \eqref{eq:step1_10} for $h_1^{\out}$, along with  \eqref{eq:step1_6}, and Cauchy-Schwarz inequality, we infer that \begin{align*}
        & \bigg\Vert \dfrac{4}{R^2}\Big(\cos(2Q_1-2\bfcq_{n-1})-\cos(2\bfcq_{n-1})\Big) h_1^{\out}\big(\tfrac{R}{\lambda_1}\big) \Phi(R)R\bigg\Vert_{L^1_{RdR}}
    \\ & \lesssim \tau_1^{-3+}\Big(1+\Vert m\Vert_{p,2}\Big),
        \end{align*}
        where, again, the reason to use the singular $L^2$ norm \eqref{eq:step1_10} in the last inequality is to make appear $\lambda_1^{-2}$. Then, calling \[
        G_1:= e_{1,\co}, \ \quad G_2:= \dfrac{4}{r^2}\Big(\cos(2Q_1-2\bfcq_{n-1})-\cos(2\bfcq_{n-1}) \Big) h_{1}^{\out},
        \]
        we see that \begin{equation}\label{eq:step1_17}\begin{aligned}
            \bigg\Vert \chi_{R\lesssim1} \Theta(R)\int_0^R \lambda_1^{-2} G_2\big(t,\tfrac{s}{\lambda_1}\big) \Phi(s) sds  \bigg\Vert_{H^1_{rdr}}  & \lesssim_p \tau_1^{-3+} \Big( 1 + \Vert m \Vert_{p,2}\Big),
            \\ \bigg\Vert   \chi_{R\gtrsim 1} \Theta(R) \int_R^\infty \lambda_1^{-2} G_2\big(t,\tfrac{s}{\lambda_1}\big) \Phi(s) sds \bigg\Vert_{H^1_{rdr}}  & \lesssim _p \tau_1^{-3+} \Big( 1 + \Vert m \Vert_{p,2}\Big),
            \\ \bigg\Vert   \Phi(R) \int_0^R \lambda_1^{-2} G_2\big(t,\tfrac{s}{\lambda_1}\big) \Phi(s) sds \bigg\Vert_{H^1_{rdr}}  & \lesssim _p \tau_1^{-3+} \Big( 1 + \Vert m \Vert_{p,2}\Big).
        \end{aligned}\end{equation}
        The bounds for $G_1$ follow identical lines, using Lemma \ref{lem:step1_1}. The differentiated and differencing estimates follow mutatis mutandis from the preceding parts, so we omit them. The final proof follows from the fourth order vanishing of the variation of constants formula at $R = 0$. 
        The proof is complete.
    \end{proof}
    
    \begin{rem}\label{rem:step1_1}
     Note that from Lemma \ref{lem:step1_1} and the explicit formula for $h_1^{\inn}$ one can deduce that  \begin{align}\label{eq:step1_16}
    \Big\Vert \dfrac{1}{r^2}h_1^{\inn}\Big\Vert_{L^2_{rdr}}\lesssim \tau_1^{-2+}. 
     \end{align}
    Indeed, estimating in a similar way as for  \eqref{eq:step1_17}, now using the singular estimate \eqref{linout12} for $G_2$, and, in the case of $G_1$, the fact that $m_1(t)$ decays as $\tau^{-1+}$ (see Lemma \ref{lem:step1_1}), one easily obtains \eqref{eq:step1_16}. We omit the details.
    \end{rem}
    
    \bigskip

    \noindent
    \textit{(iv) Control of the error term $e_{1,0}$ in \eqref{eq:step1_14}} Finally, we estimate the remaining error term via the following lemma. \begin{lem}\label{lem:step1_4}
        The following estimate holds \begin{align*}
            \lambda_2^{-1} \big\Vert S^\ell \triangle e_{1,0}(t,r;m,n)\big\Vert_{L^2_{rdr}} \lesssim_\ell \tau_1^{-3-p+} \Big( 1 + \Vert m\Vert_{p,\ell+6} + \Vert n\Vert_{p,\ell+6} \Big)^\ell \Vert n\Vert_{p,\ell+6}.
        \end{align*}
        We also have the bounds 
        \[
        \big|r^{-2}S^k e_{1,0}(t, r)\big|\lesssim_k \tau_1^{-1+},\,r\lesssim t. 
        \]
    \end{lem}

    \begin{proof}
        This follows directly by rewriting \[
        \partial_t^2 = \Big[\dfrac{1}{t}(S-r\partial_r) \Big]^2, \qquad S=t\partial_t + r\partial_r.
        \]
        When this operator falls on the integrand in the variation of constants formula, it is enough to integrate by parts to avoid the term $r\partial_r$. We omit the details.
    \end{proof}

    Finally, the proof of Lemma \ref{lem:step1_2} is completed by combining steps $(i)$-$(iv)$. The proof is complete.
\end{proof}

Let us now summarize what we have accomplished so far. The approximate solution \[
u_1(t,r):= Q_1-\bfcq_{n-1} + v_1, \qquad v_1:= h_0+h_1,
\]
solves the equation (recall that $E_{2,\co}$ and $e_{1,\co}$ are defined in \eqref{eq:step1_11}-\eqref{eq:step1_13}; see also \eqref{eq:hzeroeqn})\begin{equation}\label{eq:step1_15}\begin{aligned}
    & -\partial_t^2 u_1 + \partial_r^2 u_1 + \dfrac{1}{r}\partial_r u_1 - \dfrac{2}{r^2}\sin(2u_1)=  e_1,   
    \\ & e_1 := e_{1,1} -  e_{1,2}, 
    \\ & e_{1,1} :=  E_{2,\co} + e_{1,\co} , 
    \\ & e_{1,2} := e_{1,0} + \sum_{k=1}^3 E_{1,k},
\end{aligned}\end{equation}
where (see \eqref{eq:step1_14})\[
    e_{1,0} = \partial_t^2 h_1^{\inn} + \dfrac{4}{r^2} \Big( \cos(2Q_1-2\bfcq_{n-1})-\cos(2Q_1)\Big) h_1^{\inn},
\]
and the error terms $E_{j,k}$ are given by the following explicit expressions \begin{align*}
E_{1,1} & := \dfrac{2}{r^2}\cos\big(2Q_1 - 2\bfcq_{n-1} + 2h_0\big) \Big( \sin(2h_{1}) - 2h_{1}\Big) ,
\\ E_{1,2} & := \dfrac{4}{r^2}  \Big( \cos\big( 2Q_1 - 2\bfcq_{n-1} + 2 h_0\big) - \cos \big(2Q_1 - 2\bfcq_{n-1}\big) \Big) h_{1} ,
\\ E_{1,3} & := \dfrac{2}{r^2} \sin\big(2Q_1 -2\bfcq_{n-1}+2h_0\big) \Big( \cos(2h_{1}) - 1 \Big).
\end{align*}
We stress that the terms in $e_{1,1}$ are precisely the sum of the correction terms required in Steps $0$ and $1$ (respectively) to enforce the additional vanishing condition to prevent quadratic growth. We never seek to cancel these correction terms at any stage of the inductive process; instead, we consistently carry over the sum of all correction terms to the next step and deal with them at the end. In this regard, it is useful to recall the bounds for $m_1$ proven in Lemma \ref{lem:step1_1}. We can finally formulate the following error bound.

\begin{lem}\label{lem:step1_3}
    We have the error bound \begin{align*}
        \lambda_2^{-1} \big\Vert S^\ell \triangle e_{1,2}(t,r;m,n)\big\Vert_{L^2_{rdr}} \lesssim_\ell \tau_1^{-3-p+} \Big( 1 + \Vert m\Vert_{p,\ell+6} + \Vert n\Vert_{p,\ell+6} \Big)^\ell \Vert n\Vert_{p,\ell+6}.
    \end{align*}
    We also have the bounds
    \[
    \big|r^{-2}S^ke_{1,2}(t, r)\big|\lesssim_k \tau_1^{-1+},\,r\lesssim t. 
    \]
\end{lem}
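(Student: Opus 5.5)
The plan is to split $e_{1,2} = e_{1,0} + E_{1,1} + E_{1,2} + E_{1,3}$ and treat each piece separately. The term $e_{1,0}$ is already handled by Lemma~\ref{lem:step1_4}, which gives both the differencing bound and the pointwise bound $|r^{-2}S^k e_{1,0}|\lesssim \tau_1^{-1+}$. So the work is in the three nonlinear terms $E_{1,k}$. Each of them is at least linear in $h_1$, multiplied either by an additional small factor or by a factor $h_0$.

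\textbf{The terms $E_{1,1}$ and $E_{1,3}$.} Both are at least quadratic in $h_1$. I would use
\[
\big|\sin(2h_1)-2h_1\big|\lesssim |h_1|^3, \qquad \big|\cos(2h_1)-1\big|\lesssim h_1^2,
\]
and write $h_1=c_1(t)r^2+g_1$ via the representation in Lemma~\ref{lem:step1_2}. This gives the pointwise estimate $|h_1|\lesssim \tau_1^{-1+}r^2+\tau_1 r^4$ on $r\lesssim t$. In the quadratic term I put one factor of $h_1$ in $L^2_{rdr}$, using $\|S^\ell h_1\|_{H^1}\lesssim \tau_1^{-2+}$ and the singular weight bound $\|r^{-2}h_1\|_{L^2}$ from Remark~\ref{rem:step1_1} together with \eqref{linout12} for $h_1^{\out}$. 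The other factor $r^{-2}h_1$ goes in $L^\infty$, where it is $\lesssim \tau_1^{-1+}+\tau_1 r^2$.

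The factor $r^2\tau_1$ is harmless only if the $L^2$ factor is localized to $R\lesssim1$, or if it gains from the fourth order vanishing. I would therefore split into $R\lesssim 1$ and $R\gtrsim1$. For $R\gtrsim 1$ I use instead $|h_1|\lesssim \|h_1\|_{H^1}\lesssim\tau_1^{-2+}$ together with $r^{-2}\lesssim \lambda_1^2\lesssim \tau_1^{2+}$. This yields an $L^2$ bound of order $\tau_1^{-3+}$, while the extra factor $\lambda_2$ is absorbed into $\tau_1^{0+}$.

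The pointwise bound $|r^{-2}S^kE_{1,j}|\lesssim \tau_1^{-1+}$ follows from the same two-region bound on $h_1$.

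\textbf{The term $E_{1,2}$.} This is linear in $h_1$, but it carries the difference of cosines. By the mean value theorem this difference is $\lesssim |h_0|\lesssim \tau_1^{-2+}\min\{1,R^4\}$, by Lemma~\ref{lem:hzerocorrection}. I would then put $r^{-2}h_1$ in $L^2$, giving a bound of $\tau_1^{-2+}\cdot\tau_1^{-2+}$, which is better than needed.

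\textbf{Higher derivatives and differencing.} The $S^\ell$ derivatives are distributed by Leibniz' rule. Derivatives falling on trigonometric factors of $Q_1$ and $\bfcq_{n-1}$ are controlled by Lemma~\ref{lem:symbolbounds1} and the inductive bounds \eqref{eq:vn-1bounds}--\eqref{eq:vn-1boundsrefined}. For the differencing estimate, I apply the Leibniz rule \eqref{eq:Leibnizdifferencing} to each product. The differences $\triangle h_1$ and $\triangle h_0$ come from Lemma~\ref{lem:step1_2} and \eqref{eq:nzerodifferencingbounds}, and $\triangle\lambda_1$ from \eqref{eq:lambdaonedifferencing}. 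Each such difference contributes a factor $\tau_1^{-p}\|n\|_{p,\ell+6}$, and the remaining undifferenced factors give $\tau_1^{-3+}$. The loss of six derivatives on $m$ and $n$ is inherited from Lemma~\ref{lem:step1_4}.

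\textbf{Main obstacle.} I expect the hardest part to be the bookkeeping for the quadratic terms near the bubble scale. The $r^4\tau_1$ growth of $g_1$ has to be paired with the correct localization, so that no power of $\lambda_1$ larger than $\tau_1^{0+}$ is lost. This is the step where I would rely most on the fourth order vanishing of $h_1^{\inn}$ and of $h_0$ at $R=0$.
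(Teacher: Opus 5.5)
Your decomposition $e_{1,2}=e_{1,0}+\sum_k E_{1,k}$ and your list of ingredients agree with the paper's. However, for the quadratic term $E_{1,3}$ you attach the singular weight to the wrong factor, and the estimate you then run does not close. Putting $h_1$ in $L^2_{rdr}$ and $r^{-2}h_1$ in $L^\infty$ forces the split at $R\sim 1$. On $R\gtrsim 1$ the bounds you list give only $\lambda_1^2\|h_1\|_{L^\infty}\|h_1\|_{L^2_{rdr}}\lesssim \tau_1^{2+}\cdot\tau_1^{-2+}\cdot\tau_1^{-2+}=\tau_1^{-2+}$, and after differencing only $\tau_1^{-2-p+}$. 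Tracking $\lambda_1\sim\tau_1\bar{\lambda}_2$ and $\|h_1^{\out}\|_{L^2_{rdr}}\lesssim\lambda_2^{-1}\tau_1^{-2+}$ does not change this. The result is a full power of $\tau_1$ short of the $\tau_1^{-3-p+}$ in the statement, and of the $\tau_1^{-3+}$ needed in \eqref{eq:step2_4} for $j=1$. So your claim of $\tau_1^{-3+}$ in that region is an arithmetic slip. It also cannot be repaired inside that region: $h_1^{\out}$ lives at scale $\lambda_2^{-1}$, deep inside $R\gtrsim 1$, where $r^{-2}\lesssim\lambda_1^2$ is hugely lossy.

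The missing step is to pair the weight with the $L^2$ factor, which is exactly what the paper does. On the whole region $r\lesssim t$,
$$\|r^{-2}(\cos(2h_1)-1)\|_{L^2_{rdr}}\lesssim\|r^{-2}h_1\|_{L^2_{rdr}}\|h_1\|_{L^\infty}\lesssim\tau_1^{-4+},$$
using \eqref{linout12} for $h_1^{\out}$, \eqref{eq:step1_16} for $h_1^{\inn}$, and $\|h_1\|_{L^\infty}\lesssim\|\partial_r h_1\|_{L^2_{rdr}}+\|r^{-1}h_1\|_{L^2_{rdr}}$. This is the same pairing you already use for $E_{1,2}$. The operators $S^\ell$ and $\triangle$ are then handled by the Leibniz rule and \eqref{eq:Leibnizdifferencing}, with no region split and no use of $h_1=c_1r^2+g_1$.

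That representation is needed only for the pointwise bound, and there the threshold $R\sim1$ fails as well. On $R\gtrsim1$ your bounds give only $|r^{-2}E_{1,3}|\lesssim (r^{-2}h_1)^2\lesssim\lambda_1^4\tau_1^{-4+}\sim\bar{\lambda}_2^4\tau_1^{0+}$. Instead, use the Taylor bound up to $r^2\sim\tau_1^{-3/2}$ and the $L^\infty$ bound beyond it:
$$|r^{-2}h_1|\lesssim\min\{\tau_1^{-1+}+\tau_1^{1+}r^2,\ \tau_1^{-2+}r^{-2}\}\lesssim\tau_1^{-1/2+}.$$
This yields $|r^{-2}E_{1,3}|\lesssim\tau_1^{-1+}$. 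The same interpolation, together with $|h_0|\lesssim\tau_1^{-2+}\min\{1,R^4\}$, handles $E_{1,1}$ and $E_{1,2}$.
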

\begin{proof}
    First of all, recall \eqref{eq:step1_15}, and that the bounds for $e_{1,0}$ were already proven in Lemma \ref{lem:step1_4}, so it is enough to bound the nonlinear terms $E_{1,j}$. Now, write \begin{align*}
     \cos\big(2Q_1-2\bfcq_{n-1}+2h_0\big) & =\cos(2Q_1-2\bfcq_{n-1})
     \\ & + \cos(2Q_1-2\bfcq_{n-1})\Big(\cos(2h_0)-1\Big)
     \\ & -\sin(2Q_1-2\bfcq_{n-1})\sin(2h_0).
    \end{align*}
    It follows then that \begin{align*}
        & \big\Vert S^\ell \cos\big(2Q_1-2\bfcq_{n-1}+2h_0\big)(t,r;m)\big\Vert_{L^2_{rdr}}\lesssim \tau_1^{0+}\Big(1+\Vert m\Vert_{p,\ell}\Big)^\ell,
    \end{align*}
    where the ``slowly" growing factor $\tau_1^{0+}$ is due to the term with no $h_0$, and \begin{align*}
        & \big\Vert S^\ell \triangle \cos\big(2Q_1-2\bfcq_{n-1}+2h_0\big)(t,r;m,n)\big\Vert_{L^2_{rdr}}
        \\ & \lesssim \tau_1^{-p+}\Big(1+\Vert m\Vert_{p,\ell}+\Vert n\Vert_{p,\ell}\Big)^\ell \Vert n\Vert_{p,\ell},
    \end{align*}
    having used \eqref{eq:nzerodifferencingbounds} along with \eqref{eq:Leibnizdifferencing}. The same bounds hold if we replace $\cos(\cdot)$ by $\sin(\cdot)$. Moreover, using the singular bounds in \eqref{linout12} for the contribution of the outer wave component $h_1^{\out}$, and that in \eqref{eq:step1_16} for the contribution of the inner elliptic part $h_1^\inn$, we get that \begin{align*}
        & \Big\Vert S^\ell \Big(\dfrac{1}{r^2}\big(\cos(2h_1)-1\big)\Big)\Big\Vert_{L^2_{rdr}}
        \\ & \lesssim \Big( \sum_{\ell_1=0}^\ell \Big\Vert \dfrac{1}{r^2}S^{\ell_1}h_1\Big\Vert_{L^2_{rdr}}\Big) \Big( \sum_{\ell_2=0}^\ell \big\Vert S^{\ell_2}h_1\big\Vert_{L^\infty}\Big) \Big( 1+\sum_{\ell_3=0}^\ell \big\Vert S^{\ell_3}h_1\big\Vert_{L^\infty}\Big)^{\ell-2}
        \\ & \lesssim \tau_1^{-4+}\Big(1+\Vert m\Vert_{p,\ell+6}\Big)^{\ell^2}.
    \end{align*}
    Proceeding in a similar manner, we infer the differencing bound \begin{align*}
        & \Big\Vert S^\ell \triangle \Big( \dfrac{1}{r^2}\Big(\cos(2h_1)-1\Big)\Big)\Big\Vert_{L^2_{rdr}}
        \\ & \lesssim \tau_1^{-4-p+} \Big(1 + \Vert m\Vert_{p,\ell+6} + \Vert n\Vert_{p,\ell+6} \Big)^{\ell^2} \Vert n\Vert_{p,\ell+6}. 
    \end{align*}
    By combining the previous estimates and using the Leibniz rule along with \eqref{eq:Leibnizdifferencing}, we derive the desired bounds for $E_{1,3}(h_0,h_1)$. The pointwise bound for $E_{1,3}$ follows from the pointwise bounds near $r = 0$ for $h_{1}^{\out}, h_{1}^{\inn}$ stated in {\it{(i), (iii)}}. 
    The remaining terms $E_{1,j}(h_0,h_1)$, $j=1,2$, are treated analogously, so we omit them.
\end{proof}

\bigskip
\noindent
{\bf{Step 2}}: {\it{Inductive construction of the correction $h_{j+1}$.}} We now assume that we have already constructed the first $j+1$ corrections $h_k$, $k=0,1,...,j$. Define \[
v_j:=\sum_{k=0}^j h_k.
\]
Then, the approximate solution \[
u_j=Q_1-\bfcq_{n-1} + v_j
\]
satisfies the equation (recall $E_{2,\co}$ was defined in \eqref{eq:step1_13}) \begin{equation}\label{eq:step2_1}\begin{aligned}
    & -\partial_t^2 u_j + \partial_r^2 u_j + \dfrac{1}{r}\partial_r u_j - \dfrac{2}{r^2}\sin(2u_j) = e_j,
    \\ & e_j := e_{j,1} - e_{j,2},
    \\ & e_{j,1} := E_{2,\co} + \sum_{k=1}^j e_{k,\co},
    \\ & e_{k,\co} := m_k \Big( \cos(2Q_1)-1\Big),
    \\ & e_{j,2} := e_{j,0} + \sum_{k=1}^3 E_{j,k}, 
\end{aligned}\end{equation}
where \[
e_{j,0} := \partial_t^2 h_j^{\inn}   +   \dfrac{4}{r^2}\Big( \cos(2Q_1-2\bfcq_{n-1}) - \cos(2Q_1)\Big) h_{j}^{\inn},
\]
and the error terms \begin{align*}
    E_{j,1} & := \dfrac{2}{r^2}\cos\big(2Q_1 - 2\bfcq_{n-1} + 2h_0+...+2h_{j-1}\big) \Big( \sin(2h_{j}) - 2h_{j}\Big) ,
    \\ E_{j,2} & := \dfrac{4}{r^2}  \Big( \cos\big( 2Q_1 - 2\bfcq_{n-1} + 2 h_0+...+2h_{j-1}\big) - \cos \big(2Q_1 - 2\bfcq_{n-1}\big) \Big) h_{j} ,
    \\ E_{j,3} & := \dfrac{2}{r^2} \sin\big(2Q_1 -2\bfcq_{n-1}+2h_0+...+2h_{j-1}\big) \Big( \cos(2h_{j}) - 1 \Big).
\end{align*}
We will inductively cancel the term $e_{j,2}$. This will require adding, at each step of the induction, a new correction term $e_{k,\co}$, needed to enforce the vanishing condition. This is then added to the new correction term $e_{j+1,1}=e_{j,1}+e_{j+1,\co}$ which is carried forward to the next inductive step.

Again, the error term $e_{j,0}$ accounts for the fact that (just as what we did in \eqref{eq:step1_2}-\eqref{eq:step1_3}), a priori, we would like $h_j$ to satisfy the equation \[
- \partial_t^2 h_j + \partial_r^2 h_j + \dfrac{1}{r} \partial_r h_j - \dfrac{4}{r^2}\cos(2Q_1 - 2\bfcq_{n-1}) h_j =  e_{j-1,2} + e_{j,\co}.
\]
However, we will solve this equation only approximately. The difference between the actual equation satisfied by $h_j$ and the above ``ideal equation" is $e_{j,0}$, in other words, the error made by approximating the last equation.

In the sequel, we suppose that the increments $h_k$, the error $e_{j,2}$, and the correction terms $m_k$ obtained up to this stage satisfy (for all $1\leq k \leq j$; inductive hypothesis) \begin{align}\label{eq:step2_4}
    \big\Vert S^\ell h_k (t,\cdot;m) \big\Vert_{H^1_{rdr}} & \lesssim_{k,\ell} \tau_1^{-1-k+} \Big( 1+ \Vert m\Vert_{p,\ell+4k+2}\Big)^{C_{k,\ell}} , \nonumber
    \\  \big\Vert S^\ell \triangle h_k (t,\cdot;m,n) \big\Vert_{H^1_{rdr}} & \lesssim_{k,\ell} \tau_1^{-1-p-k+} \Big( 1+ \Vert m\Vert_{p,\ell+4k+2}\Big)^{C_{k,\ell}}  \Vert n\Vert_{p,\ell+4k+2} , \nonumber
    \\  \lambda_2^{-1}\big\Vert S^\ell e_{j,2} (t,\cdot;m) \big\Vert_{L^2_{rdr}} & \lesssim_{j,\ell} \tau_1^{-2-j+} \Big( 1+ \Vert m\Vert_{p,\ell+4j+2}\Big)^{C_{j,\ell}}  ,
    \\  \lambda_2^{-1}\big\Vert S^\ell \triangle e_{j,2} (t,\cdot;m,n) \big\Vert_{L^2_{rdr}} & \lesssim_{j,\ell} \tau_1^{-2-p-j+} \Big( 1+ \Vert m\Vert_{p,\ell+4j+2}\Big)^{C_{k,\ell}}  \Vert n\Vert_{p,\ell+4j+2}  , \nonumber
    \\  \big\vert (t\partial_t)^\ell m_k(t;m) \big\vert & \lesssim_{k,\ell} \tau_1^{-k+} \Big( 1+ \Vert m\Vert_{p,\ell+4k}\Big)^{C_{k,\ell}}  , \nonumber
    \\  \big\vert (t\partial_t)^\ell \triangle m_k(t;m,n) \big\vert & \lesssim_{k,\ell} \tau_1^{-p-k+} \Big( 1+ \Vert m\Vert_{p,\ell+4k}\Big)^{C_{k,\ell}}  \Vert n\Vert_{p,\ell+4k} . \nonumber
\end{align}
That being said, we introduce and decompose the subsequent increment $h_{j+1}$ as \[
h_{j+1} = h_{j+1}^{\out} + h_{j+1}^{\inn}
\]
where $h_{j+1}^{\out}$ will solve an outer wave equation, whereas $h_{j+1}^{\inn}$ an inner elliptic equation, namely, \begin{align}\label{eq:step2_2}
- \partial_t^2 h_{j+1}^{\out} + \partial_r^2 h_{j+1}^{\out} + \dfrac{1}{r} \partial_r h_{j+1}^{\out} - \dfrac{4}{r^2} \cos(2\bfcq_{n-1}) h_{j+1}^{\out}  =  e_{j,2}, 
\end{align}
and \begin{equation}\label{eq:step2_3}\begin{aligned}
& \partial_r^2 h_{j+1}^{\inn}  + \dfrac{1}{r} \partial_r h_{j+1}^{\inn} - \dfrac{4}{r^2} \cos(2Q_1) h_{j+1}^{\inn}
\\ & = e_{j+1,\co} + \dfrac{4}{r^2} \Big( \cos(2Q_1-2\bfcq_{n-1}) - \cos(2\bfcq_{n-1})\Big) h_{j+1}^{\out}. 
\end{aligned}\end{equation}
It is worth stressing that equation \eqref{eq:step2_2} can also be thought of as \begin{align*}
& - \partial_t^2 h_{j+1}^{\out} + \partial_r^2 h_{j+1}^{\out} + \dfrac{1}{r} \partial_r h_{j+1}^{\out} - \dfrac{4}{r^2} \cos(2Q_1 - 2\bfcq_{n-1})h_{j+1}^{\out}  
\\ & =  e_{j,2} - \dfrac{4}{r^2}\Big(\cos(2Q_1-2\bfcq_{n-1})- \cos(2\bfcq_{n-1}) \Big) h_{j+1}^{\out} , 
\end{align*}
which is how we typically write it to compute the error, once we put it together with the equation for $h_{j+1}^{\inn}$.

The main result of this step is the following lemma. 
\begin{lem}\label{lem:step2_1}
    Under the previous inductive assumptions there exist corrections $h_{j+1}$ and $m_{j+1}$ satisfying the bounds \begin{align*}
        & \Vert S^\ell h_{j+1}(t,\cdot; m)\Vert_{H^1_{rdr}}  \lesssim_{j,\ell} \tau_1^{-2-j+}\Big(1+\Vert m\Vert_{p,\ell+4(j+1)+2}\Big)^{C_{j+1,\ell}} ,
        \\  & \Vert S^\ell \triangle h_{j+1}(t,\cdot; m,n )\Vert_{H^1_{rdr}}  \lesssim_{j,\ell}  \tau_1^{-2-j-p+}\Big(1+\Vert m\Vert_{p,\ell+4(j+1)+2}\Big)^{C_{j+1,\ell}} \Vert n\Vert_{p,\ell+4(j+1)+2} ,
        \\ & \big\vert (t\partial_t)^\ell m_{j+1}(t;m)\big\vert \lesssim_{j,\ell}   \tau_1^{-j-1+}\Big(1+\Vert m\Vert_{p,\ell+4(j+1)}\Big)^{C_{j+1,\ell}} ,
        \\ & \big\vert (t\partial_t)^\ell \triangle m_{j+1}(t;m , n)\vert \lesssim_{j,\ell}  \tau_1^{-j-1-p+}\Big(1+\Vert m\Vert_{p,\ell+4(j+1)}\Big)^{C_{j+1,\ell}} \Vert n\Vert_{p,\ell+4(j+1)}  ,
    \end{align*}
    for some suitable constant $C_{j+1,\ell}$. The functions $h_{j+1}$ admit decompositions
    \begin{equation}\label{eq:hj+1Taylornearzero}
    h_{j+1}(t, r) = c_{j+1}(t)r^2 + g_{j+1}(t, r),\,r\lesssim t,
    \end{equation}
    satisfying the bounds 
    \[
\big|(t\partial_t)^k c_{j+1}(t)\big|\lesssim_k \tau_1^{-1-j+},\,\big|S^kg_{j+1}(t, r)\big|\lesssim_k \tau_1^{1-j}r^4.
    \]
    Furthermore, the new approximate solution 
    \[
    u_{j+1}:= Q_{1}-\bfcq_{n-1} + v_{j+1}, \qquad v_{j+1}= \sum_{k=0}^{j+1} h_k,
    \]
    satisfies the equation \begin{align}\label{eq:step2_8}
        & -\partial_t^2 u_{j+1} + \partial_r^2 u_{j+1} +\dfrac{1}{r} \partial_r u_{j+1}  - \dfrac{2}{r^2} \sin(2u_{j+1}) = e_{j+1},
    \end{align}
    with \begin{equation}\label{eq:step2_6}\begin{aligned}
        & e_{j+1} := e_{j+1,1} - e_{j+1,2},
        \\ & e_{j+1,1} := E_{2,\co} + \sum_{k=0}^{j+1} e_{k,\co},
        \\ & e_{k,\co} := m_k\Big(\cos(2Q_1)-1\Big),
        \\ & e_{j+1,2} := e_{j+1,0} + \sum_{k=1}^3 E_{j+1,k},
        \\ & e_{j+1,0} := \partial_{t}^2 h_{j+1}^{\inn} + \dfrac{4}{r^2}\Big( \cos(2Q_1-2\bfcq_{n-1}) - \cos(2Q_1)\Big) h_{j+1}^\inn,
    \end{aligned}\end{equation}
    where the error terms $E_{j+1,k}$ are given by \begin{align*}
        & E_{j+1,1}  := \dfrac{2}{r^2}\cos\big(2Q_1 - 2\bfcq_{n-1} + 2h_0+...+2h_{j}\big) \Big( \sin(2h_{j+1}) - 2h_{j+1}\Big) ,
        \\ & E_{j+1,2}  := \dfrac{4}{r^2}  \Big( \cos\big( 2Q_1 - 2\bfcq_{n-1} + 2 h_0+...+2h_{j}\big) - \cos \big(2Q_1 - 2\bfcq_{n-1}\big) \Big) h_{j+1} ,
        \\ & E_{j+1,3}  := \dfrac{2}{r^2} \sin\big(2Q_1 -2\bfcq_{n-1}+2h_0+...+2h_{j}\big) \Big( \cos(2h_{j+1}) - 1 \Big).
    \end{align*}
    Moreover, the error term $e_{j+1,2}$ satisfies bounds analogous to those of $e_{j,2}$ stated above, but with $j$ replaced by $j+1$, and we have that \begin{align}
        & \lambda_2^{-1} \Vert S^\ell e_{j+1,0}(t,\cdot;m)\Vert_{L^2_{rdr}} \lesssim_{\ell}  \tau_1^{-3-j+} \Big( 1 + \Vert m\Vert_{p,\ell+4(j+1)+2} \Big)^{C_{j+1,\ell}}  , \label{eq:step2_7}
        \\ &  \lambda_2^{-1} \Vert S^\ell \triangle e_{j+1,0}(t,\cdot;m,n)\Vert_{L^2_{rdr}} \lesssim_{\ell}  \tau_1^{-3-j-p+} \Big( 1 + \Vert m\Vert_{p,\ell+4(j+1)+2} \Big)^{C_{j+1,\ell}} \Vert n\Vert_{p,\ell+4(j+1)+2} . \nonumber
    \end{align}
\end{lem}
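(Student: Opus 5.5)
The plan is to repeat the two-stage construction of Step 1 (Lemma~\ref{lem:step1_2}) verbatim, with $e_{0,2}$ replaced by $e_{j,2}$ and with the inductive hypotheses \eqref{eq:step2_4} providing the input decay. The order of operations is: solve the outer wave equation \eqref{eq:step2_2}, then choose $m_{j+1}$ to enforce the orthogonality condition, then solve the inner elliptic problem \eqref{eq:step2_3}, and finally estimate the new error $e_{j+1,2}$. Throughout, the differencing bounds are obtained in parallel by applying $\triangle$ to each formula, using the Leibniz rule \eqref{eq:Leibnizdifferencing} and the bound \eqref{eq:lambdaonedifferencing} for $\triangle\lambda_1$. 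Each application of $S$, $t\partial_t$ or $\partial_t^2$ costs a fixed number of derivatives of $m$, which accounts for the index shift $\ell+4(j+1)+2$ and the growing constants $C_{j+1,\ell}$.

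\emph{Outer step.} By \eqref{eq:step2_4}, $\lambda_2^{-1}\|S^\ell e_{j,2}\|_{L^2_{rdr}}\lesssim \tau_1^{-2-j+}$. Since $\tau_1\sim\lambda_1/\bar\lambda_2$ and $\tau=e^{c\tau_2}$ is comparable to $\lambda_1$ up to $\tau_1^{0+}$ factors, Lemma~\ref{lem:linout_1} applies with $p=2+j-$. It yields $h_{j+1}^{\out}$ with $\|S^\ell h^{\out}_{j+1}\|_{\bch^1}\lesssim\tau_1^{-2-j+}$, together with the singular bounds \eqref{linout12}: $\|r^{-2}S^\ell h^{\out}_{j+1}\|_{L^2\cap L^4}\lesssim \tau_1^{-2-j+}$. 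Because the equation is linear in the source, the differencing bound follows by applying the same lemma to $\triangle e_{j,2}$. Next, Lemma~\ref{lem:Tayloerexpansion} supplies the Taylor split $c^{\out}_{j+1}r^2+g^{\out}_{j+1}$. For this I need the pointwise hypotheses on $r^{-2}S^k e_{j,2}$ near $r=0$, which I carry inductively as in Lemma~\ref{lem:step1_3}, with the $\tau_1$ powers improved by $\tau_1^{-j}$.

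\emph{Choice of $m_{j+1}$ and inner step.} Define $m_{j+1}$ exactly as $m_1$ in part (ii), namely
\[
4m_{j+1}:=-\lambda_1^2\int_0^\infty\tfrac{4}{S^2}\big(\cos(2Q_1-2\bfcq_{n-1})-\cos(2\bfcq_{n-1})\big)h^{\out}_{j+1}\Phi(S)\,S\,dS .
\]
This gives the orthogonality analogue of \eqref{eq:step1_5}. Using \eqref{eq:step1_6} and the singular bound for $h^{\out}_{j+1}$ as in Lemma~\ref{lem:step1_1}, I get $|(t\partial_t)^\ell m_{j+1}|\lesssim \tau_1^{-j-1+}$. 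The variation of constants formula with $\Phi,\Theta$ is then rewritten with the tail integral $\int_R^\infty$ for $R\gtrsim1$, which removes the quadratic growth. This gives $\|S^\ell h^{\inn}_{j+1}\|_{H^1}\lesssim\tau_1^{-3-j+}$ and the pointwise vanishing $|S^kh^{\inn}_{j+1}|\lesssim \tau_1^{1-j}r^4$. Both are derived via Lemma~\ref{lem:symbolbounds1}, which handles derivatives falling on $\lambda_1(t)$. Adding this to the outer decomposition gives \eqref{eq:hj+1Taylornearzero}.

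\emph{Error estimate.} Combining \eqref{eq:step2_2} and \eqref{eq:step2_3}, the residual is exactly \eqref{eq:step2_6}. For $e_{j+1,0}$, I write $\partial_t^2=t^{-2}[(t\partial_t)^2-t\partial_t]$ and $t\partial_t=S-r\partial_r$, integrating by parts in the variation of constants integrals, and absorb $t^{-2}$ and the factor $\bar\lambda_2^2$ from $\cos(2Q_1-2\bfcq_{n-1})-\cos(2Q_1)$ into $\tau_1^{0+}$. The inner part gains one power of $\tau_1$ relative to the outer part, so this yields \eqref{eq:step2_7}. The nonlinear terms $E_{j+1,k}$ are at least quadratic in the $h$'s, or contain the difference of cosines, which involves $v_j$ of size $\tau_1^{-2+}$. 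They are therefore bounded by $\|r^{-2}S^\ell h_{j+1}\|_{L^2}\,\|S^\ell h\|_{L^\infty}\lesssim\tau_1^{-2-j+}\tau_1^{-2+}$, exactly as in Lemma~\ref{lem:step1_3}. This uses \eqref{linout12} for the outer part and the analogue of Remark~\ref{rem:step1_1} for the inner part.

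\emph{Main obstacle.} I expect the hardest point to be ensuring the gain of exactly one power $\tau_1^{-1}$ per step in the outer problem. The input $e_{j,2}$ must match the hypothesis of Lemma~\ref{lem:linout_1}, and the $\varepsilon$-losses per $S$-derivative and per step must not accumulate. Since $N$ is fixed, I would first check that the losses are absorbed into the ``$+$'' notation by taking $\varepsilon$ depending on $N$. I would also check that converting from $\tau$ to $\tau_1$ only costs $\tau_1^{0+}$ factors.
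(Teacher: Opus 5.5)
Your proposal is correct and follows essentially the same route as the paper. You solve the outer wave equation with Lemma~\ref{lem:linout_1}, define $m_{j+1}$ by the same orthogonality condition as $m_1$, and solve the inner elliptic problem by variation of constants using the tail integral for $R\gtrsim 1$. You then bound $e_{j+1,0}$ by rewriting $\partial_t^2$ through $S$ and integrating by parts, bound the $E_{j+1,k}$ by pairing a singular $r^{-2}$ norm with $L^\infty$ norms, and take the Taylor split from Lemma~\ref{lem:Tayloerexpansion}.

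The only differences are cosmetic. For $E_{j+1,2}$ you put the weight $r^{-2}$ on $h_{j+1}$ rather than on $v_j$, which also suffices. Also, the gain of $\tau_1^{-1}$ per step comes from the inner elliptic stage; the outer wave solve is merely lossless up to $\tau_1^{0+}$.
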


\begin{proof}
    Similar to the proof of Lemma \ref{lem:step1_2}, we split the analysis into multiple steps. The following steps $(i)$-$(iv)$ are devoted to proving both estimates in \eqref{eq:step2_7}.
    
    \bigskip

    \noindent
    \textit{(i) Control of the solution to the outer wave equation \eqref{eq:step2_2}}. This is a direct application of Lemma \ref{lem:linout_1} along with the inductive bounds in  \eqref{eq:step2_4}, from where we infer that equation \eqref{eq:step2_2} admits a solution satisfying the bounds \begin{align}\label{eq:step3_6}
        & \Vert S^\ell h_{j+1}^{\out}(t,\cdot; m)\Vert_{\bch^1_{rdr}}  \lesssim_{j,\ell} \tau_1^{-2-j+}\Big(1+\Vert m\Vert_{p,\ell+4(j+1)+2}\Big)^{C_{j+1,\ell}} ,
        \\  & \Vert S^\ell \triangle h_{j+1}^{\out}(t,\cdot; m,n )\Vert_{\bch^1_{rdr}}  \lesssim_{j,\ell}  \tau_1^{-2-j-p+}\Big(1+\Vert m\Vert_{p,\ell+4(j+1)+2}\Big)^{C_{j+1,\ell}} \Vert n\Vert_{p,\ell+4(j+1)+2} .  \nonumber
    \end{align}

    \medskip

    \noindent
    \textit{(ii) Choice of the parameter $m_{j+1}(t)$ in the inner elliptic equation \eqref{eq:step2_3}.} Once again, setting $S=\lambda_1(t) s$ \begin{align*}
        4m_{j+1}(t):= - \lambda_1^2\int_0^\infty \dfrac{4}{S^2} \Big(\cos(2Q_1-2\bfcq_{n-1})-\cos(2\bfcq_{n-1})\Big) h_{j+1}^{\out} \Phi(S)SdS.
    \end{align*}
    This is the exact analog of step $(ii)$ in Lemma \ref{lem:step1_2}. We recall that this definition then yields \begin{equation}\label{eq:step2_5}\begin{aligned}
        & \bigg\langle \dfrac{4}{r^2}\Big(\cos(2Q_1-2\bfcq_{n-1})-\cos(2\bfcq_{n-1})\Big) h_{j+1}^{\out}, \,  \Phi(\lambda_1\cdot) \bigg\rangle_{L^2_{rdr}} 
        \\ & + m_{j+1} \Big\langle \cos(2Q_1)-1, \,  \Phi(\lambda_1\cdot)\Big\rangle_{L^2_{rdr}}=0 , 
    \end{aligned}\end{equation}
    having used \eqref{eq:explicitintegrals}. Repeating the proof of Lemma \ref{lem:step1_1} verbatim, we obtain \begin{align}
        & \big\vert (t\partial_t)^\ell m_{j+1}(t;m)\big\vert \lesssim_{j,\ell}   \tau_1^{-j-1+}\Big(1+\Vert m\Vert_{p,\ell+4(j+1)}\Big)^{C_{j+1,\ell}} ,   \label{eq:step2_9}
        \\ & \big\vert (t\partial_t)^\ell \triangle m_{j+1}(t;m , n)\vert \lesssim_{j,\ell}  \tau_1^{-j-1-p+}\Big(1+\Vert m\Vert_{p,\ell+4(j+1)}\Big)^{C_{j+1,\ell}} \Vert n\Vert_{p,\ell+4(j+1)} . \nonumber
    \end{align}

    \medskip

    \noindent
    \textit{(iii) Control of the solution to the inner elliptic equation \eqref{eq:step2_3}.} Once again, using the same arguments as in the proof of step $(iii)$ of Lemma   \ref{lem:step1_2}, we infer the existence of $h_{j+1}^{\inn}$ satisfying the bounds (recall also the bounds at he beginning of proof of Lemma \ref{lem:step1_3} and \eqref{eq:step2_4}) \begin{align}\label{eq:step3_7}
        & \Vert S^\ell h_{j+1}^{\inn}(t,\cdot;m)\Vert_{H^1_{rdr}} \lesssim_{j,\ell} \tau_1^{-3-j+} \Big(1+\Vert m\Vert_{p,\ell+4(j+1)}\Big)^{C_{j,\ell+2}+\ell},
        \\ & \Vert S^\ell \triangle h_{j+1}^{\inn}(t,\cdot;m,n)\Vert_{H^1_{rdr}} \lesssim_{j,\ell} \tau_1^{-3-j-p+} \Big(1+\Vert m\Vert_{p,\ell+4(j+1)}\Big)^{C_{j,\ell+2}+\ell} \Vert n\Vert_{p,\ell+4(j+1)}.   \nonumber
    \end{align}
    It follows then that $h_{j+1}=h_{j+1}^\out+h_{j+1}^\inn$ satisfies the desired bounds, provided \[
    C_{j+1,\ell}\geq C_{j,\ell+2}+\ell.
    \]
    Moreover, arguing in the same fashion as in Remark \ref{rem:step1_1}, now using that $m_{j+1}$ decays as $\tau_1^{-1-j+}$ (see \eqref{eq:step2_9} above), together with the faster decay of $h_{j+1}^\out$ in \eqref{eq:step3_6}, one concludes the singular bound for the inner elliptic part \begin{align}\label{eq:step2_10}
    \Big\Vert \dfrac{1}{r^2} h_{j+1}^\inn\Big\Vert_{L^2_{rdr}}\lesssim \tau_1^{-2-j+}  .
    \end{align}
    The proof of the decomposition \eqref{eq:hj+1Taylornearzero} including the bounds following it is accomplished just like for the case $j = 0$, taking advantage of Lemma~\ref{lem:Tayloerexpansion}, and we omit it here. 
    \medskip

    \noindent
    \textit{(iv) Control of the error term $e_{j+1,0}$ in \eqref{eq:step2_6}}. This step is identical to the proof of step $(iv)$ of Lemma \ref{lem:step1_2}, so we omit it.

    This concludes the proof of \eqref{eq:step2_7}. We now seek to prove the bounds for $e_{j+1,2}$ in \eqref{eq:step2_6}. Recall that our goal is to show that \begin{align}
        \lambda_2^{-1}\big\Vert S^\ell e_{j,2} (t,\cdot;m) \big\Vert_{L^2_{rdr}} & \lesssim_{j,\ell} \tau_1^{-2-j+} \Big( 1+ \Vert m\Vert_{p,\ell+4j+2}\Big)^{C_{j,\ell}}  , \label{eq:step3_8}
    \\  \lambda_2^{-1}\big\Vert S^\ell \triangle e_{j,2} (t,\cdot;m,n) \big\Vert_{L^2_{rdr}} & \lesssim_{j,\ell} \tau_1^{-2-p-j+} \Big( 1+ \Vert m\Vert_{p,\ell+4j+2}\Big)^{C_{k,\ell}}  \Vert n\Vert_{p,\ell+4j+2}. \nonumber   
    \end{align}
    
    \medskip

    \noindent
    $\bullet$ \textit{Estimate for $E_{j+1,1}$}. Writing \begin{align*}
    & \cos\Big(2Q_1-2\bfcq_{n-1}+2\sum_{k=0}^j h_k\Big)
    \\ & =\cos(2Q_1-2\bfcq_{n-1})\cos\Big(2\sum_{k=0}^j h_k\Big)-\sin(2Q_1-2\bfcq_{n-1})\sin\Big(2\sum_{k=0}^j h_k\Big),
    \end{align*} 
    using the first bounds in the inductive hypothesis \eqref{eq:step2_4}, as well as a Taylor expansion, as well as Leibniz rule and \eqref{eq:Leibnizdifferencing}, we infer that \begin{align*}
        \Big\Vert S^\ell \cos\Big( 2Q_1-2\bfcq_{n-1}+\sum_{k=0}^j h_k\Big)\Big\Vert_{H^1_{rdr}} \lesssim_{\ell,j} \tau_1^{0+} \Big(1+\Vert m\Vert_{p,\ell+4j+2}\Big)^{C_{j,\ell}},
    \end{align*}
    where the $\tau_1^{0+}$ bound comes from the multiplication of both cosines above, and \begin{align*}
        & \Big\Vert S^\ell \triangle \cos\Big( 2Q_1-2\bfcq_{n-1}+\sum_{k=0}^j h_k\Big)(t,r;m,n)\Big\Vert_{H^1_{rdr}} 
        \\ & \lesssim_{\ell,j} \tau_1^{-p+} \Big(1+\Vert m\Vert_{p,\ell+4j+2}\Big)^{C_{j,\ell}}\Vert n\Vert_{p,\ell+4j+2},
    \end{align*}
    On the other hand, for the cubic in $h_{j+1}$ factor, we have that \begin{align*}
        & \Big\Vert S^\ell \Big( \dfrac{1}{r^2}\Big(\sin(2h_{j+1})-2h_{j+1}\Big)\Big)\Big\Vert_{L^2_{rdr}} 
        \\ & \lesssim_{\ell} \sum_{\ell_1,\ell_2\leq \ell} \Big\Vert \dfrac{S^{\ell_1} h_{j+1}}{r^2}\Big\Vert_{L^2_{rdr}}\, \big\Vert S^{\ell_2} h_{j+1}\big\Vert_{L^\infty} \Big( 1 + \sum_{r=0}^\ell \big\Vert S^r h_{j+1}\big\Vert_{L^\infty} \Big)^{\ell-2}.
    \end{align*}
    Then, it suffices to recall the already proven bounds for $h_{j+1}$ (see \eqref{eq:step3_6} and \eqref{eq:step3_7} for the regular bounds, and \eqref{linout12} for the singular bounds for the outer part and \eqref{eq:step2_10} for the inner part). More specifically, we can bound the preceding quantities by (here $\ell_1+\ell_2\leq \ell$) \begin{align*}
        & \Big\Vert \dfrac{S^{\ell_1} h_{j+1}}{r^2}\Big\Vert_{L^2_{rdr}}\, \big\Vert S^{\ell_2} h_{j+1}\big\Vert_{L^\infty} \Big( 1 + \sum_{r=0}^\ell \big\Vert S^r h_{j+1}\big\Vert_{L^\infty} \Big)^{\ell-2}
        \\ & \lesssim \tau_1^{-j-1+} \tau_1^{-j-2+}\Big( 1+\Vert m\Vert_{p,\ell+4j+4}\Big)^{D_{\ell,j}},
    \end{align*}
    having introduced, for the sake of simplicity, the notation \[
    D_{\ell,j} := \ell(C_{j,\ell+2}+\ell).
    \]
    Estimate \eqref{eq:step3_8} for $E_{j+1,1}$ follows then from the Leibniz rule, \begin{align*}
        \lambda_2^{-1}\big\Vert S^\ell E_{j+1,1}\big\Vert_{L^2_{rdr}} \lesssim_{j,\ell} \tau_1^{-3-2j+} \Big(1+\Vert m\Vert_{p,\ell+4j+4}\Big)^{D_{\ell,j}+C_{j,\ell}}.
    \end{align*}
    Mutatis mutandis, using now estimate \ref{eq:Leibnizdifferencing}, one derives the differencing bound \[
    \lambda_2^{-1}\big\Vert S^\ell \triangle E_{j+1,1}\big\Vert_{L^2_{rdr}} \lesssim_{j,\ell} \tau_1^{-3-2j-p+} \Big(1+\Vert m\Vert_{p,\ell+4j+4}\Big)^{D_{\ell,j}+C_{j,\ell}} \Vert n\Vert_{p,\ell+4j+4} .
    \]
    
    \medskip

    \noindent
    $\bullet$ \textit{Estimate for $E_{j+1,2}$}. As usual, we begin by expanding
    \begin{align*}
        & \cos\big(2Q_1-2\bfcq_{n-1}+\sum_{k=0}^j h_k\big)-\cos(2Q_1-2\bfcq_{n-1})
        \\ & = \cos(2Q_1-2\bfcq_{n-1})\left(\cos\Big(\sum_{k=0}^j h_k\Big)-1\right)-\sin(2Q_1-2\bfcq_{n-1})\sin\Big(\sum_{k=0}^j h_k\Big).
    \end{align*}
    Then, we can write
    \begin{align*}
        \Big\Vert S^\ell \Big( \dfrac{2}{r^2}\Big(\cos\big(2Q_1-2\bfcq_{n-1}+\sum_{k=0}^j h_k\big)-\cos(2Q_1-2\bfcq_{n-1})\Big)\Big)\Big\Vert_{L^2_{rdr}}\lesssim F_1+F_2,
    \end{align*}
    where \begin{align*}
        F_1 & \lesssim_\ell  \sum_{\ell_1+\ell_2=\ell} \big\Vert S^{\ell_1}\big(\cos(2Q_1-2\bfcq_{n-1})\big)\big\Vert_{L^\infty} \Big\Vert S^{\ell_2}\Big( \dfrac{1}{r^2}\Big( \cos\Big(\sum_{k=0}^j h_k\Big)-1\Big)\Big)\Big\Vert_{L^2_{rdr}},
        \\ F_2 & \lesssim_\ell \sum_{\ell_1+\ell_2=\ell} \big\Vert S^{\ell_1}\big(\sin(2Q_1-2\bfcq_{n-1})\big)\big\Vert_{L^\infty} \Big\Vert S^{\ell_2}\Big( \dfrac{1}{r^2}\sin\Big(\sum_{k=0}^j h_k\Big)\Big)\Big\Vert_{L^2_{rdr}}.
    \end{align*}
    Then, proceeding exactly as in the case of $E_{j+1,1}$, this time using the inductive hypothesis bounds \eqref{eq:step2_4} for the $h_k$, $k\leq j$, we obtain that (here $\ell_2\leq \ell$) \begin{align*}
        & \Big\Vert S^{\ell_2}\Big( \dfrac{1}{r^2}\Big( \cos\Big(\sum_{k=0}^j h_k\Big)-1\Big)\Big)\Big\Vert_{L^2_{rdr}},  \lesssim_{j,\ell} \tau_1^{-3+} \Big( 1+ \Vert m \Vert_{p,\ell+4j+2}\Big)^{C_{j,\ell}},
        \\ &  \Big\Vert S^{\ell_2}\Big( \dfrac{1}{r^2}\sin\Big(\sum_{k=0}^j h_k\Big)\Big)\Big\Vert_{L^2_{rdr}} \lesssim_{j,\ell} \tau_1^{-1+} \Big( 1+ \Vert m \Vert_{p,\ell+4j+2}\Big)^{C_{j,\ell}}.
    \end{align*}
    Therefore, we infer that (recall the extra factor $h_{j+1}$ in the definition of $E_{j+1,2}$) \begin{align*}
        \lambda_2^{-1}\big\Vert S^\ell E_{j+1,2}\big\Vert_{L^2_{rdr}} & \lesssim \big( F_1+F_2\big) \Big(\sum_{\ell_1\leq \ell} \big\Vert S^{\ell_1} h_{j+1}\Vert_{L^\infty}\Big)
        \\ &\lesssim_{\ell,j} \tau_1^{-3-j+} \Big( 1+\Vert m \Vert_{p,\ell+4j+2}\Big) ^{C_{j+1,\ell}},
    \end{align*}
    by increasing $C_{j+1,\ell}$ if necessary. The differencing bound follows exactly the same ideas, so we omit it. This concludes the proof of \eqref{eq:step3_8} for $E_{j+1,2}$.

    Finally, the estimate for $E_{j+1,3}$ follows similar lines to those of the previous case. In this case, it is enough to use the bound \begin{align*}
        \Big\Vert S^\ell \Big( \dfrac{2}{r^2} \Big(\cos(2h_{j+1})-1\Big)\Big)\Big\Vert_{L^2_{rdr}} \lesssim_{\ell,j} \tau_1^{-3-2j+} \Big( 1 + \Vert m\Vert_{p,\ell+4j+4}\Big)^{D_{\ell,j}},
    \end{align*}
    and proceed as before. The proof is complete.
\end{proof}

\bigskip
\noindent
{\bf{Step 3}}: {\it{Proof of Proposition \ref{prop:approximateinnerbubble}}}.  Repeated applications of Lemma \ref{lem:step2_1} sufficiently many times yield an approximate solution \[
u_N=Q_1-\bfcq_{n-1} + \sum_{j=0}^N h_j, \qquad N\gg1,
\]
which generates the error \begin{align*}
    -\partial_t^2 u_N + \partial_r^2 u_N + \dfrac{1}{r}\partial_r u_N - \dfrac{2}{r^2}\sin(2u_N)=e_N, \qquad e_N=e_{N,1}-e_{N,2}.
\end{align*}
We have the bounds \begin{align}
        & \lambda_2^{-1}\big\Vert S^\ell e_{N,2} (t,\cdot;m) \big\Vert_{L^2_{rdr}}   \lesssim_{N,\ell} \tau_1^{-2-N+} \Big( 1+ \Vert m\Vert_{p,\ell+4N+2}\Big)^{C_{N,\ell}}  ,   \label{eq:step3_5}
    \\  & \lambda_2^{-1}\big\Vert S^\ell \triangle e_{N,2} (t,\cdot;m,n) \big\Vert_{L^2_{rdr}}   \lesssim_{N,\ell} \tau_1^{-2-p-N+} \Big( 1+ \Vert m\Vert_{p,\ell+4N+2}\Big)^{C_{k,\ell}}  \Vert n\Vert_{p,\ell+4N+2}  \nonumber
    \end{align}
    On the other hand, recall that (recall \eqref{eq:step1_13} and \eqref{eq:step2_1}) \[
    e_{N,1}= \bigg( - 8 (2m\overline{\lambda}_2 + m^2)   + \sum_{k=1}^N m_k\bigg) \Big( \cos(2Q_1)-1\Big), 
    \]
    and that the $m_k$ depend on $m$ as specified in part $(ii)$ of the proof Lemma \ref{lem:step2_1}. The next lemma gives a precise control of $e_{N,1}$.
    \begin{lem}
        Given $N\gg1$, there exists $t_0=t_0(N)>0$ sufficiently small such that there is a function $m\in C^\infty((0,t_0])$ with \[
        \Vert m\Vert_{p,10N}\leq 1,
        \]
        for some fixed $p\geq \tfrac12$ and such that \begin{align}\label{eq:step3_4}
        \lambda_2^{-1} \Vert e_{N,1}\Vert_{L^2_{rdr}} \lesssim \tau_1^{-N+1+}.
        \end{align}
    \end{lem}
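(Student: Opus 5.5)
We will obtain $m$ as a fixed point of the map suggested by the requirement that the coefficient of $\cos(2Q_1)-1$ in $e_{N,1}$ vanish. Setting $M(m):=\sum_{k=1}^N m_k(t;m)$, where the $m_k$ are built in Steps 1 and 2 from $m$ (through $\lambda_1=\lambda_1(m)$, Lemma~\ref{lem:simpleinductivelambdaonewithm}, and through $h_k^{\out}$), we want
\[
16\,m\,\overline{\lambda}_2 + 8m^2 = M(m).
\]
We rewrite this as
\[
m=\Gamma(m):=\frac{M(m)}{16\overline{\lambda}_2}-\frac{m^2}{2\overline{\lambda}_2},
\]
and seek a fixed point of $\Gamma$ in the ball $B:=\{m:\ \|m\|_{p,10N}\le 1\}$, with $p=\tfrac12$ say. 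An exact fixed point is more than we need. It gives $e_{N,1}=0$, and \eqref{eq:step3_4} then holds trivially. A weaker version, $\|m-\Gamma(m)\|$ of size $\tau_1^{-N}$, would also suffice, because $\|\cos(2Q_1)-1\|_{L^2_{rdr}}\sim\lambda_1^{-1}$ and $\lambda_1\lesssim\tau_1^{1+}$.

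First, $\Gamma$ maps $B$ into itself. By \eqref{eq:step2_4}, Lemma~\ref{lem:step1_1} and \eqref{eq:step2_9}, we have
\[
|(t\partial_t)^\ell m_k|\lesssim \tau_1^{-k+}\bigl(1+\|m\|_{p,\ell+4k}\bigr)^{C_{k,\ell}}.
\]
Hence $\tau_1^{1/2}|(t\partial_t)^\ell M(m)|\lesssim\tau_1^{-1/2+}$ for $\ell\le 10N$, uniformly on $B$. This holds provided the derivative losses $\ell+4k\le 10N$ close up for $k\le N$. They do if the top norm index is taken as $10N$, or if we lose derivatives and work in the norm $\|\cdot\|_{p,L}$ with $L$ chosen from $N$. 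Dividing by $\overline{\lambda}_2$ and differentiating with $t\partial_t$ costs at most $\tau_1^{0+}$, since $|(t\partial_t)^j\overline{\lambda}_2|\lesssim\overline{\lambda}_2^{1+}$ and $\overline{\lambda}_2\lesssim\tau_1^{0+}$. The quadratic term satisfies $\tau_1^{1/2}|m^2|\le\tau_1^{-1/2}\|m\|_{p,0}^2$. Shrinking $t_0$ so that $\tau_1\ge\tau_1(t_0)$ is large then makes both pieces at most $\tfrac12$ in $\|\cdot\|_{p,10N}$.

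Second, $\Gamma$ is a contraction. The differencing bounds in \eqref{eq:step2_4} and Lemma~\ref{lem:step1_1} give
\[
|(t\partial_t)^\ell\triangle m_k|\lesssim\tau_1^{-p-k+}\|n\|_{p,\ell+4k},
\]
so $\|\triangle M\|_{p,\cdot}\lesssim\tau_1^{-1+}\|n\|$. The quadratic term gives $\|\triangle m^2\|_{p,0}\lesssim\tau_1^{-p}\|n\|_{p,0}$. This yields a contraction constant $\ll1$ for small $t_0$.

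The main obstacle is the loss of derivatives: $\triangle m_k$ at level $\ell$ is controlled by $\|n\|_{p,\ell+4k}$, so the map does not close in a single fixed norm. I would first try to remove this with a two-norm (Nash--Moser-free) scheme. We prove boundedness in the strong norm $\|\cdot\|_{p,10N}$, and contraction only in the weak norm $\|\cdot\|_{p,0}$, using \eqref{eq:step2_4} with $\ell=0$; this needs $\|n\|_{p,4N}$ on the right, which is bounded by $2$ on $B$. Boundedness in the strong norm then passes to the limit by Arzel\`a--Ascoli.

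If this fails, the fallback is to iterate $m^{(i+1)}=\Gamma(m^{(i)})$ only about $N$ times. Each iterate gains a factor $\tau_1^{-1+}$ in the residual $16m\overline{\lambda}_2+8m^2-M(m)$ and costs $4N$ derivatives. With norm index about $10N$, after $N$ steps the residual is $O(\tau_1^{-N+})$, and multiplying by $\|\cos(2Q_1)-1\|_{L^2}\lambda_2^{-1}\lesssim\tau_1^{-1+}$ gives \eqref{eq:step3_4}.

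Smoothness of $m$ follows from the smoothness of the $m_k$ in $t$, using the formula for $m_{j+1}$ together with Lemma~\ref{lem:linout_1} and the time derivative bounds. Alternatively it can be obtained by mollifying at scale $\tau_1^{-N}$, at the cost of an admissible error.
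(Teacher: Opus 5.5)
The exact-fixed-point route has a genuine gap. Your fallback, however, is the paper's own argument and is sound.

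\textbf{Why the exact fixed point does not close.} The derivative loss you identify already breaks invariance of the ball. By \eqref{eq:step2_4}, controlling $\|\Gamma(m)\|_{p,L}$ requires $\|m\|_{p,L+4N}$, for every $L$. So no ball $\{\|m\|_{p,L}\le 1\}$ is mapped into itself, whether $L=10N$ or $L=L(N)$.

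The two-norm repair fails for the same reason. Contraction in $\|\cdot\|_{p,0}$ means $\|\Gamma(m+n)-\Gamma(m)\|_{p,0}\le\theta\|n\|_{p,0}$. The differencing bounds only give $\lesssim\tau_1(t_0)^{-1+}\|n\|_{p,4N}$. Knowing $\|n\|_{p,4N}\le 2$ bounds the size of the difference, not its ratio to $\|n\|_{p,0}$.

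Along the Picard sequence, the $i$-th increment in $\|\cdot\|_{p,0}$ is controlled by the $(i-1)$-st in $\|\cdot\|_{p,4N}$. That one needs the $(i-2)$-nd in $\|\cdot\|_{p,8N}$, and so on, so no Cauchy property follows. Interpolating against the strong norm does not rescue this: it only gives a sublinear recursion $a_{i+1}\lesssim\epsilon\,a_i^{\theta}$ with $\theta<1$, which does not force $a_i\to 0$. The Arzel\`a--Ascoli step is therefore unsupported. An exact solution would require a substantially different argument, and it is not needed.

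\textbf{The fallback is the paper's proof.} The paper writes the equation as $m=d+P(m)$ with $d=\mathcal{M}_N(\cdot;0)=\Gamma(0)$ and $P(m)=\Gamma(m)-d$. It runs $m^{(k)}=d+P^{(k)}(d)$, which are exactly your Picard iterates. It then expresses $P^{(k+1)}(d)-P^{(k)}(d)$ as the nested difference $\triangle P\big(m^{(k-1)},\triangle P(\dots,\triangle P(d,P(d))\dots)\big)$. Each level of nesting gains $\tau_1^{-1+}$, and the iteration stops after $N+1$ steps with residual $O(\tau_1^{-N+})$. Your conversion of this residual into \eqref{eq:step3_4}, via $\lambda_2^{-1}\|\cos(2Q_1)-1\|_{L^2_{rdr}}\lesssim\tau_1^{-1+}$, is correct. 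Make the fallback the proof and drop the rest.

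\textbf{Two points in the fallback need care.} The paper is equally brief about both.
\begin{enumerate}
\item \emph{Derivative count.} $N$ steps, each losing about $4N$ derivatives, need $O(N^2)$ derivatives of $d$, not $10N$. This is harmless: the iteration starts from $m=0$ and $d$ obeys bounds at every order. Hence each finite iterate satisfies $\|m^{(k)}\|_{p,L}\le 1$ for any fixed $L$ once $t_0=t_0(N,L)$ is small.
\item \emph{Decoupled weights.} The gain of $\tau_1^{-1}$ per step is visible only if you apply the differencing bounds with a weight $\tau_1^{a}$ on the increment $n$, where $a$ increases along the iteration, while $m$ stays in the $p=\frac12$ norm. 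As stated, those bounds use a single $p$ together with $\|m\|_{p,0}\le 1$. That is incompatible with large $p$, since $m$ only decays like $\tau_1^{-1+}$. You should note explicitly that the bounds hold in this decoupled form.
\end{enumerate}
No mollification is needed for smoothness, since each finite iterate is already smooth.
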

    \begin{proof}
        We will (approximately) solve the equation \begin{align}\label{eq:step3_1}
        - 8 (2m\overline{\lambda}_2 + m^2)   + \sum_{k=1}^N m_k=0,
        \end{align}
        via a Picard iteration argument (in $m$). We say ``approximately" since we will stop the iteration after a finite number of steps. Let us denote by \[
        \mathcal{M}_N(t;m):= \dfrac{1}{16\overline{\lambda}_2}\sum_{k=1}^Nm_k-\dfrac{1}{2\overline{\lambda}_2}m^2,
        \]
        In light of the bounds \eqref{eq:step2_4}, we have that (for $p\geq \tfrac12$) \begin{equation}\label{eq:step3_2}\begin{aligned}
            \big\vert (t\partial_t)^\ell \mathcal{M}_N(t;m)\big\vert & \lesssim_{\ell, N} \tau_1^{-1+} \Big(1+\Vert m\Vert_{p,\ell +2N+2}\Big)^{C_{N,\ell}},
            \\ \big\vert (t\partial_t)^\ell \triangle \mathcal{M}_N(t;m,n) \big\vert & \lesssim_{\ell,N}  \tau_1^{-1-p+}\Big(1+\Vert m\Vert_{p,\ell +2N+2}\Big)^{C_{N,\ell}}\Vert n\Vert_{p,\ell+2N+2} .
        \end{aligned}\end{equation}
        Let us now set \[
        P_N(t;m) := \mathcal{M}_N(t;m) - \mathcal{M}_N(t;0)  , \qquad d(t):= \mathcal{M}_N(t;0).
        \]
        Then, we can reformulate equation \eqref{eq:step3_1} as simply \begin{align*}
            m=P_N(m)+d,
        \end{align*}
        where we have suppressed the dependence on $t$. We note right away that, by \eqref{eq:step3_2}, we have \begin{align*}
            \vert (t\partial_t)^\ell d\vert \lesssim_{\ell,N} \tau_1^{-1+}.
        \end{align*}
        To simplify notation, from now on we write $P$ in place of $P_N$. Define inductively, 
        \begin{align*}
            P^{(1)}(d)=P(d), \qquad P^{(k+1)}(d)=P^{(1)}\Big(d+P^{(k)}(d)\Big), \qquad k\geq1,
        \end{align*}
        and further set
        \begin{align*}
            m^{(k)}:=d+P^{(k)}(d), \quad \hbox{ in particular } \quad P\big(m^{(k)}\big)=P^{(k+1)}(d).
        \end{align*}
        By definition we have $P^{(1)}(d+f)-P^{(1)}(d)=\triangle P(d,f)$, and hence, \begin{align}\label{eq:step3_3}
        P^{(2)}(d)=P^{(1)}(d)+\triangle P\Big(d,P(d)\Big)
        \end{align}
        Similarly, using \eqref{eq:step3_3}, we obtain that \begin{align*}
            P^{(3)}(d)-P^{(2)}(d) &= P^{(1)}\Big(d+P^{(2)}(d)\Big) - P^{(1)}\Big(d+P^{(1)}(d)\Big)
            \\ & = \triangle P\Big(d+P^{(1)}(d),\, \triangle P\big( d,P(d)\big)\Big)
            \\ & = \triangle P\Big( m^{(1)}(d), \, \triangle P\big(d,P(d)\big) \Big),
        \end{align*} 
        and
        \begin{align*}
            P^{(4)}(d)-P^{(3)}(d) &= P^{(1)}\Big(d+P^{(3)}(d)\Big)-P^{(1)}\Big(d+P^{(2)}(d)\Big)
            \\ & = \triangle P\Big(m^{(2)}(d),\triangle P\Big(m^{(1)}(d),\triangle P(d,P(d))\Big)  \Big).
        \end{align*}
        Then, generalizing, we derive the relation \begin{align*}
        & P^{(k+1)}(d)-P^{(k)}(d)
        \\ &= \triangle P\bigg( m^{(k-1)}(d), \, \triangle P\Big( m^{(k-2)}(d), \triangle P\big(m^{(k-3)}(d),..., \triangle P\big(d,P(d)\big)...\Big)\bigg)
        \end{align*}
        Finally, observing that $P_N$ satisfies the same bound \eqref{eq:step3_2} as $\mathcal{M}_N$, we conclude that \begin{align*}
            \big\vert m^{(N+1)}-d-P\big(m^{(N+1)}\big)\big\vert & = \big\vert P^{(N+1)}(d)-P\big(d+P^{(N+1)}(d)\big)\big\vert 
            \\ & = \big\vert P^{(N+1)}(d)-P^{(N+2)}(d)\big\vert,
        \end{align*}
        satisfies the bound \[
        \big\vert (t\partial_t)^\ell \big( m^{(N+1)}-d-P\big(m^{(N+1)}\big)\big)\big\vert \lesssim_{\ell,N} \tau_1^{-N+}.
        \]
        The desired bound \eqref{eq:step3_4} for $e_{N,1}$ is then an immediate consequence. The proof is complete.
\end{proof}
Taking the function $m$ constructed in the preceding lemma, we deduce Proposition \ref{prop:approximateinnerbubble} from the bounds \eqref{eq:step3_5} with $\ell=0$.

%%%%%%%%%%%%%%%%%%%%%%%%%%%%%%%%%%%%%%%%%%%
%%%%%%%%%%%%%%%%%%%%%%%%%%%%%%%%%%%%%%%%%%%
%%%%%%%%%%%%%%%%%%%%%%%%%%%%%%%%%%%%%%%%%%%
%%%%%%%%%%%%%%%%%%%%%%%%%%%%%%%%%%%%%%%%%%%
%%%%%%%%%%%%%%%%%%%%%%%%%%%%%%%%%%%%%%%%%%%
%%%%%%%%%%%%%%%%%%%%%%%%%%%%%%%%%%%%%%%%%%%

%\newpage
\bigskip
\section{Technical preparations for the final perturbation step towards the exact $n$ bubble solution}\label{sec:prep_final}
Throughout this section, we will work with the innermost time scale
\begin{align}\label{linin12}
    \tau_1:= \int_t^{t_0}\lambda_1(s)ds.
\end{align}
In the same spirit as for the previous steps, we now need to solve (and prove some bounds for the solution of) the equation \begin{align}\label{linin1}
-\partial_t^2\varepsilon+\partial_r^2\varepsilon+\dfrac{1}{r}\partial_r\varepsilon-\dfrac{4}{r^2}\cos(2Q_1)\varepsilon=f.
\end{align}
As in Section \ref{sec:linout}, a particularly useful observation that we will use later is that 
\begin{align}\label{linin11}
\tau_1 \sim \dfrac{\lambda_1(t)}{\lambda_2(t)}\bigg(1 + O \Big( \dfrac{  \lambda_2'(t)}{\big(\lambda_2(t)\big)^2} \Big)\bigg)  \quad \hbox{ for } \quad t\sim 0+.
\end{align}
Moreover, by direct calculations we have that \begin{align*}
    \omega_1 := \dfrac{\partial_{\tau_1}\lambda_1}{\lambda_1}=-\dfrac{\lambda_1'}{\lambda_1^2} \sim  \dfrac{1}{\tau_1}, \qquad \big\vert \dot{\omega}_1 \big\vert \sim  \dfrac{1}{\tau_1^2}.
\end{align*}
The first bound above can be written as (here $s> \tau_1$) \[
\dfrac{d}{ds} \log \lambda_1(s)\sim \dfrac{1}{s}, \qquad \hbox{which in turn implies that}, \qquad \lambda_1(s) \gtrsim \lambda_1(\tau_1)\dfrac{s}{\tau_1}.
\]
We also note the following bounds, which follow from the previous inequality and will be particularly useful in what follows (here $\sigma_1>\tau_1$)
\begin{equation}\label{linin5}\begin{aligned}
    \lambda_1(\tau_1)\int_{\tau_{1}}^{\sigma_1} \dfrac{ds}{\lambda_1(s)} & \lesssim \tau_1 \ln\Big(\dfrac{\sigma_1}{\tau_1}\Big),
    \\ \bigg\vert \partial_{\tau_1} \bigg( \lambda_1(\tau_1) \int_{\tau_1}^{\sigma_1} \dfrac{ds}{\lambda_1(s)} \bigg) \bigg\vert & \lesssim \ln\Big(\dfrac{\sigma_1}{\tau_1}\Big) .
\end{aligned}\end{equation}
From now on, we work with the precise inner time $\tau_1$ instead of the approximation $\tau$ used earlier for simplicity (see \eqref{linout20}).

We will follow a strategy similar to that of Section \ref{sec:linout}. We define the (physical) working norms $X_1$ for the solution $\varepsilon$, and $Y_1$ for the forcing term, respectively, \begin{align}\label{linin9}
    \Vert \varepsilon\Vert_{X_1} & = \sup_{0 < t \leq t_0} \tau_1^{N_0} \Vert \varepsilon(t,\cdot)  \Vert_{ \boldsymbol{\mathcal{H}}^1_{rdr} }  + \sup_{0 < t \leq t_0} \tau_1^{N_1} \Vert S \varepsilon(t,\cdot)  \Vert_{\boldsymbol{\mathcal{H}}^1_{rdr}}  + \sup_{0 < t \leq t_0} \tau_1^{N_2} \Vert  S^2 \varepsilon \Vert_{\boldsymbol{\mathcal{H}}^1_{rdr}}\nonumber
    \\ & + \sup_{0 < t \leq t_0} \tau_1^{N_2} \lambda_1^{-1}(t) \Vert \dfrac{1}{r^2}\varepsilon(t,\cdot)  \Vert_{L^2_{rdr}}  +  \sup_{0 < t \leq t_0} \tau_1^{N_2} \lambda_1^{-1/2}(t) \Vert  \dfrac{1}{r} S\varepsilon  \Vert_{ L^4_{rdr}},
\end{align}
and
\begin{equation}\label{linin10}\begin{aligned}
    \Vert f\Vert_{Y_1} & =  \sup_{0 < t \leq t_0} \tau_1^{N_0+2} \lambda_1^{-1}(t) \Vert f(t,\cdot)  \Vert_{L^2_{rdr}}  +  \sup_{0 < t \leq t_0} \tau_1^{N_1+2} \lambda_1^{-1}(t) \Vert Sf(t,\cdot)  \Vert_{L^2_{rdr}}  
    \\ & + \sup_{0 < t \leq t_0} \tau_1^{N_2+2} \lambda_1^{-1}(t) \Vert S^2f(t,\cdot)  \Vert_{L^2_{rdr}}, 
\end{aligned}\end{equation}
where $N_j=N/2$, $j=0,1,2$, and pass to the rescaled variables \[
\wt \varepsilon(\tau_1,R) = R^{1/2} \varepsilon \big( t(\tau_1),\, \lambda_1^{-1} R\big), \qquad R:=\lambda_1(t) r.
\]
Here we have again defined the energy norm (which is just a scale-adapted version of the $H^1$) as \begin{align}\label{linin13}
\Vert \varepsilon(t,\cdot)\Vert_{\boldsymbol{\mathcal{H}}^1_{rdr}} := \Vert \mathcal{L}^{1/2}_t \varepsilon\Vert_{L^2_{rdr}} + \Vert \partial_t \varepsilon\Vert_{L^2_{rdr}}+\lambda_1(t) \,\Vert \varepsilon\Vert_{L^2_{rdr}},
\end{align} 
having denoted by $\mathcal{L}_t:=-\partial_r^2-\tfrac{1}{r}\partial_r+\tfrac{4}{r^2}\cos(2Q_1)$. The following Lemma is the main result of this section.

\begin{lem}\label{lem:linin_1}
    Let $N\gg 1$ be sufficiently large. Assume that $\Vert f\Vert_{Y_1}<\infty$, then, equation \eqref{linin1} admits a solution satisfying the bound \begin{align*}
        \Vert \varepsilon\Vert_{X_1}\lesssim N^{-1} \Vert f\Vert_{Y_1}.
    \end{align*}
\end{lem}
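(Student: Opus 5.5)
The plan is to mirror the proof of Lemma~\ref{lem:linout_1}, with the outer scale $\lambda_2$ replaced by the inner scale $\lambda_1$, and with exponential weights replaced by the polynomial weights $\tau_1^{N_j}$. The key difference is that the problem \eqref{linin1} has no perturbative potential error: the potential is exactly $\frac{4}{r^2}\cos(2Q_1)$. In that setting, the gain $N^{-1}$ comes from integrating a large power of $\tau_1$ backwards from $\tau_1=\infty$.

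\textbf{Reduction to the transform side.} First I pass to the variables $R=\lambda_1 r$ and $\widetilde\varepsilon=R^{1/2}\varepsilon$, and take the distorted Fourier transform associated with $\mathcal{H}$. Exactly as in \eqref{linout2}--\eqref{linout3}, with $\omega_1=-\lambda_1'/\lambda_1^2\sim\tau_1^{-1}$ and $D_{\tau_1}=\partial_{\tau_1}-\omega_1\mathcal{K}_d$, this gives
\[
\big(-D_{\tau_1}^2-\omega_1D_{\tau_1}-\xi\big)\widehat{\varepsilon}=\lambda_1^{-2}\mathcal{F}(R^{1/2}f)-2\omega_1\mathcal{K}_{nd}D_{\tau_1}\widehat\varepsilon+\omega_1\cdot(\text{bounded operators})\,\widehat\varepsilon .
\]
I will solve this by a fixed point argument in a transform norm equivalent to the $\tau_1^{N_0}\boldsymbol{\mathcal{H}}^1$ piece of $X_1$.

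\textbf{Linear estimates.} The discrete component is handled by the explicit Green function $\lambda_1(\tau_1)\int_{\tau_1}^{\sigma_1}ds/\lambda_1(s)$, and the continuous component by the characteristic-based propagator $U(\tau_1,\sigma_1,\xi)$, both solved backwards from $\tau_1=\infty$. The bounds \eqref{linin5} control the kernels by $\tau_1\ln(\sigma_1/\tau_1)$, respectively $\ln(\sigma_1/\tau_1)$. Inserting a source of size $\sigma_1^{-N_0-2}\lambda_1$ (in $L^2_{rdr}$, which becomes $\sigma_1^{-N_0-2}$ after rescaling), I get
\[
\int_{\tau_1}^\infty \tau_1\ln(\sigma_1/\tau_1)\,\sigma_1^{-N_0-2}\,d\sigma_1\lesssim N^{-2}\tau_1^{-N_0}.
\]
I also need to check that the weight $\rho^{1/2}(\lambda_1^2(\tau_1)\xi/\lambda_1^2(\sigma_1))/\rho^{1/2}(\xi)$ is at most polynomial in $\sigma_1/\tau_1$; this loses only $O(1)$ powers, which the large $N$ absorbs. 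The same computation gives the $\tau_1^{-1}$-weighted bounds on $D_{\tau_1}\widehat\varepsilon_c$ and $\xi^{1/2}\widehat\varepsilon_c$.

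\textbf{Closing the fixed point.} The terms involving $\mathcal{K}_{nd}$ and $\mathcal{K}_d$ all carry a factor $\omega_1\sim\tau_1^{-1}$ or $\dot\omega_1\sim\tau_1^{-2}$. Proposition~\ref{prop:dft1} and the commutator bound keep them bounded on the relevant weighted $L^2_\rho$ spaces. Compared with the source they therefore lose one power of $\tau_1$ at most, and the backward integration recovers a factor $N^{-1}$. This is the main obstacle: the $\omega_1\mathcal{K}_{nd}D_{\tau_1}$ term has exactly critical decay, and smallness comes only from $N^{-1}$. I will first try to show that this term maps the $X$-type norm into the $Y$-type norm with constant $O(1)$, uniformly in $N$, and then use the $N^{-1}$ gain of the propagator to obtain a contraction. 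If the logarithmic factors cause trouble, I will integrate by parts in $\sigma_1$ in the oscillatory regime, as in \cite{JenKri}.

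\textbf{Higher derivatives and singular weights.} For $S\varepsilon$ and $S^2\varepsilon$, I apply $S$ to \eqref{linin1}. The commutator $S(\cos 2Q_1)$ produces terms bounded by $t|\lambda_1'|\lambda_1|\varepsilon|\lesssim \lambda_1^{2+}|\varepsilon|$. Since $N_1=N_0$ (all equal to $N/2$), the extra $\lambda_1^{0+}\lesssim\tau_1^{0+}$ losses are absorbed by the spare $N^{-1}$ factors, and I rerun the same argument with $2f+Sf+(\text{commutator})$ as the source. Finally, the weighted bounds on $\|r^{-2}\varepsilon\|_{L^2}$ and $\|r^{-1}S\varepsilon\|_{L^4}$ follow without any fixed point, exactly as at the end of Lemma~\ref{lem:linout_1}. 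I use the elliptic estimate \eqref{linout15}, the identity rewriting $t^2\partial_t^2$ through $S$, and \eqref{linout17}, then interpolate. This costs powers of $\lambda_1$ and $t^{-1}$, which are matched by the $\lambda_1^{-1}$ and $\lambda_1^{-1/2}$ factors in the norm and by $t^{-1}\lesssim\tau_1^{0+}$.
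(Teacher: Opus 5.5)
The undifferentiated part of your plan is the paper's argument: rescaling to $R=\lambda_1 r$, the distorted Fourier transform, backward Duhamel with the kernels $\tau_1\ln(\sigma_1/\tau_1)$ and $\ln(\sigma_1/\tau_1)$, and a contraction because the $\mathcal{K}$-terms map $\boldsymbol{\mathcal{X}}_1$ to $\boldsymbol{\mathcal{Y}}_1$ with an $O(1)$ constant while the propagator gains $N^{-1}$. The gap is in the step for $S\varepsilon$ and $S^2\varepsilon$.

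\textbf{The commutator term is two powers of $\tau_1$ too large.} The source for $S\varepsilon$ contains $\frac{4}{r^2}\big(S\cos(2Q_1)\big)\varepsilon$. This coefficient is of size $\tau_1^{0+}\lambda_1^2$, so
\[
\lambda_1^{-1}\Big\Vert \frac{4}{r^2}\big(S\cos(2Q_1)\big)\varepsilon\Big\Vert_{L^2_{rdr}}\lesssim \tau_1^{0+}\lambda_1\Vert\varepsilon\Vert_{L^2_{rdr}}\lesssim \tau_1^{-N_0+}\Vert\varepsilon\Vert_{X_1}.
\]
To rerun the linear estimate at level $N_1$, this term must be finite in the source slot with weight $\tau_1^{N_1+2}$. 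With your choice $N_1=N_0$ you get $\sup_t\tau_1^{2+}=\infty$. So the loss is $\tau_1^{2+}$, not $\tau_1^{0+}$.

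The reason is structural. In the $\tau_1$-frame the derivative of the potential is an $O(1)$ coefficient, with no $\omega_1^2\sim\tau_1^{-2}$ factor. The backward Duhamel integral gains exactly two powers of $\tau_1$: sources of size $\sigma_1^{-N-2}$ produce solutions of size $\tau_1^{-N}$, and nothing more. The factor $N^{-1}$ is a constant and cannot absorb a growing power of $\tau_1$. Moreover, this term involves $\varepsilon$, not $S\varepsilon$, so it is not a fixed-point term at all. Smallness is beside the point; what you need is extra decay of $\varepsilon$.

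\textbf{The fix is a hierarchy of weights.} This is what the paper's proof actually uses: $N_0>N_1+2$ and $N_1>N_2+2$. The proof sets $N_1=N_0/2$ and states exactly the condition $N_0>N_1+2$, so the displayed ``$N_j=N/2$'' should be read as such a hierarchy, e.g.\ $N_j=N/2^j$. With these weights, the commutator term is fed by the already established $\tau_1^{-N_0}$ bound on $\varepsilon$. It then lies in the level-$N_1$ source space with room to spare, and the linear estimate yields $\Vert S\varepsilon\Vert_{\bch^1_{rdr}}\lesssim\tau_1^{-N_1}$. The same works for $S^2\varepsilon$ at level $N_2$, with sources $(S\cos 2Q_1)S\varepsilon$ and $(S^2\cos 2Q_1)\varepsilon$.

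\textbf{The same point affects your fixed-point paragraph.} You wrote that terms losing ``one power of $\tau_1$'' are recovered by the $N^{-1}$ gain. They would not be. The argument closes only because every perturbative term sits exactly at the $\tau_1^{-N_0-2}$ level. This includes the commutator $[\mathcal{K}_{nd},\mathcal{K}_d]$, which comes with $\omega_1^2$ when you expand $(D_{\tau_1}+\omega_1\mathcal{K}_{nd})^2$.

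Your singular-weight step is fine once the weights are ordered this way.
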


\begin{proof}
    As noted earlier, and following the approach of Section \ref{sec:linout}, we first pass to the rescaled variables that make the Schr\"odinger operator time independent, \[
    \wt \varepsilon(\tau_1,R) = R^{1/2} \varepsilon \big( t(\tau_1),\, \lambda_1^{-1} R\big), \qquad R:=\lambda_1(t) r,
    \]
    which in turn leads us to study the equation \begin{align}\label{linin2}
    & \bigg( - \Big(\partial_{\tau_1} +\omega_1 R\partial_R\Big)^2 + \dfrac{1}{4}\Big( \dfrac{\lambda_{1,\tau_1}}{\lambda_1}\Big)^2+\dfrac{1}{2}\partial_{\tau_1} \Big( \dfrac{\lambda_{1,\tau_1}}{\lambda_1}\Big)  \bigg) \wt \varepsilon - \mathcal{H}\wt \varepsilon = \wt f,
    \end{align}
    having denoted by $\wt f:= \lambda_1^{-2} R^{1/2} f$. It is worth noting that \[
    \Vert \wt f\Vert_{L^2_{RdR}} = \dfrac{1}{\lambda_1} \Vert f\Vert_{L^2_{rdr}}.
    \]
    Recall that \begin{align*}
        \mathcal{F}[\wt\varepsilon](\xi) & =  \int_0^\infty \phi(R,\xi) \wt\varepsilon (R) dR + \langle \phi_{0},\wt\varepsilon\rangle_{L^2_{dR}}.
    \end{align*}
    We adopt the same notation as in Section \ref{sec:linout}, namely, \[
    \widehat{\varepsilon} = \mathcal{F}[\wt \varepsilon], \qquad \hbox{or }, \hbox{ according to the last identity}, \qquad \widehat{\varepsilon}= \begin{pmatrix}
        \widehat{\varepsilon}_p
        \\ \widehat{\varepsilon}_c
    \end{pmatrix}.
    \]
    where $\widehat{\varepsilon}_c\in L^2(\R_+,\rho)$. Then, taking the distorted Fourier transform of equation \eqref{linin2},and using Lemma \ref{dft5} we obtain \begin{equation}\label{linin3}\begin{aligned}
\Big( - D_{\tau_1  }^2 - \omega_1 D_{\tau_1  } - \xi \Big) \mathcal{F} \wt \varepsilon & =  \mathcal{F}\widetilde{f} - 2 \omega_2 \mathcal{K}_{nd} D_{\tau_1  } \mathcal{F}\wt \varepsilon  -  \omega_1 [ \mathcal{K}_{nd}, \mathcal{K}_d]  \mathcal{F}\wt \varepsilon 
\\ &   + \omega_1^2 \big( \mathcal{K}_{nd}^2 - \mathcal{K}_{nd}\big)  \mathcal{F}\wt \varepsilon - \dot\omega_1 \mathcal{K}_{nd}  \mathcal{F}\wt \varepsilon ,
\end{aligned}\end{equation}
having denoted by $D_{\tau_1}:=\partial_{\tau_1}-\omega_1\mathcal{K}_d$. Before going any further, let us define the Fourier norms we will work with in the sequel \begin{align*}
    \Vert \widehat{\varepsilon}\Vert_{\boldsymbol{\mathcal{X}}_1}& := \sup_{0 < t \leq t_0} \tau_1^{N_0} \Big( \vert \widehat{\varepsilon}_p(\tau_1)\vert  + \tau_1 \vert \dot{\widehat{\varepsilon}}_p(\tau_1)\vert\Big) + \sup_{0 < t \leq t_0} \tau_1^{N_0} \Vert \rho^{1/2} \widehat{\varepsilon}_c(\tau_1,\cdot)\Vert_{L^2_{d\xi}}
    \\ & \ \,  + \sup_{0 < t \leq t_0} \tau_1^{N_0+1} \Vert \rho^{1/2} D_{\tau_1}\widehat{\varepsilon}_c(\tau_1,\cdot)\Vert_{L^2_{d\xi}} + \sup_{0 < t \leq t_0} \tau_1^{N_0+1} \Vert \rho^{1/2}\xi^{1/2} \widehat{\varepsilon}_c(\tau_1,\cdot)\Vert_{L^2_{d\xi}}.
\end{align*}
The relevance of this choice of norm becomes clear upon applying Plancherel's theorem, which shows that
\begin{align*}
    \sup_{0 < t \leq t_0} \tau_1^{N_0} \Vert \varepsilon\Vert_{\boldsymbol{\mathcal{H}}^1_{rdr}} \lesssim \Vert \mathcal{F}[\wt \varepsilon]\Vert_{\boldsymbol{\mathcal{X}}_1}.
\end{align*}
For the source term, we use the (Fourier) norm \[
\Vert \mathcal{F}[\wt f]\Vert_{\boldsymbol{\mathcal{Y}}_1}:=\sup_{0<t \leq t_0} \tau_1^{N_0+2}\vert f_p(\tau_1)\vert +  \sup_{0<t \leq t_0} \tau_1^{N_0+2}\Vert \rho^{1/2} f_c(\tau_1,\cdot)\Vert_{L^2_{d\xi}}.
\]
We split the analysis into several steps.

\medskip
\noindent
$\bullet$ \textit{Fixed point argument for \eqref{linin3}}: Decomposing equation \eqref{linin3} into its discrete and continuous components, and denoting by  $\widehat{g}_p$ and $\widehat{g}_c$ the projections of the right-hand side of \eqref{linin3} onto the point and continuous spectra, respectively, we are led to study the following system of equations
\begin{equation}\label{linin4}\begin{aligned}
    -\partial_{\tau_1}\Big(\partial_{\tau_1}-\omega_1\Big)\widehat{\varepsilon}_p & = \widehat{g}_p,
    \\ \Big(-D_{\tau_1}^2 - \omega_1 D_{\tau_1} - \xi\Big) \widehat{\varepsilon}_c & = \widehat{g}_c.
\end{aligned}\end{equation}
The first equation in \eqref{linin4} can be solved directly by means of variation of constants. A fundamental system for the first equation is given by
\[
\widehat{\varepsilon}_{p,\text{hom}}= c_1 \lambda_1(\tau_1) + c_2 \lambda_1(\tau_1) \int_{\tau_1}^\infty \dfrac{1}{\lambda_1(t')} dt'.
\]
Then, solving backwards from infinity, one concludes that the first equation in \eqref{linin4} admits a solution \[
\widehat{\varepsilon}_p = - \lambda_1(\tau_1) \int_{\tau_1}^\infty \Big( \int_{\tau_1}^{\sigma_1} \dfrac{ds}{\lambda_1(s)}  \Big) \, \widehat{g}_p(\sigma_1)d\sigma_1. 
\]
Moreover, recalling \eqref{linin5} we then conclude  that \begin{align}\label{linin7}
    \sup_{0 < t\leq t_0} \tau_1^{N_0} \Big( \vert \widehat{\varepsilon}_p(\tau_1)\vert + \tau_1 \vert \dot{\widehat{\varepsilon}}_p(\tau_1)\vert \Big)  &  \lesssim \sup_{0 < t\leq t_0} \tau_1^{N_0+1} \int_{\tau_1}^\infty \log\Big( \dfrac{\sigma_1}{\tau_1}\Big) \big\vert \widehat{g}_p (\sigma_1)\big\vert d\sigma_1  \nonumber
    \\ & \lesssim \dfrac{1}{(N_0+1)^2} \sup_{0 < t \leq t_0} \tau_1^{N_0+2} \big\vert \widehat{g}_p(\tau_1) \big\vert , 
\end{align}
having used that \begin{align*}
\tau_1^{N_0+1}\int_{\tau_1}^\infty \log\Big(\dfrac{\sigma_1}{\tau_1}\Big) \sigma_1^{-N_0-2} d\sigma_1 = \int_1^\infty \log(s) s^{-N_0-2}ds=\dfrac{1}{(N_0+1)^2}.
\end{align*}
In the case of the second equation in \eqref{linin4} we proceed similarly as for the second equation of \eqref{linout4}. Concretely, we have that the second equation in \eqref{linin4} admits a solution of the form \begin{align}\label{linin6}
\widehat{\varepsilon}_c(\tau_1,\xi)=\int_{\tau_1}^\infty U(\tau_1,\sigma_1,\xi) \widehat{g}_c\Big(\tau_1,  \dfrac{\lambda_1^2(\tau_1)}{\lambda_1^2(\sigma_1)}\xi\Big) d\sigma_1
\end{align}
where the Green function is given by  \begin{align*}
    U(\tau_1,\sigma_1,\xi) := \xi^{-1/2} \dfrac{\rho^{1/2}\Big(\tfrac{\lambda_1^2(\tau_1)}{\lambda_1^2(\sigma_1)}\xi\Big) }{\rho^{1/2}(\xi\big)} \, \dfrac{\lambda_1(\tau_1)}{\lambda_1(\sigma_1)} \sin \bigg( \xi^{1/2} \lambda_1(\tau_1) \int_{\tau_1}^{\sigma_1} \lambda_1^{-1}(s)ds\bigg) .
\end{align*}
Moreover, from \eqref{linin5} we deduce that 
\begin{align*}
    & \big\vert U(\tau_1,\sigma_1,\xi)\big\vert  \lesssim  \dfrac{\rho^{1/2}\Big(\tfrac{\lambda_1^2(\tau_1)}{\lambda_1^2(\sigma_1)}\xi\Big) }{\rho^{1/2}( \xi\big)} \, \dfrac{\lambda_1(\tau_1)}{\lambda_1(\sigma_1)}  \, \min\Big\{ \xi^{-1/2}, \, \tau_1 \ln\Big( \dfrac{\sigma_1}{\tau_1} \Big) \Big\} , 
    \\ & \big\vert \partial_{\tau_1} U(\tau_1,\sigma_1,\xi)\big\vert  \lesssim \dfrac{\rho^{1/2}\Big(\tfrac{\lambda_1^2(\tau_1)}{\lambda_1^2(\sigma_1)}\xi\Big) }{\rho^{1/2}( \xi\big)}   \,  \ln\Big( \dfrac{\sigma_1}{\tau_1} \Big) . 
\end{align*}
It immediately follows from the above bounds and the explicit formula \eqref{linin6} that 
\begin{equation}\label{linin8}\begin{aligned}
    & \sup_{0 < t \leq t_0} \tau_1^{N_0} \Vert \rho^{1/2} \widehat{\varepsilon}_c(\tau_1,\cdot)\Vert_{L^2_{d\xi}}  +  \sup_{0 < t \leq t_0} \tau_1^{N_0+1} \Vert \rho^{1/2} D_{\tau_1} \widehat{\varepsilon}_c(\tau_1,\cdot)\Vert_{L^2_{d\xi}}
    \\ & \sup_{0 < t \leq t_0} \tau_1^{N_0} \Vert \rho^{1/2} \xi^{1/2} \widehat{\varepsilon}_c(\tau_1,\cdot)\Vert_{L^2_{d\xi}} \, \lesssim \, \dfrac{1}{N_0}  \sup_{0 < t \leq t_0} \tau_1^{N_0+2} \Vert \rho^{1/2} \widehat{g}_c(\tau_1,\cdot)\Vert_{L^2_{d\xi}}. 
\end{aligned}\end{equation}
Combining \eqref{linin7} and \eqref{linin8} we conclude that \[
\Vert \widehat{\varepsilon}\Vert_{\boldsymbol{\mathcal{X}}_1} \lesssim \dfrac{1}{N_0} \Vert g\Vert_{\boldsymbol{\mathcal{Y}}_1}.
\]
With these estimates in hand, to solve \eqref{linin3} it suffices to now notice that, according to Proposition \ref{prop:dft1}, \begin{align*}
    &\dfrac{1}{N_0}\bigg(\Vert\omega_2 \mathcal{K}_{nd} D_{\tau_1  } \mathcal{F}\wt \varepsilon \Vert_{\boldsymbol{\mathcal{Y}}_1}  + \Vert  \omega_1 [ \mathcal{K}_{nd}, \mathcal{K}_d]  \mathcal{F}\wt \varepsilon  \Vert_{\boldsymbol{\mathcal{Y}}_1}
\\ &   + \Vert \omega_1^2 \big( \mathcal{K}_{nd}^2 - \mathcal{K}_{nd}\big)  \mathcal{F}\wt \varepsilon  \Vert_{\boldsymbol{\mathcal{Y}}_1}  +  \Vert \dot\omega_1 \mathcal{K}_{nd}  \mathcal{F}\wt \varepsilon  \Vert_{\boldsymbol{\mathcal{Y}}_1} \bigg)  \lesssim  \dfrac{1}{N_0} \Vert \mathcal{F}[\wt \varepsilon]  \Vert_{\boldsymbol{\mathcal{X}}_1}.
\end{align*}
Then, taking $N_0\gg1$ sufficiently large, we obtain the smallness required to close the fixed-point argument associated with \eqref{linin3}, which yields the existence of a solution satisfying the desired bound corresponding to the first norm in \eqref{linin9}. The remainder of the proof is devoted to controlling the remaining four norms in \eqref{linin9}. 

\medskip
\noindent
$\bullet$ \textit{Control of the norms in \eqref{linin9} depending on the scaling vector field $S$}: We apply $S=t\partial_t+r\partial_r$ to equation \eqref{linin1} and use the bounds obtained in the previous step. Indeed, we have that \[
-\partial_t^2S\varepsilon+\partial_r^2S\varepsilon+\dfrac{1}{r}\partial_r S \varepsilon - \dfrac{4}{r^2}\cos(2Q_1)S\varepsilon = Sf+2f+ \dfrac{4}{r^2}\Big(S\cos(2Q_1)\Big)\varepsilon.
\]
Then, noticing that \[
\bigg\vert \dfrac{4}{r^2}S\cos(2Q_1)\bigg\vert \lesssim \dfrac{\lambda_1^3r^2(\lambda_1+t\vert \lambda_1'\vert)}{\langle \lambda_1 r\rangle^8} \lesssim t \lambda_1\vert\lambda_1'\vert  \lesssim \tau_1^{0+}\lambda_1^2,
\]
we infer that \begin{align*}
    \lambda_1^{-1} \Big\Vert \dfrac{4}{r^2}\Big( S\cos(2Q_1)\Big) \varepsilon\Big\Vert_{L^2_{rdr}} \lesssim \tau_1^{0+} \lambda_1 \Vert \varepsilon\Vert_{L^2_{rdr}},
\end{align*}
which in turn implies (recall that $N_1=N_0/2\gg1$, and the definition of the physical norms $X_1$ and $Y_1$ in \eqref{linin9}-\eqref{linin10}) that \begin{align*}
    & \sup_{0 < t \leq t_0 } \tau_1^{N_1+2} \lambda_1^{-1}\Big\Vert \dfrac{4}{r^2}\Big( S\cos(2Q_1)\Big) \varepsilon\Big\Vert_{L^2_{rdr}} 
    \\ & \ll \sup_{0 < t\leq t_0} \tau_1^{(N_1+2)+}\Big(\lambda_1(t) \Vert \varepsilon(t,\cdot)\Vert_{L^2_{rdr}} \Big) \lesssim \Vert \varepsilon\Vert_{X_1},
\end{align*}
provided $N_0>N_1+2$, which holds for any $N_0>4$. Denoting by $\Vert\cdot\Vert_{\widetilde{\boldsymbol{\mathcal{X}}}_1^j}$ and $\Vert\cdot\Vert_{\widetilde{\boldsymbol{\mathcal{Y}}}_1^j}$ the Fourier norms defined in the previous step, with $N_0$ replaced by $N_j$, we then conclude (by applying the bounds obtained in the previous step) that \[
\Vert \mathcal{F}[S\varepsilon]\Vert_{\widetilde{\boldsymbol{\mathcal{X}}}_1^1} \lesssim \Vert \mathcal{F}[\widetilde{Sf}]\Vert_{\widetilde{\boldsymbol{\mathcal{Y}}}_1^1} + \Vert \mathcal{F}[\wt f]\Vert_{\boldsymbol{\mathcal{Y}}_1} .
\]
The bounds for $S^2\varepsilon$ follow identical lines, so we omit them.

\medskip
\noindent
$\bullet$ \textit{Control of the singular norms in \eqref{linin9} depending on $r^{-2}$}: This follows from a direct estimate. No Duhamel formulation or smallness is needed here (in contrast with the previous steps). We prove this only for the case $\ell=0$. The general case $\ell \geq1$ then follows by differentiating the equation as before. We first recall the local elliptic estimate near $r=0$ in \cite{KST2}, namely, \begin{align*}
\Vert r^{-2}\varepsilon\Vert_{L^2_{rdr}} \lesssim \Big\Vert \dfrac{1}{t} \partial_r\varepsilon\Big\Vert_{L^2_{rdr}}+\Big\Vert \dfrac{1}{t r}\varepsilon\Big\Vert_{L^2_{rdr}}+\Big\Vert \Big( \dfrac{t^2-r^2}{t^2}\partial_r^2+\dfrac{1}{r}\partial_r-\dfrac{4}{r^2}\Big)\varepsilon\Big\Vert_{L^2_{rdr}}.
\end{align*}
We rewrite the last operator above using the scaling vector field \begin{align*}
\bigg( \dfrac{t^2-r^2}{t^2} \partial_r^2 + \dfrac{1}{r}\partial_r  - \dfrac{4}{r^2} \bigg) \varepsilon & = \dfrac{1}{t^2}\bigg( -2t \partial_t \varepsilon - S^2 \varepsilon  + S\varepsilon  + 2 t \partial_t S \varepsilon \bigg)
\\ & + \bigg(-\partial_t^2+\partial_r^2+\dfrac{1}{r}\partial_r - \dfrac{4}{r^2}\bigg) \varepsilon.
\end{align*}
From this last identity it follows that \begin{align*}
    \dfrac{1}{\lambda_1}\Big\Vert \Big( \dfrac{t^2-r^2}{t^2} \partial_r^2 + \dfrac{1}{r}\partial_r  - \dfrac{4}{r^2}\Big) \varepsilon\Big\Vert_{L^2_{rdr}} & \lesssim \dfrac{1}{t^2\lambda_1^2} \Big( \lambda_1 \Vert S^2\varepsilon\Vert_{L^2_{rdr}} + \lambda_1 \Vert S\varepsilon\Vert_{L^2_{rdr}} \Big)
    \\ & + \dfrac{1}{t \lambda_1}\Big( \Vert \partial_t S \varepsilon\Vert_{L^2_{rdr}} + \Vert \partial_t \varepsilon\Vert_{L^2_{rdr}}\Big)
    \\ & + \dfrac{1}{\lambda_1}\Big\Vert \Box \varepsilon - \dfrac{4}{r^2}\varepsilon\Big\Vert_{L^2_{rdr}},
\end{align*}
where we recall that $\Box \varepsilon = -\partial_t^2\varepsilon+\partial_r^2\varepsilon+\tfrac1r\partial_r\varepsilon$. By direct calculations,
\begin{align*}
    \Big\vert \dfrac{4}{r^2}\Big(\cos(2Q_1)-1\Big) \Big\vert \lesssim \dfrac{\lambda_1^4r^2}{\langle \lambda_1 r\rangle^8} \lesssim \lambda_1^2.
\end{align*}
from which we infer that \[
\dfrac{1}{\lambda_1}\Big\Vert \Box \varepsilon - \dfrac{4}{r^2}\varepsilon\Big\Vert_{L^2_{rdr}} \lesssim \dfrac{1}{\lambda_1}\Vert f \Vert_{L^2_{rdr}}+ \lambda_1\Vert \varepsilon\Vert_{L^2_{rdr}} ,
\]
which in turn (along with the estimates from the previous steps) implies that \[
\sup_{0 < t \leq t_0} \tau_1^{N_2} \lambda_1^{-1} \Vert \dfrac{1}{r^2}\varepsilon(t,\cdot)\Vert_{L^2_{rdr}} \lesssim \Vert f\Vert_{Y_1}.
\]
The $L^4$ estimate follows from interpolation between the last bound and \[
\Vert S^2 \varepsilon\Vert_{L^\infty} \lesssim \Vert \nabla S^2\varepsilon\Vert_{L^2_{rdr}} + \Big\Vert \dfrac{1}{r}S^2 \varepsilon \Big\Vert_{L^2_{rdr}}.
\]
The proof is complete.
\end{proof}

%\newpage
\bigskip
\section{Construction of the exact solution}

It remains only to construct the final correction $\varepsilon(t,r)$ to complete the approximate solution $u_N$ and turn it into an exact solution
\begin{align*}
    u(t,r):=u_N(t,r)+\varepsilon(t,r)
\end{align*}
of equation \eqref{eq:keq2corotational} in the cone $r\lesssim t$, where $u_N$ is given by Proposition \ref{prop:approximateinnerbubble}. The equation for $\varepsilon(t,r)$ is then (recall equation \eqref{eq:step2_8} and  \eqref{eq:step2_6})
\begin{align}\label{eq:exact1}
    -\partial_t^2\varepsilon + \partial_r^2 \varepsilon + \dfrac{1}{r}\partial_r \varepsilon - \dfrac{4}{r^2}\cos\big(2Q_1-2\bfcq_{n-1}\big)\varepsilon= - e_N + \sum_{k=1}^3 E_k
\end{align}
where the terms $E_k$ are given by
\begin{equation}\label{eq:exact3}\begin{aligned}
    E_1(\varepsilon) &:= \dfrac{2}{r^2}\cos(2Q_1-2\bfcq_{n-1} + 2v_N)  \Big(\sin(2\varepsilon)-2\varepsilon\Big),
    \\ E_2(\varepsilon) &:=  \dfrac{4}{r^2}\Big(\cos(2Q_1-2\bfcq_{n-1}+2v_N)-\cos(2Q_1-2\bfcq_{n-1})\Big) \, \varepsilon
    \\ E_3(\varepsilon) &:= \dfrac{2}{r^2}\sin(2Q_1-2\bfcq_{n-1}+2v_N) \Big(\cos(2\varepsilon)-1\Big).
\end{aligned}\end{equation}
Here we define
\begin{align*}
    e_N :=\chi_{r\lesssim t}\Big(-\partial_t^2 u_N+ \partial_r^2 u_N + \dfrac{1}{r}\partial_r u_N - \dfrac{2}{r^2}\sin(2u_N)\Big).
\end{align*}

\begin{prop}
    There exists $N\gg1$ sufficiently large, and $t_0=t_0(\beta, N)>0$ small enough so that equation \eqref{eq:exact1} admits a solution \[
    \varepsilon\in C^0\big( (0,t_0]; \, \bch^1_{rdr}\big),
    \]
    satisfying the bound \[
    \sum_{k=0}^2 \big\Vert S^k \varepsilon(t,r)\big\Vert_{\bch^1_{rdr}} \ll_{N,t_0} \tau^{-N}, \qquad  S:= t\partial_t + r\partial_r.
    \]
    The same bounds obtain if we replace $\varepsilon$ by $S^l\varepsilon$, $l\geq 1$, with implicit constants also depending on $l$. 
\end{prop}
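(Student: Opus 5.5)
We solve \eqref{eq:exact1} by a contraction argument in the space $X_1$ of \eqref{linin9}, using Lemma~\ref{lem:linin_1} as the linear solution operator. The potential in \eqref{eq:exact1} is $\frac{4}{r^2}\cos(2Q_1-2\bfcq_{n-1})$, while Lemma~\ref{lem:linin_1} treats only $\frac{4}{r^2}\cos(2Q_1)$. We therefore move the difference to the right:
\[
-\partial_t^2\varepsilon+\partial_r^2\varepsilon+\tfrac1r\partial_r\varepsilon-\tfrac{4}{r^2}\cos(2Q_1)\varepsilon=-e_N+\sum_{k=1}^3E_k(\varepsilon)+\tfrac{4}{r^2}\big(\cos(2Q_1-2\bfcq_{n-1})-\cos(2Q_1)\big)\varepsilon=:\mathcal{N}(\varepsilon).
\]
We then define $\varepsilon=\mathcal{T}(\varepsilon)$ as the solution given by Lemma~\ref{lem:linin_1} with $f=\mathcal{N}(\varepsilon)$. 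We show $\mathcal{T}$ maps a ball $\{\|\varepsilon\|_{X_1}\le C\|e_N\|_{Y_1}\}$ into itself and is a contraction there. The weight $N_0=N/2$ in $X_1,Y_1$ is chosen so that $\|e_N\|_{Y_1}<\infty$. This uses Proposition~\ref{prop:approximateinnerbubble}, with $N$ replaced by a larger $N'$ if necessary so that $\lambda_1^{-1}\|S^ke_N\|_{L^2}\lesssim\tau_1^{-N'}$ with $N'\ge N_0+3$. The $\tau_1^{0+}$ losses from $\lambda_1\lesssim\tau_1^{1+}$, $\lambda_2\lesssim\tau_1^{0+}$ and $t^{-1}\lesssim\tau_1^{0+}$ are absorbed into this margin. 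The bounds for $S^k e_N$, $k\le2$, come from \eqref{eq:step3_5} together with the lemma controlling $e_{N,1}$. The cutoff $\chi_{r\lesssim t}$ costs only powers of $t^{-1}\lesssim\tau_1^{0+}$.

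The linear terms in $\varepsilon$ are the potential difference and $E_2(\varepsilon)$. The gain must come from the factor $N^{-1}$ in Lemma~\ref{lem:linin_1}, not from the potentials themselves. By \eqref{linout19}-type bounds and the decomposition of $\bfcq_{n-1}$, we have $\big|\frac{4}{r^2}(\cos(2Q_1-2\bfcq_{n-1})-\cos(2Q_1))\big|\lesssim\bar\lambda_2^{2+}\lesssim\tau_1^{0+}$. Hence
\[
\lambda_1^{-1}\Big\|\tfrac{4}{r^2}(\cdots)\varepsilon\Big\|_{L^2}\lesssim\tau_1^{0+}\lambda_1^{-2}\cdot\lambda_1\|\varepsilon\|_{L^2}\lesssim\tau_1^{-2+}\|\varepsilon\|_{\bch^1},
\]
which exactly matches the $\tau_1^{N_0+2}$ versus $\tau_1^{N_0}$ weights, with even some room to spare. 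For $E_2(\varepsilon)$, the prefactor is controlled pointwise by $r^{-2}|v_N|$. We use $|v_N|\lesssim\tau_1^{-2+}\min\{1,(\lambda_1 r)^4\}$, plus the $c_N r^2$ part, which gives a bound of size $\lambda_1^{1+}$. The resulting contribution is at most $\lambda_1^{2}\tau_1^{-2+}|\varepsilon|$, again small in $Y_1$. Applying $S$ and $S^2$ only produces factors $\tau_1^{0+}$, by the bounds on $S^kv_N$ and Lemma~\ref{lem:symbolbounds1}, together with the weights $N_1=N_2$. This weight structure is arranged so that the $S\varepsilon$ term coming from $S$ falling on the potential is controlled by the stronger $\varepsilon$ norm.

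The nonlinear terms $E_1,E_3$ are at least quadratic in $\varepsilon$. They are estimated using the singular components of $X_1$:
\[
\Big\|\tfrac{1}{r^2}\varepsilon^2\Big\|_{L^2}\le\Big\|\tfrac{\varepsilon}{r^2}\Big\|_{L^2}\|\varepsilon\|_{L^\infty},\qquad \|\varepsilon\|_{L^\infty}\lesssim\|\partial_r\varepsilon\|_{L^2}+\|r^{-1}\varepsilon\|_{L^2}\lesssim\|\varepsilon\|_{\bch^1}.
\]
This yields a bound $\lambda_1\tau_1^{-2N_0+}\|\varepsilon\|_{X_1}^2$, which is far better than needed. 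The $L^4$ norm of $r^{-1}S\varepsilon$ handles the terms $\frac{1}{r^2}\varepsilon\,S\varepsilon$ after $S$ is applied. Differences $\mathcal{N}(\varepsilon)-\mathcal{N}(\tilde\varepsilon)$ are treated in the same way. Combining these with Lemma~\ref{lem:linin_1} gives
\[
\|\mathcal{T}\varepsilon-\mathcal{T}\tilde\varepsilon\|_{X_1}\lesssim N^{-1}\|\varepsilon-\tilde\varepsilon\|_{X_1},
\]
so $\mathcal{T}$ is a contraction for $N$ large and $t_0$ small. The resulting fixed point satisfies $\sum_{k\le2}\|S^k\varepsilon\|_{\bch^1}\lesssim\tau_1^{-N/2}$. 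Since $\tau\lesssim\tau_1^{1+}$ and $N$ may be taken as large as desired, this gives the asserted $\ll\tau^{-N}$ after relabeling $N$. The bounds for $S^l\varepsilon$ with $l\ge3$ follow by commuting further powers of $S$ and repeating the argument inductively, as in Lemma~\ref{lem:linout_1}.

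The main obstacle is the linear potential-difference term. It carries no decay of its own relative to $\tau_1$: its coefficient is $\bar\lambda_2^2$, which is only $\tau_1^{0+}$, whereas in $Y_1$ it must be compared with $\lambda_1^2\tau_1^{-2}$, which is of size $\bar\lambda_2^2$ by \eqref{linin11}. Near $r\sim\lambda_2^{-1}$ the bound $\bar\lambda_2^{2}$ is sharp. Hence this term is critical and closes only because of the $N^{-1}$ gain from the large weight $N_0$ in Lemma~\ref{lem:linin_1}. The same issue arises after applying $S^k$, where it must be handled with $N_1=N_2$ weights. If this fails, I would first replace the cruder $\tau_1^{0+}$ bound with the precise $\lambda_2^2\lesssim\lambda_1^2\tau_1^{-2}$, which shows the term is exactly of the critical order $\omega_1^2$. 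It is then absorbed by the $1/(N_0+1)^2$ factor in \eqref{linin7} and its analogue \eqref{linin8}.
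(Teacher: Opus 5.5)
Your architecture matches the paper's: you move the potential difference to the right, invert with Lemma~\ref{lem:linin_1}, and close a Banach iteration in $X_1$ using only the $N^{-1}$ gain. However, the step for the linear term $E_2(\varepsilon)$ fails as written. The $Y_1$ and $X_1$ weights differ by exactly $\tau_1^{2}$, and the only smallness available is $N^{-1}$. So every term linear in $\varepsilon$ needs a coefficient bounded by $C\lambda_1^2\tau_1^{-2}$, with $C$ independent of $N$ and with no $\tau_1^{0+}$ loss. Up to a term quadratic in $v_N$, the coefficient of $E_2$ is $-\frac{4}{r^2}\sin(2Q_1-2\bfcq_{n-1})\sin(2v_N)$. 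Near $R=\lambda_1 r\sim 1$ the correction $v_N$ has size $\tau_1^{-2}$ (coming from $h_0$), so the coefficient has size $\lambda_1^2\tau_1^{-2}\sim\lambda_2^2$. Thus $E_2$ is exactly as critical as the potential difference, not ``small in $Y_1$''. Your bound $\lambda_1^2\tau_1^{-2+}$ leaves an unbounded factor $\tau_1^{0+}$ in the $Y_1$ norm. The $c_Nr^2$ piece is far worse: with $|c_N|\lesssim\lambda_1^{1+}$, the only bound recorded in Proposition~\ref{prop:approximateinnerbubble}, the coefficient is $\lambda_1^{1+}\gg\lambda_2^2\sim\lambda_1^2\tau_1^{-2}$ by \eqref{linin11}. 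So $\mathcal{T}$ would not even map a ball of $X_1$ into itself. The same objection applies to your first estimate of the potential difference: $\tau_1^{-2+}$ gives no ``room to spare'', and only the sharp bound $\lambda_2^2\lesssim\lambda_1^2\tau_1^{-2}$ from your last paragraph works there.

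What is missing is a loss-free bound $\lambda_1^{-1}\|E_2\|_{L^2_{rdr}}\lesssim\tau_1^{-2}\|\varepsilon\|_{\bch^1}$, which is the paper's \eqref{eq:exact6}. With it, $E_2$ is absorbed by the $N^{-1}$ gain together with the potential difference. This bound cannot be read off from Proposition~\ref{prop:approximateinnerbubble}; you need the finer structure of $v_N$ from the construction:
\begin{itemize}
\item $|h_0|\lesssim\tau_1^{-2}\min\{1,R^4\}$ except at very large $R$ (Lemma~\ref{lem:hzerocorrection}). The $\tau_1^{0+}$ loss there comes only from $h_3$ at large $R$, and one still gets $|v_N|/\langle R\rangle\lesssim\tau_1^{-2}$.
\item $|\sin 2Q_1|\lesssim\langle R\rangle^{-2}$ and $|\sin(2\bfcq_{n-1})|\lesssim\lambda_2^2r^2$.
\item For $j\geq1$, the expansions $h_j=c_jr^2+g_j$ with $|c_j|\lesssim\tau_1^{-1+}$ and $|g_j|\lesssim\tau_1^{1+}r^4$ (Lemmas~\ref{lem:step1_2} and~\ref{lem:step2_1}). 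These make the $h_j$ negligible for $R\lesssim\tau_1^{0+}$, and beyond that the factor $\langle R\rangle^{-2}$ beats their $\tau_1^{-2+}$ size.
\end{itemize}
Relatedly, when $S$ falls on these critical coefficients it produces factors $t|\lambda_j'|/\lambda_j\lesssim\tau_1^{0+}$. These can only be absorbed if the weights strictly decrease, $N_0>N_1>N_2$ (Lemma~\ref{lem:linin_1} itself uses $N_0>N_1+2$), not with $N_1=N_2$.
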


\begin{proof}
First of all, recall the definitions of the $X_1$ and $Y_1$ norms in \eqref{linin9} and \eqref{linin10}, as well as the $\boldsymbol{\mathcal{H}}^1_{rdr}$ norm (at the scale of $\lambda_1$) in \eqref{linin13}. We plan to use a Banach fixed-point argument. The key idea is to exploit the fast decay of the source term (after simplifying the linear operator in equation \eqref{eq:exact1}). In this way, due to the $N^{-1}$ factor, the bound in Lemma \ref{lem:linin_1} will, in particular, provide the required smallness (for $N\gg1$ sufficiently large).  More specifically, we rewrite equation \eqref{eq:exact1} as 
\begin{equation}\label{eq:varepsilonequation}\begin{split}
    & -\partial_t^2\varepsilon + \partial_r^2 \varepsilon + \dfrac{1}{r}\partial_r \varepsilon - \dfrac{4}{r^2}\cos\big(2Q_1\big)\varepsilon
    \\ & = \dfrac{4}{r^2}\Big( \cos(2Q_1-2\bfcq_{n-1})-\cos(2Q_1)\Big)\varepsilon - e_N + \sum_{k=1}^3 E_k.
\end{split}\end{equation}
We seek to show that \[
\Vert g\Vert_{Y_1} \lesssim \Vert \varepsilon\Vert_{X_1}+o_{t_0}(1),
\]
where $g$ is the right-hand side of the above equation, namely, \begin{align}\label{eq:exact5}
g:=\dfrac{4}{r^2}\Big( \cos(2Q_1-2\bfcq_{n-1})-\cos(2Q_1)\Big)\varepsilon -  e_N + \sum_{k=1}^3 E_k.
\end{align}
Indeed, as usual, we begin by rewriting the difference of the trig functions as
\begin{align*}
    \cos(2Q_1-2\bfcq_{n-1})-\cos(2Q_1)  =  2\sin(\bfcq_{n-1})\sin\big(2Q_1-\bfcq_{n-1}\big).
\end{align*}
Recalling the definition of $\tau_1$ in \eqref{linin12}, the asymptotic \eqref{linin11}, and the inductive hypothesis \eqref{eq:intro_wn-1_decay_indhyp}-\eqref{eq:intro-vn-1boundsrefined}, it is clear that
\begin{align*}
    \bigg\vert \dfrac{4}{R^2}\Big(\cos(2Q_1-2\bfcq_{n-1})-\cos(2Q_1) \Big)\bigg\vert \lesssim \dfrac{\lambda_2^2}{\lambda_1^2} \sim \tau_1^{-2},
\end{align*}
having used that \[
\cos(2Q_1-2\bfcq_{n-1})-\cos(2Q_1) =2\sin(2Q_1-\bfcq_{n-1})\sin(\bfcq_{n-1}).
\]
From the above bound it immediately follows that \begin{align*}
    \lambda_1^{-1}\bigg\Vert \dfrac{4}{r^2}\Big(\cos(2Q_1-2\bfcq_{n-1})-\cos(2Q_1) \Big) \varepsilon \bigg\Vert_{L^2_{rdr}}  \lesssim  \tau_1^{-2} \Big(\lambda_1 \Vert \varepsilon\Vert_{L^2_{rdr}}\Big),
\end{align*}
and, more generally,  \begin{equation}\label{eq:exact8}\begin{aligned}
    & \lambda_1^{-1}\bigg\Vert S^\ell\bigg(\dfrac{4}{r^2}\Big(\cos(2Q_1-2\bfcq_{n-1})-\cos(2Q_1) \Big) \varepsilon \bigg) \bigg\Vert_{L^2_{rdr}} 
    \\ & \lesssim  \tau_1^{-2+} \Big(\lambda_1 \sum_{k=0}^\ell \Vert S^k \varepsilon\Vert_{L^2_{rdr}}\Big).
\end{aligned}\end{equation}
Next, concerning the nonlinear terms $E_k$, $k=1,2,3$, the only delicate case is $E_2$, since it is linear in $\varepsilon$. The other two terms can be bounded directly, using the additional factors of $\varepsilon$ (together with the large power of $\tau$ in the $X$ norm) to recover the two powers of $\tau$. In fact, taking advantage of the singular terms in the $X_1$ norm we obtain that \begin{align}\label{eq:exact2}
    \lambda_1^{-1}\Vert S^\ell E_1\Vert_{L^2_{rdr}} \lesssim \lambda_1^{-1}\Big\Vert \dfrac{\varepsilon}{r^2}\Big\Vert_{L^2_{rdr}} \, \sum_{k=0}^\ell \Vert S^k \varepsilon\Vert_{L^\infty}^2, \quad \ell\leq 2.
\end{align}
In the case of $E_2$, we rewrite said expression as \[
E_2= 4 \bigg( \cos(2Q_1-2\bfcq_{n-1}) \dfrac{\cos(2v_N)-1}{r^2}-\sin(2Q_1-2\bfcq_{n-1})\dfrac{\sin(2v_N)}{r^2}\bigg)\varepsilon.
\]
Then, recalling from Lemma \ref{lem:hzerocorrection}, Lemma \ref{lem:step1_2}, and Lemma \ref{lem:step2_1}, that \[
\Big\vert \dfrac{1}{\langle R\rangle} v_N \Big\vert \lesssim \tau_1^{-2}, \quad \, \vert v_N\vert \lesssim \tau_1^{-2+} \, \quad \hbox{ and } \, \quad 
\big\vert \sin(2Q_1) \big\vert = \dfrac{R^2(R^4-1)}{(R^4+1)^2}\lesssim \dfrac{1}{\langle R\rangle^2},
\]
which, along with the inductive hypothesis \eqref{eq:intro_wn-1_decay_indhyp}-\eqref{eq:intro-vn-1boundsrefined}, then yields \begin{align*}
    \bigg\vert \dfrac{1}{r^2} \sin(2Q_1-2\bfcq_{n-1}) \sin(2v_N) \bigg\vert & \lesssim \dfrac{1}{r^2} \bigg( \big\vert \sin(2Q_1)\big\vert + \big\vert \sin(2\bfcq_{n-1})\big\vert \bigg)  \big\vert \sin(2v_N)\big\vert
    \\ & \lesssim \bigg(\dfrac{1}{r^2}\, \dfrac{1}{\langle R\rangle^2} + \dfrac{\vert \bfcq_{n-1}\vert}{r^2}\bigg) \big\vert \sin(2v_N)\big\vert
    \\ & \lesssim \dfrac{1}{r^2} \, \dfrac{1}{\langle R \rangle} \, \tau_1^{-2} + \lambda_2^2 \tau_1^{-2+}.
\end{align*}
We conclude that \begin{align*}
    & \lambda_1^{-1} \Big\Vert \dfrac{1}{r^2}\Big(\sin(2Q_1-2\bfcq_{n-1})\sin(2v_N)\Big) \varepsilon\Big\Vert_{L^2_{rdr}}
    \\ & \lesssim \tau_1^{-2}\Big(\lambda_1^{-1}  \Big\Vert \dfrac{1}{r^2}\varepsilon\Big\Vert_{L^2_{rdr}}\Big) + \lambda_2^2\lambda_1^{-1} \tau_1^{-2+} \Vert \varepsilon\Vert_{L^2_{rdr}} .
\end{align*}
Similarly, using Lemma \ref{lem:linout_1} for the singular bounds for the outer wave part of $v_N$ and \eqref{eq:step1_16} and \eqref{eq:step2_10} for the inner elliptic part, we infer that \[
\Big\Vert \dfrac{1}{r^2} \Big(\cos(2v_N)-1\Big)\Big\Vert_{L^2_{rdr}} \lesssim \Vert v_N \Vert_{L^\infty} \Big\Vert \dfrac{1}{r^2}v_N\Big\Vert_{L^2_{rdr}} \lesssim \tau_1^{-4+}.
\]
and hence, \[
\lambda_1^{-1} \Vert E_2\Vert_{L^2_{rdr}} \lesssim \tau_1^{-2}\Big(\lambda_1^{-1}  \Big\Vert \dfrac{1}{r^2}\varepsilon\Big\Vert_{L^2_{rdr}}\Big) + \lambda_2^2\lambda_1^{-1} \tau_1^{-2+} \Vert \varepsilon\Vert_{L^2_{rdr}} + \tau^{-5+}\Vert \varepsilon\Vert_{L^\infty}.
\]
On the other hand, due to the estimates in \eqref{eq:step2_4}, we also have the differentiated bounds \[
\lambda_1^{-1} \Vert S^\ell E_2\Vert_{L^2_{rdr}} \lesssim \tau_1^{-2} \, \sum_{k=0}^\ell \Vert S^k \varepsilon\Vert_{L^\infty}, \quad \ \ell\leq 2.
\]
Then, using the elementary bound, \begin{align}\label{eq:exact4}
    \Vert S^k\varepsilon\Vert_{L^\infty} \lesssim \Vert \nabla S^k \varepsilon\Vert_{L^2_{rdr}} + \Big\Vert \dfrac{1}{r}S^k\varepsilon\Big\Vert_{L^2_{rdr}},
\end{align}
as well as \eqref{linout17} and \eqref{linin13}, we obtain that \begin{align}\label{eq:exact6}
    \lambda_1^{-1} \Vert S^\ell E_2(t,\cdot)\Vert_{L^2_{rdr}} \lesssim \tau_1^{-2} \sum_{k=0}^\ell \Vert S^k\varepsilon\Vert_{\boldsymbol{\mathcal{H}}^1_{rdr}}, \quad \ \ell\leq 2.
\end{align}
Similarly, the bound \eqref{eq:exact2} leads to \begin{align}\label{eq:exact7}
    \lambda_1^{-1}\Vert S^\ell E_1\Vert_{L^2_{rdr}} \lesssim \lambda_1^{-1}\Big\Vert \dfrac{\varepsilon}{r^2}\Big\Vert_{L^2_{rdr}} \, \sum_{k=0}^\ell \Vert S^k \varepsilon\Vert_{\bch^1_{rdr}}^{2}, \quad \ell\leq 2.
\end{align}
Finally, it only remains to bound the nonlinear term $E_3$ in \eqref{eq:exact3}. This case is similar to $E_1$. Indeed, we have that (here $\ell\leq2$) \begin{align*}
    \lambda_1^{-1} \Big\Vert  S^\ell\Big(\dfrac{1}{r^2}\big(\cos(2\varepsilon)-1\big)\Big)\Big\Vert_{L^2_{rdr}} \lesssim \lambda_1^{-1} \Big\Vert \dfrac{1}{r^2}\varepsilon\Big\Vert_{L^2_{rdr}}\, \sum_{k=0}^2 \Vert S^k\varepsilon\Vert_{L^\infty} + \lambda_1^{-1} \Big\Vert \dfrac{1}{r}S\varepsilon\Big\Vert_{L^4_{rdr}}^2.
\end{align*}  
Then, using the brutal bound (here $\ell\leq 2$) \begin{align*}
    \big\Vert S^\ell\cos(2Q_1-2\bfcq_{n-1} + 2 v_N)\big\Vert_{L^\infty} \lesssim_\ell 1, 
\end{align*}
along with Leibniz rule and H\"older inequality, we get that 
\begin{align*}
    \lambda_1^{-1}\Vert S^\ell E_3\Vert_{L^2_{rdr}} \lesssim \lambda_1^{-1} \Big\Vert \dfrac{1}{r^2}\varepsilon\Big\Vert_{L^2_{rdr}} \, \sum_{k=0}^2 \Vert S^k\varepsilon\Vert_{L^\infty} + \lambda_1^{-1} \Big\Vert \dfrac{1}{r}S \varepsilon\Big\Vert_{L^4_{rdr}}^2,
\end{align*}
which, along with \eqref{eq:exact4}, in turn implies \begin{align}\label{eq:exact9}
    \lambda_1^{-1}\Vert S^\ell E_3\Vert_{L^2_{rdr}} \lesssim \lambda_1^{-1} \Big\Vert \dfrac{1}{r^2}\varepsilon\Big\Vert_{L^2_{rdr}} \, \sum_{k=0}^2 \Vert S^k\varepsilon\Vert_{\boldsymbol{\mathcal{H}}^1_{rdr}} + \lambda_1^{-1} \Big\Vert \dfrac{1}{r}S \varepsilon\Big\Vert_{L^4_{rdr}}^2, \quad \ \ell\leq2.
\end{align}
Thus, gathering estimates \eqref{eq:exact8}, \eqref{eq:exact6}, \eqref{eq:exact7}, along with \eqref{eq:exact9}, recalling  the definitions of the $X_1$ and $Y_1$ norms in \eqref{linin9} and \eqref{linin10}, the $\boldsymbol{\mathcal{H}}^1_{rdr}$ norm in \eqref{linin13}, as well as the definition of $g$ in \eqref{eq:exact5}, we infer that \[
\Vert g\Vert_{Y_1} \lesssim \Vert \varepsilon\Vert_{X_1} + o_{t_0}(1).
\]
Then, the proof follows by Lemma \ref{lem:linin_1}, and a standard Banach iteration, provided we choose $N$ sufficiently large. 
\\
The bounds for $S^l\varepsilon$ are obtained by applying $S^l$ to the equation and using induction on $l$
The proof is complete.
\end{proof}

In order to prepare the insertion of the next inner bubble, we still need to confirm the Taylor type expansion for $\varepsilon$, as in \eqref{eq:intro-vn-1refined}, \eqref{eq:intro-vn-1boundsrefined}. For this we have 
\begin{lem}\label{lem:varepsfinestructure} The solution $\varepsilon$ described in the preceding lemma allows the representation
\[
\varepsilon(t, r) = c(t)r^2 + g(t, r),
\]
where we have the bounds 
\[
\big|(t\partial_t)^kc(t)\big|\lesssim_k \tau^{-N+1},\,\big|r^{-4}S^kg(t,r)\big|\lesssim_k\tau^{-N+2}. 
\]
\end{lem}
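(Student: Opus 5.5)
We follow the commuting vector field argument of Lemma~\ref{lem:Tayloerexpansion}, adapted to the inner scale $\lambda_1$ and to the fast decay $\tau^{-N}$ that $\varepsilon$ enjoys. Write \eqref{eq:varepsilonequation} as
\[
-\varepsilon_{tt}+\varepsilon_{rr}+\tfrac1r\varepsilon_r-\tfrac{4}{r^2}\varepsilon = F,
\]
with
\[
F:=g+\tfrac{4}{r^2}\big(\cos(2Q_1)-1\big)\varepsilon ,
\]
where $g$ is the right-hand side \eqref{eq:exact5}. Localise with $\chi_{r\ll t}$ and use the identity \eqref{eq:trickidentity1}, which replaces $\partial_t^2$ by $t^{-2}$ times expressions in $S^2\varepsilon$, $S\varepsilon$, $t\partial_t S\varepsilon$ and $t\partial_t\varepsilon$. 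This turns the problem into the elliptic equation $(\partial_{rr}+\frac1r\partial_r-\frac4{r^2})(\chi_{r\ll t}\varepsilon)=\phi$. Variation of constants then gives the representation \eqref{eq:epsilonformula}, namely $\chi\varepsilon=c(t)r^2+\frac14r^2\int_0^r s^{-1}\phi\,ds-\frac14r^{-2}\int_0^r s^3\phi\,ds$.

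\textbf{Step 1: $H^2$-type bound.} As in step (1) of Lemma~\ref{lem:Tayloerexpansion}, the preceding proposition (applied to $S^l\varepsilon$) together with the $r^{-2}$ norm in $X_1$ gives $\|\varepsilon_{rr}\|_{L^2(r\ll t)}+\|\varepsilon_r/r\|_{L^2}\lesssim \tau^{-N+}$. The losses are polynomial in $\lambda_1$ and in $t^{-1}$, and these are absorbed by $\tau^{0+}$ since $\lambda_1\sim\tau_1\lambda_2$ and $t^{-1}\lesssim\lambda_2\lesssim\tau^{0+}$.

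\textbf{Step 2: bounds for $c$ and $\varepsilon/r^2$.} Bound $\big|\frac14\int_0^r s^{-1}\phi-\frac14r^{-4}\int_0^r s^3\phi\big|\lesssim\tau^{-N+1}$ for $r\lesssim t$, exactly as in \eqref{eq:ccontrol1}. The pieces coming from $S$-terms and cutoffs are handled with the embeddings used there. For the source part, $r^{-2}\chi F$ contains the potential term $\frac{4}{r^2}(\cos 2Q_1-1)\varepsilon$, which is bounded pointwise by $\lambda_1^2|\varepsilon|$. Its weighted integral is controlled via Cauchy--Schwarz with $\|\varepsilon/r^2\|_{L^2}\lesssim\lambda_1\tau^{-N/2}$ from the $X_1$ norm. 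Here I plan to use the full strength of the preceding proposition, $\|S^k\varepsilon\|_{\bch^1}\ll\tau^{-N}$, rather than the $N/2$ weights, and accept one power of $\lambda_1\lesssim\tau^{1+}$ as the loss. The remaining parts of $F$ are $e_N$ and the nonlinear terms. For $e_N$ one needs pointwise near-origin bounds, which follow from Proposition~\ref{prop:approximateinnerbubble} and the $r^{-2}$ bounds on $e_{j,2}$ in Lemmas~\ref{lem:step1_3} and \ref{lem:step2_1}. Then $|c(t)|\lesssim\tau^{-N+1}$ follows as in step (2) of Lemma~\ref{lem:Tayloerexpansion}, and consequently $\|\varepsilon/r^2\|_{L^\infty}\lesssim\tau^{-N+1}$.

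\textbf{Step 3: the remainder $g$ and higher derivatives.} Redo the estimate of step (5) of Lemma~\ref{lem:Tayloerexpansion}, now feeding in the $L^\infty$ bound on $\varepsilon/r^2$, to get $|\ldots|\lesssim r^2\tau^{-N+2}$. The extra power of $\tau$ comes from the $\lambda_1^2$ in the potential and from $r^{-2}e_N$ near $0$, which is precisely the allowed loss. This gives $|r^{-4}g|\lesssim\tau^{-N+2}$. Higher derivatives follow as in step (6) of Lemma~\ref{lem:Tayloerexpansion}: apply $S^k$ to the equation, obtain $S^k\varepsilon=c_k r^2+g_k$ with $c_k=(t\partial_t+2)^kc$, and invert inductively.

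The main obstacle is the pointwise near-origin control of $r^{-2}S^kF$, and in particular of $e_N$ and of the inner potential. One must check that the Taylor structure $v_N=c_Nr^2+\tilde v_N$ from \eqref{eq:vNbounds}, together with $O(R^4)$ vanishing of the inner corrections, gives $|r^{-2}S^ke_N|\lesssim\tau^{-N+2}$ rather than just the $L^2$ bound. If it does not, I would shrink $N$ by a fixed amount, i.e.\ run the construction with $N+3$ and relabel.
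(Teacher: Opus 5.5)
Your outline follows the paper's route: rerun steps (1)--(6) of Lemma~\ref{lem:Tayloerexpansion} for $\varepsilon$ via \eqref{eq:trickidentity1} and variation of constants, getting $c$ first, then $\|\varepsilon/r^2\|_{L^\infty}$, then the sharp remainder bound, then $S^k$. There are, however, two problems.

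\textbf{Missing step and wrong source of decay.} You omit the one ingredient the paper treats as new: the nonlinear terms $E_k$ of \eqref{eq:exact3}. You list them but never estimate them.
\begin{itemize}
\item For the $c$-step, the paper shows $\|r^{-2}E_k\|_{L^1_{r\,dr}}\lesssim\tau^{-N+1}$ by H\"older; for $E_2$ it uses $\|v_N/r^2\|_{L^2}\|r^{-2}\varepsilon\|_{L^2}$.
\item For the remainder, it shows $|r^{-2}E_k|\lesssim\tau^{-N+2}$, e.g.\ $|r^{-2}E_2|\lesssim\|r^{-2}v_N\|_{L^\infty}\|r^{-2}\varepsilon\|_{L^\infty}$. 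Since $E_2$ is linear in $\varepsilon$, this genuinely feeds back the $L^\infty$ bound on $\varepsilon/r^2$, and this is where \eqref{eq:vNbounds} is used.
\end{itemize}
Where you do invoke \eqref{eq:vNbounds}, namely to get $|r^{-2}S^ke_N|\lesssim\tau^{-N+2}$, it cannot help. Those bounds grow like $\lambda_1^{1+}$ and $\lambda_1^{3+}$. The only recorded pointwise bound on $r^{-2}e_{j,2}$ is $\tau^{-1+}$, for $j=1$ only, in Lemma~\ref{lem:step1_3}, and Lemma~\ref{lem:step2_1} records none. Decay in $N$ near $r=0$ has to come from two sources:
\begin{itemize}
\item Every term of $e_{N,2}$ carries a factor $h_N$ or $h_N^{\inn}$. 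Their near-origin structure is given by \eqref{eq:hj+1Taylornearzero} with $j+1=N$, with coefficients $O(\tau^{-N+})$ and $O(\tau^{2-N}r^4)$, and by the fourth-order vanishing of the elliptic part.
\item $e_{N,1}$ has coefficient $O(\tau^{-N+})$ by the construction of $m$.
\end{itemize}
Your fallback of running with $N+3$ presupposes exactly such a $\tau^{-N+C}$ bound, so this is what must be checked. The paper's sketch passes over $e_N$ here as well.

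\textbf{Exponent bookkeeping.} In Step 1 you absorb polynomial losses in $\lambda_1$ into $\tau^{0+}$ ``since $\lambda_1\sim\tau_1\lambda_2$''. That relation says the opposite: $\lambda_1$ is a full power of $\tau$, and only $\lambda_2$ and $t^{-1}$ are $\tau^{0+}$. Tracking the losses honestly:
\begin{itemize}
\item In Step 1, the inner potential in $(II)$ costs $\lambda_1\|\varepsilon/r\|_{L^2}$. The input $\|r^{-2}\chi_{r\ll t}\varepsilon\|_{L^2}$ is only $O(\lambda_1\tau^{-N})$ even at full strength.
\item In Step 2, your Cauchy--Schwarz gives $\lambda_1\|\varepsilon/r^2\|_{L^2}\lesssim\lambda_1^2\tau^{-N}$, two powers of $\lambda_1$ rather than one.
\item In Step 3, the inner potential contributes $\lesssim\lambda_1^2r^2\|\varepsilon/r^2\|_{L^\infty}$ to the variation-of-constants expression.
\end{itemize}
So your argument as written gives roughly $|c|\lesssim\tau^{-N+2+}$ and $|r^{-4}g|\lesssim\tau^{-N+4+}$, which is what scaling at $\lambda_1$ suggests, not the stated exponents. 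This is harmless for three reasons: the paper's sketch does not track the inner potential either, $N$ is free, and downstream only $c\lesssim\lambda_1^{1+}$ and $r^{-4}g\lesssim\lambda_1^{3+}$ are needed, as in \eqref{eq:intro-vn-1boundsrefined}. But then enlarging $N$ must be a built-in step of your proof, not a contingency reserved for $e_N$.
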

\begin{proof} This essentially mimics the proof of Lemma~\ref{lem:Tayloerexpansion}, the main difference being that we need to also to control the nonlinear source terms. 
\\
{\it{(1): We have the bound 
\begin{equation}\label{eq:Htwobound10}
\big\|\varepsilon_{rr}\big\|_{L^2_{r\,dr}(r\ll t)} + \big\|\frac{\varepsilon_r}{r}\big\|_{L^2_{r\,dr}}\lesssim \tau^{-N+},
\end{equation}
and similarly for $S^k\varepsilon$.}} We take advantage of \eqref{eq:trickidentity1}, with $\epsilon$ replaced by $\varepsilon$. Thanks to the preceding lemma, we see that as in the proof of Lemma~\ref{lem:Tayloerexpansion}, it suffices to bound $\big\|(I), (II), (III)\big\|_{L^2_{r\,dr}}$. The bounds 
\[
\big\|(I)\big\|_{L^2_{r\,dr}} + \big\|(III)\big\|_{L^2_{r\,dr}}\lesssim \tau^{-N+}
\]
is then a direct consequence of the preceding lemma. As for the term $(II)$, we take advantage of 
\begin{align*}
&\Big\|\chi_{r\ll t}\big[ \Box\varepsilon - \frac{4\varepsilon}{r^2}\big]\Big\|_{L^2_{r\,dr}}\\&\leq \Big\|\chi_{r\ll t}\big[ \Box\varepsilon - \frac{4\cos(\bfcq_{n-1})\varepsilon}{r^2}\big]\Big\|_{L^2_{r\,dr}} + \Big\|\chi_{r\ll t}\frac{4(\cos(\bfcq_{n-1})-1)\varepsilon}{r^2}\Big\|_{L^2_{r\,dr}}. 
\end{align*}
The last two terms can be bounded by $\lesssim \tau^{-N+}$ thanks to the preceding lemma and its proof. 
\\

{\it{(2): The estimate $\big|c(t)\big|\lesssim \tau^{-N+1}$.}} Write
\begin{equation}\label{eq:epsilonformula0}
\chi_{r\ll t}\varepsilon = c(t)r^2 +  \frac14 r^2\int_0^r s^{-1}\varphi(s)\,ds - \frac14 r^{-2}\int_0^r s^{3}\varphi(s)\,ds, 
\end{equation}
where $\varphi$ is the sum of the terms on the right hand side in \eqref{eq:trickidentity1}, with $\epsilon$ replaced by $\varepsilon$, as well as the term $\frac{r^2}{t^2}\big(\chi_{r\ll t}\varepsilon\big)_{rr}$. As in the proof of Lemma~\ref{lem:Tayloerexpansion}, it suffices to show the estimate 
\begin{align*}
\big|\frac14 \int_0^r s^{-1}\varphi(s)\,ds - \frac14 r^{-4}\int_0^r s^{3}\varphi(s)\,ds\big|\lesssim \tau^{-N+1}.     
\end{align*}
In light of {it{(1)}} and the preceding lemma, as well as Proposition~\ref{prop:approximateinnerbubble} and the equation for $\varepsilon$, \eqref{eq:varepsilonequation}, the the only new aspect comes from the terms $E_k,\,k = 1, 2, 3$, given explicitly in \eqref{eq:exact3}. We claim that these satisfy the estimates 
\begin{equation}\label{eq:E_kwithsingularweight}
\big\|r^{-2}E_k\big\|_{L^1_{r\,dr}}\lesssim \tau^{-N+1},\,k = 1, 2, 3.  
\end{equation}
Once this is done, we can infer the bounds 
\begin{align*}
&\big|\frac14 \int_0^r s^{-1}E_k(s)\,ds - \frac14 r^{-4}\int_0^r s^{3}E_k(s)\,ds\big|\\
&\lesssim \big\|r^{-2}E_k\big\|_{L^1_{r\,dr}}\lesssim \tau^{-N+1},
\end{align*}
as desired. To derive the bound \eqref{eq:E_kwithsingularweight}, we note that 
\begin{align*}
\big\|r^{-2}E_1\big\|_{L^2_{r\,dr}}\lesssim \big\|r^{-2}\varepsilon\big\|_{L^2_{r\,dr}}^2\cdot \big\|\varepsilon\big\|_{L^\infty}\lesssim \tau^{-3N+}, 
\end{align*}
where we have taken advantage of the preceding lemma and its proof, as well as Holder's inequality. 
\\
Next, we have 
\begin{align*}
\big\|r^{-2}E_2\big\|_{L^2_{r\,dr}}\lesssim \big\|\frac{v_N}{r^2}\big\|_{L^2_{r\,dr}}\big\|r^{-2}\varepsilon\big\|_{L^2_{r\,dr}}\lesssim \tau^{-N+1}, 
\end{align*}
where we have taken advantage of Proposition~\ref{prop:approximateinnerbubble}, Holder's inequality, and the preceding lemma and its proof. 
\\
The term $E_3$ can be handled like the term $E_1$. 
\\

{\it{(3): We have the bounds $\big\|\frac{S^k\varepsilon}{r^2}\big\|_{L^\infty_{r\,dr}}\lesssim_k \tau^{-N+1}$.}} This follows from the preceding step for $k = 0$, and by repeating the precedign analysis for $S^k\varepsilon$, we derive the corresponding estimate for the differentiated function.
\\

{\it{(4): We have the bound 
\begin{equation}\label{eq:sharpgbound}
\big|\frac14 \int_0^r s^{-1}\varphi(s)\,ds - \frac14 r^{-4}\int_0^r s^{3}\varphi(s)\,ds\big|\lesssim \tau^{-N+2} r^2,\,r\lesssim t. 
\end{equation}
}} Once this has been proven, the formula \eqref{eq:epsilonformula0} then implies the conclusion of the lemma for $k = 0$, and the derivative bounds follow by following the same reasoning for $S^k\varepsilon$. The only new feature in the proof of \eqref{eq:sharpgbound} compared to the argument for {\it{(5)}} in the proof of Lemma~\ref{lem:Tayloerexpansion} concerns the required bounds for the contributions of the terms $E_k, k = 1,2, 3$, as in {\it{(2)}}. These bounds follow from {\it{(3)}}. In fact, we claim that 
\begin{align*}
\big|r^{-2}E_k\big|\lesssim \tau^{-N+2},\,k = 1, 2, 3. 
\end{align*}
This the easily implies \eqref{eq:sharpgbound}. To establish the claimed estimate, we first note that (recall \eqref{eq:exact3})
\begin{align*}
\big|r^{-2}E_1\big|\lesssim \big\|r^{-2}\varepsilon\big\|_{L^\infty_{r\,dr}}^2\cdot \big\|\varepsilon\big\|_{L^\infty_{r\,dr}}\lesssim \tau^{-3N+2},
\end{align*}
where we have taken advantage of the preceding lemma, as well as {\it{(3)}}. 
\\
Next, we see that 
\begin{align*}
\big|r^{-2}E_2\big|\lesssim \big\|r^{-2}v_N\big\|_{L^\infty_{r\,dr}}\cdot\big\|r^{-2}\varepsilon\big\|_{L^\infty_{r\,dr}}\lesssim \tau^{-N+2},      
\end{align*}
taking advantage of Proposition~\ref{prop:approximateinnerbubble} and {\it{(3)}}. For the remaining term $E_3$, we have the estimate 
\begin{align*}
\big|r^{-2}E_3\big|\lesssim \big\|r^{-2}\varepsilon\big\|_{L^\infty_{r\,dr}}^2\lesssim \tau^{-2N+2}. 
\end{align*}
\end{proof}

The proof of Theorem \ref{thm:Main} is then a direct consequence of the preceding proposition, together with \eqref{linin11}, as well as Huyghen's principle. Indeed, the function $u_N+\varepsilon$ solves equation \eqref{eq:keq2corotational} in the light cone 
\[
\mathcal{A}:= \{r\leq t\} \cap \{t\in (0,t_0]\}.
\]
Then, letting $u(t,r)$ be the solution of \eqref{eq:keq2corotational} with initial data $(u_N+\varepsilon)(t_0)$ at time $t=t_0$, we get that \[
u\Big\vert_{r\leq t} = (u_N+\varepsilon)\Big\vert_{r\leq t}
\]
for $t\in(0,t_0]$, due to Huyghen's principle. Finally, note that the conservation of energy implies that $u$ cannot develop a singularity outside the light cone $r\leq t$, and so the first singularity of $u$ on $[0,t_0]\times \R_+$ occurs indeed at time $t=0$, as described by $u_N+\varepsilon$, resulting in a tower of $n$ bubbles concentrating simultaneously but at different rates at the origin.

%%%%%%%%%%%%%%%%%%%%%%%%%%%%%%%%%%%%%%%%%%%%
%%%%%%%%%%%%%%%%%%%%%%%%%%%%%%%%%%%%%%%%%%%%
%%%%%%%%%%%%%%%%%%%%%%%%%%%%%%%%%%%%%%%%%%%%
%%%%%%%%%%%%%%%%%%%%%%%%%%%%%%%%%%%%%%%%%%%%
%%%%%%%%%%%%%%%%%%%%%%%%%%%%%%%%%%%%%%%%%%%%

\end{document}